\def\stackrel#1#2{\mathrel{\mathop{#2}\limits^{#1}}}
\numberwithin{equation}{section}
\newtheorem{theorem}{Theorem}[section]
\newtheorem{lemma}[theorem]{Lemma}
\newtheorem{corollary}[theorem]{Corollary}
\newtheorem{proposition}[theorem]{Proposition}
\theoremstyle{definition}
\newtheorem{remark}[theorem]{Remark}
\newcommand{\R}{{\mathbb R}}
\DeclareFontFamily{U}{mathx}{\hyphenchar\font45}
\DeclareFontShape{U}{mathx}{m}{n}{
      <5> <6> <7> <8> <9> <10>
      <10.95> <12> <14.4> <17.28> <20.74> <24.88>
      mathx10
      }{}
\DeclareSymbolFont{mathx}{U}{mathx}{m}{n}
\DeclareMathSymbol{\bigtimes}{1}{mathx}{"91}
\newcommand{\vsm}{{\mathsmaller{\mathsmaller{V}}}}
\newcommand{\Z}{{\mathbb Z}}
\date{\vspace{-9ex}}
\title{Continuum  polymer measures      corresponding \\ to the critical  2d stochastic heat flow}
\date{  }
 \author{\textbf{Jeremy  Clark}\footnote{ {\tt jeremy@olemiss.edu}}\hspace{.8cm}and\hspace{.8cm}\textbf{Barkat Mian}\footnote{ {\tt bmian@go.olemiss.edu}} \vspace{.1cm}  \\  University of Mississippi, Department of Mathematics   }
\begin{document}
\maketitle

\begin{abstract}  We construct continuum directed polymer measures corresponding to the   \textit{critical 2d stochastic heat flow} (2d SHF) introduced by Caravenna, Sun, and Zygouras in their recent article [Inventiones mathematicae \textbf{233},
 325--460 (2023)].  For this purpose, we prove a Chapman-Kolmogorov relation for the 2d SHF along with a related elementary conditional expectation formula.  We explore some basic properties of the continuum polymer measures, with our main focus being on their second moments. In particular, we  show that the form of their second moments is consistent with the family of  continuum  polymer measures, indexed by a disorder strength parameter, having a conditional Gaussian multiplicative chaos distributional interrelationship similar to that previously found in an analogous hierarchical toy model.  
\end{abstract}

\section{Introduction} The \textit{critical two-dimensional stochastic heat flow} (2d SHF) of index $\vartheta\in \R$ is a two-parameter process of random Borel measures $\mathscr{Z}^{\mathsmaller{\vartheta}}=\{\mathscr{Z}^{\mathsmaller{\vartheta}}_{s,t}(dx,dy)\}_{0\leq s < t <\infty}$ on  $(\R^2)^2$ derived 
by  Caravenna, Sun, and Zygouras in~\cite{CSZ5} as a universal distributional limit of point-to-point partition functions for (1+2)-dimensional models of a directed polymer in a random environment (DPRE) within a critical weak-coupling scaling regime.  Our scaling convention for the 2d SHF  will be such that  $\mathscr{Z}^{\vartheta}_{s,t}$ is equal to what in~\cite{CSZ5} is expressed by $ 2\mathscr{Z}^{\vartheta'}_{2s,2t} $  for $\vartheta':=\vartheta+\gamma_{\mathsmaller{\textup{EM}}}-\log 2$, where $\gamma_{\mathsmaller{\textup{EM}}}$ is the Euler-Mascheroni constant.
The parameter $\vartheta$ modulates between the  weak and strong disorder phases in the limits  $\vartheta\downarrow -\infty$ and  $\vartheta\uparrow \infty$, respectively.   The expectation of the random measure  $\mathscr{Z}^{\mathsmaller{\vartheta}}_{s,t}$ is the  Borel measure $\mathbf{U}_{t-s}$ on $(\R^2)^2$ with Lebesgue density 
\begin{align}\label{Gee}
\text{}\hspace{4.5cm}\mathbf{\dot{U}}_{t-s}(x,y)\,=\,g_{t-s}(x-y)\,,  \hspace{2cm}x,y\in \R^2\,,
\end{align}
in which $g_r:\R^2\rightarrow [0,\infty)$ is  the Gaussian function $g_r(x)=\frac{ 1 }{2\pi r}\exp\{-\frac{1}{2r} |x|^2  \} $.  In  loose terms,  we can characterize $\mathscr{Z}^{\mathsmaller{\vartheta}}$ as a  consistent randomization  of  the family $\{\mathbf{U}_{t-s}(dx,dy)\}_{0\leq s < t <\infty}$, where the consistency  is reflected in a Chapman-Kolmogorov type relation, one formulation of which we give in Proposition~\ref{PropChapman}.  After recalling some of the key properties of the 2d SHF below, we will review its connection with the two-dimensional stochastic heat equation. \vspace{.2cm}

The 2d SHF has a basic temporal independence property, namely that the random measures $\mathscr{Z}^{\mathsmaller{\vartheta}}_{s_1,t_1}$, \ldots, $\mathscr{Z}^{\mathsmaller{\vartheta}}_{s_n,t_n}$ are mutually  independent when the intervals  $(s_1,t_1),\ldots, (s_n,t_n)\subset [0,\infty)$ are disjoint.  Moreover, there is equality in distribution under  time and space shifts:
\[
\mathscr{Z}^{\mathsmaller{\vartheta}} \,\stackrel{\textup{d}}{=}\,\Big\{\,\mathscr{Z}^{\mathsmaller{\vartheta}}_{s+r,t+r}\big(d(x+a),d(y+a)\big)\,\Big\}_{0\leq s < t <\infty}
\]
for any $r\in [0,\infty)$ and $a\in \R^2$. 
A diffusive time-space rescaling results in a distributional equality with a shifted value of $\vartheta$:  
\begin{align}\label{Shifts}
\mathscr{Z}^{\mathsmaller{\vartheta}+\log \lambda} \,\stackrel{\textup{d}}{=}\,\Big\{\,\frac{1}{\lambda} \mathscr{Z}^{\mathsmaller{\vartheta}}_{\lambda s,\lambda t}\big(d(\sqrt{\lambda}x),d(\sqrt{\lambda}y)\big)\,\Big\}_{0\leq s < t <\infty}
\end{align}
for any $\lambda>0$.
While, as mentioned above,  the random measure $\mathscr{Z}^{\mathsmaller{\vartheta}}_{s,t} $ has expectation $\mathbf{U}_{t-s}$,  its variance $\mathbf{K}_{t-s}^{\mathsmaller{\vartheta}}$ has the 
 closed form 
\begin{align}\label{VarZee}
\mathbf{K}_{t-s}^{\mathsmaller{\vartheta}}(dx,dx';dy,dy') \,:=\,&\,\mathbb{E}\Big[ \, \mathscr{Z}^{\mathsmaller{\vartheta}}_{s,t}(dx,dy) \,\mathscr{Z}^{\mathsmaller{\vartheta}}_{s,t}(dx',dy') \,\Big] \,- \, \mathbf{U}_{t-s}(dx,dy)\, \mathbf{U}_{t-s}(dx',dy') \nonumber  \\ \,=\,&\,g_{t-s}\bigg(\frac{x+x'}{\sqrt{2}}-\frac{y+y'}{\sqrt{2}} 
  \bigg)\,K_{t-s}^{\mathsmaller{\vartheta}}\bigg(\frac{x-x'}{\sqrt{2}}, \frac{y-y'}{\sqrt{2}}  \bigg) \,dx\,dx'\,dy\,dy'
\,,
 \end{align}
where, in terms of the derivative $\nu'$ of the Volterra function $\nu(a):=\int_0^{\infty} \frac{a^r}{\Gamma(r+1)}  dr$, we define $ K_t^{\mathsmaller{\vartheta}}:(\R^2)^2\rightarrow [0,\infty]$ by
 \begin{align}\label{DefK}
K_t^{\mathsmaller{\vartheta}}(x,y)\,:=\, &\,2\,\pi \,e^{\vartheta}\,\int_{0<r<s<t   }\,  g_r(x)\,\nu'\big((s-r)e^{\vartheta}\big)\,g_{t-s}(y)\,ds\,dr\,. 
 \end{align}
 The  Volterra function and its multivariate generalizations are discussed in Apelblat's monograph~\cite{Apelblat0} and the survey~\cite{Garrappa} by Garrappa and Mainardi. Notably, $K_t^{\mathsmaller{\vartheta}}(x,y)$ diverges logarithmically  as $x$ or $y$ approaches the origin. 
The  function $ P_t^{\mathsmaller{\vartheta}}(x,y)  := g_t(x-y)+K_t^{\mathsmaller{\vartheta}}(x,y) $ is the integral kernel for a bounded operator on the Hilbert space $L^2(\R^2)$ of the form $\textup{exp}\big\{ -t H^\mathsmaller{\vartheta}\big\}$, in which $H^\mathsmaller{\vartheta}$ is a two-dimensional Schr\"odinger operator with a point potential at the origin; see~\cite[Chapter I.5]{AGHH} for a summary of these operators and~\cite[Equation (3.11)]{Albeverio} for an equivalent exponential kernel formula. This implies the semigroup property $P_{s+t}^{\mathsmaller{\vartheta}}(x,z)=\int_{\R^2}P_s^{\mathsmaller{\vartheta}}(x,y)P_t^{\mathsmaller{\vartheta}}(y,z) dy$ for $s,t\geq 0$. An interesting diffusion representation of the kernel $ P_t^{\mathsmaller{\vartheta}}(x,y)$ was found by  Chen in~\cite{Chen2}.\vspace{.2cm}

The 2d SHF  is the natural candidate for being the distributional limit law of regularized solutions of the two-dimensional stochastic heat equation  (2d SHE) within a critical scaling window.  The 2d SHE is the ill-defined stochastic partial differential equation (SPDE)
\begin{align}\label{SHE}\frac{\partial}{\partial t}\,U_t(x)\,=\,\frac{1}{2}\,\Delta\, U_t(x)\,+\, \sqrt{\vsm }\,\xi (t,x)\,U_t(x) \,, \hspace{.5cm}t\in [0,\infty)\,, \hspace{.3cm}x=(x_1,x_2)\in \R^2\,,  \end{align}
in which  $\Delta$ is the two-dimensional Laplacian, $\mathsmaller{V}>0$ is a coupling strength parameter, and  $\xi(t,x)$ is a time-space white noise on $[0,\infty)\times \R^2$, meaning a Gaussian field having the zero-range correlations $\mathbb{E}\big[ \xi(t,x) \xi(t',x')    \big]=\delta(t-t')\delta(x-x')$. 
In~\cite{BC} Bertini and Cancrini  
 proposed a delicate regularization scheme for generating a ``solution" to the 2d SHE, wherein the white noise   in~(\ref{SHE}) is replaced by a spatially mollified Gaussian field $\xi_{\varepsilon}(t,x)$ that converges to $\xi(t,x)$ as the parameter $\varepsilon>0 $ vanishes, and the coupling $\mathsmaller{V}\equiv \mathsmaller{V}^{\mathsmaller{\vartheta},\varepsilon}$ converges to zero as $\varepsilon \searrow 0$ under an asymptotics~(\ref{VSM}) that depends at higher order  on a refining  parameter $\vartheta \in \R$.  The  mollified Gaussian field is defined  by $\xi_{\varepsilon}(t,x):=  \frac{1}{\varepsilon^2}\int_{\R^2} \xi(t,y)j\big(\frac{y-x}{\varepsilon}\big)dy$ for some  function $j\in C_c^{\infty}(\R^2)$ with integral one, which has the correlations
$$ \mathbb{E}\big[\, \xi_{\varepsilon}(t,x)\,\xi_{\varepsilon}(t',x') \,\big]\,=\,\delta(t-t')\,J_{\varepsilon}( x-x' ) \,,
 $$
where  $J_{\varepsilon}(x):=\frac{1}{\varepsilon^2} J\big(\frac{x}{\varepsilon} \big) $ for $J( x ):=\int_{\R^2}j(y)j(x+y)dy$. The coupling strength $\mathsmaller{V}^{\mathsmaller{\vartheta},\varepsilon}>0$ is taken to  vanish with  small $\varepsilon$ under the asymptotics 
\begin{align}\label{VSM}
\vsm^{\mathsmaller{\vartheta},\varepsilon}\,\stackrel{\varepsilon\rightarrow 0}{=}\,\frac{ 2\pi  }{ \log \frac{1}{\varepsilon} }\bigg(1+ \frac{ \vartheta +2I_{J} }{ 2\log \frac{1}{\varepsilon} }\bigg)\, +\,\mathit{o}\bigg(\frac{ 1}{ \log^2 \frac{1}{\varepsilon} } \bigg)\,, 
\end{align}
in which $I_{J} $ has the following form  in terms of the function $J$: 
\begin{align*}
I_{J}\,:=\,\gamma_{\mathsmaller{\textup{EM}}}\,-\,\log 2\,+\,\int_{(\R^2)^2}\log |x-y| \,J(x)\,J(y)\,dx\,dy \,.
\end{align*}
Then, for an initial time $s\geq 0$ and data $\varphi\in C_c(\R^2)$, consider   the informal SPDE
\begin{align}\label{SHEII}\frac{\partial}{\partial t}\,U_{s,t}^{\mathsmaller{\vartheta},\varepsilon}(x,\varphi)\,=\,\frac{1}{2}\,\Delta\, U_{s,t}^{\mathsmaller{\vartheta},\varepsilon}(x,\varphi)\,+\, \sqrt{\vsm^{\mathsmaller{\vartheta},\varepsilon} }\,\xi_{\varepsilon} (t,x)\,U_{s,t}^{\mathsmaller{\vartheta},\varepsilon}(x,\varphi) \,,  \hspace{.5cm} U_{s,s}^{\mathsmaller{\vartheta},\varepsilon}(x,\varphi)\,=\,\varphi(x) \,,  \hspace{.5cm}t\,\geq \,s\,. \end{align}
  A solution to~(\ref{SHEII}) can be expressed through the Feynman-Kac formalism 
\begin{align}\label{DefU}
    U_{s,t}^{\mathsmaller{\vartheta},\varepsilon}(x,\varphi)\,=\,\int_{\boldsymbol{\Upsilon}_s} \,\textup{exp}\Bigg\{ \,\sqrt{\vsm^{\mathsmaller{\vartheta},\varepsilon} } \int_s^{t}\xi_{\varepsilon} \big(r, \,p(r) \big)\, dr     \,-\, \vsm^{\mathsmaller{\vartheta},\varepsilon} \,\frac{(t-s)\|j\|^2_2}{2\varepsilon^2} \,\Bigg\}    \,\varphi\big(p(t)\big)  \,\mathbf{P}_{s,x}(dp) \,,
\end{align}
 in which $\mathbf{P}_{s,x}$ denotes Wiener measure on the path space $\boldsymbol{\Upsilon}_s=C\big([s,\infty),\R^2   \big)$ for initial position $x\in \R^2$.  Note that for any $p\in \boldsymbol{\Upsilon}_s$ the process $\{B_t^{p}\}_{t\in [0,\infty)} $ defined by $ B_{t}^{p}:=\int_s^{s+t}\xi_{\varepsilon} \big(r, p(r) \big) dr$ is a 1d Brownian motion with diffusion rate $  \frac{1}{\varepsilon^2}\|j\|^2_2  $. The random positive linear functional that maps $\varphi \in C_c(\R^2)$ to $U_{s,t}^{\mathsmaller{\vartheta},\varepsilon}(x,\varphi)\in \R$ defines a random Borel measure $U_{s,t}^{\mathsmaller{\vartheta},\varepsilon}(x,\cdot)$ on $\R^2$, and we define the random Borel measure $\mathscr{Z}^{\mathsmaller{\vartheta},\varepsilon}_{s,t}$ on $(\R^2)^2$ by 
\begin{align}\label{DefZEpsilon}
    \text{}\hspace{1cm} \mathscr{Z}^{\mathsmaller{\vartheta},\varepsilon}_{s,t}(dx,dy)\,=\,dx\, U_{s,t}^{\mathsmaller{\vartheta},\varepsilon}(x,dy) \,, \hspace{1cm}x,y\in \R^2\,.
\end{align}
We can view the random element $ \mathscr{Z}^{\mathsmaller{\vartheta},\varepsilon}_{s,t}$ as taking values in the   space of locally finite Borel measures on $(\R^2)^2$, which is endowed with the vague topology.  
The expectation of the random measure $\mathscr{Z}^{\mathsmaller{\vartheta},\varepsilon}_{s,t}$ is  equal to $\mathbf{U}_{t-s}$, and the fine-tuning of the coupling parameter $\mathsmaller{V}^{\mathsmaller{\vartheta},\varepsilon} $ in~(\ref{VSM}) is such that the
variance of $\mathscr{Z}^{\mathsmaller{\vartheta},\varepsilon}_{s,t}$ converges vaguely to~(\ref{VarZee})  with small $\varepsilon$, as first observed in~\cite[Theorem 3.2]{BC}. \vspace{.2cm}

The process $ \mathscr{Z}^{\mathsmaller{\vartheta},\varepsilon}:=\big\{\mathscr{Z}^{\mathsmaller{\vartheta},\varepsilon}_{s,t}\big\}_{0\leq s<t<\infty}$ is expected to converge in distribution as $\varepsilon\rightarrow 0$ to the 2d SHF  $\mathscr{Z}^{\mathsmaller{\vartheta}}$, meaning that  for any $0\leq s_j<t_j<\infty$ and $\varphi_j\in  C_c\big( (\R^2)^2\big)$ with $j\in \{1,\ldots, N\}$ there is convergence in distribution
\begin{align}\label{MultZ}
\Big( \mathscr{Z}^{\mathsmaller{\vartheta},\varepsilon}_{s_1,t_1}(\varphi_1),\ldots, \mathscr{Z}^{\mathsmaller{\vartheta},\varepsilon}_{s_N,t_N}(\varphi_N)  \Big)  \hspace{.5cm}  \stackbin[\varepsilon\rightarrow 0  ]{\textup{d}}{\Longrightarrow}  \hspace{.5cm}  \Big( \mathscr{Z}^{\mathsmaller{\vartheta}}_{s_1,t_1}(\varphi_1),\ldots,  \mathscr{Z}^{\mathsmaller{\vartheta}}_{s_N,t_N}(\varphi_N)  \Big) \,.
\end{align}
Indeed, the  scaling regime in \cite{CSZ5} yielding the 2d SHF as its distributional limit can be understood as an alternative method for regularizing the 2d SHE, using models that are discrete in time and space.
In~\cite[Remark 1.4]{CSZ5} the authors give a plausible outline for how this distributional convergence might be proven in  analogy to the discrete polymer model counterparts, but a fuller discussion of the proof has not been provided in the literature yet to our knowledge.  It follows from results in~\cite{BC} that there exists a vanishing sequence $\{\epsilon_j\}_1^{\infty}$ and a limit law  $\hat{\mathscr{Z}}^{\mathsmaller{\vartheta}}$ such that there is convergence  in distribution
 \begin{align}\label{ZLimit}
\mathscr{Z}^{\mathsmaller{\vartheta},\varepsilon_j}  \hspace{.5cm}  \stackbin[j\rightarrow \infty  ]{\textup{d}}{\Longrightarrow}  \hspace{.5cm} \hat{\mathscr{Z}}^{\mathsmaller{\vartheta}}
 \end{align}
 in the same sense as~(\ref{MultZ}), leaving the question of uniqueness of the distributional limits open.  While Bertini and Cancrini identified $\mathbf{K}_{t-s}^{\mathsmaller{\vartheta}}$ in~(\ref{DefK}) as the limit of the variance of $\mathscr{Z}^{\mathsmaller{\vartheta},\varepsilon}_{s,t}$ as $\varepsilon\rightarrow 0$, a rigorous proof that any limit~(\ref{ZLimit}) does indeed have  variance  $\mathbf{K}_{t-s}^{\mathsmaller{\vartheta}}$ did not come until  Caravenna,  Sun, and Zygouras  provided a uniform integrability argument in~\cite{CSZ4} through a uniform bound on the third moments of the family  $\{\mathscr{Z}^{\mathsmaller{\vartheta},\varepsilon}\}_{\epsilon>0} $. In~\cite{GQT} Gu, Quastel, and Tsai proved that each higher integer moment $ \mathbb{E}\big[ (\mathscr{Z}^{\mathsmaller{\vartheta},\varepsilon})^m  \big] $ converges vaguely as $\varepsilon\rightarrow 0$ to a limit, which they  characterize using a theory of  $m$-particle Schr\"odinger operators with point interactions, thus uniquely determining the  higher moments of any  distributional limit~(\ref{ZLimit}); see also Chen's recent work~\cite{Chen1} for a probabilistic perspective on the convergence of the moments, giving a useful technical extension of the main result in~\cite{GQT}. 
 This does not entail distributional uniqueness for limits~(\ref{ZLimit}) because the sequence $ \mathbb{E}\big[ (\hat{\mathscr{Z}}^{\mathsmaller{\vartheta}}(\varphi) )^m  \big] $ in $m\in \mathbb{N}$ is expected to increase too quickly  for nonzero $\varphi\in C_c\big( (\R^2)^2\big)$. \vspace{.2cm}

The aim of this article is to construct and initiate  a study of  random Borel measures $\mathscr{Z}^{\mathsmaller{\vartheta}}_{[s,t]}$ on the path space $\boldsymbol{\Upsilon}_{[s,t]}:=   C\big(  [s,t],\R^2   \big)$ having a canonical relation to the SHF $\mathscr{Z}^{\mathsmaller{\vartheta}} $. These random path measures have a distributional invariance under spatial shifts, inherited from the 2d SHF~(\ref{Shifts}), and are almost surely infinite. We do not address the question of constructing a normalized version of these measures in which they  are supported on a set of paths with fixed starting point $x\in \R^2$ and have total mass one (i.e., are probability measures) here.  Our approach takes  inspiration from the article~\cite{AKQ2} by Alberts, Khanin, and Quastel formulating a 1d continuum directed polymer model; see also~\cite[Section 2.10]{Quastel} for a very brief overview of the model.
For a partition $P=\{t_0,\ldots, t_m\}$ of the interval $[s,t]$, the random measure $\mathscr{Z}^{\mathsmaller{\vartheta}}_{[s,t]}$   formally satisfies
\begin{align}\label{fdd}
\mathscr{Z}^{\mathsmaller{\vartheta}}_{[s,t]}\Big(&\Big\{\, p \in  \boldsymbol{\Upsilon}_{[s,t]}\,:\,p(t_0)\in dx_0\,,\,\, p(t_1)\in dx_1 \,,\, \ldots \,,\, p(t_m)\in dx_m \,\Big\}\Big) \nonumber   \\ &\,=\,\mathscr{Z}^{\mathsmaller{\vartheta}}_{t_0,t_1}(dx_0,dx_1)\,\frac{\mathscr{Z}^{\mathsmaller{\vartheta}}_{t_1,t_2}(dx_1,dx_2)}{dx_1}\,\cdots\, \frac{\mathscr{Z}^{\mathsmaller{\vartheta}}_{t_{m-1},t_m}(dx_{m-1},dx_m)}{dx_{m-1}}\,,\hspace{.8cm} x_j\in \R^2\,.
\end{align}
The right side above is ill-defined since the projection 
$\overline{\mathscr{Z}}^{\mathsmaller{\vartheta}}_{a,b}(dx):=\mathscr{Z}^{\mathsmaller{\vartheta}}_{a,b}(dx,\R^2)$ is almost surely singular with respect to Lebesgue measure on $\R^2$, and thus $\mathscr{Z}^{\mathsmaller{\vartheta}}_{a,b}$ does not have a disintegration analogous to~(\ref{DefZEpsilon}).\footnote{The almost sure singularity of $\overline{\mathscr{Z}}^{\mathsmaller{\vartheta}}_{a,b}$  with respect to Lebesgue measure on $\R^2$ is a result in a manuscript~\cite{CSZ8} that Caravenna, Sun, and Zygouras are preparing.}  However,  a simple  limit scheme through Gaussian averaging can be used to rigorously define the right side of~(\ref{fdd}).  The first moment $\mathbf{U}_{[s,t]}=\mathbb{E}\big[\mathscr{Z}^{\mathsmaller{\vartheta}}_{[s,t]}\big]$ is Wiener measure on $\boldsymbol{\Upsilon}_{[s,t]}$ with initial position having Lebesgue ``distribution" on $\R^2$.  The second moment $\mathbf{Q}^{\mathsmaller{\vartheta}}_{[s,t]}=\mathbb{E}\big[(\mathscr{Z}^{\mathsmaller{\vartheta}}_{[s,t]})^2\big]$ was effectively the subject of our previous  work~\cite{CM}, and we explain the connection between $\mathbf{Q}^{\mathsmaller{\vartheta}}_{[s,t]}$ and the path measures considered there in Section~\ref{SubsectionSM}.  For any $\mathsmaller{\vartheta}'\in \R$ the measures $\mathbf{Q}^{\mathsmaller{\vartheta}'}_{[s,t]}$ and $\mathbf{Q}^{\mathsmaller{\vartheta}}_{[s,t]}$ are equivalent with Radon-Nikodym derivative
\begin{align}\label{RNDeriv}
\frac{ d\mathbf{Q}^{\mathsmaller{\vartheta}'}_{[s,t]} }{d\mathbf{Q}^{\mathsmaller{\vartheta}}_{[s,t]}  }(p,q)\,=\,\textup{exp}\Big\{\, (\vartheta'-\vartheta)\,\mathbf{I}_{[s,t]}(p,q) \,\Big\}\,,\hspace{1cm}p,q\in \boldsymbol{\Upsilon}_{[s,t]}\,,
\end{align}
where $\mathbf{I}_{[s,t]}(p,q)\in [0,\infty]$ is an \textit{intersection time} between the paths $p,q$, that is, a certain measure of the set of intersection times $\{ r\in [s,t]: p(r)=q(r)\}$. As we argue heuristically  in Section~\ref{SubsectCGMC}, the formula~(\ref{RNDeriv})  suggests that when $\vartheta'>\vartheta$ the law of $\mathscr{Z}^{\mathsmaller{\vartheta}'}_{[s,t]}$ can be constructed as a Gaussian multiplicative chaos (GMC) with random reference measure $\mathscr{Z}^{\mathsmaller{\vartheta}}_{[s,t]}$.   A result of this type was previously obtained in~\cite{Clark4} for an analogous toy model defining a critical continuum directed polymer defined on a diamond fractal~\cite{Clark3}.   \vspace{.2cm}

In the recent  work~\cite{QRV} Quastel, Ram\'irez, and Vir\'ag studied a   2d continuum polymer model associated with a  2d SHE driven by a spatial white noise, that is, differing from~(\ref{SHE}) in that the multiplicative  time-space white noise term $\xi(t,x)$ is replaced by a spatial one $\xi(x)$. This translates to their continuum polymer being \textit{undirected} because, heuristically, a polymer  may return to a spatial point $x\in \R^2$, thus sampling the white noise variable $\xi(x)$ with  `multiplicity.' The authors showed that their polymer measures can be constructed as GMCs using Shamov's formulation of subcritical GMC in~\cite{Shamov}.  As we explain in Section~\ref{SubsectCGMC}, this is not possible for the continuum polymer measures that we consider here, except in the relative sense mentioned above of $\mathscr{Z}^{\mathsmaller{\vartheta}'}_{[s,t]}$ being constructible (in law) as a GMC with random reference measure $\mathscr{Z}^{\mathsmaller{\vartheta}}_{[s,t]}$ when $\vartheta'>\vartheta$.
In~\cite{Berger0,Berger1}  Berger and Lacoin considered another family of continuum polymer models corresponding to the SHE, valid for any dimension, in which the Gaussian white noise $\xi(t,x)$ is replaced by a   time-space white  L\'evy noise making positive jumps,  which includes the one-sided $\alpha$-stable  as a special case.  Realizations of their polymer measures are extremely singular with respect to Wiener measure  when the  jump size is not integrable near zero under the L\'evy measure \cite[Proposition 2.18(iii)]{Berger1}. See also the related work~\cite{Berger2} by Berger, Chong, and Lacoin, which obtains some strong localization results through control of the  moments of the partition functions over long time scales.\vspace{.2cm}

Rather than using the 2d SHF $\mathscr{Z}^{\mathsmaller{\vartheta}}$---understood in the sense of~\cite{CSZ5} as the universal distributional limit of discrete 2d directed polymer models within their critical scaling window---for the foundation
of our construction of the random path measure $\mathscr{Z}^{\mathsmaller{\vartheta}}_{[s,t]}$, our analysis will start with a distributional limit~(\ref{ZLimit}) generated from the regularized 2d SHE.  This difference comes up only in the proof of  Proposition~\ref{PropCond} in Section~\ref{SubsectionCond}, where we are deducing a basic conditional expectation property for the limiting object using its counterpart for the prelimits. Our reason for this choice is to avoid the extra set of notations needed for  the discrete polymer models, and the analysis here can be carried out from the discrete  models using the techniques in~\cite{CSZ5}  without any essential change. Of course, as mentioned above, the  distributional limits~(\ref{ZLimit}) are expected to be equal  to $\mathscr{Z}^{\mathsmaller{\vartheta}}$, and their known properties coincide.  In the sequel, we will drop the hat from the symbol $\hat{\mathscr{Z}}^{\mathsmaller{\vartheta}}$ in~(\ref{ZLimit})  and refer to its law  as the 2d SHF.

\subsection{Article organization and a brief summary of the main results}
In Section~\ref{SectResults} we state our main results, from which we highlight the following in summary form:
\begin{itemize}

\item Proposition~\ref{PropCond} presents a conditional expectation formula for $\mathscr{Z}^{\mathsmaller{\vartheta}}_{r,u}$ conditional on information generated by the 2d SHF over the intervals $[0,s]$ and $[t,\infty)$ for  $r<s<t<u$, meaning the $\sigma$-algebra $\sigma\big\{ \mathscr{Z}^{\mathsmaller{\vartheta}}_{a,b} : (a,b)\subset [0,s]\cup [t,\infty) \big\}$.

\item Proposition~\ref{PropChapman} states a Chapman-Kolmogorov type relation for the 2d SHF, constructing $\mathscr{Z}^{\mathsmaller{\vartheta}}_{r,t}$ from the pair  $\mathscr{Z}^{\mathsmaller{\vartheta}}_{r,s}$ and $\mathscr{Z}^{\mathsmaller{\vartheta}}_{s,t}$ when $r<s<t$.

\item Theorem~\ref{ThmSHFExtension} formulates random measures $\mathscr{Z}^{\mathsmaller{\vartheta}}_{P}$ on $(\R^2)^{m+1}$ corresponding to  partitions $P=\{t_0,t_1,\ldots, t_m\}$ of an interval $[s,t]$ that are heuristically expressible by the right side of~(\ref{fdd}).  This is a multi-interval extension of the 2d SHF, and their projective (marginal) consistency in terms of removing intermediate times $t_j$, $1\leq j<m$ generalizes the Chapman-Kolmogorov relation.

\item Theorem~\ref{ThmPathMeasConst} concerns the existence of a continuum polymer measure $\mathscr{Z}^{\mathsmaller{\vartheta}}_{[s,t]}$ consistent with the  measures $\mathscr{Z}^{\mathsmaller{\vartheta}}_{P}$ in the sense of~(\ref{fdd}). 

\item Theorem~\ref{ThmRNDer} lists some  key properties of the second moment $\mathbf{Q}^{\mathsmaller{\vartheta}}_{[s,t]}$ of $\mathscr{Z}^{\mathsmaller{\vartheta}}_{[s,t]}$, including~(\ref{RNDeriv}).

\end{itemize}

Proofs are placed in  Sections~\ref{SecChapmanProof}--\ref{SecCPM}.  Many of our arguments are simply based on second moment calculations.  The conditional expectation in the first bullet  eases many of these computations, and it underlies a martingale property that we use to construct $\mathscr{Z}^{\mathsmaller{\vartheta}}_{P}$.  The construction of $\mathscr{Z}^{\mathsmaller{\vartheta}}_{[s,t]}$ relies on Kolomogorov's extension theorem through the consistency of the measures $\mathscr{Z}^{\mathsmaller{\vartheta}}_{P}$ referred to in the third bullet. As we mentioned above, there is a direct connection between the second moment measure $\mathbf{Q}^{\mathsmaller{\vartheta}}_{[s,t]}$ and the path measures studied in~\cite{CM}, which we elaborate in Section~\ref{SubsectionSM}. \vspace{.2cm}

Appendix~\ref{SecAppendix}  includes a remark on the existence of a subsequential distributional limit~(\ref{ZLimit}) of the regularized 2d SHE.

\subsection{Notation}

We will use the  notational conventions below. Every  measure on a topological space is assumed to be a Borel measure.   We use $X$ to denote a generic Polish space below.
\begin{itemize}

\item $C_b(X)$ denotes the set of real-valued bounded continuous functions on $X$.

\item $\hat{C}_b(X)$ denotes the set of functions in $C_b(X)$  with bounded support.

\item  $\widehat{\mathcal{M}}(X)$ denotes the set of finite  measures on $X$, which is endowed with the weak topology, meaning the topology generated by the family of maps   $\mu \rightarrow  \mu(\varphi)$ for $\varphi\in C_b(X)$, $\mu\in \widehat{\mathcal{M}}(X) $.

\item $\mathcal{M}(X)$  denotes the set of locally finite measures on $X$, that is measures that are finite-valued on bounded subsets of $X$.  We equip $\mathcal{M}(X)$   with the topology of vague convergence, meaning the topology generated by the family of maps   $\mu \rightarrow  \mu(\varphi)$ for $\varphi\in \hat{C}_b(X)$, $\mu\in \mathcal{M}(X) $.

\item If $\boldsymbol{\mu}_{j}$ for $j\in \mathbb{N}\cup\{\infty\}$ are $\mathcal{M}(X)$-valued random elements  on some probability space, then  we say that $\boldsymbol{\mu}_j$ \textit{converges to $\boldsymbol{\mu}_\infty$ vaguely in $L^p$}  when $\boldsymbol{\mu}_j(\varphi)\rightarrow \boldsymbol{\mu}_{\infty}(\varphi)$ in $L^p$ for each $\varphi\in \hat{C}_b(X)$.

\item Given a measure $\mu$  and $n\in \mathbb{N}$, we denote its $n$-fold product by $\mu^n$. 

\item When $\mu$ is a measure on $(\R^2)^m$, we interpret $\mu^2$ as a measure on  $(\R^2\times \R^2)^m$ as follows:
\[
\mu^2\big(dx_1,dx_1';\ldots; dx_m,dx_m'\big)\,=\,\mu\big(dx_1,\ldots, dx_m\big)\,\mu\big(dx_1',\ldots, dx_m'\big)\,,\hspace{.5cm}x_j,x_j'\in \R^2\,.
\]

\item When $\mu$ is a measure on $\boldsymbol{\Upsilon}_{[s,t]}:= C\big([s,t],\mathbb{R}^2\big)$, we interpret   $\mu^2$ as a measure on   $C\big([s,t], (\mathbb{R}^2)^2\big)$ through the canonical  identification   $\boldsymbol{\Upsilon}_{[s,t]}^2\equiv C\big([s,t],(\mathbb{R}^2)^2\big)$.

\item If a measure $\mu$ on $\R^n$ is absolutely continuous with respect to Lebesgue measure, we denote its density by $\dot{\mu}$.

\end{itemize}

\section{From the 2d SHF to continuum  polymer measures}\label{SectResults}
In Section~\ref{SubsectSubsequentialLimit} we review some relevant known properties of a distributional limit law~(\ref{ZLimit}).  We state a Chapman-Kolmogorov relation for the 2d SHF in Section~\ref{SecChapmanState}, which  underlies the consistency of a multi-interval extension of the 2d SHF that we introduce in Section~\ref{SubsectMTSHF}.  The continuum polymer measure and its second moment are discussed in Sections~\ref{SubsectCPM}--\ref{SubsectCGMC}.

\subsection{On subsequential distributional limits of the regularized 2d SHE}\label{SubsectSubsequentialLimit}
Put  $S:=\big\{(s,t): 0\leq s<t<\infty\big\}$ and $\mathcal{M}_m:=\mathcal{M}\big( (\R^2)^{m+1}\big)$,  omitting the subscript when $m=1$. The  starting point for our analysis is the limiting distribution on $\mathcal{M}^S$ in the lemma below, which is implied by Bertini and Cancrini's analysis in~\cite{BC}.  We include a remark on its proof in Appendix~\ref{SecAppendix}.  
\begin{lemma}\label{LemmaSubseqLimit}  Fix $\vartheta\in \R$.  For $\epsilon>0$   put $\mathscr{Z}^{\mathsmaller{\vartheta},\epsilon}=\big\{ \mathscr{Z}^{\mathsmaller{\vartheta},\epsilon}_{s,t}\big\}_{(s,t)\in S} $, where $\mathscr{Z}^{\mathsmaller{\vartheta},\epsilon}_{s,t}$ is the $\mathcal{M}$-valued random element defined in~(\ref{DefZEpsilon}).  There exists a vanishing sequence $\{\epsilon_j\}_1^{\infty}\subset (0,\infty)$ such that the sequence of $\mathcal{M}^S$-valued random elements
$\{\mathscr{Z}^{\mathsmaller{\vartheta},\epsilon_j}\}_1^{\infty}$ converges in the sense of finite-dimensional distributions.
\end{lemma}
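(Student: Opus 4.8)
The plan is to apply Prokhorov's theorem to each finite-dimensional marginal and then stitch the resulting subsequences together by a diagonal argument; the only non-formal ingredients are a uniform-in-$\epsilon$ first-moment bound and a uniform-in-$\epsilon$ continuity of $(s,t)\mapsto\mathscr{Z}^{\mathsmaller{\vartheta},\epsilon}_{s,t}$ in its endpoints. Fix a finite set $F=\{(s_1,t_1),\dots,(s_N,t_N)\}\subset S$. Since $(\R^2)^2$ is locally compact, second countable and Hausdorff, the space $\mathcal{M}$ of locally finite measures on $(\R^2)^2$ with the vague topology is Polish, and by Kallenberg's tightness criterion for random measures the family $\{\mathscr{Z}^{\mathsmaller{\vartheta},\epsilon}_{s,t}\}_{\epsilon>0}$ is tight once we know that for every nonnegative $\varphi\in\hat{C}_b\big((\R^2)^2\big)$ the real random variables $\{\mathscr{Z}^{\mathsmaller{\vartheta},\epsilon}_{s,t}(\varphi)\}_{\epsilon>0}$ form a tight family in $[0,\infty)$; as $\mathbb{E}\big[\mathscr{Z}^{\mathsmaller{\vartheta},\epsilon}_{s,t}(\varphi)\big]=\mathbf{U}_{t-s}(\varphi)$ is finite and $\epsilon$-independent, this is immediate from Markov's inequality. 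A finite product of tight families being tight, the $\mathcal{M}^F$-valued family $\big\{(\mathscr{Z}^{\mathsmaller{\vartheta},\epsilon}_{s_k,t_k})_{k=1}^{N}\big\}_{\epsilon>0}$ is tight, so by Prokhorov's theorem every subsequence has a further subsequence converging in distribution in $\mathcal{M}^F$.

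To produce one vanishing sequence working for all finite $F\subset S$ at once, fix a countable dense set $S_0\subset S$ (say the pairs $(s,t)$ with $s,t\in\mathbb{Q}_{\geq 0}$, $s<t$), enumerate it, and iterate the previous step: a standard diagonal extraction gives $\{\epsilon_j\}_{1}^{\infty}$ along which $(\mathscr{Z}^{\mathsmaller{\vartheta},\epsilon_j}_{s,t})_{(s,t)\in F_0}$ converges in distribution in $\mathcal{M}^{F_0}$ for every finite $F_0\subset S_0$ (joint convergence of a tuple forces convergence of each sub-tuple). For a general finite $F=\{(s_1,t_1),\dots,(s_N,t_N)\}\subset S$ and nonnegative $\varphi_1,\dots,\varphi_N\in\hat{C}_b$, choose $(s_k^{(n)},t_k^{(n)})\in S_0$ with $(s_k^{(n)},t_k^{(n)})\to(s_k,t_k)$. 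By $|e^{-a}-e^{-b}|\le|a-b|$ for $a,b\ge 0$, the Laplace functional $\mathbb{E}\big[\exp\{-\sum_{k}\mathscr{Z}^{\mathsmaller{\vartheta},\epsilon_j}_{s_k,t_k}(\varphi_k)\}\big]$ differs from the corresponding one with the $(s_k^{(n)},t_k^{(n)})$ by at most $\sum_{k}\mathbb{E}\big|\mathscr{Z}^{\mathsmaller{\vartheta},\epsilon_j}_{s_k,t_k}(\varphi_k)-\mathscr{Z}^{\mathsmaller{\vartheta},\epsilon_j}_{s_k^{(n)},t_k^{(n)}}(\varphi_k)\big|$; granting that this bound tends to $0$ as $n\to\infty$ \emph{uniformly in} $j$, and recalling that for each fixed $n$ the $S_0$-Laplace functional converges as $j\to\infty$, it follows that the $F$-Laplace functional converges as $j\to\infty$ too. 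Since laws of $\mathcal{M}^F$-valued random elements are determined by such Laplace functionals and the family in question is tight, this yields convergence in distribution in $\mathcal{M}^F$, i.e.\ convergence of $\{\mathscr{Z}^{\mathsmaller{\vartheta},\epsilon_j}\}$ in the sense of finite-dimensional distributions.

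The main obstacle is the uniform-in-$\epsilon$ $L^1$-continuity of $(s,t)\mapsto\mathscr{Z}^{\mathsmaller{\vartheta},\epsilon}_{s,t}(\varphi)$ used in the last step, i.e.\ an $L^1$-separability of the prelimit process in its endpoints. I would obtain it from Bertini and Cancrini's moment computations: the first moment is exactly $\mathbf{U}_{t-s}(\varphi)$, jointly continuous in $(s,t)$ and $\epsilon$-independent, while the mixed second moments $\mathbb{E}\big[\mathscr{Z}^{\mathsmaller{\vartheta},\epsilon}_{s,t}(\varphi)\,\mathscr{Z}^{\mathsmaller{\vartheta},\epsilon}_{s',t'}(\varphi)\big]$ are given by explicit two-body expressions converging, locally uniformly in the time variables, to a continuous limit; together with the uniform third-moment bound of~\cite{CSZ4} (providing uniform integrability of $\mathscr{Z}^{\mathsmaller{\vartheta},\epsilon}_{s,t}(\varphi)^2$) this bounds $\mathbb{E}\big[(\mathscr{Z}^{\mathsmaller{\vartheta},\epsilon}_{s',t'}(\varphi)-\mathscr{Z}^{\mathsmaller{\vartheta},\epsilon}_{s,t}(\varphi))^2\big]$ uniformly in small $\epsilon$ by a quantity vanishing as $(s',t')\to(s,t)$, which is more than enough. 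Alternatively, the uncountable index set can be bypassed by proving the lemma for the restriction of $\mathscr{Z}^{\mathsmaller{\vartheta},\epsilon}$ to $S_0$, which is all the subsequent constructions require.
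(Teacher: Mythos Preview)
Your argument is correct and follows essentially the same route as the paper: tightness at each fixed $(s,t)$ via moment bounds, diagonal extraction along a countable dense $S_0\subset S$, and extension to all of $S$ using uniform-in-$\epsilon$ continuity of $(s,t)\mapsto\mathscr{Z}^{\vartheta,\epsilon}_{s,t}$. The paper packages this as an abstract lemma (conditions (I) tightness and (II) $\sup_\epsilon\mathbb{E}[\rho_X(Y^\epsilon_s,Y^\epsilon_t)]\to0$) and verifies (I), (II) from \cite[Lemma~4.1, Proposition~4.2]{BC}; it then uses Skorokhod's representation to upgrade the $S_0$-convergence to almost-sure and extends to $s\in S$ by in-probability limits, rather than going through Laplace functionals as you do. One small point: your appeal to the third-moment bound of \cite{CSZ4} is unnecessary---the uniform second-moment estimates already in \cite{BC} suffice to control $\mathbb{E}\big[(\mathscr{Z}^{\vartheta,\epsilon}_{s',t'}(\varphi)-\mathscr{Z}^{\vartheta,\epsilon}_{s,t}(\varphi))^2\big]$ directly, and this is all the paper uses.
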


For $t>0$ let  $\mathbf{Q}_{t}^{\mathsmaller{\vartheta}}$ denote  the measure on $\big(\R^2\times \R^2\big)^2$ with Lebesgue density
\begin{align}\label{QDensity}
\mathbf{\dot{Q}}_{t}^{\mathsmaller{\vartheta}}\big(x,x';y,y'\big)\,:=\,&\,g_{t}\bigg(\frac{x+x'}{\sqrt{2}}-\frac{y+y'}{\sqrt{2}} 
  \bigg)\,P_{t}^{\mathsmaller{\vartheta}}\bigg(\frac{x-x'}{\sqrt{2}}, \frac{y-y'}{\sqrt{2}}  \bigg) \,,\hspace{1cm}x,x',y,y'\in \R^2\,,
\end{align}
where $P_{t}^{\mathsmaller{\vartheta}}$ is defined below~(\ref{DefK}).  Note that  $\mathbf{K}_{t}^{\mathsmaller{\vartheta}}:=\mathbf{Q}_{t}^{\mathsmaller{\vartheta}}-\mathbf{U}_{t}^2$ is expressed in~(\ref{VarZee}).  The next proposition lists several of the basic properties of a limit law in Lemma~\ref{LemmaSubseqLimit}.
\begin{proposition}\label{PropBasicProperties}Fix $\vartheta\in \R$. If $\mathscr{Z}^{\mathsmaller{\vartheta}}=\big\{ \mathscr{Z}^{\mathsmaller{\vartheta}}_{s,t}\big\}_{(s,t)\in S} $ is an  $\mathcal{M}^S$-valued random element having a limiting distribution from Lemma~\ref{LemmaSubseqLimit}, then (i)--(v) below hold.
\begin{enumerate}[(i)]

\item The first and second moments of $\mathscr{Z}^{\mathsmaller{\vartheta}}_{s,t}$ are $\mathbf{U}_{t-s}$ and $\mathbf{Q}_{t-s}^{\mathsmaller{\vartheta}}$, respectively.

\item The $m^{\textup{th}}$ moment $\mathbb{E}\big[(\mathscr{Z}^{\mathsmaller{\vartheta}}_{s,t})^m\big]$ is locally finite for each $m\in \mathbb{N}$.

\item $\mathscr{Z}^{\mathsmaller{\vartheta}}_{s_1,t_1}$, \ldots, $\mathscr{Z}^{\mathsmaller{\vartheta}}_{s_n,t_n}$ are mutually independent when the intervals $(s_1,t_1)$,\ldots,$(s_n,t_n)$ are disjoint.

\item $\mathscr{Z}^{\mathsmaller{\vartheta}}_{s,t}$ and $\mathscr{Z}^{\mathsmaller{\vartheta}}_{r+s,r+t}$ are equal in distribution for each $r>0$.

\item $\mathscr{Z}^{\mathsmaller{\vartheta}}_{s,t}$ and $\mathscr{Z}^{\mathsmaller{\vartheta}}_{s,t}\circ \tau_a^{-1}$ are equal in distribution for each $a\in \R^2$, where $\tau_a:(\R^2)^2\rightarrow (\R^2)^2$ is the shift map $(x,y)\rightarrow (x+a,y+a)$ .

\end{enumerate}
\end{proposition}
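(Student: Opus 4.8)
The plan is to derive each of (i)--(v) from its exact counterpart at the level of the regularized prelimits $\mathscr{Z}^{\mathsmaller{\vartheta},\epsilon}$ and then transfer it through the finite-dimensional distributional limit of Lemma~\ref{LemmaSubseqLimit}. Throughout I fix the vanishing sequence $\{\epsilon_j\}_{1}^{\infty}$ along which $\mathscr{Z}^{\mathsmaller{\vartheta}}$ arises as the limit, so that for any $(s,t)\in S$ --- indeed over any finite family of intervals simultaneously --- the random vectors $\big(\mathscr{Z}^{\mathsmaller{\vartheta},\epsilon_j}_{s,t}(\varphi_1),\ldots,\mathscr{Z}^{\mathsmaller{\vartheta},\epsilon_j}_{s,t}(\varphi_n)\big)$ with $\varphi_i\in\hat{C}_b\big((\R^2)^2\big)$ converge in law to the corresponding vector built from $\mathscr{Z}^{\mathsmaller{\vartheta}}$. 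A bookkeeping remark used repeatedly: the Borel $\sigma$-algebra on $\mathcal{M}(X)$ (vague topology, $X$ Polish) is generated by the evaluation maps $\mu\mapsto\mu(\psi)$, $\psi\in\hat{C}_b(X)$; hence the law of an $\mathcal{M}(X)$-valued random element is determined by the joint laws of its finite evaluation vectors, and mutual independence of such random elements is equivalent to mutual independence of all their finite evaluation vectors (by a $\pi$--$\lambda$ argument). So it suffices in each case to argue at the level of finitely many evaluations, which I do tacitly below.

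I would handle the purely distributional items (iii)--(v) first, since these pass through the weak limit with no integrability input. For fixed $\epsilon$, the field $\mathscr{Z}^{\mathsmaller{\vartheta},\epsilon}_{s,t}$ is a deterministic functional of the white noise restricted to $[s,t]\times\R^2$, as is visible from the Feynman--Kac representation~(\ref{DefU})--(\ref{DefZEpsilon}) (for $r\in[s,t]$ the mollified field $\xi_\epsilon(r,\cdot)$ depends only on $\xi(r,\cdot)$). Consequently: (iii) disjoint intervals involve independent portions of $\xi$, so the corresponding prelimit measures are independent; (iv) the construction is covariant under time translation and $\xi$ is stationary under time shifts, so $\mathscr{Z}^{\mathsmaller{\vartheta},\epsilon}_{s,t}\stackrel{\textup{d}}{=}\mathscr{Z}^{\mathsmaller{\vartheta},\epsilon}_{r+s,r+t}$; (v) the construction is covariant under spatial translation (translation invariance of the Laplacian and of Wiener measure, $\mathbf{P}_{s,x+a}$ being the image of $\mathbf{P}_{s,x}$ under $p\mapsto p+a$) and $\xi$ is stationary under spatial shifts, so $\mathscr{Z}^{\mathsmaller{\vartheta},\epsilon}_{s,t}\stackrel{\textup{d}}{=}\mathscr{Z}^{\mathsmaller{\vartheta},\epsilon}_{s,t}\circ\tau_a^{-1}$. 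Each of these is an equality of laws, or an independence, of finite evaluation vectors, and such relations survive the joint weak limit $j\to\infty$ (for independence, the prelimit characteristic function factorizes and factorization is preserved in the limit). The bookkeeping remark then upgrades the conclusions to $\mathscr{Z}^{\mathsmaller{\vartheta}}$, giving (iii)--(v).

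For (ii), fix $(s,t)\in S$ and $m\in\mathbb{N}$. Since the $m$-th moment measure of $\mathscr{Z}^{\mathsmaller{\vartheta}}_{s,t}$ is nonnegative and any compact in $\big((\R^2)^2\big)^m$ lies in a product of compacts, each dominated by a nonnegative $\varphi\in\hat{C}_b\big((\R^2)^2\big)$, local finiteness follows once $\mathbb{E}\big[\mathscr{Z}^{\mathsmaller{\vartheta}}_{s,t}(\varphi)^m\big]<\infty$ for every such $\varphi$. As $\mu\mapsto\mu(\varphi)^m$ is continuous and nonnegative, the portmanteau theorem (in the form $\mathbb{E}[X]\le\liminf_j\mathbb{E}[X_j]$ for nonnegative $X_j$ converging to $X$ in law) gives $\mathbb{E}\big[\mathscr{Z}^{\mathsmaller{\vartheta}}_{s,t}(\varphi)^m\big]\le\liminf_j\mathbb{E}\big[\mathscr{Z}^{\mathsmaller{\vartheta},\epsilon_j}_{s,t}(\varphi)^m\big]$. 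By the moment-convergence results of Gu--Quastel--Tsai~\cite{GQT} (see also Chen~\cite{Chen1}), the $m$-th moment measure of $\mathscr{Z}^{\mathsmaller{\vartheta},\epsilon}_{s,t}$ converges vaguely as $\epsilon\to0$, so $\mathbb{E}\big[\mathscr{Z}^{\mathsmaller{\vartheta},\epsilon}_{s,t}(\varphi)^m\big]$ converges and is in particular bounded along $\{\epsilon_j\}$; this proves (ii).

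Finally (i), which is where the one genuinely nontrivial ingredient appears. At the prelimit level the needed facts are exact or known: $\mathbb{E}\big[\mathscr{Z}^{\mathsmaller{\vartheta},\epsilon}_{s,t}\big]=\mathbf{U}_{t-s}$ for every $\epsilon$, and the second-moment measure $\mathbb{E}\big[(\mathscr{Z}^{\mathsmaller{\vartheta},\epsilon}_{s,t})^2\big]$ converges vaguely to $\mathbf{Q}_{t-s}^{\mathsmaller{\vartheta}}$ as $\epsilon\to0$ (Bertini--Cancrini~\cite[Theorem 3.2]{BC}). To push the expectations through the weak limit one needs uniform integrability, and this is exactly what the uniform third-moment bound $\sup_{\epsilon>0}\mathbb{E}\big[\mathscr{Z}^{\mathsmaller{\vartheta},\epsilon}_{s,t}(\varphi)^3\big]<\infty$ of Caravenna--Sun--Zygouras~\cite{CSZ4} provides: it makes $\big\{\mathscr{Z}^{\mathsmaller{\vartheta},\epsilon_j}_{s,t}(\varphi)^2\big\}_j$, hence also $\big\{\mathscr{Z}^{\mathsmaller{\vartheta},\epsilon_j}_{s,t}(\varphi)\big\}_j$, uniformly integrable, so $\mathbb{E}\big[\mathscr{Z}^{\mathsmaller{\vartheta},\epsilon_j}_{s,t}(\varphi)^k\big]\to\mathbb{E}\big[\mathscr{Z}^{\mathsmaller{\vartheta}}_{s,t}(\varphi)^k\big]$ for $k=1,2$. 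Matching limits yields $\mathbb{E}\big[\mathscr{Z}^{\mathsmaller{\vartheta}}_{s,t}(\varphi)\big]=\mathbf{U}_{t-s}(\varphi)$ and $\mathbb{E}\big[\mathscr{Z}^{\mathsmaller{\vartheta}}_{s,t}(\varphi)^2\big]=\int\varphi(x,y)\varphi(x',y')\,\mathbf{Q}_{t-s}^{\mathsmaller{\vartheta}}(dx,dx';dy,dy')$ for every $\varphi\in\hat{C}_b$; since the first and second moment measures of $\mathscr{Z}^{\mathsmaller{\vartheta}}_{s,t}$ are locally finite by (ii), the second is symmetric, and a locally finite measure is determined by its integrals against $\hat{C}_b$, polarization gives (i). The only real obstacle in the argument is this uniform-integrability step for the second moment --- precisely what the third-moment estimate of~\cite{CSZ4} was designed to handle --- so the difficulty is already settled in the literature, and what remains is the routine transfer and the measure-theoretic bookkeeping described above.
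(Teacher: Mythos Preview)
Your proposal is correct and follows essentially the same route as the paper's own treatment: transfer each property from the prelimits $\mathscr{Z}^{\mathsmaller{\vartheta},\epsilon}$ through the finite-dimensional distributional limit, invoking~\cite{BC} and~\cite{CSZ4} for the uniform-integrability needed in (i) and~\cite{GQT} for (ii), with (iii)--(v) inherited directly. The only cosmetic difference is that the paper cites the uniform second-moment bound of~\cite[Lemma~4.1]{BC} for the first-moment UI rather than deducing it from the third-moment bound, but your version works just as well.
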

The claim in (i) that $\mathbb{E}\big[\mathscr{Z}^{\mathsmaller{\vartheta}}_{s,t}\big]=\mathbf{U}_{t-s}$   follows from Lemma~\ref{LemmaSubseqLimit} and a uniform integrability argument using  uniform bounds on the second moments of the family $\{ \mathscr{Z}^{\mathsmaller{\vartheta},\epsilon}_{s,t}  \}_{\varepsilon\in (0,1]}$ in the sense of~\cite[Lemma 4.1]{BC}, for instance.  Similarly, $\mathbb{E}\big[(\mathscr{Z}^{\mathsmaller{\vartheta}}_{s,t})^2\big]=\mathbf{Q}_{t-s}^{\mathsmaller{\vartheta}}$  was established in~\cite[Theorem 1.9]{CSZ4} using  uniform bounds on the third moments  of the family $\{ \mathscr{Z}^{\mathsmaller{\vartheta},\epsilon}_{s,t}  \}_{\varepsilon\in (0,1]}$. Statement (ii) follows from~\cite[Theorem 1.1]{GQT}, which implies  vague convergence of  each positive integer moment $\mathbb{E}\big[(\mathscr{Z}^{\mathsmaller{\vartheta},\varepsilon}_{s,t})^m\big]$ to $\mathbb{E}\big[(\mathscr{Z}^{\mathsmaller{\vartheta}}_{s,t})^m\big]$ as $\varepsilon\rightarrow 0$.  The remaining properties (iii)--(v) are straightforwardly inherited from $\mathscr{Z}^{\mathsmaller{\vartheta},\epsilon}$.

\subsection{A conditional expectation formula and  Chapman-Kolmogorov type relation}\label{SecChapmanState}

As a preliminary for constructing  a continuum polymer measure associated with  $\mathscr{Z}^{\mathsmaller{\vartheta}}$, we require a Chapman-Kolmogorov type relation between the random measures $ \mathscr{Z}^{\mathsmaller{\vartheta},\varepsilon}_{r,s}$,  $ \mathscr{Z}^{\mathsmaller{\vartheta},\varepsilon}_{s,t}$, and $ \mathscr{Z}^{\mathsmaller{\vartheta},\varepsilon}_{r,t}$ for $r<s<t$. We formulate this Chapman-Kolmogorov relation using a binary operation for measures  on $(\R^2)^2 $, defined in greater generality below.  Let $X$ and $X'$ be  Polish spaces.  Given  $\varsigma>0$ and $\sigma$-finite measures  $\mu_1$ and $\mu_2$  on  $X\times \R^n$ and $ \R^n\times X'$, respectively,   we define $\mu_1 \mathlarger{\mathlarger{\bullet}}_{\varsigma} \mu_2 $ as the  measure on $X\times X'$  given by 
\begin{align}\label{MeasComb}
\text{}\hspace{1.1cm}\mu_1 \,\mathlarger{\mathlarger{\bullet}}_{\varsigma}\, \mu_2 (dx,dx')\,=\,&\int_{a,b\in \R^n } \,
 \mu_1(dx,da)\,g_{\varsigma}(a-b)\,\mu_2(db,dx')\,,  \hspace{1cm}x\in X,\,x'\in X' \,,
\end{align}
for the $n$-dimensional Gaussian $g_{\varsigma}(a)=(2\pi \varsigma)^{-\frac{n}{2}} e^{-\frac{ |a|^2 }{ 2\varsigma  } } $. We use $\mu_1 \,\mathlarger{\mathlarger{\bullet}}_{\varsigma}$ as a shorthand for the measure on $X\times \R^n$  with
\begin{align}\label{MeasComb2}
\text{}\hspace{1.1cm}\mu_1 \,\mathlarger{\mathlarger{\bullet}}_{\varsigma} (dx,dx')\,=\,&\int_{a\in \R^n } \,
 \mu_1(dx,da)\,g_{\varsigma}(a-x')\,dx'\,,  \hspace{1cm}x\in X,\,x'\in \R^n \,,
\end{align}
and $\mathlarger{\mathlarger{\bullet}}_{\varsigma}\,\mu_2$ is the analogously defined measure on $\R^n\times X'$.  When $\mu_2$ has a disintegration with respect to Lebesgue measure on $\R^n$, that is $\mu_2(dx,ds')=dx\,\dot{\mu}_2(x,ds')$, for a kernel $\dot{\mu}_2$ from  $\R^n$ to $X'$, we define 
\[
\mu_1 \,\mathlarger{\mathlarger{\bullet}}\, \mu_2(dx,dx')\,=\,\int_{a\in \R^n}\, \mu_1(dx,da)\, \dot{\mu}_2(a,dx')\,,
\]
and we define $\mu_1 \,\mathlarger{\mathlarger{\bullet}}\, \mu_2$ analogously when $\mu_1$ has a disintegration with respect to Lebesgue measure on $\R^n$. By virtue of the semigroup property held by the families of integral kernels $\big\{g_a (x-y)\big\}_{a\in [0,\infty)}$ and $\big\{P_{a}^{\mathsmaller{\vartheta}}(x,y)\big\}_{a\in [0,\infty)}$, we have
 $\mathbf{U}_{a}\,\mathlarger{\mathlarger{\bullet}}\, \mathbf{U}_{b}=\mathbf{U}_{a+b}$ and $\mathbf{Q}_{a}^{\mathsmaller{\vartheta}}\,\mathlarger{\mathlarger{\bullet}}\, \mathbf{Q}_{b}^{\mathsmaller{\vartheta}}=\mathbf{Q}_{a+b}^{\mathsmaller{\vartheta}}$, where for the latter the measures are understood to be on $(\R^4)^2\equiv (\R^2\times \R^2)^2$.
 Our definitions trivially yield that 
\begin{align*}
 &\, \mu_1 \,\mathlarger{\mathlarger{\bullet}}_{\varsigma}\, \,\mu_2 \,=\,\mu_1 \,\mathlarger{\mathlarger{\bullet}}\, (\mathlarger{\mathlarger{\bullet}}_{\varsigma}\,\mu_2)\,=\, (\mu_1 \,\mathlarger{\mathlarger{\bullet}}_{\varsigma})\,\mathlarger{\mathlarger{\bullet}}\,\mu_2 \,.
\end{align*}
Part of our motivation for defining the operation~(\ref{MeasComb}) comes from the conditional expectation identity in Proposition~\ref{PropCond} below.

The proofs of the three propositions below are placed in Section~\ref{SecChapmanProof}.  For $A\subset [0,\infty)$ define the $\sigma$-algebra $\mathscr{F}_{A}:= \sigma\big\{ \mathscr{Z}^{\mathsmaller{\vartheta}}_{s,t} \,:\, (s,t) \subset  A     \big\} $.  
\begin{proposition} \label{PropCond}  Fix $\vartheta\in \R$.  Let $\mathscr{Z}^{\mathsmaller{\vartheta}}$  be an $\mathcal{M}^S$-valued random element with a limit distribution from Lemma~\ref{LemmaSubseqLimit}.  For any  $0\leq  r<s<t<u$, we have 
 \[
\mathbb{E}\Big[\,\mathscr{Z}^{\mathsmaller{\vartheta}}_{r,u}\,\Big|\, \mathscr{F}_{[0,s]\cup [t,\infty) }  \,\Big]    \,=\,\mathscr{Z}^{\mathsmaller{\vartheta}}_{r,s}  \,\mathlarger{\mathlarger{\bullet}}_{t-s}\,  \mathscr{Z}^{\mathsmaller{\vartheta}}_{t,u} \,.
 \]
When $s=r$ or $t=u$, the right side above  reduces to $\mathlarger{\mathlarger{\bullet}}_{t-s}\,  \mathscr{Z}^{\mathsmaller{\vartheta}}_{t,u} $ and $\mathscr{Z}^{\mathsmaller{\vartheta}}_{r,s}\, \mathlarger{\mathlarger{\bullet}}_{t-s}$ respectively.
\end{proposition}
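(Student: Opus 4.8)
The plan is to prove the identity first at the regularized level, for the random measures $\mathscr{Z}^{\mathsmaller{\vartheta},\varepsilon}$ of~(\ref{DefZEpsilon}), where it follows directly from the Feynman--Kac representation~(\ref{DefU}) and the temporal independence of the mollified noise, and then to transfer it to a subsequential limit $\mathscr{Z}^{\mathsmaller{\vartheta}}$ from Lemma~\ref{LemmaSubseqLimit}. Fix $\varepsilon>0$. The measure $\mathscr{Z}^{\mathsmaller{\vartheta},\varepsilon}_{a,b}$ is absolutely continuous with a smooth density, and splitting the Wiener path integral in~(\ref{DefU}) at the intermediate times $s$ and $t$ --- equivalently, the propagator-composition property of the mollified equation, which holds pathwise --- gives $\mathscr{Z}^{\mathsmaller{\vartheta},\varepsilon}_{r,u}=\mathscr{Z}^{\mathsmaller{\vartheta},\varepsilon}_{r,s}\,\mathlarger{\mathlarger{\bullet}}\,\mathscr{Z}^{\mathsmaller{\vartheta},\varepsilon}_{s,t}\,\mathlarger{\mathlarger{\bullet}}\,\mathscr{Z}^{\mathsmaller{\vartheta},\varepsilon}_{t,u}$, the $\mathlarger{\mathlarger{\bullet}}$'s using the Lebesgue disintegrations. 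As $\xi_\varepsilon$ is mollified only in space, these three factors are functions of $\xi_\varepsilon$ over the time intervals $(r,s)$, $(s,t)$, $(t,u)$ respectively, and the white noise's restrictions to these disjoint intervals are independent; moreover, any open interval contained in $[0,s]\cup[t,\infty)$ lies entirely in $[0,s]$ or entirely in $[t,\infty)$, so $\mathscr{F}^{\varepsilon}_{[0,s]\cup[t,\infty)}:=\sigma\{\mathscr{Z}^{\mathsmaller{\vartheta},\varepsilon}_{a,b}:(a,b)\subset[0,s]\cup[t,\infty)\}$ is contained in the $\sigma$-algebra generated by $\xi_\varepsilon$ over $[0,s]\cup[t,\infty)$. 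Thus $\mathscr{Z}^{\mathsmaller{\vartheta},\varepsilon}_{r,s}$ and $\mathscr{Z}^{\mathsmaller{\vartheta},\varepsilon}_{t,u}$ are $\mathscr{F}^{\varepsilon}_{[0,s]\cup[t,\infty)}$-measurable while $\mathscr{Z}^{\mathsmaller{\vartheta},\varepsilon}_{s,t}$ is independent of it, and pulling the measurable factors out of the conditional expectation (conditional Tonelli applies termwise in the $\mathlarger{\mathlarger{\bullet}}$-integrals, all kernels being nonnegative) and replacing the middle factor by its mean $\mathbb{E}[\mathscr{Z}^{\mathsmaller{\vartheta},\varepsilon}_{s,t}]=\mathbf{U}_{t-s}$ yields
\[
\mathbb{E}\big[\mathscr{Z}^{\mathsmaller{\vartheta},\varepsilon}_{r,u}\,\big|\,\mathscr{F}^{\varepsilon}_{[0,s]\cup[t,\infty)}\big]=\mathscr{Z}^{\mathsmaller{\vartheta},\varepsilon}_{r,s}\,\mathlarger{\mathlarger{\bullet}}\,\mathbf{U}_{t-s}\,\mathlarger{\mathlarger{\bullet}}\,\mathscr{Z}^{\mathsmaller{\vartheta},\varepsilon}_{t,u}=\mathscr{Z}^{\mathsmaller{\vartheta},\varepsilon}_{r,s}\,\mathlarger{\mathlarger{\bullet}}_{t-s}\,\mathscr{Z}^{\mathsmaller{\vartheta},\varepsilon}_{t,u}\,,
\]
the last equality because $\mathbf{U}_{t-s}$ has Lebesgue density $g_{t-s}$. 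The endpoint cases $s=r$ and $t=u$ are identical with the missing factor dropped.

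Next I would fix a limit $\mathscr{Z}^{\mathsmaller{\vartheta}}$ of $\{\mathscr{Z}^{\mathsmaller{\vartheta},\varepsilon_j}\}$ and note that $\mathscr{Z}^{\mathsmaller{\vartheta}}_{r,s}$, $\mathscr{Z}^{\mathsmaller{\vartheta}}_{t,u}$ are $\mathscr{F}_{[0,s]\cup[t,\infty)}$-measurable and $\mathlarger{\mathlarger{\bullet}}_{t-s}$ is a measurable operation, so by a functional monotone class argument --- the bounded continuous functions of finitely many evaluations $\mathscr{Z}^{\mathsmaller{\vartheta}}_{a_i,b_i}(\psi_i)$ with $(a_i,b_i)\subset[0,s]\cup[t,\infty)$, $\psi_i\in\hat{C}_b$ form a multiplicative class generating $\mathscr{F}_{[0,s]\cup[t,\infty)}$, and both sides below are integrable by Proposition~\ref{PropBasicProperties}(i) and the first-moment computation made at the end --- it suffices to prove
\[
\mathbb{E}\big[Y\,\mathscr{Z}^{\mathsmaller{\vartheta}}_{r,u}(\varphi)\big]=\mathbb{E}\big[Y\,\big(\mathscr{Z}^{\mathsmaller{\vartheta}}_{r,s}\,\mathlarger{\mathlarger{\bullet}}_{t-s}\,\mathscr{Z}^{\mathsmaller{\vartheta}}_{t,u}\big)(\varphi)\big]
\]
for every $Y=F\big(\mathscr{Z}^{\mathsmaller{\vartheta}}_{a_1,b_1}(\psi_1),\dots,\mathscr{Z}^{\mathsmaller{\vartheta}}_{a_k,b_k}(\psi_k)\big)$ of this form and every $\varphi\in\hat{C}_b\big((\R^2)^2\big)$. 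Writing $Y^{\varepsilon_j}$ for the analogue built from $\mathscr{Z}^{\mathsmaller{\vartheta},\varepsilon_j}$, the first paragraph gives $\mathbb{E}[Y^{\varepsilon_j}\mathscr{Z}^{\mathsmaller{\vartheta},\varepsilon_j}_{r,u}(\varphi)]=\mathbb{E}[Y^{\varepsilon_j}(\mathscr{Z}^{\mathsmaller{\vartheta},\varepsilon_j}_{r,s}\,\mathlarger{\mathlarger{\bullet}}_{t-s}\,\mathscr{Z}^{\mathsmaller{\vartheta},\varepsilon_j}_{t,u})(\varphi)]$, and I would pass $j\to\infty$. On the left, $(Y^{\varepsilon_j},\mathscr{Z}^{\mathsmaller{\vartheta},\varepsilon_j}_{r,u}(\varphi))$ converges in distribution by Lemma~\ref{LemmaSubseqLimit}, $Y^{\varepsilon_j}$ is uniformly bounded, and $\sup_j\mathbb{E}[(\mathscr{Z}^{\mathsmaller{\vartheta},\varepsilon_j}_{r,u}(\varphi))^2]<\infty$ by the uniform second-moment bounds of~\cite[Lemma 4.1]{BC}, so the products are uniformly integrable and converge in mean. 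On the right, the one wrinkle is that $\mathlarger{\mathlarger{\bullet}}_{t-s}$ convolves against the non-compactly supported Gaussian $g_{t-s}$: write $g_{t-s}=h_M+(g_{t-s}-h_M)$ with $h_M\in C_c(\R^2)$ and $0\le h_M\uparrow g_{t-s}$. The $h_M$-contribution, after a uniform approximation of $\varphi\otimes h_M$ on its compact support by finite sums of product functions, is a bounded continuous functional of finitely many $\hat{C}_b$-evaluations of $\mathscr{Z}^{\mathsmaller{\vartheta},\varepsilon_j}_{r,s}$ and $\mathscr{Z}^{\mathsmaller{\vartheta},\varepsilon_j}_{t,u}$, hence converges by Lemma~\ref{LemmaSubseqLimit} with the uniform second-moment bounds; the remainder is controlled uniformly in $j$ because, by the temporal independence and $\mathbb{E}[\mathscr{Z}^{\mathsmaller{\vartheta},\varepsilon_j}_{r,s}]=\mathbf{U}_{s-r}$, $\mathbb{E}[\mathscr{Z}^{\mathsmaller{\vartheta},\varepsilon_j}_{t,u}]=\mathbf{U}_{u-t}$, its $L^1$-norm is at most $\|F\|_\infty$ times
\[
\int_{(\R^2)^4}|\varphi(x,x')|\,g_{s-r}(x-a)\,\big(g_{t-s}-h_M\big)(a-b)\,g_{u-t}(b-x')\,dx\,da\,db\,dx'\,,
\]
which decreases to $0$ as $M\to\infty$ by dominated convergence (its $h_M\equiv 0$ value being $\mathbf{U}_{u-r}(|\varphi|)<\infty$, which also shows $(\mathscr{Z}^{\mathsmaller{\vartheta}}_{r,s}\,\mathlarger{\mathlarger{\bullet}}_{t-s}\,\mathscr{Z}^{\mathsmaller{\vartheta}}_{t,u})(\varphi)$ is almost surely finite and integrable). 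A diagonal argument in $M$ then delivers convergence of the right-hand side, and the identity follows.

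The step I expect to be the main obstacle is this passage to the limit. Conditional expectations are not continuous under weak convergence, so the claim must be recast as an equality of unconditional expectations against a measure-determining family, and even then the Gaussian kernel inside $\mathlarger{\mathlarger{\bullet}}_{t-s}$ obstructs a direct appeal to vague convergence and forces the truncation argument, which leans on the explicit first-moment formulas and the uniform second-moment bounds for the regularized flow. By contrast, the regularized identity is essentially bookkeeping with the Feynman--Kac formula, and the degenerate endpoint cases require no new idea.
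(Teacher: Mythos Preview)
Your overall strategy matches the paper's: establish the identity at the regularized level via the Feynman--Kac representation (the paper packages this as Lemma~\ref{LemmaEpsilonCase}), reduce the limiting statement to equalities of unconditional expectations against bounded continuous cylinder functionals generating $\mathscr{F}_{[0,s]\cup[t,\infty)}$, and pass to the limit using the uniform second-moment bounds from~\cite{BC} for uniform integrability.

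The point of departure, and where your sketch has a genuine gap, is in pushing the $\mathlarger{\mathlarger{\bullet}}_{t-s}$ operation through the limit. Your truncation $g_{t-s}=h_M+(g_{t-s}-h_M)$ is the right instinct, but the assertion that the $h_M$-contribution is ``a bounded continuous functional of finitely many $\hat{C}_b$-evaluations'' after approximating ``$\varphi\otimes h_M$ on its compact support'' is not justified: the function $(x,a,b,y)\mapsto\varphi(x,y)\,h_M(a-b)$ has compact support in $(x,y)$ and in $a-b$, but \emph{not} in $(a,b)$ separately, so it cannot be uniformly approximated by finite sums of tensor products of functions in $\hat{C}_b\big((\R^2)^2\big)\times\hat{C}_b\big((\R^2)^2\big)$. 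To make this work you need a further cutoff in $a$ (or $b$) alone, with the resulting tail controlled uniformly in $j$ through the first moments $\mathbb{E}\big[\mathscr{Z}^{\mathsmaller{\vartheta},\varepsilon_j}_{r,s}\big]=\mathbf{U}_{s-r}$ --- the same dominated-convergence device you already invoke for the $(g_{t-s}-h_M)$ remainder. This extra layer is routine but missing as written, so the ``diagonal argument in $M$'' does not yet close.

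The paper avoids this nesting of truncations by introducing the weighted space $\widetilde{\mathcal{M}}$ of Section~\ref{SubsecSpecSpace}, in which the map $(\mu,\lambda)\mapsto\mu\,\mathlarger{\mathlarger{\bullet}}_{\varsigma}\,\lambda$ is continuous (Proposition~\ref{PropCont}). Because $\mathbb{E}\big[\mathscr{Z}^{\mathsmaller{\vartheta},\varepsilon}_{a,b}\big]=\mathbf{U}_{b-a}\in\widetilde{\mathcal{M}}$, the prelimits are tight in $\widetilde{\mathcal{M}}$; a Skorokhod coupling then gives almost-sure convergence of $\mathscr{Z}^{\mathsmaller{\vartheta},\varepsilon_j}_{r,s}\,\mathlarger{\mathlarger{\bullet}}_{t-s}\,\mathscr{Z}^{\mathsmaller{\vartheta},\varepsilon_j}_{t,u}(\varphi)$ directly, with no truncation at all. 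Your route, once patched with the additional cutoff, is more elementary in that it stays within the vague topology, but the paper's $\widetilde{\mathcal{M}}$ device is cleaner and is reused throughout Sections~\ref{SecChapmanProof}--\ref{SecGenSHF}.
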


Proposition~\ref{PropCond} and a short computation yields the following.
\begin{proposition}\label{PropCond2}  Fix $\vartheta\in \R$.  Let $\mathscr{Z}^{\mathsmaller{\vartheta}}$  be an $\mathcal{M}^S$-valued random element with a limit distribution from Lemma~\ref{LemmaSubseqLimit}. For any  $ 0\leq r<s<t<u$, the second moment of the difference between $\mathscr{Z}^{\mathsmaller{\vartheta}}_{r,u}$ and $\mathscr{Z}^{\mathsmaller{\vartheta}}_{r,s}  \,\mathlarger{\mathlarger{\bullet}}_{t-s}\,  \mathscr{Z}^{\mathsmaller{\vartheta}}_{t,u}$ is the following positive measure:
\begin{align}\label{QKQ}
\mathbb{E}\Big[ \,\big(\mathscr{Z}^{\mathsmaller{\vartheta}}_{r,u}-\mathscr{Z}^{\mathsmaller{\vartheta}}_{r,s}  \,\mathlarger{\mathlarger{\bullet}}_{t-s}\,  \mathscr{Z}^{\mathsmaller{\vartheta}}_{t,u}\big)^2\,\Big]\,=\,\mathbf{Q}_{s-r}^{\mathsmaller{\vartheta}}  \,\mathlarger{\mathlarger{\bullet}}\,    \mathbf{K}_{t-s}^{\mathsmaller{\vartheta}}  \,\mathlarger{\mathlarger{\bullet}}\, \mathbf{Q}_{u-t}^{\mathsmaller{\vartheta}} \,.
\end{align}
\end{proposition}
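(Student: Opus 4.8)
The plan is to read off the identity from Proposition~\ref{PropCond} together with the Hilbert-space orthogonality of conditional expectation, and then to evaluate the resulting two second-moment measures in closed form. Write $A:=\mathscr{Z}^{\mathsmaller{\vartheta}}_{r,u}$ and $B:=\mathscr{Z}^{\mathsmaller{\vartheta}}_{r,s}\,\mathlarger{\mathlarger{\bullet}}_{t-s}\,\mathscr{Z}^{\mathsmaller{\vartheta}}_{t,u}$, so that Proposition~\ref{PropCond} says precisely $B=\mathbb{E}\big[A\mid \mathscr{F}_{[0,s]\cup[t,\infty)}\big]$. Since $\mathbb{E}[A^2]=\mathbf{Q}_{u-r}^{\mathsmaller{\vartheta}}$ is locally finite (Proposition~\ref{PropBasicProperties}(i),(ii)) and $\mathbb{E}[B]=\mathbf{U}_{u-r}$ is locally finite, both $A$ and $B$ are almost surely locally finite, $A-B$ is an almost surely locally finite signed measure, and for every $\psi\in\hat C_b\big((\R^2)^2\big)$ the random variables $A(\psi)$ and $B(\psi)=\mathbb{E}\big[A(\psi)\mid\mathscr{F}_{[0,s]\cup[t,\infty)}\big]$ lie in $L^2$.

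The first step is to show $\mathbb{E}\big[(A-B)^2\big]=\mathbb{E}[A^2]-\mathbb{E}[B^2]$ as (signed, locally finite) measures on $\big((\R^2)^2\big)^2$. For test functions $\psi_1,\psi_2\in\hat C_b\big((\R^2)^2\big)$ one has $\mathbb{E}\big[(A-B)^{\otimes 2}\big](\psi_1\otimes\psi_2)=\mathbb{E}\big[(A-B)(\psi_1)(A-B)(\psi_2)\big]$ by Fubini, and, since $B(\psi_1),B(\psi_2)$ are $\mathscr{F}_{[0,s]\cup[t,\infty)}$-measurable, the tower property gives $\mathbb{E}[A(\psi_1)B(\psi_2)]=\mathbb{E}[B(\psi_1)B(\psi_2)]=\mathbb{E}[B(\psi_2)A(\psi_1)]$; expanding the product then yields $\mathbb{E}\big[(A-B)(\psi_1)(A-B)(\psi_2)\big]=\mathbb{E}[A(\psi_1)A(\psi_2)]-\mathbb{E}[B(\psi_1)B(\psi_2)]$. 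Because finite linear combinations of products $\psi_1\otimes\psi_2$ form a point-separating algebra that is convergence-determining for locally finite measures on the Polish space $\big((\R^2)^2\big)^2$, this identity of integrals upgrades to the asserted identity of measures.

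The second step is to evaluate the two moments. By Proposition~\ref{PropBasicProperties}(i) and the semigroup property $\mathbf{Q}_a^{\mathsmaller{\vartheta}}\,\mathlarger{\mathlarger{\bullet}}\,\mathbf{Q}_b^{\mathsmaller{\vartheta}}=\mathbf{Q}_{a+b}^{\mathsmaller{\vartheta}}$ recalled above, $\mathbb{E}[A^2]=\mathbf{Q}_{u-r}^{\mathsmaller{\vartheta}}=\mathbf{Q}_{s-r}^{\mathsmaller{\vartheta}}\,\mathlarger{\mathlarger{\bullet}}\,\mathbf{Q}_{t-s}^{\mathsmaller{\vartheta}}\,\mathlarger{\mathlarger{\bullet}}\,\mathbf{Q}_{u-t}^{\mathsmaller{\vartheta}}$ on $(\R^4)^2$. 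For $\mathbb{E}[B^2]$, expand $B$ via the definition~(\ref{MeasComb}) as an integral of $\mathscr{Z}^{\mathsmaller{\vartheta}}_{r,s}(d\cdot,da)\,g_{t-s}(a-b)\,\mathscr{Z}^{\mathsmaller{\vartheta}}_{t,u}(db,d\cdot)$, form the tensor square, and take expectations; since $\mathscr{Z}^{\mathsmaller{\vartheta}}_{r,s}$ and $\mathscr{Z}^{\mathsmaller{\vartheta}}_{t,u}$ are independent (Proposition~\ref{PropBasicProperties}(iii)), the expectation factorizes into $\mathbb{E}\big[(\mathscr{Z}^{\mathsmaller{\vartheta}}_{r,s})^2\big]=\mathbf{Q}_{s-r}^{\mathsmaller{\vartheta}}$ and $\mathbb{E}\big[(\mathscr{Z}^{\mathsmaller{\vartheta}}_{t,u})^2\big]=\mathbf{Q}_{u-t}^{\mathsmaller{\vartheta}}$, now glued over the two intermediate $\R^2$-variables by the kernel $g_{t-s}(a_1-b_1)\,g_{t-s}(a_2-b_2)$, which is exactly the Lebesgue density of $\mathbf{U}_{t-s}^2$ on $(\R^2\times\R^2)^2$. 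Hence $\mathbb{E}[B^2]=\mathbf{Q}_{s-r}^{\mathsmaller{\vartheta}}\,\mathlarger{\mathlarger{\bullet}}\,\mathbf{U}_{t-s}^2\,\mathlarger{\mathlarger{\bullet}}\,\mathbf{Q}_{u-t}^{\mathsmaller{\vartheta}}$. Subtracting, using bilinearity of $\mathlarger{\mathlarger{\bullet}}$ in the middle factor (legitimate since $\mathbf{Q}_{t-s}^{\mathsmaller{\vartheta}}$, $\mathbf{U}_{t-s}^2$ both have Lebesgue densities), and invoking $\mathbf{K}_{t-s}^{\mathsmaller{\vartheta}}=\mathbf{Q}_{t-s}^{\mathsmaller{\vartheta}}-\mathbf{U}_{t-s}^2$ gives
\[
\mathbb{E}\big[(A-B)^2\big]\,=\,\mathbf{Q}_{s-r}^{\mathsmaller{\vartheta}}\,\mathlarger{\mathlarger{\bullet}}\,\big(\mathbf{Q}_{t-s}^{\mathsmaller{\vartheta}}-\mathbf{U}_{t-s}^2\big)\,\mathlarger{\mathlarger{\bullet}}\,\mathbf{Q}_{u-t}^{\mathsmaller{\vartheta}}\,=\,\mathbf{Q}_{s-r}^{\mathsmaller{\vartheta}}\,\mathlarger{\mathlarger{\bullet}}\,\mathbf{K}_{t-s}^{\mathsmaller{\vartheta}}\,\mathlarger{\mathlarger{\bullet}}\,\mathbf{Q}_{u-t}^{\mathsmaller{\vartheta}}\,,
\]
which is a positive measure since all three kernels are nonnegative (equivalently, since $\mathbf{U}_{t-s}^2\leq\mathbf{Q}_{t-s}^{\mathsmaller{\vartheta}}$ and hence $\mathbb{E}[B^2]\leq\mathbb{E}[A^2]$). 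The boundary cases $s=r$ and $t=u$ follow from the same computation under the convention for $\mathlarger{\mathlarger{\bullet}}_{t-s}$ stated after Proposition~\ref{PropCond}.

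I expect the only genuinely delicate point to be the measure-theoretic bookkeeping in the first step: confirming that all the relevant random variables are square-integrable, that $\mathbb{E}$ commutes with the pairing $\mu\mapsto\mu(\psi)$ for the conditional expectation of a random measure, and that the identity on product test functions promotes to an identity of a priori signed, only locally finite measures. Everything else is the already-established first- and second-moment formulas, the semigroup property of $\mathbf{Q}^{\mathsmaller{\vartheta}}$, and the bilinearity of the $\mathlarger{\mathlarger{\bullet}}$-operation.
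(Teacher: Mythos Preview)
Your proof is correct and follows essentially the same approach as the paper: you invoke Proposition~\ref{PropCond} to recognize $B$ as the conditional expectation of $A$, apply the orthogonality $\mathbb{E}[(A-B)^2]=\mathbb{E}[A^2]-\mathbb{E}[B^2]$, compute $\mathbb{E}[A^2]=\mathbf{Q}_{u-r}^{\mathsmaller{\vartheta}}$ and $\mathbb{E}[B^2]=\mathbf{Q}_{s-r}^{\mathsmaller{\vartheta}}\,\mathlarger{\mathlarger{\bullet}}_{t-s}\,\mathbf{Q}_{u-t}^{\mathsmaller{\vartheta}}$ via independence, and finish with the semigroup property and the definition $\mathbf{K}_a^{\mathsmaller{\vartheta}}=\mathbf{Q}_a^{\mathsmaller{\vartheta}}-\mathbf{U}_a^2$. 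The paper's proof is more compressed but identical in substance; your added discussion of the measure-theoretic bookkeeping is reasonable and does not constitute a different route.
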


The right side of~(\ref{QKQ}) vanishes vaguely as $t\searrow s$, implying that $\mathscr{Z}^{\mathsmaller{\vartheta}}_{r,s}  \,\mathlarger{\mathlarger{\bullet}}_{t-s}\,  \mathscr{Z}^{\mathsmaller{\vartheta}}_{t,u}$ converges vaguely in $L^2$ to $\mathscr{Z}^{\mathsmaller{\vartheta}}_{r,u}$.  Likewise, the difference between  $\mathscr{Z}^{\mathsmaller{\vartheta}}_{r,s}  \,\mathlarger{\mathlarger{\bullet}}_{\varsigma}\,  \mathscr{Z}^{\mathsmaller{\vartheta}}_{s+\varsigma,t}$ and $\mathscr{Z}^{\mathsmaller{\vartheta}}_{r,t} $ vanishes vaguely in $L^2$ as $\varsigma\rightarrow 0$, bringing us close to proving the next proposition, as only  the error induced from replacing $\mathscr{Z}^{\mathsmaller{\vartheta}}_{s+\varsigma,t}$ by $\mathscr{Z}^{\mathsmaller{\vartheta}}_{s,t}$ needs to be controlled.
\begin{proposition}[Chapman-Kolmogorov Type Relation] \label{PropChapman}  Fix $\vartheta\in \R$ and $0\leq r<s<t$. Let $\mathscr{Z}^{\mathsmaller{\vartheta}}$  be an $\mathcal{M}^S$-valued random element with a limit distribution from Lemma~\ref{LemmaSubseqLimit}.  Then  the random measure $\mathscr{Z}^{\mathsmaller{\vartheta}}_{r,s}  \,\mathlarger{\mathlarger{\bullet}}_{\varsigma}\,  \mathscr{Z}^{\mathsmaller{\vartheta}}_{s,t}$ converges vaguely in $L^2$ to $\mathscr{Z}^{\mathsmaller{\vartheta}}_{r,t}$  as $\varsigma\rightarrow 0$.
\end{proposition}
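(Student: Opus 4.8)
The plan is to fix $\varphi\in\hat{C}_b(\R^2\times\R^2)$ and show that $(\mathscr{Z}^{\mathsmaller{\vartheta}}_{r,s}\,\mathlarger{\mathlarger{\bullet}}_{\varsigma}\,\mathscr{Z}^{\mathsmaller{\vartheta}}_{s,t})(\varphi)$ converges in $L^2$ to $\mathscr{Z}^{\mathsmaller{\vartheta}}_{r,t}(\varphi)$ as $\varsigma\downarrow 0$, which is all that vague $L^2$ convergence requires.  Write $A_\varsigma:=\mathscr{Z}^{\mathsmaller{\vartheta}}_{r,s}\,\mathlarger{\mathlarger{\bullet}}_{\varsigma}\,\mathscr{Z}^{\mathsmaller{\vartheta}}_{s,t}$ and $B_\varsigma:=\mathscr{Z}^{\mathsmaller{\vartheta}}_{r,s}\,\mathlarger{\mathlarger{\bullet}}_{\varsigma}\,\mathscr{Z}^{\mathsmaller{\vartheta}}_{s+\varsigma,t}$ for $0<\varsigma<t-s$.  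As recorded in the paragraph preceding the statement, Proposition~\ref{PropCond2} applied at the time points $r<s<s+\varsigma<t$ gives $\mathbb{E}\big[(B_\varsigma-\mathscr{Z}^{\mathsmaller{\vartheta}}_{r,t})^2\big]=\mathbf{Q}^{\mathsmaller{\vartheta}}_{s-r}\,\mathlarger{\mathlarger{\bullet}}\,\mathbf{K}^{\mathsmaller{\vartheta}}_{\varsigma}\,\mathlarger{\mathlarger{\bullet}}\,\mathbf{Q}^{\mathsmaller{\vartheta}}_{t-s-\varsigma}$, and this vanishes vaguely as $\varsigma\downarrow 0$ because the total mass $2\pi\int_0^{\varsigma}\nu(\rho e^{\mathsmaller{\vartheta}})\,d\rho$ carried by $\mathbf{K}^{\mathsmaller{\vartheta}}_{\varsigma}$ tends to $0$ (here $\nu(0)=0$), while $\mathbf{Q}^{\mathsmaller{\vartheta}}_{t-s-\varsigma}\to\mathbf{Q}^{\mathsmaller{\vartheta}}_{t-s}$ vaguely.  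Thus $B_\varsigma(\varphi)\to\mathscr{Z}^{\mathsmaller{\vartheta}}_{r,t}(\varphi)$ in $L^2$, and it remains only to prove $A_\varsigma(\varphi)-B_\varsigma(\varphi)\to 0$ in $L^2$.

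I would expand $\mathbb{E}\big[(A_\varsigma(\varphi)-B_\varsigma(\varphi))^2\big]=\mathbb{E}[A_\varsigma(\varphi)^2]-2\,\mathbb{E}[A_\varsigma(\varphi)B_\varsigma(\varphi)]+\mathbb{E}[B_\varsigma(\varphi)^2]$ and show that all three expectations converge to the common value $\mathbf{Q}^{\mathsmaller{\vartheta}}_{t-r}(\varphi^{\otimes 2})$, so that the sum tends to $0$.  The outer two are computed from the temporal independence of $\mathscr{Z}^{\mathsmaller{\vartheta}}_{r,s}$ from $\mathscr{Z}^{\mathsmaller{\vartheta}}_{s,t}$ and from $\mathscr{Z}^{\mathsmaller{\vartheta}}_{s+\varsigma,t}$ (Proposition~\ref{PropBasicProperties}(iii)) together with the first/second moment formulas and shift invariance (Proposition~\ref{PropBasicProperties}(i),(iv)):
\[
\mathbb{E}[A_\varsigma(\varphi)^2]=\big(\mathbf{Q}^{\mathsmaller{\vartheta}}_{s-r}\,\mathlarger{\mathlarger{\bullet}}_{\varsigma}\,\mathbf{Q}^{\mathsmaller{\vartheta}}_{t-s}\big)(\varphi^{\otimes 2})\,,\qquad \mathbb{E}[B_\varsigma(\varphi)^2]=\big(\mathbf{Q}^{\mathsmaller{\vartheta}}_{s-r}\,\mathlarger{\mathlarger{\bullet}}_{\varsigma}\,\mathbf{Q}^{\mathsmaller{\vartheta}}_{t-s-\varsigma}\big)(\varphi^{\otimes 2})\,,
\]
where the gluing over $\R^2\times\R^2$ uses the Gaussian $g_{\varsigma}$ in each replica coordinate.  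The cross term is where Proposition~\ref{PropCond} enters: with $\mathscr{G}:=\mathscr{F}_{[0,s]\cup[s+\varsigma,\infty)}$, both $\mathscr{Z}^{\mathsmaller{\vartheta}}_{r,s}$ and $\mathscr{Z}^{\mathsmaller{\vartheta}}_{s+\varsigma,t}$---hence $B_\varsigma$---are $\mathscr{G}$-measurable, whereas the degenerate ($r=s$) case of Proposition~\ref{PropCond} gives $\mathbb{E}\big[\mathscr{Z}^{\mathsmaller{\vartheta}}_{s,t}\,\big|\,\mathscr{G}\big]=\mathlarger{\mathlarger{\bullet}}_{\varsigma}\,\mathscr{Z}^{\mathsmaller{\vartheta}}_{s+\varsigma,t}$; factoring out the $\mathscr{G}$-measurable random measure $\mathscr{Z}^{\mathsmaller{\vartheta}}_{r,s}$ and using $g_{\varsigma}\ast g_{\varsigma}=g_{2\varsigma}$ yields $\mathbb{E}[A_\varsigma\,|\,\mathscr{G}]=\mathscr{Z}^{\mathsmaller{\vartheta}}_{r,s}\,\mathlarger{\mathlarger{\bullet}}_{2\varsigma}\,\mathscr{Z}^{\mathsmaller{\vartheta}}_{s+\varsigma,t}$, so, using independence of $\mathscr{Z}^{\mathsmaller{\vartheta}}_{r,s}$ and $\mathscr{Z}^{\mathsmaller{\vartheta}}_{s+\varsigma,t}$ once more,
\[
\mathbb{E}[A_\varsigma(\varphi)B_\varsigma(\varphi)]=\mathbb{E}\big[\mathbb{E}[A_\varsigma(\varphi)\,|\,\mathscr{G}]\,B_\varsigma(\varphi)\big]=\int \varphi^{\otimes 2}\;\mathbf{Q}^{\mathsmaller{\vartheta}}_{s-r}(dx;da)\;g_{2\varsigma}(a_1-b_1)\,g_{\varsigma}(a_2-b_2)\;\mathbf{Q}^{\mathsmaller{\vartheta}}_{t-s-\varsigma}(db;dy)\,,
\]
with $a=(a_1,a_2)$, $b=(b_1,b_2)$ the pairs of intermediate replica positions.

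To close the argument I would verify that each of these three integrals tends to $\mathbf{Q}^{\mathsmaller{\vartheta}}_{t-r}(\varphi^{\otimes 2})$.  Each is of the form ``$\mathbf{Q}^{\mathsmaller{\vartheta}}_{s-r}$ composed with $\mathbf{Q}^{\mathsmaller{\vartheta}}_{t-s}$ or $\mathbf{Q}^{\mathsmaller{\vartheta}}_{t-s-\varsigma}$ through a Gaussian gluing kernel whose variance in each replica is $O(\varsigma)$,'' and in the center-of-mass/difference coordinates $(u,v)=\big(\tfrac1{\sqrt2}(x_1+x_2),\tfrac1{\sqrt2}(x_1-x_2)\big)$---where $\dot{\mathbf{Q}}^{\mathsmaller{\vartheta}}_a(x_1,x_2;y_1,y_2)=g_a(u_x-u_y)\,P^{\mathsmaller{\vartheta}}_a(v_x,v_y)$---the gluing factors as a heat-kernel convolution in $u$ and a convolution of $P^{\mathsmaller{\vartheta}}$-kernels against a Gaussian in $v$.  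The common limit then follows from continuity of $a\mapsto g_a$ and $a\mapsto P^{\mathsmaller{\vartheta}}_a$ and the semigroup identities $g_a\ast g_b=g_{a+b}$, $P^{\mathsmaller{\vartheta}}_a\circ P^{\mathsmaller{\vartheta}}_b=P^{\mathsmaller{\vartheta}}_{a+b}$ (equivalently $\mathbf{Q}^{\mathsmaller{\vartheta}}_a\,\mathlarger{\mathlarger{\bullet}}\,\mathbf{Q}^{\mathsmaller{\vartheta}}_b=\mathbf{Q}^{\mathsmaller{\vartheta}}_{a+b}$), with the interchange of limit and integral justified by dominated convergence---absorbing each gluing factor $g_\sigma$ into the $P^{\mathsmaller{\vartheta}}$-semigroup via the trivial bound $g_\sigma\leq P^{\mathsmaller{\vartheta}}_\sigma$ (and, in the cross term, first using $g_{\varsigma}\leq 2g_{2\varsigma}$ to replace the mixed kernel by an isotropic one), which dominates the $v$-part of the integrand by the fixed, locally integrable kernel $P^{\mathsmaller{\vartheta}}_{t-r+\varsigma}(v,v')$ and controls the logarithmic singularity of $K^{\mathsmaller{\vartheta}}$ uniformly in $\varsigma\in(0,1]$.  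Since the three limits coincide, $\mathbb{E}\big[(A_\varsigma(\varphi)-B_\varsigma(\varphi))^2\big]\to(1-2+1)\,\mathbf{Q}^{\mathsmaller{\vartheta}}_{t-r}(\varphi^{\otimes 2})=0$, which finishes the proof.

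The step I expect to be the main obstacle is the evaluation of the cross term $\mathbb{E}[A_\varsigma(\varphi)B_\varsigma(\varphi)]$: there is no direct handle on the joint second moment of the 2d SHF over the nested intervals $(s,t)\supset(s+\varsigma,t)$, and the conditional expectation identity of Proposition~\ref{PropCond} is exactly what makes it computable.  Everything else is routine bookkeeping with Gaussian convolutions and the two semigroups, the only mild technical point being the dominated-convergence justification sketched above.
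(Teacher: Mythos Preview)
Your proof is correct and takes essentially the same approach as the paper's: both use Proposition~\ref{PropCond} to identify $\mathbb{E}[A_\varsigma\mid\mathscr{G}]=\mathscr{Z}^{\mathsmaller{\vartheta}}_{r,s}\,\mathlarger{\mathlarger{\bullet}}_{2\varsigma}\,\mathscr{Z}^{\mathsmaller{\vartheta}}_{s+\varsigma,t}$ and then reduce the problem to showing that a handful of second-moment measures of the form $\mathbf{Q}^{\mathsmaller{\vartheta}}_{s-r}$ glued to $\mathbf{Q}^{\mathsmaller{\vartheta}}_{t-s-\varsigma}$ through Gaussians of variance $O(\varsigma)$ all converge vaguely to $\mathbf{Q}^{\mathsmaller{\vartheta}}_{t-r}$. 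The paper organizes the bookkeeping slightly differently---inserting $C_\varsigma:=\mathscr{Z}^{\mathsmaller{\vartheta}}_{r,s}\,\mathlarger{\mathlarger{\bullet}}_{2\varsigma}\,\mathscr{Z}^{\mathsmaller{\vartheta}}_{s+\varsigma,t}$ as an intermediate and handling $A_\varsigma-C_\varsigma$ (via the Pythagorean identity) and $C_\varsigma-B_\varsigma$ (via a four-term expansion $\sum_{a,b\in\{1,2\}}(-1)^{a+b}\lambda^{a,b}_\varsigma$) separately---but the substance is the same, and your three-term expansion with the cross term $\mathbb{E}[C_\varsigma B_\varsigma]$ is simply a repackaging of these pieces.
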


 Note in particular that $\mathscr{Z}^{\mathsmaller{\vartheta}}_{r,t}$ depends measurably on the pair $(\mathscr{Z}^{\mathsmaller{\vartheta}}_{r,s},  \mathscr{Z}^{\mathsmaller{\vartheta}}_{s,t})$.  The independence property in (iii) of Proposition~\ref{PropBasicProperties}  combined with  Proposition~\ref{PropChapman} imply the following unsurprising lemma, which we prove at the end of Section~\ref{SecChapmanProof}.
\begin{lemma}\label{LemInd} The $\sigma$-algebras $\mathscr{F}_{[s,t]}$ and  $\mathscr{F}_{[0,s]\cup [t,\infty) } $ are independent. 
\end{lemma}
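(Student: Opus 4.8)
The plan is to verify independence on generating $\pi$-systems and to reduce everything to the temporal independence property of Proposition~\ref{PropBasicProperties}(iii) by means of the Chapman--Kolmogorov relation. First I would note that $\mathscr{F}_{[s,t]}$ is generated by the $\pi$-system $\mathcal{P}_{1}$ of events $\{(\mathscr{Z}^{\vartheta}_{a_{1},b_{1}}(\varphi_{1}),\dots,\mathscr{Z}^{\vartheta}_{a_{k},b_{k}}(\varphi_{k}))\in B\}$ with $s\le a_{i}<b_{i}\le t$, $\varphi_{i}\in\hat{C}_{b}\big((\R^{2})^{2}\big)$, and $B$ a Borel set; and, since an open interval contained in the disconnected set $[0,s]\cup[t,\infty)$ must lie entirely in $[0,s]$ or entirely in $[t,\infty)$, the $\sigma$-algebra $\mathscr{F}_{[0,s]\cup[t,\infty)}$ is generated by the analogous $\pi$-system $\mathcal{P}_{2}$ built from intervals $(c_{j},d_{j})$ with $d_{j}\le s$ or $c_{j}\ge t$. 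By the standard $\pi$-system criterion it then suffices to show that an arbitrary finite vector $\mathbf{V}_{1}=(\mathscr{Z}^{\vartheta}_{a_{1},b_{1}},\dots,\mathscr{Z}^{\vartheta}_{a_{k},b_{k}})$ drawn from the left family is independent of an arbitrary finite vector $\mathbf{V}_{2}=(\mathscr{Z}^{\vartheta}_{c_{1},d_{1}},\dots,\mathscr{Z}^{\vartheta}_{c_{l},d_{l}})$ drawn from the right family.

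Given such vectors, I would fix $D>\max_{j}d_{j}$ and introduce three partitions: a partition $\{t_{0}=s<\dots<t_{m}=t\}$ of $[s,t]$ refining all the endpoints $a_{i},b_{i}$; a partition $\{u_{0}=0<\dots<u_{p}=s\}$ of $[0,s]$ refining the endpoints $c_{j},d_{j}$ that lie in $[0,s]$; and a partition $\{w_{0}=t<\dots<w_{q}=D\}$ of $[t,D]$ refining the endpoints that lie in $[t,\infty)$. Each $[a_{i},b_{i}]$ is then a union of consecutive blocks $[t_{\ell},t_{\ell+1}]$, and similarly on the other side. The key step is to apply Proposition~\ref{PropChapman} iteratively---together with the remark recorded after it that $\mathscr{Z}^{\vartheta}_{r,t}$ depends measurably on $(\mathscr{Z}^{\vartheta}_{r,s},\mathscr{Z}^{\vartheta}_{s,t})$---to show, by induction on the number of blocks, that each $\mathscr{Z}^{\vartheta}_{a_{i},b_{i}}(\varphi)$ is a vague-$L^{2}$ limit of iterated $\mathlarger{\mathlarger{\bullet}}_{\varsigma}$-combinations of the block measures $\mathscr{Z}^{\vartheta}_{t_{0},t_{1}},\dots,\mathscr{Z}^{\vartheta}_{t_{m-1},t_{m}}$. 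Since an $L^{2}$ limit agrees almost surely with an almost-sure subsequential limit, this shows that $\mathscr{Z}^{\vartheta}_{a_{i},b_{i}}(\varphi)$ is almost surely equal to a $\sigma\big(\mathscr{Z}^{\vartheta}_{t_{0},t_{1}},\dots,\mathscr{Z}^{\vartheta}_{t_{m-1},t_{m}}\big)$-measurable random variable, so that $\mathbf{V}_{1}$ is almost surely a measurable function of the block vector $\mathbf{W}$ over $[s,t]$, and likewise $\mathbf{V}_{2}$ is almost surely a measurable function of the block vector $\mathbf{W}'$ formed from the blocks of $[0,s]$ and of $[t,D]$.

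To conclude, I would observe that all the open block intervals appearing in $\mathbf{W}$ and $\mathbf{W}'$ are pairwise disjoint: the blocks partitioning $[s,t]$, $[0,s]$, and $[t,D]$ are internally disjoint; every block of $[0,s]$ lies to the left of $s$; every block of $[s,t]$ lies in $[s,t]$; and every block of $[t,D]$ lies to the right of $t$. Proposition~\ref{PropBasicProperties}(iii) then makes the entire family of block measures mutually independent, so $\mathbf{W}$ and $\mathbf{W}'$ are independent; since independence is preserved under measurable transformations and under modification of random variables on null sets, $\mathbf{V}_{1}$ and $\mathbf{V}_{2}$ are independent, and the lemma follows.

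I expect the only real work to be the inductive measurability claim in the second paragraph: one must check that the map $\mu_{1}\mathlarger{\mathlarger{\bullet}}_{\varsigma}\mu_{2}$, tested against a fixed $\varphi$, is a measurable (bounded bilinear) functional of the pair $(\mu_{1},\mu_{2})$---routine, since its kernel $g_{\varsigma}$ is smooth and bounded---and then to propagate measurability through the limits $\varsigma\to0$. This is essentially bookkeeping; the conceptual input is entirely contained in Proposition~\ref{PropBasicProperties}(iii) and Proposition~\ref{PropChapman}, the point being that any $\mathscr{Z}^{\vartheta}$-variable indexed by a subinterval of $[s,t]$ can be reconstructed, modulo null sets, from finitely many block variables whose intervals are disjoint from everything associated with the complementary time set.
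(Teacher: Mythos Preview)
Your proposal is correct and follows essentially the same approach as the paper's proof: both reduce via a $\pi$-system argument to showing independence of finite collections of $\mathscr{Z}^{\vartheta}$-variables, then use the Chapman--Kolmogorov relation (Proposition~\ref{PropChapman}) to conclude that each $\mathscr{Z}^{\vartheta}_{a,b}$ is measurable with respect to the block variables coming from a partition, and finally invoke Proposition~\ref{PropBasicProperties}(iii) for the independence of block variables over disjoint time intervals. The paper's version is slightly terser---it simply states the measurability consequence of Proposition~\ref{PropChapman} without spelling out the $L^2$-limit/subsequence argument you give---but the logical structure is identical.
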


\begin{remark}\label{RemarkL^2Conv} Let $\widetilde{C}$ denote the set of real-valued continuous functions $\varphi(x,y)$ on $(\R^2)^2$ that are bounded in absolute value by a constant multiple of  $ \textup{exp}\big\{-\frac{1}{n}|x|+n|y|\big\} $ for some $n\in \mathbb{N}$.  In Section~\ref{SubsecSpecSpace} we consider a topology for measures on $(\R^2)^2$ such that $\mu_j\rightarrow \mu_{\infty}$  provided that $\mu_j(\varphi)\rightarrow \mu_{\infty}(\varphi)$ for all  $\varphi\in \widetilde{C}$, which is stronger than the vague topology.  We can strengthen  Proposition~\ref{PropChapman} to the statement that
  $\mathscr{Z}^{\mathsmaller{\vartheta}}_{r,s}  \,\mathlarger{\mathlarger{\bullet}}_{\varsigma}\,  \mathscr{Z}^{\mathsmaller{\vartheta}}_{s,t}(\varphi)$ converges to  $\mathscr{Z}^{\mathsmaller{\vartheta}}_{r,t}(\varphi)$ in $L^2$  for every $\varphi\in \widetilde{C}$.
 \end{remark}

\subsection{Multi-interval extension of the 2d SHF}\label{SubsectMTSHF}

 As before, let $X$ and $X'$ be Polish spaces and  $\mu_1$, $\mu_2$  be $\sigma$-finite measures on  $X\times \R^n$ and $ \R^n\times X'$,  respectively.  For $\varsigma>0$  let $\mu_1\, \mathlarger{\mathlarger{\circ}}_{\varsigma} \mu_2  $ denote the measure on $X\times\R^n \times X'$ given by
\begin{align}\label{StarOperation}
  \mu_1 \,\mathlarger{\mathlarger{\circ}}_{\varsigma}\, \mu_2 (dx,da,dx')\,=\,&\int_{ b\in \R^n } \, \mu_1(dx,da)\,g_{\varsigma}(a-b)\, \mu_2(db,dx')\,.
\end{align}
The projection of $ \mu_1 \mathlarger{\mathlarger{\circ}}_{\varsigma}  \mu_2 $  on $X\times X'$ (by integrating out $a\in \R^n$) yields the operation~(\ref{MeasComb}): 
\[
 \mu_1 \,\mathlarger{\mathlarger{\bullet}}_{\varsigma}\, \mu_2(dx,dx')\,=\, 
 \mu_1 \,\mathlarger{\mathlarger{\circ}}_{\varsigma}\, \mu_2 \big(dx,\R^n,dx'\big)  \, .
 \]
When $\mu_2$ has a disintegration with respect to Lebesgue measure on $\R^n$, that is $\mu_2(db,dx')=db\,\dot{\mu}_2(b,dx')$, for a kernel $\dot{\mu}_2$ from  $\R^n$ to $X'$, we define 
\[
\mu_1 \,\mathlarger{\mathlarger{\circ}} \, \mu_2(dx,da,dx')\,=\,\mu_1(dx,da)\, \dot{\mu}_2(a,dx')\,.
\]

If  $\mu_1,\ldots, \mu_m$ are $\sigma$-finite measures on $\R^n\times \R^n$,  then we can define a  measure on  $(\R^n)^{m+1} $ through iterative use of the operation $\mathlarger{\mathlarger{\circ}}_{\varsigma}$, which is associative:
\[
\mu_1 \,\mathlarger{\mathlarger{\circ}}_{\varsigma}\,\mu_2 \,\mathlarger{\mathlarger{\circ}}_{\varsigma} \, \cdots \,  \mathlarger{\mathlarger{\circ}}_{\varsigma}\, \mu_m \,.
\]
The proofs of the following theorem and the propositions below are placed in Section~\ref{SecGenSHF}.
\begin{theorem}[Multi-interval Extension of the 2d SHF] \label{ThmSHFExtension} Fix $\vartheta\in \R$.  Let $\mathscr{Z}^{\mathsmaller{\vartheta}}$  be an $\mathcal{M}^S$-valued random element with the limit distribution in Lemma~\ref{LemmaSubseqLimit}, defined on a probability space $(\Omega, \mathcal{F},\mathbb{P})$.   Given a partition $P=\{ t_0 , t_1 ,\ldots ,t_m\}$ of the interval $[s,t]$, the $ \mathcal{M}_{m }$-valued random element
\begin{align}\label{MartMulti}
\mathscr{Z}^{\mathsmaller{\vartheta},\varsigma}_{P}\,:=\,\mathscr{Z}^{\mathsmaller{\vartheta}}_{t_0,t_1} \,\mathlarger{\mathlarger{\circ}}_{\varsigma}\,\mathscr{Z}^{\mathsmaller{\vartheta}}_{t_1,t_2} \,\mathlarger{\mathlarger{\circ}}_{\varsigma} \, \cdots   \,\mathlarger{\mathlarger{\circ}}_{\varsigma}\, \mathscr{Z}^{\mathsmaller{\vartheta}}_{t_{m-1},t_m} 
\end{align}
converges vaguely in $L^2$ to a limit $ \mathscr{Z}^{\mathsmaller{\vartheta}}_{P} $ as $\varsigma\rightarrow 0$.
\end{theorem}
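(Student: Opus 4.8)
I would first reduce to a scalar statement: it suffices to show that for every $\varphi\in\hat{C}_b\big((\R^2)^{m+1}\big)$ the random variables $\mathscr{Z}^{\vartheta,\varsigma}_{P}(\varphi)$ converge in $L^2$ as $\varsigma\downarrow 0$, since a standard argument over a countable determining family of test functions---together with the uniform second-moment bound below and the positivity of the prelimits---then produces a genuine $\mathcal{M}_m$-valued random element $\mathscr{Z}^\vartheta_P$ realizing these limits, and yields the vague $L^2$ convergence. The strategy for the scalar statement is to exhibit a lightly perturbed version of $\mathscr{Z}^{\vartheta,\varsigma}_P$ as an $L^2$-bounded martingale in the mollification scale and apply Doob's convergence theorem. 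Concretely, fix $\varsigma_0<\min_{2\le j\le m}(t_j-t_{j-1})$ and for $0<\varsigma\le\varsigma_0$ introduce the surrogate
\[
M_\varsigma\,:=\,\mathscr{Z}^\vartheta_{t_0,t_1}\,\mathlarger{\mathlarger{\circ}}_{\varsigma}\,\mathscr{Z}^\vartheta_{t_1+\varsigma,\,t_2}\,\mathlarger{\mathlarger{\circ}}_{\varsigma}\,\cdots\,\mathlarger{\mathlarger{\circ}}_{\varsigma}\,\mathscr{Z}^\vartheta_{t_{m-1}+\varsigma,\,t_m}\,,
\]
obtained from~(\ref{MartMulti}) by shifting the left endpoint of each of the $j\ge 2$ intervals rightward by $\varsigma$, i.e. by gluing across genuine time gaps $[t_j,t_j+\varsigma]$ rather than instantaneously. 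The intervals $[t_0,t_1],[t_1+\varsigma,t_2],\dots,[t_{m-1}+\varsigma,t_m]$ are disjoint, so the factors of $M_\varsigma$ are independent by Proposition~\ref{PropBasicProperties}(iii). The point of the shift is that a gluing at $t_{j-1}+\varsigma$---unlike one at the very left endpoint $t_{j-1}$---can be read off as a conditional expectation from Proposition~\ref{PropCond}.

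\textbf{Step 1 (the martingale).} Fix a strictly decreasing null sequence $\varsigma_1>\varsigma_2>\cdots\downarrow 0$ in $(0,\varsigma_0]$ and set $\mathscr{G}_n:=\mathscr{F}_{[t_0,t_1]\cup\bigcup_{j=2}^m[t_{j-1}+\varsigma_n,\,t_j]}$; as $n$ grows the carrying set of intervals grows, so $\mathscr{G}_n$ is increasing, and $M_{\varsigma_n}$ is $\mathscr{G}_n$-measurable. I would show $\mathbb{E}\big[M_{\varsigma_{n+1}}(\varphi)\mid\mathscr{G}_n\big]=M_{\varsigma_n}(\varphi)$. Since $M_{\varsigma_{n+1}}(\varphi)$ is a fixed multilinear functional of the mutually independent factors, and conditioning on $\mathscr{G}_n$ decouples across the disjoint carrying intervals (Lemma~\ref{LemInd}), the conditional expectation passes onto each factor: the first stays (it is $\mathscr{G}_n$-measurable), while for $j\ge 2$ the boundary case ($s=r$) of Proposition~\ref{PropCond}, combined with the independence of Lemma~\ref{LemInd} to discard the irrelevant part of $\mathscr{G}_n$, gives
\[
\mathbb{E}\big[\mathscr{Z}^\vartheta_{t_{j-1}+\varsigma_{n+1},\,t_j}\,\big|\,\mathscr{G}_n\big]\,=\,\mathlarger{\mathlarger{\bullet}}_{\varsigma_n-\varsigma_{n+1}}\,\mathscr{Z}^\vartheta_{t_{j-1}+\varsigma_n,\,t_j}\,.
\]
Substituting these back and repeatedly applying the identity $\mu_1\,\mathlarger{\mathlarger{\circ}}_{\varsigma'}(\mathlarger{\mathlarger{\bullet}}_{\varsigma''}\mu_2)=\mu_1\,\mathlarger{\mathlarger{\circ}}_{\varsigma'+\varsigma''}\mu_2$---a consequence of the Gaussian semigroup relation $g_{\varsigma'}*g_{\varsigma''}=g_{\varsigma'+\varsigma''}$ and the elementary relations among $\mathlarger{\mathlarger{\circ}}$ and $\mathlarger{\mathlarger{\bullet}}$ recorded after~(\ref{MeasComb2})---collapses the result to $M_{\varsigma_n}$. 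For the uniform $L^2$ bound, independence of the factors and $\mathbb{E}[(\mathscr{Z}^\vartheta_{a,b})^{2}]=\mathbf{Q}^\vartheta_{b-a}$ give
\[
\mathbb{E}\big[M_{\varsigma_n}(\varphi)^{2}\big]\,=\,\int\varphi\otimes\varphi\;d\big(\mathbf{Q}^\vartheta_{t_1-t_0}\,\mathlarger{\mathlarger{\circ}}_{\varsigma_n}\,\mathbf{Q}^\vartheta_{t_2-t_1-\varsigma_n}\,\mathlarger{\mathlarger{\circ}}_{\varsigma_n}\cdots\mathlarger{\mathlarger{\circ}}_{\varsigma_n}\,\mathbf{Q}^\vartheta_{t_m-t_{m-1}-\varsigma_n}\big)
\]
(the $\mathbf{Q}^\vartheta$ understood as doubled measures on $\R^4$), which tends as $\varsigma_n\downarrow 0$ to the finite number $\int\varphi\otimes\varphi\;d(\mathbf{Q}^\vartheta_{t_1-t_0}\,\mathlarger{\mathlarger{\circ}}\cdots\mathlarger{\mathlarger{\circ}}\,\mathbf{Q}^\vartheta_{t_m-t_{m-1}})$, the latter composition being well defined because each $\mathbf{Q}^\vartheta$ is absolutely continuous, and finite because $\dot{\mathbf{Q}}^\vartheta$ carries only logarithmic singularities so the $m$-fold composite density is locally integrable and paired with the bounded, boundedly supported $\varphi\otimes\varphi$ (convergence being a routine approximate-identity estimate). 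Doob's $L^2$ martingale convergence theorem then gives convergence of $M_{\varsigma_n}(\varphi)$ in $L^2$; since the martingale property is valid along any strictly decreasing null sequence, a sub-subsequence argument shows $M_\varsigma(\varphi)$ converges in $L^2$ to a limit $M_\infty(\varphi)$ as $\varsigma\downarrow 0$.

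\textbf{Step 2 (removing the shift).} It remains to check $\mathbb{E}\big[\,|M_\varsigma(\varphi)-\mathscr{Z}^{\vartheta,\varsigma}_P(\varphi)|^{2}\,\big]\to 0$, so that $\mathscr{Z}^\vartheta_P(\varphi):=M_\infty(\varphi)$ works. Expanding the square, the two pure second moments both converge to $L:=\int\varphi\otimes\varphi\;d(\mathbf{Q}^\vartheta_{t_1-t_0}\,\mathlarger{\mathlarger{\circ}}\cdots\mathlarger{\mathlarger{\circ}}\,\mathbf{Q}^\vartheta_{t_m-t_{m-1}})$ as in Step~1. For the cross moment $\mathbb{E}[M_\varsigma(\varphi)\,\mathscr{Z}^{\vartheta,\varsigma}_P(\varphi)]$ the two measures share the factor $\mathscr{Z}^\vartheta_{t_0,t_1}$ and have independent factors across distinct $j$; within index $j\ge 2$ the only nontrivial term is $\mathbb{E}[\mathscr{Z}^\vartheta_{t_{j-1}+\varsigma,t_j}\otimes\mathscr{Z}^\vartheta_{t_{j-1},t_j}]$, which Proposition~\ref{PropCond} (conditioning $\mathscr{Z}^\vartheta_{t_{j-1},t_j}$ on $\mathscr{F}_{[t_{j-1}+\varsigma,\infty)}$, boundary case) turns into $\mathbf{Q}^\vartheta_{t_j-t_{j-1}-\varsigma}$ with one slot acted on by $\mathlarger{\mathlarger{\bullet}}_\varsigma$, converging to $\mathbf{Q}^\vartheta_{t_j-t_{j-1}}$. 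Hence the cross moment also tends to $L$, the whole expression tends to $L-2L+L=0$, and the controlling smallness is exactly of the type in Proposition~\ref{PropCond2} (vanishing of $\mathbf{K}^\vartheta_\epsilon$ after compositions). Therefore $\mathscr{Z}^{\vartheta,\varsigma}_P(\varphi)\to\mathscr{Z}^\vartheta_P(\varphi)$ in $L^2$; positivity and linearity in $\varphi$ pass to the limit, and the routine extension over a countable determining class yields the $\mathcal{M}_m$-valued $\mathscr{Z}^\vartheta_P$, with first and second moments $\mathbf{U}_{t_1-t_0}\,\mathlarger{\mathlarger{\circ}}\cdots\mathlarger{\mathlarger{\circ}}\,\mathbf{U}_{t_m-t_{m-1}}$ and $\mathbf{Q}^\vartheta_{t_1-t_0}\,\mathlarger{\mathlarger{\circ}}\cdots\mathlarger{\mathlarger{\circ}}\,\mathbf{Q}^\vartheta_{t_m-t_{m-1}}$.

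\textbf{Main obstacle.} The delicate point is the factorwise conditional-expectation identity in Step~1---justifying that $\mathbb{E}[\,\cdot\mid\mathscr{G}_n]$ distributes over the $\mathlarger{\mathlarger{\circ}}_\varsigma$-gluing. This rests on (i) independence of the 2d SHF over disjoint intervals to decouple the conditioning, (ii) the boundary case of Proposition~\ref{PropCond}, and (iii) a measure-valued Fubini statement, $\mathbb{E}[\mu_1\,\mathlarger{\mathlarger{\circ}}_\varsigma\,\mu_2\mid\mathcal{G}]=\mathbb{E}[\mu_1\mid\mathcal{G}]\,\mathlarger{\mathlarger{\circ}}_\varsigma\,\mathbb{E}[\mu_2\mid\mathcal{G}]$ for conditionally independent $\mu_1,\mu_2$, whose justification needs the $\mathlarger{\mathlarger{\circ}}_\varsigma$-defining integrals to converge absolutely almost surely---a consequence of the finite-moment bound of Proposition~\ref{PropBasicProperties}(ii). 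A secondary, purely analytic nuisance is the local integrability of the $m$-fold composite of the logarithmically singular densities $\dot{\mathbf{Q}}^\vartheta$ needed for the limiting second moments to be finite, which reduces to an elementary estimate on the Volterra-type kernel $K^\vartheta$.
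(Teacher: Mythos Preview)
Your proposal is correct and follows essentially the same approach as the paper: your surrogate $M_\varsigma$ is precisely the paper's $\widetilde{\mathscr{Z}}^{\vartheta,\varsigma}_P$ (Section~\ref{SubsectionMart}), and your Step~1 reproduces Lemma~\ref{LemmaMartingale} verbatim, including the use of Proposition~\ref{PropCond} in its boundary case together with the Gaussian semigroup identity to obtain the backwards martingale property. The only organizational difference is in Step~2: rather than directly expanding the cross moment, the paper inserts the intermediate $\overline{\mathscr{Z}}^{\vartheta,\varsigma}_P:=\mathbb{E}\big[\mathscr{Z}^{\vartheta,\varsigma}_P\mid \textup{F}_\varsigma\big]=\mathscr{Z}^\vartheta_{t_0,t_1}\,\mathlarger{\mathlarger{\circ}}_{2\varsigma}\,\mathscr{Z}^\vartheta_{t_1+\varsigma,t_2}\,\mathlarger{\mathlarger{\circ}}_{2\varsigma}\cdots$ and bounds $\mathscr{Z}^{\vartheta,\varsigma}_P-\overline{\mathscr{Z}}^{\vartheta,\varsigma}_P$ and $\overline{\mathscr{Z}}^{\vartheta,\varsigma}_P-\widetilde{\mathscr{Z}}^{\vartheta,\varsigma}_P$ separately via telescoping sums whose summands are controlled by Lemma~\ref{LemmaSqueeze}; this makes the $\mathbf{K}^\vartheta_\varsigma$-smallness explicit term by term, but your direct computation of the three second moments converging to the common limit $L$ accomplishes the same thing with the same ingredients.
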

 Define $\mathbf{U}_{P}:=\mathbf{U}_{t_1-t_0}\mathlarger{\mathlarger{\circ}}\cdots \mathlarger{\mathlarger{\circ}}\,\mathbf{U}_{t_m-t_{m-1}} $ and $\mathbf{Q}_P^{\mathsmaller{\vartheta}}:=\mathbf{Q}_{t_1-t_0}^{\mathsmaller{\vartheta}}\mathlarger{\mathlarger{\circ}}\cdots \mathlarger{\mathlarger{\circ}}\,\mathbf{Q}_{t_m-t_{m-1}}^{\mathsmaller{\vartheta}} $.  In other terms, $\mathbf{U}_{P}$ is the 
  measure on $(\R^2)^{m+1}$ with Lebesgue density
\begin{align*}
\text{}\hspace{.5cm} \mathbf{\dot{U}}_{P}\big(x_0,\,\ldots\,, x_m\big)\,:=\,\prod_{j=1}^m \,g_{t_j-t_{j-1}}(x_j-x_{j-1}) \,,
\end{align*}
and $\mathbf{Q}_P^{\mathsmaller{\vartheta}}$ is the measure on $ (\R^4)^{m+1}\equiv (\R^2\times \R^2)^{m+1}$ with Lebesgue density
\begin{align}\label{QDef}
\mathbf{\dot{Q}}_P^{\mathsmaller{\vartheta}} \big(x_0, x'_0; \,\ldots\,; x_m, x'_m\big)\,:=\,&\,\prod_{j=1}^m  \,g_{t_j-t_{j-1}}\bigg(\frac{x_j+x'_j}{\sqrt{2}}-\frac{x_{j-1}+x'_{j-1}}{\sqrt{2}} 
  \bigg)\nonumber \\  &\,\,\,\,\cdot\,P_{t_j-t_{j-1}}^{\mathsmaller{\vartheta}}\bigg(\frac{x_{j-1}-x'_{j-1}}{\sqrt{2}} ,\,\frac{x_j-x'_j}{\sqrt{2}}  \bigg)\,.
\end{align}
We also put $\mathbf{K}_P^{\mathsmaller{\vartheta}}:=\mathbf{Q}_P^{\mathsmaller{\vartheta}}-\mathbf{U}_P^2$.  The next proposition lists some elementary properties of the multi-interval 2d SHF.  The consistency property in (v) generalizes the Chapman-Kolmogorov relation in Proposition~\ref{PropChapman} and is crucial for the construction of the continuum polymer measures presented in the next subsection.
\begin{proposition}[Properties of the Multi-interval 2d SHF]\label{PropSHFExtensionProp} Fix $\vartheta\in \R$.  For a partition $P=\{t_0,\ldots, t_m\}$ of $[s,t]$, let  $ \mathscr{Z}^{\mathsmaller{\vartheta}}_{P} $ be defined as in Theorem~\ref{ThmSHFExtension}.  Then (i)--(vii) below hold.  
\begin{enumerate}[(i)]

\item $\mathscr{Z}^{\mathsmaller{\vartheta}}_{P}$ has first and second moments 
$\mathbf{U}_P$ and $\mathbf{Q}_P^{\mathsmaller{\vartheta}}$, respectively.

\item $\mathscr{Z}^{\mathsmaller{\vartheta}}_{P}$ is $\mathscr{F}_{[s,t]}$-measurable.

\item  $\mathscr{Z}^{\mathsmaller{\vartheta}}_{P}$ and $\mathscr{Z}^{\mathsmaller{\vartheta}}_{r+P}$ are equal in distribution for any $r>0$.

\item  $\mathscr{Z}^{\mathsmaller{\vartheta}}_{P}$ and $\mathscr{Z}^{\mathsmaller{\vartheta}}_{P}\circ \tau_a^{-1}$  are equal in distribution for each $a\in \R^2$, where $\tau_a:(\R^2)^{m+1}\rightarrow (\R^2)^{m+1}$ is the shift map $(x_0,\ldots, x_m)\rightarrow (x_0+a,\ldots,x_m+a)$.

\item  If $P'=\{s_0,\ldots, s_{\ell}\}$ is a partition of $[s,t]$ refining $P$, then $\mathscr{Z}^{\mathsmaller{\vartheta}}_{P}$ is a projection of  $\mathscr{Z}^{\mathsmaller{\vartheta}}_{P'}$ in the sense that $\mathscr{Z}^{\mathsmaller{\vartheta}}_{P}=\mathscr{Z}^{\mathsmaller{\vartheta}}_{P'}\circ \Pi_{P',P}^{-1}$ almost surely for the map $\Pi_{P',P}:(\R^2)^{\ell+1}\rightarrow (\R^2)^{m+1}$  sending $(x_{s_0}, x_{s_1},\ldots, x_{s_{\ell}})$ to $(x_{t_0},x_{t_1},\ldots, x_{t_m})$.

\item The map $P\mapsto \mathscr{Z}^{\mathsmaller{\vartheta}}_{P}$ is continuous, where $P$ is regarded as an element of the $(m+1)$-dimensional time simplex on $[0,\infty)$ and the image space is equipped with the topology of vague convergence in $L^2$.

\item  If $P'$ is a partition of $[t,u]$, then $\mathscr{Z}^{\mathsmaller{\vartheta}}_{P} \,\mathlarger{\mathlarger{\circ}}_{\varsigma}\, \mathscr{Z}^{\mathsmaller{\vartheta}}_{P'} $ converges vaguely in $L^2$ to $\mathscr{Z}^{\mathsmaller{\vartheta}}_{P\cup P'}$ as $\varsigma\rightarrow 0$.

\end{enumerate}

\end{proposition}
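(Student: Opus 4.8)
All seven assertions will be read off from the construction of $\mathscr{Z}^{\vartheta}_{P}$ in Theorem~\ref{ThmSHFExtension} as the vague $L^2$-limit of the prelimits $\mathscr{Z}^{\vartheta,\varsigma}_{P}$ in~(\ref{MartMulti}), using: the single-interval properties in Proposition~\ref{PropBasicProperties} and the Chapman--Kolmogorov relation Proposition~\ref{PropChapman} (with its strengthening Remark~\ref{RemarkL^2Conv}); the algebra of the operation $\mathlarger{\mathlarger{\circ}}_{\varsigma}$ (associativity; linearity in each slot; equivariance under the diagonal spatial shift, since the Gaussians $g_{\varsigma}$ are translation invariant; and the factorization $\mathbb{E}\big[(\mu_1\,\mathlarger{\mathlarger{\circ}}_{\varsigma}\,\mu_2)^{2}\big]=\mathbb{E}\big[\mu_1^{2}\big]\,\mathlarger{\mathlarger{\circ}}_{\varsigma}\,\mathbb{E}\big[\mu_2^{2}\big]$, with the Gaussian in doubled dimension on the right, whenever $\mu_1,\mu_2$ are independent); and the continuity/uniform domination of the kernels $g_{\varsigma}$ and $P^{\vartheta}_{a}$, which give $\mathlarger{\mathlarger{\circ}}_{\varsigma}\to\mathlarger{\mathlarger{\circ}}$ as $\varsigma\downarrow 0$ and the semigroup identities for $\{\mathbf{U}_{a}\}$ and $\{\mathbf{Q}^{\vartheta}_{a}\}$. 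Items (i)--(iv) are then short. For (i): $\mathbb{E}[\mathscr{Z}^{\vartheta,\varsigma}_{P}]=\mathbf{U}_{t_1-t_0}\,\mathlarger{\mathlarger{\circ}}_{\varsigma}\,\cdots\,\mathlarger{\mathlarger{\circ}}_{\varsigma}\,\mathbf{U}_{t_m-t_{m-1}}$ and, by Proposition~\ref{PropBasicProperties}(iii) and the factorization, $\mathbb{E}[(\mathscr{Z}^{\vartheta,\varsigma}_{P})^{2}]=\mathbf{Q}^{\vartheta}_{t_1-t_0}\,\mathlarger{\mathlarger{\circ}}_{\varsigma}\,\cdots\,\mathlarger{\mathlarger{\circ}}_{\varsigma}\,\mathbf{Q}^{\vartheta}_{t_m-t_{m-1}}$ (now with $4$-dimensional Gaussians), which converge to $\mathbf{U}_{P}$, $\mathbf{Q}^{\vartheta}_{P}$; since vague $L^2$-convergence forces $\mathscr{Z}^{\vartheta,\varsigma}_{P}(\varphi)\to\mathscr{Z}^{\vartheta}_{P}(\varphi)$ in $L^1$ and $L^2$ for $\varphi\in\hat{C}_b$, the same limits give the moments of $\mathscr{Z}^{\vartheta}_{P}$. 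For (ii): each $\mathscr{Z}^{\vartheta}_{t_{j-1},t_j}$ is $\mathscr{F}_{[s,t]}$-measurable, hence so is $\mathscr{Z}^{\vartheta,\varsigma}_{P}$, and $\mathscr{Z}^{\vartheta}_{P}(\varphi)$ is an $L^2$-limit of such variables; a random element of $\mathcal{M}_m$ is determined by countably many test functions, so $\mathscr{Z}^{\vartheta}_{P}$ is $\mathscr{F}_{[s,t]}$-measurable. For (iii)--(iv): Proposition~\ref{PropBasicProperties}(iii)--(v) give that the joint law of $(\mathscr{Z}^{\vartheta}_{t_{j-1},t_j})_{j=1}^{m}$ is invariant under $t_j\mapsto t_j+r$ and under the diagonal spatial shift of every interval; applying the fixed map~(\ref{MartMulti}) (which is spatial-shift equivariant) and letting $\varsigma\downarrow 0$ transfers these invariances to $\mathscr{Z}^{\vartheta}_{P}$.

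The concatenation (vii) follows since associativity gives $\mathscr{Z}^{\vartheta,\varsigma}_{P\cup P'}=\mathscr{Z}^{\vartheta,\varsigma}_{P}\,\mathlarger{\mathlarger{\circ}}_{\varsigma}\,\mathscr{Z}^{\vartheta,\varsigma}_{P'}$, whose left side converges vaguely in $L^2$ to $\mathscr{Z}^{\vartheta}_{P\cup P'}$ by Theorem~\ref{ThmSHFExtension}, so it remains to replace the interior smoothing scales of the two blocks by their $\varsigma\downarrow 0$ limits; the error $\mathscr{Z}^{\vartheta,\varsigma}_{P}\,\mathlarger{\mathlarger{\circ}}_{\varsigma}\,\mathscr{Z}^{\vartheta,\varsigma}_{P'}-\mathscr{Z}^{\vartheta}_{P}\,\mathlarger{\mathlarger{\circ}}_{\varsigma}\,\mathscr{Z}^{\vartheta}_{P'}$ is bilinear in the $L^2$-small blocks $\mathscr{Z}^{\vartheta,\varsigma}_{P}-\mathscr{Z}^{\vartheta}_{P}$ and $\mathscr{Z}^{\vartheta,\varsigma}_{P'}-\mathscr{Z}^{\vartheta}_{P'}$, and it vanishes in $L^2$ by the same propagation estimate used for (v).

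The substantive item is (v). By induction on $\ell-m$ it suffices to treat $P'=P\cup\{u\}$ with $t_{j-1}<u<t_j$. Setting $A^{\varsigma}:=\mathscr{Z}^{\vartheta,\varsigma}_{\{t_0,\ldots,t_{j-1}\}}$ and $B^{\varsigma}:=\mathscr{Z}^{\vartheta,\varsigma}_{\{t_j,\ldots,t_m\}}$, associativity gives $\mathscr{Z}^{\vartheta,\varsigma}_{P'}=A^{\varsigma}\,\mathlarger{\mathlarger{\circ}}_{\varsigma}\,\mathscr{Z}^{\vartheta}_{t_{j-1},u}\,\mathlarger{\mathlarger{\circ}}_{\varsigma}\,\mathscr{Z}^{\vartheta}_{u,t_j}\,\mathlarger{\mathlarger{\circ}}_{\varsigma}\,B^{\varsigma}$ and $\mathscr{Z}^{\vartheta,\varsigma}_{P}=A^{\varsigma}\,\mathlarger{\mathlarger{\circ}}_{\varsigma}\,\mathscr{Z}^{\vartheta}_{t_{j-1},t_j}\,\mathlarger{\mathlarger{\circ}}_{\varsigma}\,B^{\varsigma}$; integrating out the coordinate at time $u$ turns the inner $\mathlarger{\mathlarger{\circ}}_{\varsigma}$ into $\mathlarger{\mathlarger{\bullet}}_{\varsigma}$, so $\mathscr{Z}^{\vartheta,\varsigma}_{P'}\circ\Pi_{P',P}^{-1}=A^{\varsigma}\,\mathlarger{\mathlarger{\circ}}_{\varsigma}\,(\mathscr{Z}^{\vartheta}_{t_{j-1},u}\,\mathlarger{\mathlarger{\bullet}}_{\varsigma}\,\mathscr{Z}^{\vartheta}_{u,t_j})\,\mathlarger{\mathlarger{\circ}}_{\varsigma}\,B^{\varsigma}$, and subtracting and using linearity of $\mathlarger{\mathlarger{\circ}}_{\varsigma}$ in the middle slot, $\mathscr{Z}^{\vartheta,\varsigma}_{P'}\circ\Pi_{P',P}^{-1}-\mathscr{Z}^{\vartheta,\varsigma}_{P}=A^{\varsigma}\,\mathlarger{\mathlarger{\circ}}_{\varsigma}\,D^{\varsigma}\,\mathlarger{\mathlarger{\circ}}_{\varsigma}\,B^{\varsigma}$ with $D^{\varsigma}:=\mathscr{Z}^{\vartheta}_{t_{j-1},u}\,\mathlarger{\mathlarger{\bullet}}_{\varsigma}\,\mathscr{Z}^{\vartheta}_{u,t_j}-\mathscr{Z}^{\vartheta}_{t_{j-1},t_j}$; the three factors are mutually independent (built from $\mathscr{Z}^{\vartheta}$ over intervals with pairwise disjoint interiors). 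Taking the second moment against $\varphi\otimes\varphi$ and using the factorization gives $(\mathbb{E}[(A^{\varsigma})^{2}]\,\mathlarger{\mathlarger{\circ}}_{\varsigma}\,\mathbb{E}[(D^{\varsigma})^{2}]\,\mathlarger{\mathlarger{\circ}}_{\varsigma}\,\mathbb{E}[(B^{\varsigma})^{2}])(\varphi\otimes\varphi)$, which one reorganizes as $\mathbb{E}[(D^{\varsigma})^{2}]$ paired with a test function $\Psi^{\varsigma}$ obtained by integrating $\varphi\otimes\varphi$ against the two $\mathbf{Q}^{\vartheta}$-chains $\mathbb{E}[(A^{\varsigma})^{2}]$, $\mathbb{E}[(B^{\varsigma})^{2}]$ (smoothed by $g_{\varsigma}$ at the interface): since these chains are dominated and convergent as $\varsigma\downarrow 0$, $\Psi^{\varsigma}$ is dominated by a fixed function of Gaussian-type decay and converges, while $\mathbb{E}[(D^{\varsigma})^{2}]\to 0$ tested against such functions by Proposition~\ref{PropChapman} with Remark~\ref{RemarkL^2Conv}; hence $\mathbb{E}[(\mathscr{Z}^{\vartheta,\varsigma}_{P'}\circ\Pi_{P',P}^{-1}-\mathscr{Z}^{\vartheta,\varsigma}_{P})(\varphi)^2]\to 0$. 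Consequently $\mathscr{Z}^{\vartheta,\varsigma}_{P'}\circ\Pi_{P',P}^{-1}\to\mathscr{Z}^{\vartheta}_{P}$ vaguely in $L^2$; the same domination lets one commute $\Pi_{P',P}$ past the limit (approximating the indicator of the eliminated coordinate by compactly supported functions, with tails controlled uniformly in $\varsigma$), so the limit equals $\mathscr{Z}^{\vartheta}_{P'}\circ\Pi_{P',P}^{-1}$, and uniqueness of $L^2$-limits yields $\mathscr{Z}^{\vartheta}_{P'}\circ\Pi_{P',P}^{-1}=\mathscr{Z}^{\vartheta}_{P}$ a.s.

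For (vi), fixing $\varphi\in\hat{C}_b$, the second-moment identities above, combined with (v) and (vii) to pass two partitions $P,\widetilde{P}$ to a common refinement, express $\mathbb{E}[\mathscr{Z}^{\vartheta}_{P}(\varphi)\,\mathscr{Z}^{\vartheta}_{\widetilde{P}}(\varphi)]$ as integrals of products of the kernels $g_{a}$, $P^{\vartheta}_{a}$ whose time arguments are gaps of the partitions; these depend jointly continuously on the partition points and are locally uniformly dominated on the open time simplex, so $P\mapsto\mathscr{Z}^{\vartheta}_{P}(\varphi)$ is $L^2$-continuous. I expect the one genuinely technical point, shared by (v) and (vii), to be this uniform-in-$\varsigma$ propagation bound $\mathbb{E}[(A^{\varsigma}\,\mathlarger{\mathlarger{\circ}}_{\varsigma}\,D^{\varsigma}\,\mathlarger{\mathlarger{\circ}}_{\varsigma}\,B^{\varsigma})(\varphi)^2]\to 0$: one must dominate the $\mathbf{Q}^{\vartheta}$-chains flanking the $L^2$-small middle factor uniformly as $\varsigma\downarrow 0$, and, because $\Pi_{P',P}$ pairs the limit with a test function of unbounded support, simultaneously control the tails in the eliminated coordinate. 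Both are supplied by the Gaussian-type bounds on the $\mathbf{Q}^{\vartheta}$-kernels that enter the proof of Theorem~\ref{ThmSHFExtension} together with the refined convergence of Remark~\ref{RemarkL^2Conv}.
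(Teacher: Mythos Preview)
Your outline is sound for (i)--(v) and (vii), and its core computations---factorizing second moments across independent intervals, isolating a middle error $D^{\varsigma}$, and letting it vanish via Chapman--Kolmogorov---coincide with what the paper ultimately does. The paper, however, organizes the argument differently in two respects. First, it works throughout with the time-shifted approximant $\widetilde{\mathscr{Z}}^{\vartheta,\varsigma}_{P}:=\mathscr{Z}^{\vartheta}_{t_0,t_1}\,\mathlarger{\mathlarger{\circ}}_{\varsigma}\,\mathscr{Z}^{\vartheta}_{t_1+\varsigma,t_2}\,\mathlarger{\mathlarger{\circ}}_{\varsigma}\cdots$, which is a genuine backwards martingale and converges $\widetilde{C}_m$-weakly in $L^2$ (Lemma~\ref{LemmaMartingale}); this stronger mode of convergence is precisely what lets one commute the projection $\Pi_{P',P}$ past the limit in (v) without the ad hoc tail-truncation you sketch. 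Second, for (vi) and (vii) the paper first establishes the multi-interval conditional expectation formula (Proposition~\ref{PropCondGen}), which produces explicit closed-form error measures of the shape $\mathbf{Q}^{\vartheta}_{P_{\le s}}\,\mathlarger{\mathlarger{\bullet}}\,\mathbf{K}^{\vartheta}_{P_{[s,t]}}\,\mathlarger{\mathlarger{\bullet}}\,\mathbf{Q}^{\vartheta}_{P_{\ge t}}$ that vanish by Lemma~\ref{LemmaSqueeze}. This replaces your bilinear propagation estimates by a single exact identity at each step.

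Your treatment of (vi) is the weakest link. Passing $P$ and $\widetilde{P}$ to a common refinement $P\cup\widetilde{P}$ introduces a gap $|r-t_k|\to 0$, so the relevant kernel $P^{\vartheta}_{|r-t_k|}$ is becoming singular; saying the integrands are ``locally uniformly dominated on the open time simplex'' does not cover this boundary behaviour, and an extra argument (e.g.\ integrating out the extra coordinate via the semigroup property before taking the limit) is needed. The paper sidesteps the issue entirely: using Proposition~\ref{PropCondGen} it compares both $\mathscr{Z}^{\vartheta}_{P}$ and $\mathscr{Z}^{\vartheta}_{P'}$ to intermediate objects $\mathscr{Z}^{\vartheta}_{P_{\le t_k}}\,\mathlarger{\mathlarger{\circ}}\,\mathbf{U}_{r-t_k}\,\mathlarger{\mathlarger{\bullet}}\,\mathscr{Z}^{\vartheta}_{P_{\ge r}}$ (and the version with $\mathlarger{\mathlarger{\bullet}}$, $\mathlarger{\mathlarger{\circ}}$ swapped), whose errors are $\mathbf{K}^{\vartheta}_{r-t_k}$-type and vanish, and then shows the two intermediates differ only by a first-moment computation involving $\mathbf{U}_{r-t_k}$. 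Your approach can be made to work, but as written it does not address the singularity at the face of the simplex.
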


For $P\subset [0,\infty)$ and $0\leq s<t$, define 
\begin{align*}
P_{\leq s}\,:=\,   \big(P\cap [0,s]\big)\cup \{ s \}\,, \hspace{.5cm} P_{\geq t}\,:=\, \big(P\cap [t,\infty)\big)\cup \{ t \}\,, \hspace{.5cm} P_{[s,t]}\,:=\, \big(P\cap [s,t]\big)\cup \{ s,t \}\,.
\end{align*}
The following generalizes Propositions~\ref{PropCond} \& \ref{PropCond2} to the multi-interval 2d SHF.
\begin{proposition} \label{PropCondGen} Fix $\vartheta\in \R$ and a partition $P$ of the interval $[r,u]$.  Let  $\mathscr{Z}^{\mathsmaller{\vartheta}}_P$ be  the $ \mathcal{M}_{m} $-valued random element defined in Theorem~\ref{ThmSHFExtension}.  Given  $s,t\in [r,u]  $  with  $s<t$ and $s,t\notin P$,
we have
\[
\mathbb{E}\Big[\,\mathscr{Z}^{\mathsmaller{\vartheta}}_{P}\,\Big|\, \mathscr{F}_{[0,s]\cup [t,\infty) } \, \Big]    \,=\, \mathscr{Z}^{\mathsmaller{\vartheta}}_{P_{\leq s}}  \,\mathlarger{\mathlarger{\bullet}}\,\mathbf{U}_{P_{[s,t]}}\,\mathlarger{\mathlarger{\bullet}}\,\mathscr{Z}^{\mathsmaller{\vartheta}}_{P_{\geq t} } \,,
\]
and the second moment of the difference between $\mathscr{Z}^{\mathsmaller{\vartheta}}_{P}$ and $\mathscr{Z}^{\mathsmaller{\vartheta}}_{P_{\leq s}}  \,\mathlarger{\mathlarger{\bullet}}\,\mathbf{U}_{P_{[s,t]}}\,\mathlarger{\mathlarger{\bullet}}\,\mathscr{Z}^{\mathsmaller{\vartheta}}_{P_{\geq t} }$ has the form
\begin{align}\label{QKQ2}
\mathbb{E}\Big[\,\big( \mathscr{Z}^{\mathsmaller{\vartheta}}_{P}  \,-\, \mathscr{Z}^{\mathsmaller{\vartheta}}_{P_{\leq s}}  \,\mathlarger{\mathlarger{\bullet}}\,\mathbf{U}_{P_{[s,t]}}\,\mathlarger{\mathlarger{\bullet}}\,\mathscr{Z}^{\mathsmaller{\vartheta}}_{P_{\geq t} }\big)^2\,\Big]\,=\,\mathbf{Q}_{P_{\leq s}}^{\mathsmaller{\vartheta}}\,\mathlarger{\mathlarger{\bullet}}\,\mathbf{K}_{P_{[s,t]}}^{\mathsmaller{\vartheta}}\,\mathlarger{\mathlarger{\bullet}}\,\mathbf{Q}^{\mathsmaller{\vartheta}}_{P_{\geq t} }\,.
\end{align}
If $s\in P$ or $t\in P$, the above equalities hold with the corresponding $\mathlarger{\mathlarger{\bullet}}$ replaced by $\mathlarger{\mathlarger{\circ}}$.
\end{proposition}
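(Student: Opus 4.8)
The plan is to establish the conditional expectation identity first, and then to deduce the second‑moment formula~(\ref{QKQ2}) from it by a conditional‑variance computation, in the spirit of the proof of Proposition~\ref{PropCond}. Throughout write $\mathscr{F}:=\mathscr{F}_{[0,s]\cup[t,\infty)}$. First I reduce to the case $s,t\in P$: put $P':=P\cup\{s,t\}$, a partition of $[r,u]$ containing $s$ and $t$ with $P'\setminus P\subseteq\{s,t\}$. By the projective consistency in Proposition~\ref{PropSHFExtensionProp}(v) one has $\mathscr{Z}^{\mathsmaller{\vartheta}}_{P}=\mathscr{Z}^{\mathsmaller{\vartheta}}_{P'}\circ\Pi_{P',P}^{-1}$ almost surely, and pushing a measure forward along $\Pi_{P',P}$ is linear and $L^1$‑bounded, hence commutes with $\mathbb{E}[\,\cdot\,|\,\mathscr{F}]$. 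Since $P'_{\leq s}=P_{\leq s}$, $P'_{[s,t]}=P_{[s,t]}$, $P'_{\geq t}=P_{\geq t}$, and since integrating out the coordinate shared by a $\mathlarger{\mathlarger{\circ}}$‑gluing converts it into the corresponding $\mathlarger{\mathlarger{\bullet}}$‑gluing (the defining relation between these operations), it suffices to prove the formula for $P'$ with all three gluings of type $\mathlarger{\mathlarger{\circ}}$; projecting out whichever of $s,t$ does not lie in $P$ then recovers the stated mixture of $\mathlarger{\mathlarger{\bullet}}$'s and $\mathlarger{\mathlarger{\circ}}$'s.

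So assume $s,t\in P$ and write $P=\{t_0<\dots<t_m\}$ with $t_a=s$, $t_b=t$, $a<b$. For $\varsigma>0$, associativity of $\mathlarger{\mathlarger{\circ}}_{\varsigma}$ lets me factor the regularized product~(\ref{MartMulti}) as $\mathscr{Z}^{\mathsmaller{\vartheta},\varsigma}_{P}=A_\varsigma\,\mathlarger{\mathlarger{\circ}}_{\varsigma}\,B_\varsigma\,\mathlarger{\mathlarger{\circ}}_{\varsigma}\,C_\varsigma$, where $A_\varsigma=\mathscr{Z}^{\mathsmaller{\vartheta},\varsigma}_{P_{\leq s}}$, $B_\varsigma=\mathscr{Z}^{\mathsmaller{\vartheta},\varsigma}_{P_{[s,t]}}$, $C_\varsigma=\mathscr{Z}^{\mathsmaller{\vartheta},\varsigma}_{P_{\geq t}}$ collect the blocks over $[r,s]$, $[s,t]$, $[t,u]$ respectively. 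By Proposition~\ref{PropSHFExtensionProp}(ii), $A_\varsigma$ is $\mathscr{F}_{[r,s]}$‑measurable and $C_\varsigma$ is $\mathscr{F}_{[t,u]}$‑measurable, so both are $\mathscr{F}$‑measurable, while $B_\varsigma$ is $\mathscr{F}_{[s,t]}$‑measurable and hence independent of $\mathscr{F}$ by Lemma~\ref{LemInd}. Because $\mathlarger{\mathlarger{\circ}}_{\varsigma}$ acts on the random measures multilinearly by integration against the bounded Gaussian kernel $g_\varsigma$, testing against $\varphi\in\hat{C}_b$ and combining Fubini with the local finiteness of second moments lets me bring $\mathbb{E}[\,\cdot\,|\,\mathscr{F}]$ under the spatial integrals, pull out the $\mathscr{F}$‑measurable factors $A_\varsigma$ and $C_\varsigma$, and replace $B_\varsigma$ by $\mathbb{E}[B_\varsigma]$, yielding $\mathbb{E}[\mathscr{Z}^{\mathsmaller{\vartheta},\varsigma}_{P}\,|\,\mathscr{F}]=A_\varsigma\,\mathlarger{\mathlarger{\circ}}_{\varsigma}\,\mathbb{E}[B_\varsigma]\,\mathlarger{\mathlarger{\circ}}_{\varsigma}\,C_\varsigma$. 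By Proposition~\ref{PropBasicProperties}(i) applied blockwise (the blocks of $B_\varsigma$ being independent) and the semigroup property of the Gaussian heat kernel, $\mathbb{E}[B_\varsigma]$ is a deterministic measure converging to $\mathbf{U}_{P_{[s,t]}}$ as $\varsigma\to0$. Now let $\varsigma\to0$: the left side converges vaguely in $L^2$ to $\mathbb{E}[\mathscr{Z}^{\mathsmaller{\vartheta}}_{P}\,|\,\mathscr{F}]$, since $\mathscr{Z}^{\mathsmaller{\vartheta},\varsigma}_{P}\to\mathscr{Z}^{\mathsmaller{\vartheta}}_{P}$ vaguely in $L^2$ (Theorem~\ref{ThmSHFExtension}) and conditional expectation is an $L^2$‑contraction on each test functional; the right side converges vaguely in $L^2$ to $\mathscr{Z}^{\mathsmaller{\vartheta}}_{P_{\leq s}}\,\mathlarger{\mathlarger{\circ}}\,\mathbf{U}_{P_{[s,t]}}\,\mathlarger{\mathlarger{\circ}}\,\mathscr{Z}^{\mathsmaller{\vartheta}}_{P_{\geq t}}$ by the same second‑moment estimates that prove Theorem~\ref{ThmSHFExtension} and Proposition~\ref{PropSHFExtensionProp}(vii), applied to the $\varsigma$‑convergent blocks $A_\varsigma\to\mathscr{Z}^{\mathsmaller{\vartheta}}_{P_{\leq s}}$, $\mathbb{E}[B_\varsigma]\to\mathbf{U}_{P_{[s,t]}}$, $C_\varsigma\to\mathscr{Z}^{\mathsmaller{\vartheta}}_{P_{\geq t}}$, the middle one now being deterministic and smooth. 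Equating the two limits gives the conditional expectation identity.

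For the second moment, set $W:=\mathbb{E}[\mathscr{Z}^{\mathsmaller{\vartheta}}_{P}\,|\,\mathscr{F}]$, which by the above equals $\mathscr{Z}^{\mathsmaller{\vartheta}}_{P_{\leq s}}\,\mathlarger{\mathlarger{\bullet}}\,\mathbf{U}_{P_{[s,t]}}\,\mathlarger{\mathlarger{\bullet}}\,\mathscr{Z}^{\mathsmaller{\vartheta}}_{P_{\geq t}}$ when $s,t\notin P$. As $W$ is $\mathscr{F}$‑measurable with $\mathbb{E}[\mathscr{Z}^{\mathsmaller{\vartheta}}_{P}\,|\,\mathscr{F}]=W$, conditioning on $\mathscr{F}$ gives $\mathbb{E}[\mathscr{Z}^{\mathsmaller{\vartheta}}_{P}(\varphi)\,W(\psi)]=\mathbb{E}[W(\varphi)\,W(\psi)]$ for all test functions, whence $\mathbb{E}[(\mathscr{Z}^{\mathsmaller{\vartheta}}_{P}-W)^2]=\mathbb{E}[(\mathscr{Z}^{\mathsmaller{\vartheta}}_{P})^2]-\mathbb{E}[W^2]=\mathbf{Q}_{P}^{\mathsmaller{\vartheta}}-\mathbb{E}[W^2]$. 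Expanding $W^2$ by bilinearity of $\mathlarger{\mathlarger{\bullet}}$ and using that $\mathscr{Z}^{\mathsmaller{\vartheta}}_{P_{\leq s}}$ and $\mathscr{Z}^{\mathsmaller{\vartheta}}_{P_{\geq t}}$ are independent with second moments $\mathbf{Q}_{P_{\leq s}}^{\mathsmaller{\vartheta}}$ and $\mathbf{Q}_{P_{\geq t}}^{\mathsmaller{\vartheta}}$ (Propositions~\ref{PropBasicProperties}(iii) and~\ref{PropSHFExtensionProp}(i),(ii)) gives $\mathbb{E}[W^2]=\mathbf{Q}_{P_{\leq s}}^{\mathsmaller{\vartheta}}\,\mathlarger{\mathlarger{\bullet}}\,\mathbf{U}_{P_{[s,t]}}^2\,\mathlarger{\mathlarger{\bullet}}\,\mathbf{Q}_{P_{\geq t}}^{\mathsmaller{\vartheta}}$. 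Finally $\mathbf{Q}_a^{\mathsmaller{\vartheta}}\mathlarger{\mathlarger{\bullet}}\mathbf{Q}_b^{\mathsmaller{\vartheta}}=\mathbf{Q}_{a+b}^{\mathsmaller{\vartheta}}$, applied inside the two blocks straddling $s$ and $t$, factors $\mathbf{Q}_{P}^{\mathsmaller{\vartheta}}=\mathbf{Q}_{P_{\leq s}}^{\mathsmaller{\vartheta}}\,\mathlarger{\mathlarger{\bullet}}\,\mathbf{Q}_{P_{[s,t]}}^{\mathsmaller{\vartheta}}\,\mathlarger{\mathlarger{\bullet}}\,\mathbf{Q}_{P_{\geq t}}^{\mathsmaller{\vartheta}}$, so subtracting and recalling $\mathbf{K}_{P_{[s,t]}}^{\mathsmaller{\vartheta}}=\mathbf{Q}_{P_{[s,t]}}^{\mathsmaller{\vartheta}}-\mathbf{U}_{P_{[s,t]}}^2$ produces~(\ref{QKQ2}); the $\mathlarger{\mathlarger{\circ}}$‑case is identical, retaining the relevant coordinate instead of integrating it out.

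The step I expect to be the main obstacle is making the second paragraph rigorous: interchanging the conditional expectation with the Gaussian‑smoothed gluings $\mathlarger{\mathlarger{\circ}}_{\varsigma}$ (a Fubini argument resting on the uniform second‑moment bounds for the prelimits $\mathscr{Z}^{\mathsmaller{\vartheta},\varepsilon}$), and, more substantially, identifying the $\varsigma\to0$ limit of $A_\varsigma\,\mathlarger{\mathlarger{\circ}}_{\varsigma}\,\mathbb{E}[B_\varsigma]\,\mathlarger{\mathlarger{\circ}}_{\varsigma}\,C_\varsigma$ as a bona fide vague‑$L^2$ limit, which amounts to rerunning the moment estimates behind Theorem~\ref{ThmSHFExtension} and Proposition~\ref{PropSHFExtensionProp}(vii) in the slightly more general situation where one of the glued blocks is deterministic and varies with $\varsigma$.
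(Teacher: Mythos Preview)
Your argument is correct and the second-moment computation at the end is essentially identical to the paper's. The route to the conditional expectation identity, however, is genuinely different. The paper does \emph{not} reduce to the case $s,t\in P$; it works with $s,t\notin P$ directly, uses the martingale approximation $\widetilde{\mathscr{Z}}^{\mathsmaller{\vartheta},\varsigma}_{P}$ from Lemma~\ref{LemmaMartingale} (whose blocks $\mathscr{Z}^{\mathsmaller{\vartheta}}_{t_{j-1}+\varsigma,t_j}$ have $\varsigma$-gaps), and applies Proposition~\ref{PropCond} to the two blocks that straddle $s$ and $t$. The payoff is that after this application the middle factor collapses \emph{exactly} to $\mathbf{U}_{P_{[s,t]}}$, independent of $\varsigma$, so the $\varsigma\to 0$ limit only requires the $\widetilde{C}$-weak $L^2$ convergence of $\widetilde{\mathscr{Z}}^{\mathsmaller{\vartheta},\varsigma}_{P_{\le s}}$ and $\widetilde{\mathscr{Z}}^{\mathsmaller{\vartheta},\varsigma}_{P_{\ge t}}$ already furnished by Lemma~\ref{LemmaMartingale}. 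Your approach trades the use of Proposition~\ref{PropCond} for Lemma~\ref{LemInd} (independence of the middle block), which is conceptually cleaner, but at the cost of a $\varsigma$-dependent middle factor $\mathbb{E}[B_\varsigma]$ and hence the extra limit argument you flag; it also relies on Proposition~\ref{PropSHFExtensionProp}(v), which in the paper's logical order is proved \emph{after} Proposition~\ref{PropCondGen} (though its proof does not actually depend on it, so no genuine circularity arises). Be careful not to cite Proposition~\ref{PropSHFExtensionProp}(vii) even for its ``estimates,'' since that part does invoke Proposition~\ref{PropCondGen}.
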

The form of the right side of~(\ref{QKQ2}) is unimportant beyond having projection~(\ref{QKQ}), which vanishes vaguely as $t\searrow s$.

\subsection{The associated continuum  polymer measures}\label{SubsectCPM}

 Recall that we define $\boldsymbol{\Upsilon}_{[s,t]}:=C\big( [s,t],\R^2 \big)$ and   equate  $\boldsymbol{\Upsilon}_{[s,t]}^2$ with $C\big( [s,t], (\R^{2})^2 \big)$.  Let $ \Lambda_{[s,t]}$ denote the set of partitions on the interval $[s,t]$.  For  $P=\{ t_0,t_1,\ldots, t_m\}\in \Lambda_{[s,t]}$, let  $\Pi_{P}: \boldsymbol{\Upsilon}_{[s,t]}\rightarrow (\R^{2})^{m+1}$ be  the $(m+1)$-point evaluation map 
\[
\Pi_{P}(p)\,= \,\big(p(t_0), p(t_1)  ,\ldots,  p(t_m) \big) \,, \hspace{1.5cm} p\in \boldsymbol{\Upsilon}_{[s,t]}\,.
\]
  Let $\mathbf{U}_{[s,t]}$ denote the  measure on $\boldsymbol{\Upsilon}_{[s,t]}$   satisfying $\mathbf{U}_{[s,t]}\circ \Pi_{P}^{-1}= \mathbf{U}_{P}$ for every $P\in \Lambda_{[s,t]}$.  In other terms,  $\mathbf{U}_{[s,t]}$ 
is the 2d Wiener measure with initial position having Lebesgue  measure. 
For a partition $P=\{t_0,\ldots, t_m\}$ of $[s,t]$, let $\mathbf{V}^{\mathsmaller{\vartheta}}_{P} $ denote the measure on $(\R^2)^{m+1}$ with Lebesgue density
\begin{align*}
\mathbf{\dot{V}}^{\mathsmaller{\vartheta}}_{P}(x_0,\ldots, x_m) \,=\,\prod_{j=1}^m\, P_{t_j-t_{j-1}}^{\mathsmaller{\vartheta}} (x_{j-1},x_j)\,, \hspace{1cm}x_j\in \R^2\,.
\end{align*}
There exists  a unique  measure $\mathbf{V}^{\mathsmaller{\vartheta}}_{[s,t]} $ on  $\boldsymbol{\Upsilon}_{[s,t]}$ satisfying $\mathbf{V}^{\mathsmaller{\vartheta}}_{[s,t]}\circ \Pi_P^{-1}= \mathbf{V}^{\mathsmaller{\vartheta}}_{P}$ for all $P\in \Lambda_{[s,t]}$; see the discussion in Section~\ref{SubsectionSM}.  Define the map $R:\boldsymbol{\Upsilon}_{[s,t]}^2\rightarrow \boldsymbol{\Upsilon}_{[s,t]}^2$ by $R(p,q)=\big(\frac{p+q}{\sqrt{2}}, \frac{p-q}{\sqrt{2}} \big) $.  Let $\mathbf{Q
}^{\mathsmaller{\vartheta}}_{[s,t]}$ be the  measure on $\boldsymbol{\Upsilon}_{[s,t]}^2$ given by pushing forward  the product measure   $\mathbf{U}_{[s,t]}\times \mathbf{V}^{\mathsmaller{\vartheta}}_{[s,t]} $ by $R$: 
\begin{align}\label{QUV}
\mathbf{Q
}^{\mathsmaller{\vartheta}}_{[s,t]}\,:=\, \big( \mathbf{U}_{[s,t]}\times \mathbf{V}^{\mathsmaller{\vartheta}}_{[s,t]} \big)\circ R^{-1}    \,.
\end{align}
Thus $\mathbf{Q}_{[s,t]}^{\mathsmaller{\vartheta}}$ satisfies $\mathbf{Q}_{[s,t]}^{\mathsmaller{\vartheta}}\circ (\Pi_{P}^2)^{-1}\,=\, \mathbf{Q}_{P}^{\mathsmaller{\vartheta}}$ for all $P\in \Lambda_{[s,t]}$, where $\mathbf{Q}_{P}^{\mathsmaller{\vartheta}}$ is defined in~(\ref{QDef}) and $\Pi_{P}^2: \boldsymbol{\Upsilon}_{[s,t]}^2\rightarrow (\R^{2}\times\R^2)^{m+1}$ is the evaluation map 
\[
\Pi_{P}^2(p,q)\,= \,\big(p(t_0), q(t_0); p(t_1) ,q(t_1) ;\ldots;  p(t_m),q(t_m) \big) \,, \hspace{1.5cm} p,q\in \boldsymbol{\Upsilon}_{[s,t]}\,.
\] 
Also define $\mathbf{K}_{[s,t]}^{\mathsmaller{\vartheta}}:=\mathbf{Q}_{[s,t]}^{\mathsmaller{\vartheta}}- \mathbf{U}_{[s,t]}^2$.  The proofs of the theorem and three propositions below are in Section~\ref{SecCPM}.
\begin{theorem}[Continuum Polymer Measure]\label{ThmPathMeasConst} Fix  $\vartheta\in \R$. Let the family of random measures  $\{\mathscr{Z}^{\mathsmaller{\vartheta}}_{P}\}_{P\in\Lambda_{[s,t]} }$ be defined as in Theorem~\ref{ThmSHFExtension}.    There exists a $\mathcal{M}(\boldsymbol{\Upsilon}_{[s,t]})$-valued random element $\mathscr{Z}_{[s,t]}^{\mathsmaller{\vartheta}}$ such that for any $P\in \Lambda_{[s,t]}$ 
\begin{align}\label{ZProject}
  \mathscr{Z}_{[s,t]}^{\mathsmaller{\vartheta}}\circ \Pi_{P}^{-1}  \, =\,   \mathscr{Z}^{\mathsmaller{\vartheta}}_{P} \hspace{.5cm}\mathbb{P}\text{-a.e.}  
\end{align}
\end{theorem}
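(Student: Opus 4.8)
The plan is to construct $\mathscr{Z}_{[s,t]}^{\mathsmaller{\vartheta}}$ via Kolmogorov's extension theorem applied to the projective family $\{\mathscr{Z}^{\mathsmaller{\vartheta}}_{P}\}_{P\in\Lambda_{[s,t]}}$, working on a fixed probability space $(\Omega,\mathcal{F},\mathbb{P})$ carrying $\mathscr{Z}^{\mathsmaller{\vartheta}}$. The key input is part (v) of Proposition~\ref{PropSHFExtensionProp}: if $P'$ refines $P$, then $\mathscr{Z}^{\mathsmaller{\vartheta}}_{P}=\mathscr{Z}^{\mathsmaller{\vartheta}}_{P'}\circ\Pi_{P',P}^{-1}$ almost surely. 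This is exactly the consistency relation needed for a projective limit, but there is a subtlety: the exceptional $\mathbb{P}$-null set depends on the pair $(P,P')$, and $\Lambda_{[s,t]}$ is uncountable, so one cannot simply intersect null sets. I would handle this by first restricting to a countable cofinal family of partitions, then recovering all partitions by continuity.

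\textbf{Step 1: a countable dense skeleton.}
Let $\mathcal{D}$ be the countable directed set of dyadic partitions of $[s,t]$, ordered by refinement, with $\bigcup_{P\in\mathcal{D}}P$ dense in $[s,t]$. On a single $\mathbb{P}$-full event $\Omega_0$ (the countable intersection over pairs $P\subset P'$ in $\mathcal{D}$ of the full events from Proposition~\ref{PropSHFExtensionProp}(v)), the family $\{\mathscr{Z}^{\mathsmaller{\vartheta}}_{P}(\omega)\}_{P\in\mathcal{D}}$ is a genuine projective system of locally finite measures on the spaces $(\R^2)^{|P|}$. I would verify the hypotheses of the Kolmogorov/Bochner extension theorem for projective limits of (locally finite, or $\sigma$-finite) measures along a countable directed index set: the relevant regularity is automatic here since each $\mathscr{Z}^{\mathsmaller{\vartheta}}_{P}$ is locally finite on a Polish space and the consistency maps $\Pi_{P',P}$ are the canonical coordinate projections. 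This yields, for each $\omega\in\Omega_0$, a locally finite measure $\widetilde{\mathscr{Z}}^{\mathsmaller{\vartheta}}_{[s,t]}(\omega)$ on the projective limit, which I identify with $\boldsymbol{\Upsilon}_{[s,t]}=C([s,t],\R^2)$ by the standard argument that the support concentrates on continuous paths --- here one borrows tightness from the first moment being Wiener measure $\mathbf{U}_{[s,t]}$ (Proposition~\ref{PropSHFExtensionProp}(i)), i.e.\ $\mathbb{E}[\widetilde{\mathscr{Z}}^{\mathsmaller{\vartheta}}_{[s,t]}(A)]<\infty$ forces $\widetilde{\mathscr{Z}}^{\mathsmaller{\vartheta}}_{[s,t]}$ to charge only the continuous-path subset of the product space, up to a null set. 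Measurability of $\omega\mapsto\widetilde{\mathscr{Z}}^{\mathsmaller{\vartheta}}_{[s,t]}(\omega)$ in the vague topology follows because $\widetilde{\mathscr{Z}}^{\mathsmaller{\vartheta}}_{[s,t]}(\omega)(\varphi\circ\Pi_P)=\mathscr{Z}^{\mathsmaller{\vartheta}}_{P}(\omega)(\varphi)$ is measurable for cylinder test functions, and cylinder functions are measure-determining.

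\textbf{Step 2: extending (\ref{ZProject}) from $\mathcal{D}$ to all of $\Lambda_{[s,t]}$.}
By construction $\widetilde{\mathscr{Z}}^{\mathsmaller{\vartheta}}_{[s,t]}\circ\Pi_P^{-1}=\mathscr{Z}^{\mathsmaller{\vartheta}}_{P}$ a.s.\ for $P\in\mathcal{D}$. For a general $P=\{t_0,\dots,t_m\}\in\Lambda_{[s,t]}$, choose a sequence $P^{(k)}\in\mathcal{D}$ (or a sequence of partitions with dyadic-approximated breakpoints converging to those of $P$). I would use two continuity facts: on the one hand, $\widetilde{\mathscr{Z}}^{\mathsmaller{\vartheta}}_{[s,t]}\circ\Pi_{P^{(k)}}^{-1}\to\widetilde{\mathscr{Z}}^{\mathsmaller{\vartheta}}_{[s,t]}\circ\Pi_P^{-1}$ vaguely a.s., simply because $\Pi_{P^{(k)}}\to\Pi_P$ pointwise on the continuous paths that carry the measure (evaluation at a time is continuous in the uniform topology) together with local finiteness; on the other hand, $\mathscr{Z}^{\mathsmaller{\vartheta}}_{P^{(k)}}\to\mathscr{Z}^{\mathsmaller{\vartheta}}_{P}$ vaguely in $L^2$ by Proposition~\ref{PropSHFExtensionProp}(vi). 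Passing to an a.s.-convergent subsequence and matching limits gives $\widetilde{\mathscr{Z}}^{\mathsmaller{\vartheta}}_{[s,t]}\circ\Pi_P^{-1}=\mathscr{Z}^{\mathsmaller{\vartheta}}_{P}$ $\mathbb{P}$-a.s.\ for this arbitrary $P$, which is exactly (\ref{ZProject}). Setting $\mathscr{Z}^{\mathsmaller{\vartheta}}_{[s,t]}:=\widetilde{\mathscr{Z}}^{\mathsmaller{\vartheta}}_{[s,t]}$ on $\Omega_0$ and (say) the zero measure off $\Omega_0$ completes the construction.

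\textbf{Main obstacle.}
The genuine difficulty is Step~1's measure-theoretic bookkeeping: the Kolmogorov extension theorem in its textbook form is stated for \emph{probability} (or finite) measures, whereas here each $\mathscr{Z}^{\mathsmaller{\vartheta}}_{P}$ is only locally finite and in fact almost surely infinite. I expect to spend most of the effort reducing to the finite case --- for instance by working instead with the tilted finite measures $\varphi\cdot\mathscr{Z}^{\mathsmaller{\vartheta}}_{P}$ for a fixed strictly positive integrable weight $\varphi$ on $\R^2$ (or a countable exhausting family of such weights), applying Kolmogorov there, and then stitching the pieces together; compatibility of the reweighting across partitions is what makes this work, and it has to be checked. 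The companion point --- that the projective-limit measure lives on $C([s,t],\R^2)$ rather than merely on the product $\sigma$-algebra --- is a standard Kolmogorov-continuity-type argument leveraged off $\mathbf{U}_{[s,t]}=\mathbb{E}[\mathscr{Z}^{\mathsmaller{\vartheta}}_{[s,t]}]$, so I regard it as routine by comparison.
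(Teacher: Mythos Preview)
Your proposal is correct and matches the paper's architecture: restrict to the countable family of dyadic partitions, intersect null sets to get simultaneous consistency, apply Kolmogorov's extension theorem to obtain a measure on $(\R^2)^D$, show it lives on uniformly continuous functions, pull back to $\boldsymbol{\Upsilon}_{[s,t]}$, and finally upgrade~(\ref{ZProject}) from dyadic $P$ to arbitrary $P$ via the continuity in Proposition~\ref{PropSHFExtensionProp}(vi) exactly as you describe.

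Two minor points of comparison. First, for the $\sigma$-finiteness obstacle you flag, the paper does not tilt by a weight but simply localizes to the cylinder sets $\widehat{B}_N=\{p:|p(0)|\le N\}$, on which the restricted random measure $\widehat{\mathscr{Z}}^{\mathsmaller{\vartheta},N}$ is almost surely finite; this is equivalent to your weight approach and slightly cleaner notationally. Second, for the support on continuous paths, the paper carries out a full Kolmogorov continuity criterion: it computes $\mathbb{E}\big[\int |p(\mathbf{t})-p(\mathbf{s})|^{2m}\,\widehat{\mathscr{Z}}^{\mathsmaller{\vartheta},N}(dp)\big]=C_{N,m}(\mathbf{t}-\mathbf{s})^m$ and runs Borel--Cantelli twice (once in $\omega$, once in $p$). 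Your first-moment shortcut---the complement of the uniformly continuous functions is $\mathbf{U}^D$-null, hence $\mathbb{E}[\widehat{\mathscr{Z}}^{\mathsmaller{\vartheta},N}(\cdot)]$-null, hence almost surely $\widehat{\mathscr{Z}}^{\mathsmaller{\vartheta},N}$-null---is genuinely simpler and works, since the non-uniformly-continuous set lies in the product $\sigma$-algebra on $(\R^2)^D$ and the identification $\mathbb{E}[\widehat{\mathscr{Z}}^{\mathsmaller{\vartheta},N}]=\mathbf{U}^D|_{\widehat{B}_N}$ follows by a monotone class argument from the cylinder sets.
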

The   proposition below follows closely from the defining property~(\ref{ZProject}) and  (i)--(iv) in Proposition~\ref{PropSHFExtensionProp}.
\begin{proposition}\label{PropPathMeasProp} Fix  $\vartheta\in \R$.    If $\mathscr{Z}_{[s,t]}^{\mathsmaller{\vartheta}}$ is the  $\mathcal{M}(\boldsymbol{\Upsilon}_{[s,t]})$-valued random element in Theorem~\ref{ThmPathMeasConst}, then (i)--(iv) below hold.  
\begin{enumerate}[(i)]

\item $\mathscr{Z}^{\mathsmaller{\vartheta}}_{[s,t]}$ has first and second moments 
$\mathbf{U}_{[s,t]}$ and $\mathbf{Q}_{[s,t]}^{\mathsmaller{\vartheta}}$, respectively.

\item $\mathscr{Z}^{\mathsmaller{\vartheta}}_{[s,t]}$ is $\mathscr{F}_{[s,t]}$-measurable.

\item  $\mathscr{Z}^{\mathsmaller{\vartheta}}_{[s,t]}$ 
 and $\mathscr{Z}^{\mathsmaller{\vartheta}}_{[u+s,u+t]}\circ \mathbf{s}_u^{-1}$   are equal in distribution for any $u>0$, where $\mathbf{s}_u: \boldsymbol{\Upsilon}_{[u+s,u+t]}\rightarrow \boldsymbol{\Upsilon}_{[s,t]}$ is defined through $(\mathbf{s}_u p)(r)=p(r+u)$.

\item $\mathscr{Z}^{\mathsmaller{\vartheta}}_{[s,t]}$  and $\mathscr{Z}^{\mathsmaller{\vartheta}}_{[s,t]}\circ \tau_a^{-1}$  are equal in distribution for each $a\in \R^2$, where $\tau_a:\boldsymbol{\Upsilon}_{[s,t]}\rightarrow \boldsymbol{\Upsilon}_{[s,t]}$ is defined through $(\tau_a p)(r)=p(r)+a$ .

\end{enumerate}

\end{proposition}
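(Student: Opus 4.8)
All four statements reduce to the corresponding facts about the multi-interval measures $\mathscr{Z}^{\vartheta}_{P}$ in Proposition~\ref{PropSHFExtensionProp} by pushing everything through the defining relation~(\ref{ZProject}), i.e. $\mathscr{Z}^{\vartheta}_{[s,t]}\circ\Pi_{P}^{-1}=\mathscr{Z}^{\vartheta}_{P}$ a.s.\ for every $P\in\Lambda_{[s,t]}$. The organizing principle, already implicit in the construction of $\mathscr{Z}^{\vartheta}_{[s,t]}$ in Theorem~\ref{ThmPathMeasConst}, is that a locally finite measure on $\boldsymbol{\Upsilon}_{[s,t]}$, as well as the law of an $\mathcal{M}(\boldsymbol{\Upsilon}_{[s,t]})$-valued random element, is determined by its finite-dimensional evaluation projections, since the point-evaluation cylinder sets form a $\pi$-system generating the Borel $\sigma$-algebra and the measures in play ($\mathbf{U}_{[s,t]}$, $\mathbf{Q}^{\vartheta}_{[s,t]}$, and the laws themselves) are $\sigma$-finite.

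For (i): for a cylinder set $A=\Pi_{P}^{-1}(B)$ one has $\mathscr{Z}^{\vartheta}_{[s,t]}(A)=\mathscr{Z}^{\vartheta}_{P}(B)$ a.s.\ by~(\ref{ZProject}), whence $\mathbb{E}\big[\mathscr{Z}^{\vartheta}_{[s,t]}(A)\big]=\mathbf{U}_{P}(B)=\mathbf{U}_{[s,t]}(A)$ using Proposition~\ref{PropSHFExtensionProp}(i) and the defining property of $\mathbf{U}_{[s,t]}$; agreement on the generating $\pi$-system and $\sigma$-finiteness give $\mathbb{E}[\mathscr{Z}^{\vartheta}_{[s,t]}]=\mathbf{U}_{[s,t]}$. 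The second-moment identity is the same computation with $\Pi_{P}$ replaced by $\Pi_{P}^{2}$, using $(\mathscr{Z}^{\vartheta}_{[s,t]})^{2}\circ(\Pi_{P}^{2})^{-1}=(\mathscr{Z}^{\vartheta}_{P})^{2}$, the identity $\mathbb{E}[(\mathscr{Z}^{\vartheta}_{P})^{2}]=\mathbf{Q}^{\vartheta}_{P}$ from Proposition~\ref{PropSHFExtensionProp}(i), and the relation $\mathbf{Q}^{\vartheta}_{[s,t]}\circ(\Pi_{P}^{2})^{-1}=\mathbf{Q}^{\vartheta}_{P}$ recorded just after~(\ref{QUV}). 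For (ii): for each $\varphi\in\hat{C}_b(\boldsymbol{\Upsilon}_{[s,t]})$ the random variable $\mathscr{Z}^{\vartheta}_{[s,t]}(\varphi)$ is an a.s.\ limit of integrals $\mathscr{Z}^{\vartheta}_{P_n}(\varphi_n)$ against cylinder approximations $\varphi_n$, and each $\mathscr{Z}^{\vartheta}_{P_n}(\varphi_n)$ is $\mathscr{F}_{[s,t]}$-measurable by Proposition~\ref{PropSHFExtensionProp}(ii); since such functionals generate the Borel $\sigma$-algebra of $\mathcal{M}(\boldsymbol{\Upsilon}_{[s,t]})$, the random measure $\mathscr{Z}^{\vartheta}_{[s,t]}$ is $\mathscr{F}_{[s,t]}$-measurable.

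For (iii) and (iv) one transports~(\ref{ZProject}) across the relevant deterministic maps. For $P=\{t_0,\dots,t_m\}\in\Lambda_{[s,t]}$ a direct check gives $\Pi_{P}\circ\mathbf{s}_u=\Pi_{u+P}$ as maps $\boldsymbol{\Upsilon}_{[u+s,u+t]}\to(\R^{2})^{m+1}$, so $\big(\mathscr{Z}^{\vartheta}_{[u+s,u+t]}\circ\mathbf{s}_u^{-1}\big)\circ\Pi_{P}^{-1}=\mathscr{Z}^{\vartheta}_{[u+s,u+t]}\circ\Pi_{u+P}^{-1}=\mathscr{Z}^{\vartheta}_{u+P}$ a.s., while $\mathscr{Z}^{\vartheta}_{[s,t]}\circ\Pi_{P}^{-1}=\mathscr{Z}^{\vartheta}_{P}$, and these coincide in law by Proposition~\ref{PropSHFExtensionProp}(iii). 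Similarly $\Pi_{P}\circ\tau_a=\tau_a^{(m+1)}\circ\Pi_{P}$, where $\tau_a^{(m+1)}$ is the diagonal shift on $(\R^{2})^{m+1}$, so $\big(\mathscr{Z}^{\vartheta}_{[s,t]}\circ\tau_a^{-1}\big)\circ\Pi_{P}^{-1}=\mathscr{Z}^{\vartheta}_{P}\circ(\tau_a^{(m+1)})^{-1}$, which has the same law as $\mathscr{Z}^{\vartheta}_{P}$ by Proposition~\ref{PropSHFExtensionProp}(iv). In each case the two random measures being compared have, for every $P$, the same law after projection by $\Pi_{P}$; since both are built as projective limits over the directed set $\Lambda_{[s,t]}$ of consistent families (Proposition~\ref{PropSHFExtensionProp}(v)) with deterministic projection maps, matching marginal laws force matching joint laws of the whole families, hence equality in law of the two path measures.

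The content is essentially bookkeeping on top of Proposition~\ref{PropSHFExtensionProp}; the only non-formal ingredient is the measure-theoretic point flagged in the first paragraph, namely that finite point-evaluation cylinders are measure- and law-determining for locally finite measures on the (non-locally-compact) path space $\boldsymbol{\Upsilon}_{[s,t]}$ equipped with the vague topology. This is, however, precisely the input already needed to make sense of Theorem~\ref{ThmPathMeasConst}, and I expect it to be the main—indeed the only—obstacle here, so that in the write-up it can simply be invoked rather than reproved.
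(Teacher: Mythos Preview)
Your proposal is correct and takes essentially the same approach as the paper: all four parts are reduced to Proposition~\ref{PropSHFExtensionProp} through the defining relation~(\ref{ZProject}). The paper's execution for (ii) and (iii) invokes an explicit dissecting semi-ring of cylinder sets (via \cite[Lemma 4.7]{Kallenberg}) rather than your cylinder-function approximation of test functions, but this is a minor technical variation on the same idea.
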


Fix  $0\leq s<t$ and $x,y\in \R^2$.  Let $\mathbf{P}_{[s,t]}^{x,y}$ denote the 2d Brownian bridge measure on the time interval $[s,t]$ for paths with $p(s)=x$ and $p(t)=y$.  In other terms, $\mathbf{P}_{[s,t]}^{x,y}$ is the probability measure on $\boldsymbol{\Upsilon}_{[s,t]}$ such that for any partition $P=\{t_0,\ldots, t_m\}$ of $[s,t]$
\[
\mathbf{P}_{[s,t]}^{x,y}\circ \Pi_{P}^{-1} \big(dx_0, dx_1,\ldots,dx_{m}\big) \,=\,  \frac{ \prod_{1}^m\,g_{t_j-t_{j-1}}(x_j-x_{j-1}) }{g_{t-s}(y-x)  }\,\delta_x(dx_0) \,  dx_1\,\cdots \,dx_{m-1}\,\delta_y(dx_m)\,.
\]
 Let $\mu_1$ and $\mu_2$ be measures on $\boldsymbol{\Upsilon}_{[r,s]}$ and $\boldsymbol{\Upsilon}_{[t,u]}$ for $r\in [0,s)$, $u\in (t,\infty)$. Consider the  measure on $\boldsymbol{\Upsilon}_{[r,s]}\times \boldsymbol{\Upsilon}_{[s,t]}\times \boldsymbol{\Upsilon}_{[t,u]}$ given by
\begin{align}\label{ToPullBack}
\lambda_{\mu_1,\mu_2}\big(dp_1,dp_2,dp_3\big)=  \mu_1(dp_{1})  \,\mu_2(dp_{3})\,\mathbf{P}_{[s,t]}^{p_1(s),p_3(t)}(dp_{2})\,, \hspace{.2cm}p_1\in \boldsymbol{\Upsilon}_{[r,s]}\,,\,p_2\in  \boldsymbol{\Upsilon}_{[s,t]}\,, \,p_3\in  \boldsymbol{\Upsilon}_{[t,u]}\,.
\end{align}
We then define $\mu_1 \odot   \mu_2$ as the measure on  $\boldsymbol{\Upsilon}_{[r,u]}$ that results from  pulling back the measure~(\ref{ToPullBack}) by the injective map 
\[
p\in \boldsymbol{\Upsilon}_{[r,u]} \hspace{.4cm}
 \longmapsto  \hspace{.4cm} \big(p_{[r,s]}, p_{[s,t]},p_{[t,u]} \big)\in \boldsymbol{\Upsilon}_{[r,s]}\times \boldsymbol{\Upsilon}_{[s,t]}\times \boldsymbol{\Upsilon}_{[t,u]} \,,
 \]
where $p_A$ denotes the restriction of $p$ to $A\subset [r,u]$.  We can characterize $\mu_1 \odot   \mu_2$ as the unique measure on $\boldsymbol{\Upsilon}_{[r,u]}$  satisfying
\[
\big(\mu_1 \odot   \mu_2\big)\circ \Pi_{P}^{-1}\,=\,\big(\mu_1 \circ \Pi_{P_{\leq s}}^{-1}\big)\,\mathlarger{\mathlarger{\circ}}\,\mathbf{U}_{P_{[s,t]}}\,\mathlarger{\mathlarger{\circ}}\,\big(\mu_2 \circ \Pi_{P_{\geq t}}^{-1}\big)
\]
for every $P\in \Lambda_{[r,u]}$ with $s,t\in P$.  The next proposition follows from~(\ref{ZProject}) and  Proposition~\ref{PropCondGen}.
\begin{proposition}\label{PropCondExpLast} Fix  $\vartheta\in \R$ and $0\leq r<s<t<u$. Let the   $\mathcal{M}(\boldsymbol{\Upsilon}_{[r,u]})$-valued random  element  $\mathscr{Z}^{\mathsmaller{\vartheta}}_{[r,u]} $ be defined as in Theorem~\ref{ThmPathMeasConst}.
  Then we have
  \begin{align*}
\mathbb{E}\Big[\,\mathscr{Z}^{\mathsmaller{\vartheta}}_{[r,u]}\,\Big|\,\mathscr{F}_{[0,s]\cup [t,\infty) } \, \Big]\,=\, \mathscr{Z}^{\mathsmaller{\vartheta}}_{[r,s]}\odot   \mathscr{Z}^{\mathsmaller{\vartheta}}_{[t,u]}\,.
  \end{align*}
 The second moment of the difference between  $\mathscr{Z}^{\mathsmaller{\vartheta}}_{[r,u]}$ and $\mathscr{Z}^{\mathsmaller{\vartheta}}_{[r,s]}\odot   \mathscr{Z}^{\mathsmaller{\vartheta}}_{[t,u]}$ has pushforward by $\Pi_{r,u}^2$ 
 \begin{align}\label{FinalVariance}
\mathbb{E}\Big[\,\big( \mathscr{Z}^{\mathsmaller{\vartheta}}_{[r,u]} \,-\, \mathscr{Z}^{\mathsmaller{\vartheta}}_{[r,s]}\odot   \mathscr{Z}^{\mathsmaller{\vartheta}}_{[t,u]}\big)^2\,\Big]\circ \big(\Pi_{r,u}^2\big)^{-1}\,=\,\mathbf{Q}_{s-r}^{\mathsmaller{\vartheta}}\,\mathlarger{\mathlarger{\bullet}}\,\mathbf{K}_{t-s}^{\mathsmaller{\vartheta}}\,\mathlarger{\mathlarger{\bullet}}\,\mathbf{Q}^{\mathsmaller{\vartheta}}_{u-t}\,.
\end{align}
\end{proposition}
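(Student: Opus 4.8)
The plan is to derive the proposition from the projective identity~(\ref{ZProject}), Proposition~\ref{PropCondGen}, and Proposition~\ref{PropCond2}, using the characterization of the path-space operation $\odot$ recorded just above the statement. The guiding principle is that any assertion about the $\mathcal{M}(\boldsymbol{\Upsilon}_{[r,u]})$-valued random element $\mathscr{Z}^{\mathsmaller{\vartheta}}_{[r,u]}$ reduces, via~(\ref{ZProject}), to the corresponding assertion about its finite-dimensional projections $\mathscr{Z}^{\mathsmaller{\vartheta}}_{[r,u]}\circ\Pi_P^{-1}=\mathscr{Z}^{\mathsmaller{\vartheta}}_{P}$, where the identities we need are already available.

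For the conditional expectation formula, I would set $M:=\mathscr{Z}^{\mathsmaller{\vartheta}}_{[r,s]}\odot\mathscr{Z}^{\mathsmaller{\vartheta}}_{[t,u]}$. Since $\mathscr{Z}^{\mathsmaller{\vartheta}}_{[r,s]}$ and $\mathscr{Z}^{\mathsmaller{\vartheta}}_{[t,u]}$ are $\mathscr{F}_{[r,s]}$- and $\mathscr{F}_{[t,u]}$-measurable respectively (Proposition~\ref{PropPathMeasProp}(ii)) and $\odot$ is a deterministic operation, $M$ is $\mathscr{F}_{[0,s]\cup[t,\infty)}$-measurable. I would then fix a partition $P\in\Lambda_{[r,u]}$ with $s,t\in P$ and compute $M\circ\Pi_P^{-1}$ in two ways: the defining property of $\odot$, together with~(\ref{ZProject}) applied over $[r,s]$ and over $[t,u]$, gives $M\circ\Pi_P^{-1}=\mathscr{Z}^{\mathsmaller{\vartheta}}_{P_{\leq s}}\,\mathlarger{\mathlarger{\circ}}\,\mathbf{U}_{P_{[s,t]}}\,\mathlarger{\mathlarger{\circ}}\,\mathscr{Z}^{\mathsmaller{\vartheta}}_{P_{\geq t}}$, while Proposition~\ref{PropCondGen} (in the case $s,t\in P$, where the $\mathlarger{\mathlarger{\bullet}}$'s become $\mathlarger{\mathlarger{\circ}}$'s) identifies the right-hand side with $\mathbb{E}\big[\mathscr{Z}^{\mathsmaller{\vartheta}}_{P}\,\big|\,\mathscr{F}_{[0,s]\cup[t,\infty)}\big]$. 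Because conditional expectation commutes with the pushforward by the continuous linear map $\mu\mapsto\mu\circ\Pi_P^{-1}$ (using that $\mathscr{Z}^{\mathsmaller{\vartheta}}_{P}$ has locally finite first moment $\mathbf{U}_P$, so the relevant quantities are integrable against bounded-support test functions), this says that $M\circ\Pi_P^{-1}$ is a version of $\mathbb{E}\big[\mathscr{Z}^{\mathsmaller{\vartheta}}_{[r,u]}\,\big|\,\mathscr{F}_{[0,s]\cup[t,\infty)}\big]\circ\Pi_P^{-1}$ for every such $P$. Since the cylinder sets coming from partitions containing $s$ and $t$ form a measure-determining $\pi$-system on $\boldsymbol{\Upsilon}_{[r,u]}$ and $\mathscr{Z}^{\mathsmaller{\vartheta}}_{[r,u]}$ is locally finite with an exhaustion of $\boldsymbol{\Upsilon}_{[r,u]}$ by finite-measure cylinder sets, the agreement of all these projections forces $M=\mathbb{E}\big[\mathscr{Z}^{\mathsmaller{\vartheta}}_{[r,u]}\,\big|\,\mathscr{F}_{[0,s]\cup[t,\infty)}\big]$ almost surely.

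For the variance identity I would push the difference $\nu:=\mathscr{Z}^{\mathsmaller{\vartheta}}_{[r,u]}-\mathscr{Z}^{\mathsmaller{\vartheta}}_{[r,s]}\odot\mathscr{Z}^{\mathsmaller{\vartheta}}_{[t,u]}$ forward under the two-point evaluation $\Pi_{r,u}^2$ and use that squaring, taking the expectation, and pushing forward all commute, so that $\mathbb{E}[\nu^2]\circ(\Pi_{r,u}^2)^{-1}=\mathbb{E}\big[\big(\nu\circ\Pi_{\{r,u\}}^{-1}\big)^2\big]$ in the convention of the notation section. For the coarsest partition $\{r,u\}$ one has $\mathscr{Z}^{\mathsmaller{\vartheta}}_{[r,u]}\circ\Pi_{\{r,u\}}^{-1}=\mathscr{Z}^{\mathsmaller{\vartheta}}_{r,u}$, while projecting the $\odot$-characterization at $P=\{r,s,t,u\}$ down to the endpoints---that is, integrating the time-$s$ and time-$t$ coordinates out of $\mathscr{Z}^{\mathsmaller{\vartheta}}_{r,s}\,\mathlarger{\mathlarger{\circ}}\,\mathbf{U}_{t-s}\,\mathlarger{\mathlarger{\circ}}\,\mathscr{Z}^{\mathsmaller{\vartheta}}_{t,u}$---reproduces exactly the measure $\mathscr{Z}^{\mathsmaller{\vartheta}}_{r,s}\,\mathlarger{\mathlarger{\bullet}}_{t-s}\,\mathscr{Z}^{\mathsmaller{\vartheta}}_{t,u}$ of~(\ref{MeasComb}). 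Hence $\nu\circ\Pi_{\{r,u\}}^{-1}=\mathscr{Z}^{\mathsmaller{\vartheta}}_{r,u}-\mathscr{Z}^{\mathsmaller{\vartheta}}_{r,s}\,\mathlarger{\mathlarger{\bullet}}_{t-s}\,\mathscr{Z}^{\mathsmaller{\vartheta}}_{t,u}$, and Proposition~\ref{PropCond2} gives $\mathbb{E}\big[\big(\nu\circ\Pi_{\{r,u\}}^{-1}\big)^2\big]=\mathbf{Q}^{\mathsmaller{\vartheta}}_{s-r}\,\mathlarger{\mathlarger{\bullet}}\,\mathbf{K}^{\mathsmaller{\vartheta}}_{t-s}\,\mathlarger{\mathlarger{\bullet}}\,\mathbf{Q}^{\mathsmaller{\vartheta}}_{u-t}$, which is~(\ref{FinalVariance}).

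I do not expect a serious obstacle; the proposition is essentially a corollary. The points needing care are purely bookkeeping: checking that conditional expectation and the second-moment functional genuinely commute with the pushforwards by the evaluation maps $\Pi_P$ and $\Pi_{r,u}^2$ for these infinite but locally finite measure-valued random elements (which comes down to integrability against the locally finite first and second moments $\mathbf{U}_P$ and $\mathbf{Q}_P^{\mathsmaller{\vartheta}}$), and confirming that the cylinder test functions $\psi\circ\Pi_P$ with $s,t\in P$ suffice both to characterize the conditional expectation and to pin down the measures---both of which are already implicit in the machinery behind Theorems~\ref{ThmSHFExtension} and~\ref{ThmPathMeasConst}.
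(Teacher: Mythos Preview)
Your proposal is correct and follows essentially the same route as the paper: reduce to finite-dimensional projections via~(\ref{ZProject}), invoke Proposition~\ref{PropCondGen} (in the $s,t\in P$ case) to identify the conditional expectation of $\mathscr{Z}^{\mathsmaller{\vartheta}}_{P}$ with the $\Pi_P$-pushforward of $\mathscr{Z}^{\mathsmaller{\vartheta}}_{[r,s]}\odot\mathscr{Z}^{\mathsmaller{\vartheta}}_{[t,u]}$, and conclude by the measure-determining property of these projections; for the variance you project to the endpoints and cite Proposition~\ref{PropCond2}, whereas the paper cites the equivalent special case of~(\ref{QKQ2}). The extra bookkeeping you flag (measurability of $M$, commutation of conditional expectation with $\Pi_P$-pushforward) is exactly what the paper leaves implicit.
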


Next we  extend  (vii) of Proposition~\ref{PropSHFExtensionProp} to the level of path measures by formulating how $\mathscr{Z}^{\mathsmaller{\vartheta}}_{[r,t]}$ can be approximated through a limiting procedure involving $\mathscr{Z}^{\mathsmaller{\vartheta}}_{[r,s]}$  and $\mathscr{Z}^{\mathsmaller{\vartheta}}_{[s,t]}$ for  $s\in (r,t)$.
The approximating measures that we have in mind are defined on the set of  c\`adl\`ag paths  over the time interval $[r,t]$ that have at most one  discontinuity,  occurring at time $s$. Since $\mathscr{Z}^{\mathsmaller{\vartheta}}_{[r,t]}$ is defined as a random measure on the space $\boldsymbol{\Upsilon}_{[r,t ]}$ of continuous paths,  it will be convenient to express the approximation scheme in terms of the pushforward of $\mathscr{Z}^{\mathsmaller{\vartheta}}_{[r,t]}$  
by the  map $  \mathlarger{\iota}_{r,s,t}:\boldsymbol{\Upsilon}_{[r,t ]}\rightarrow \boldsymbol{\Upsilon}_{[r,s ]}\times \boldsymbol{\Upsilon}_{[s,t ]}$ given by $ \mathlarger{\iota}_{r,s,t}(p)=(p_{[r,s]}, p_{[s,t]}  )$.  Define the maps $\Xi_{s,t,+}:\boldsymbol{\Upsilon}_{[s,t ]}\rightarrow \boldsymbol{\Upsilon}_{[s,t]}\times \R^2$  and $\Xi_{s,t,-}:   \boldsymbol{\Upsilon}_{[s,t]}\rightarrow \R^2\times \boldsymbol{\Upsilon}_{[s,t]}$  by
\[
\Xi_{s,t,+}(p)\,:=\,\big( p , p(t) \big)
\hspace{1cm}\text{and}\hspace{1cm} \Xi_{s,t,-}(p)\,:=\,\big(p(s), p  \big)  \,.
\]
If $\mu_1\in \mathcal{M}(\boldsymbol{\Upsilon}_{[r,s]})$ and $\mu_2\in \mathcal{M}(\boldsymbol{\Upsilon}_{[s,t]})$, then $\big(\mu_1\circ \Xi_{r,s,+}^{-1} \big)\,\mathlarger{\mathlarger{\bullet}}_{\varsigma}\,\big(\mu_2\circ \Xi_{s,t,-}^{-1} \big)$ for $\varsigma>0$ is the  measure on $\boldsymbol{\Upsilon}_{[r,s]}\times \boldsymbol{\Upsilon}_{[s,t]}$ satisfying
\[
\big(\mu_1\circ \Xi_{r,s,+}^{-1} \big)\,\mathlarger{\mathlarger{\bullet}}_{\varsigma}\,\big(\mu_2\circ \Xi_{s,t,-}^{-1} \big)\,(dp,dq)\,=\, \mu_1(dp)\,g_{\varsigma}\big(p(s)-q(s)\big) \,\mu_2(dq)\,, \hspace{.7cm}p\in \boldsymbol{\Upsilon}_{[r,s]}\,,\,\,q\in \boldsymbol{\Upsilon}_{[s,t]}\,.
\]
\begin{proposition}\label{PropConn} Fix  $\vartheta\in \R$ and $0\leq r<s<t$.   The    $\mathcal{M}\big(\boldsymbol{\Upsilon}_{[r,s]}\times \boldsymbol{\Upsilon}_{[s,t]}  \big)$-valued random  element $\big(\mathscr{Z}^{\mathsmaller{\vartheta}}_{[r,s]}\circ \Xi_{r,s,+}^{-1} \big)\,\mathlarger{\mathlarger{\bullet}}_{\varsigma}\,\big(\mathscr{Z}^{\mathsmaller{\vartheta}}_{[s,t]}\circ \Xi_{s,t,-}^{-1} \big)$ converges vaguely in probability to $\mathscr{Z}^{\mathsmaller{\vartheta}}_{[r,t]}\circ\mathlarger{\iota}_{r,s,t}^{-1} $ as $\varsigma\rightarrow 0$.
\end{proposition}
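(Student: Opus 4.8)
The plan is to prove vague convergence in probability by testing against bounded continuous functions with bounded support, reducing first to \emph{cylinder} test functions and then comparing a general test function to cylinder ones.  Write $\mu_{\varsigma}:=\big(\mathscr{Z}^{\mathsmaller{\vartheta}}_{[r,s]}\circ \Xi_{r,s,+}^{-1}\big)\,\mathlarger{\mathlarger{\bullet}}_{\varsigma}\,\big(\mathscr{Z}^{\mathsmaller{\vartheta}}_{[s,t]}\circ \Xi_{s,t,-}^{-1}\big)$ and $\mu_{\infty}:=\mathscr{Z}^{\mathsmaller{\vartheta}}_{[r,t]}\circ \mathlarger{\iota}_{r,s,t}^{-1}$, both $\mathcal{M}\big(\boldsymbol{\Upsilon}_{[r,s]}\times \boldsymbol{\Upsilon}_{[s,t]}\big)$-valued; since this space with the vague topology is Polish, it suffices to show $\mu_{\varsigma}(\varphi)\to \mu_{\infty}(\varphi)$ in probability as $\varsigma\to 0$ for every $\varphi\in \hat{C}_b\big(\boldsymbol{\Upsilon}_{[r,s]}\times \boldsymbol{\Upsilon}_{[s,t]}\big)$.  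From the formula for $\mu_{\varsigma}$ recorded just above the proposition, the independence of $\mathscr{F}_{[r,s]}$ and $\mathscr{F}_{[s,t]}$ (Proposition~\ref{PropBasicProperties}(iii) and Lemma~\ref{LemInd}), and Proposition~\ref{PropPathMeasProp}(i), one gets $\mathbb{E}[\mu_{\varsigma}](dp,dq)=\mathbf{U}_{[r,s]}(dp)\,g_{\varsigma}\big(p(s)-q(s)\big)\,\mathbf{U}_{[s,t]}(dq)$; because $\mathbf{U}_{[r,s]}$ and $\mathbf{U}_{[s,t]}$ have Lebesgue time-$s$ marginals, the mass of $\mathbb{E}[\mu_{\varsigma}]$ on any sup-norm ball $B_M:=\{\|p\|_{\infty}\leq M,\ \|q\|_{\infty}\leq M\}$ is at most $\mathbf{U}_{[r,s]}\big(\{\|p\|_{\infty}\leq M\}\big)<\infty$ uniformly in $\varsigma$, and $\sup_{\varsigma}\mathbb{E}\big[\mu_{\varsigma}(B_M\setminus\mathcal{K})\big]$ can be made arbitrarily small by taking a large compact $\mathcal{K}\subset B_M$ consisting of paths with a prescribed modulus of continuity (by tightness of the two Wiener pieces).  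The analogous statements hold for $\mathbb{E}[\mu_{\infty}]$.

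Consider first a cylinder test function $\varphi(p,q)=\Phi\big(\Pi_{P_1}(p),\Pi_{P_2}(q)\big)$, where $P_1\in\Lambda_{[r,s]}$ and $P_2\in\Lambda_{[s,t]}$ both contain $s$ and $\Phi$ is bounded continuous with bounded support.  Using $\mathscr{Z}^{\mathsmaller{\vartheta}}_{[r,s]}\circ\Pi_{P_1}^{-1}=\mathscr{Z}^{\mathsmaller{\vartheta}}_{P_1}$, $\mathscr{Z}^{\mathsmaller{\vartheta}}_{[s,t]}\circ\Pi_{P_2}^{-1}=\mathscr{Z}^{\mathsmaller{\vartheta}}_{P_2}$, and $\mathscr{Z}^{\mathsmaller{\vartheta}}_{[r,t]}\circ\Pi_{P_1\cup P_2}^{-1}=\mathscr{Z}^{\mathsmaller{\vartheta}}_{P_1\cup P_2}$ from Theorem~\ref{ThmPathMeasConst}, together with $p(s)$ being the $s$-coordinate of $\Pi_{P_1}(p)$ and $q(s)$ that of $\Pi_{P_2}(q)$, one has $\mu_{\varsigma}(\varphi)=\int \mathscr{Z}^{\mathsmaller{\vartheta}}_{P_1}(d\vec{x})\,g_{\varsigma}\big(x^{(s)}-y^{(s)}\big)\,\mathscr{Z}^{\mathsmaller{\vartheta}}_{P_2}(d\vec{y})\,\Phi(\vec{x},\vec{y})$ and $\mu_{\infty}(\varphi)=\int \mathscr{Z}^{\mathsmaller{\vartheta}}_{P_1\cup P_2}(d\vec{z})\,\Phi\big(\vec{z}|_{P_1},\vec{z}|_{P_2}\big)$, the single $s$-coordinate of $\vec{z}$ being used in both blocks.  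I would prove $\mu_{\varsigma}(\varphi)\to\mu_{\infty}(\varphi)$ in $L^2$ in two steps.  Put $\widetilde{\Phi}(\vec{x},\vec{y}):=\Phi\big(\vec{x},\vec{y}\big|_{y^{(s)}\to x^{(s)}}\big)$ and let $\widetilde{\varphi}$ be the corresponding cylinder function.  First, by independence of $\mathscr{Z}^{\mathsmaller{\vartheta}}_{P_1}$ and $\mathscr{Z}^{\mathsmaller{\vartheta}}_{P_2}$ and Proposition~\ref{PropSHFExtensionProp}(i), $\mathbb{E}\big[(\mu_{\varsigma}(\varphi)-\mu_{\varsigma}(\widetilde{\varphi}))^2\big]$ is the integral of the bounded-support integrand $g_{\varsigma}(x^{(s)}-y^{(s)})\,g_{\varsigma}(x'^{(s)}-y'^{(s)})\,(\Phi-\widetilde{\Phi})(\vec{x},\vec{y})\,(\Phi-\widetilde{\Phi})(\vec{x}',\vec{y}')$ against $\mathbf{Q}^{\mathsmaller{\vartheta}}_{P_1}(d\vec{x},d\vec{x}')\,\mathbf{Q}^{\mathsmaller{\vartheta}}_{P_2}(d\vec{y},d\vec{y}')$; since $|\Phi-\widetilde{\Phi}|$ is dominated by the modulus of continuity of $\Phi$ at $|x^{(s)}-y^{(s)}|$, the densities of $\mathbf{Q}^{\mathsmaller{\vartheta}}_{P_1},\mathbf{Q}^{\mathsmaller{\vartheta}}_{P_2}$ are continuous off their diagonals with only locally integrable logarithmic blow-ups on them, and $g_{\varsigma}$ concentrates at the diagonal, dominated convergence makes this vanish as $\varsigma\to0$.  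Second, $\mu_{\varsigma}(\widetilde{\varphi})$ equals the pairing of $\mathscr{Z}^{\mathsmaller{\vartheta}}_{P_1}\,\mathlarger{\mathlarger{\circ}}_{\varsigma}\,\mathscr{Z}^{\mathsmaller{\vartheta}}_{P_2}$ with the bounded-support continuous function obtained from $\Phi$ by slaving $y^{(s)}$ to $x^{(s)}$, so Proposition~\ref{PropSHFExtensionProp}(vii) gives $\mu_{\varsigma}(\widetilde{\varphi})\to\mathscr{Z}^{\mathsmaller{\vartheta}}_{P_1\cup P_2}(\,\cdot\,)=\mu_{\infty}(\varphi)$ vaguely in $L^2$.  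Hence $\mu_{\varsigma}(\varphi)\to\mu_{\infty}(\varphi)$ in $L^2$, in particular in probability, for every cylinder $\varphi$.

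For a general $\varphi\in\hat{C}_b\big(\boldsymbol{\Upsilon}_{[r,s]}\times\boldsymbol{\Upsilon}_{[s,t]}\big)$ with $\supp\varphi\subset B_M$ and $\epsilon>0$, choose the compact $\mathcal{K}\subset B_M$ of the first paragraph so that $\sup_{\varsigma}\mathbb{E}[\mu_{\varsigma}(B_M\setminus\mathcal{K})]<\epsilon$ and $\mathbb{E}[\mu_{\infty}(B_M\setminus\mathcal{K})]<\epsilon$.  On $\mathcal{K}$ the function $\varphi$ is uniformly continuous and the evaluation maps $\Pi_{P_1}\times\Pi_{P_2}$ separate points, so (Stone--Weierstrass) $\varphi$ is approximated within $\epsilon$ uniformly on $\mathcal{K}$ by a cylinder $\widetilde{\varphi}\in\hat{C}_b$; therefore $\mathbb{E}\big|\mu_{\varsigma}(\varphi)-\mu_{\varsigma}(\widetilde{\varphi})\big|$ and $\mathbb{E}\big|\mu_{\infty}(\varphi)-\mu_{\infty}(\widetilde{\varphi})\big|$ are $O(\epsilon)$ uniformly in $\varsigma$, while $\mu_{\varsigma}(\widetilde{\varphi})\to\mu_{\infty}(\widetilde{\varphi})$ in probability by the previous paragraph.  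Letting $\varsigma\to0$ and then $\epsilon\to0$ yields $\mu_{\varsigma}(\varphi)\to\mu_{\infty}(\varphi)$ in probability, which is the proposition.

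The main obstacle is this last reduction: one must control, uniformly in $\varsigma$, how much mass $\mu_{\varsigma}$ places near rough paths, i.e.\ obtain an $\varsigma$-free tightness estimate.  This is where the infinite-dimensional character of the statement genuinely enters, and it rests on the concrete form of $\mathbb{E}[\mu_{\varsigma}]$ as a Gaussian gluing of two Wiener pieces together with the standard modulus-of-continuity bounds for Brownian motion; the finite-dimensional convergence is, by contrast, a routine consequence of Proposition~\ref{PropSHFExtensionProp}(vii) once $q(s)$ has been slaved to $p(s)$.
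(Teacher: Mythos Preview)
Your argument is sound and shares the paper's core idea of reducing to Proposition~\ref{PropSHFExtensionProp}(vii), but the two proofs organize the reduction differently.

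For the finite-dimensional step the paper is slicker. It pushes both $\mu_\varsigma$ and $\mu_\infty$ forward by the single evaluation map $\widetilde{\Pi}_P(p,q)=\big(p(t_0),\dots,p(t_\ell),q(t_{\ell+1}),\dots,q(t_m)\big)$, which records $p(s)=p(t_\ell)$ but \emph{discards} $q(s)$. With that choice the pushforward of $\mu_\varsigma$ is literally $\mathscr{Z}^{\mathsmaller{\vartheta}}_{P_{\leq s}}\,\mathlarger{\mathlarger{\circ}}_{\varsigma}\,\mathscr{Z}^{\mathsmaller{\vartheta}}_{P_{\geq s}}$ and that of $\mu_\infty$ is $\mathscr{Z}^{\mathsmaller{\vartheta}}_P$, so (vii) applies immediately. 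Your route---keeping both $s$-coordinates and then showing $\mu_\varsigma(\varphi-\widetilde\varphi)\to 0$ in $L^2$ via the modulus of continuity of $\Phi$---also works, but the slaving estimate is the most delicate part of your cylinder argument and is avoided entirely by the paper's choice of projection.

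Conversely, for the passage from cylinder functions to general $\varphi\in\hat C_b$ you are more careful than the paper. The paper's proof consists of the finite-dimensional computation followed by the one-line assertion ``this implies that $\mu_\varsigma$ converges vaguely in probability,'' with no tightness discussion; your first-moment tightness and Stone--Weierstrass reduction genuinely fills in what the paper leaves to the reader, and your identification of this as ``the main obstacle'' is apt. One technical caveat: a cylinder function $\Phi\circ(\Pi_{P_1}\times\Pi_{P_2})$ with $\Phi$ compactly supported need not lie in $\hat C_b(\boldsymbol{\Upsilon}_{[r,s]}\times\boldsymbol{\Upsilon}_{[s,t]})$, since the paths can be arbitrarily wild between partition points. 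The clean fix is to run the tightness on the cylinder set $\{|p(r)|\le M\}$ rather than the sup-norm ball $B_M$ (this still has uniformly bounded $\mathbb{E}[\mu_\varsigma]$-mass, equal to $\pi M^2$), and to force the approximant $\Phi$ to have support contained in $\{|x^{(r)}|\le M\}$; then both $\varphi$ and $\widetilde\varphi$ are supported in the same cylinder set and the $O(\epsilon)$ bound goes through.
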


\subsection{The second moment of $\mathscr{Z}^{\mathsmaller{\vartheta}}_{[s,t]}$ and path intersection time}\label{SubsectCPMSM}

The theorems in this subsection concerning the properties of the second moment, $\mathbf{Q}^{\mathsmaller{\vartheta}}_{[s,t]}$,  of the random path measure $\mathscr{Z}^{\mathsmaller{\vartheta}}_{[s,t]}$  follow from results in~\cite{CM}, as we discuss further in Section~\ref{SubsectionSM}. \vspace{.2cm}

Let $\mathcal{I}$ denote the set of intersecting pairs of paths $(p,q)\in\boldsymbol{\Upsilon}_{[s,t]}^2$, i.e., such that $p(r)=q(r)$ for some $r\in [s,t]$.   Note that the product Wiener measure  $\mathbf{U}_{[s,t]}^2 $ is supported on  $ \mathcal{I}^c$ since two independent 2d Brownian motions starting from different positions  almost surely will not intersect.  For $\varepsilon>0$ define $\mathbf{I}_{[s,t]}^{\varepsilon}:\boldsymbol{\Upsilon}_{[s,t]}^2 \rightarrow [0,\infty)$  by
 $$\mathbf{I}_{[s,t]}^{\varepsilon}(p,q) \,:=\,\frac{1}{2\varepsilon^2\log^2\frac{1}{\varepsilon}   } \,\textup{meas}\bigg(  \,\bigg\{\, a\in [s,t] \,:\,  \bigg|\frac{p(a)-q(a)}{\sqrt{2}}\bigg|\leq \varepsilon   \, \bigg\}\,\bigg)\,, \hspace{.9cm}p,q\in \boldsymbol{\Upsilon}_{[s,t]}\,,
 $$
where $\text{meas}(A)$ denotes the Lebesgue measure of $A\subset \R$.  Then $\mathbf{I}_{[s,t]}^{\varepsilon}(p,q)=0$ for small enough $\varepsilon$ when $(p,q)\in \mathcal{I}^c$.
We refer to the limit of $\mathbf{I}_{[s,t]}^{\varepsilon}(p,q)$  as $\varepsilon\rightarrow 0$ in (ii) of the theorem below as the \textit{intersection time} between the paths $p$ and $q$  over the time interval $[s,t]$. 
\begin{theorem}[Properties of the Second Moment of $\mathscr{Z}^{\mathsmaller{\vartheta}}_{[s,t]}$]\label{ThmRNDer} Fix  $\vartheta\in \R$   and $0\leq s<t$.  
\begin{enumerate}[(i)]
\item The  Lebesgue decomposition of $\mathbf{Q}^{\mathsmaller{\vartheta}}_{[s,t]}$ with respect to   $\mathbf{U}_{[s,t]}^2$ has components $\mathbf{U}_{[s,t]}^2$ and $\mathbf{K}^{\mathsmaller{\vartheta}}_{[s,t]}$, which are  supported on $\mathcal{I}^c$ and $\mathcal{I}$, respectively.

\item There exists a Borel measurable function $\mathbf{I}_{[s,t]}$ on $\boldsymbol{\Upsilon}_{[s,t]}^2$ with  $\mathbf{I}_{[s,t]}=0$ on $\mathcal{I}^c$    such that $\mathbf{I}_{[s,t]}^{\varepsilon} \rightarrow \mathbf{I}_{[s,t]}$ in  $L^1_{\textup{loc} }\big(\mathbf{Q}^{\mathsmaller{\vartheta}}_{[s,t]} \big)$ as $\varepsilon\rightarrow 0$.

\item For any $\vartheta'\in \R$ the measure $\mathbf{Q}^{\mathsmaller{\vartheta}'}_{[s,t]}$ is absolutely continuous with respect to $\mathbf{Q}^{\mathsmaller{\vartheta}}_{[s,t]}$ with Radon-Nikodym derivative \vspace{-.2cm}
 \begin{align}\label{RND} \frac{d\mathbf{Q}^{\mathsmaller{\vartheta}'}_{[s,t]}}{d\mathbf{Q}^{\mathsmaller{\vartheta}}_{[s,t]}}\,=\,\textup{exp}\big\{\,(\mathsmaller{\vartheta}'-\mathsmaller{\vartheta}) \, \mathbf{I}_{[s,t]} \, \big\} \,.
 \end{align}

\item   $\mathbf{I}_{[s,t]}$ is $\mathbf{Q}^{\mathsmaller{\vartheta}}_{[s,t]}$-almost everywhere positive on  $\mathcal{I}$.

\end{enumerate}

\end{theorem}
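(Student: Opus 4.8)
\emph{Proof proposal.} The plan is to reduce all four parts to the single-path picture behind~(\ref{QUV}), where the required facts are exactly those established in~\cite{CM} for the path measure $\mathbf{V}^{\mathsmaller{\vartheta}}_{[s,t]}$ attached to the $2$d point interaction; the identification of $\mathbf{Q}^{\mathsmaller{\vartheta}}_{[s,t]}$ with the objects of~\cite{CM} is what Section~\ref{SubsectionSM} is for. Two elementary observations carry the bookkeeping. First, the map $R(p,q)=(\frac{p+q}{\sqrt{2}},\frac{p-q}{\sqrt{2}})$ is a bi-measurable involution of $\boldsymbol{\Upsilon}_{[s,t]}^2$ that preserves $\mathbf{U}_{[s,t]}^2$, since an orthogonal linear recombination of two independent $2$d Brownian motions with Lebesgue initial positions is again a pair of the same type; hence $\mathbf{U}_{[s,t]}^2=(\mathbf{U}_{[s,t]}\times\mathbf{U}_{[s,t]})\circ R^{-1}$, while~(\ref{QUV}) reads $\mathbf{Q}^{\mathsmaller{\vartheta}}_{[s,t]}=(\mathbf{U}_{[s,t]}\times\mathbf{V}^{\mathsmaller{\vartheta}}_{[s,t]})\circ R^{-1}$. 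Second, under $R$ the set $\mathcal{I}$ of intersecting pairs corresponds to the set of paths in $\boldsymbol{\Upsilon}_{[s,t]}$ that visit the origin; since a $2$d Brownian motion started from a Lebesgue-distributed point a.s.\ never hits the origin, $\mathbf{U}_{[s,t]}$ gives that set measure zero, and since $P^{\mathsmaller{\vartheta}}_{t}\geq g_{t}$ pointwise we have $\mathbf{V}^{\mathsmaller{\vartheta}}_{[s,t]}\geq\mathbf{U}_{[s,t]}$ as measures with the excess $\mathbf{V}^{\mathsmaller{\vartheta}}_{[s,t]}-\mathbf{U}_{[s,t]}$ carried by the origin-visiting paths (a result of~\cite{CM}, whose mechanism is that $K^{\mathsmaller{\vartheta}}_{\delta}(x,y)$ is super-polynomially small relative to $g_{\delta}(x-y)$ as $\delta\downarrow 0$ when $x,y$ stay away from the origin).

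Part (i) is then immediate: pushing the Lebesgue decomposition $\mathbf{V}^{\mathsmaller{\vartheta}}_{[s,t]}=\mathbf{U}_{[s,t]}+(\mathbf{V}^{\mathsmaller{\vartheta}}_{[s,t]}-\mathbf{U}_{[s,t]})$ forward through $R$, after tensoring with $\mathbf{U}_{[s,t]}$ on the first factor and using that $R$ fixes $\mathbf{U}_{[s,t]}^2$, shows that $\mathbf{Q}^{\mathsmaller{\vartheta}}_{[s,t]}$ has absolutely continuous part $\mathbf{U}_{[s,t]}^2$ (supported on $\mathcal{I}^c$) and singular part $\mathbf{K}^{\mathsmaller{\vartheta}}_{[s,t]}=\mathbf{Q}^{\mathsmaller{\vartheta}}_{[s,t]}-\mathbf{U}_{[s,t]}^2$ (supported on $\mathcal{I}$), using that pushforward by a bijection commutes with Lebesgue decomposition.

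For (ii)--(iv) I would invoke~\cite{CM} and transport through $R$. The functional $\mathbf{I}^{\varepsilon}_{[s,t]}$ is, via $R$, precisely $\frac{1}{2\varepsilon^2\log^2\frac{1}{\varepsilon}}$ times the occupation time of the $\varepsilon$-ball about the origin by the difference path $\frac{p-q}{\sqrt{2}}$, and $\varepsilon^2\log^2\frac{1}{\varepsilon}$ is exactly the normalization under which~\cite{CM} constructs a finite \emph{local time at the origin} as an $L^1_{\mathrm{loc}}$ limit against $\mathbf{V}^{\mathsmaller{\vartheta}}_{[s,t]}$; pulling that limit back gives the function $\mathbf{I}_{[s,t]}$ of (ii), which vanishes on $\mathcal{I}^c$ because the difference path of a $\mathbf{U}_{[s,t]}^2$-typical pair stays a positive distance from the origin. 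Part (iii) is the $R$-pullback of the Radon--Nikodym identity of~\cite{CM} for $\mathbf{V}^{\mathsmaller{\vartheta}}_{[s,t]}$ under a shift of the interaction parameter: on the origin-avoiding set the densities of $\mathbf{V}^{\mathsmaller{\vartheta}'}_{[s,t]}$ and $\mathbf{V}^{\mathsmaller{\vartheta}}_{[s,t]}$ coincide (ratio $1=e^{0}$, consistent with $\mathbf{I}_{[s,t]}=0$ there), while on the origin-visiting set the shift acts as exponential tilting by the local time with rate $\vartheta'-\vartheta$; since $R$ also fixes $\mathbf{U}_{[s,t]}^2$, formula~(\ref{RND}) holds $\mathbf{Q}^{\mathsmaller{\vartheta}}_{[s,t]}$-a.e.\ on all of $\boldsymbol{\Upsilon}_{[s,t]}^2$. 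Part (iv) is~\cite{CM}'s statement that this local time is strictly positive off a null set on the origin-visiting paths — intuitively because the singular part of $\mathbf{V}^{\mathsmaller{\vartheta}}_{[s,t]}$ is assembled from genuine sojourns near the origin weighted by the strictly positive Volterra kernel $\nu'$, so the normalized occupation density cannot degenerate — transported through $R$ to $\mathcal{I}$.

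Since the analytic substance of (ii)--(iv) is imported, the real work here is to make the reduction airtight: one must verify that the point-interaction path measure analyzed in~\cite{CM} is \emph{exactly} the marginal $\mathbf{V}^{\mathsmaller{\vartheta}}_{[s,t]}$ of~(\ref{QUV}) up to normalization (the task of Section~\ref{SubsectionSM}), and that the bijection $R$ transports not only the measures but also the approximating functionals $\mathbf{I}^{\varepsilon}_{[s,t]}$ and the $L^1_{\mathrm{loc}}$ mode of convergence, together with the Lebesgue-decomposition and Radon--Nikodym statements. I expect the genuinely delicate point — the one that would absorb the bulk of the effort if reproved here rather than cited — to be the exact exponential form in (iii): that shifting the $2$d point-interaction parameter by $\vartheta'-\vartheta$ is the same as tilting by $e^{(\vartheta'-\vartheta)\mathbf{I}_{[s,t]}}$ with the critically normalized intersection local time $\mathbf{I}_{[s,t]}$, with no lower-order corrections.
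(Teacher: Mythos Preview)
Your proposal is correct and follows essentially the same route as the paper: reduce everything through the rotation $R$ in~(\ref{QUV}) to statements about the single-path measure $\mathbf{V}^{\mathsmaller{\vartheta}}_{[s,t]}$, use that $R$ preserves $\mathbf{U}_{[s,t]}^2$ and sends $\mathcal{I}$ to the set $\mathcal{O}$ of origin-visiting paths, and then import the Lebesgue decomposition, local-time construction, Radon--Nikodym formula, and positivity from~\cite{CM}. The paper packages the $\mathbf{V}^{\mathsmaller{\vartheta}}_{[s,t]}$ statements as Proposition~\ref{PropLocTime} and cites the specific theorems of~\cite{CM} for each part, but the logical structure is exactly what you outlined.
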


We can use Theorem~\ref{ThmRNDer} to make some casual observations about the random  product measure $\big(\mathscr{Z}_{[s,t]}^{\mathsmaller{\vartheta}}\big)^2$, which we do in the next few remarks.  
\begin{remark}
Given $a>0$ the formula~(\ref{RND}) implies that the function
\[ (p,q)\in \boldsymbol{\Upsilon}_{[s,t]}^2 \hspace{.7cm}\longrightarrow \hspace{.7cm} \textup{exp}\big\{\,a\,\mathbf{I}_{[s,t]}(p,q) \,\big\}
\]
is almost surely locally integrable under  $\big(\mathscr{Z}_{[s,t]}^{\mathsmaller{\vartheta}}\big)^2$ since for a bounded set $A\subset \boldsymbol{\Upsilon}_{[s,t]}^2 $
\begin{align*}
\mathbb{E}\bigg[\,\int_{A} \,\textup{exp}\big\{\,a\,\mathbf{I}_{[s,t]}\,\big\}\,\big(\mathscr{Z}_{[s,t]}^{\mathsmaller{\vartheta}}\big)^2\,\bigg]\,=\,\int_{A} \,\textup{exp}\big\{\,a\,\mathbf{I}_{[s,t]}\,\big\}\,\mathbf{Q}^{\mathsmaller{\vartheta}}_{[s,t]}\,\stackrel{(\ref{RND})  }{=}\, \mathbf{Q}^{\mathsmaller{\vartheta}+a}_{[s,t]}(A)\,<\,\infty\,,
\end{align*}
where the first equality uses that  $\mathbf{Q}^{\mathsmaller{\vartheta}}_{[s,t]}$ is the second moment of $\mathscr{Z}_{[s,t]}^{\mathsmaller{\vartheta}}$. 
\end{remark}
\begin{remark} \label{CorRealizations} Similarly, parts (ii) \& (iv) of Theorem~\ref{ThmRNDer} imply that the product measure $\big(\mathscr{Z}_{[s,t]}^{\mathsmaller{\vartheta}}\big)^2$ almost surely takes full weight on the set of pairs $(p,q)\in \boldsymbol{\Upsilon}_{[s,t]}^2$ such that either $p$ and $q$ do not intersect or  $\mathbf{I}_{[s,t]}(p,q)>0$.
\end{remark}
\begin{remark}
If $\hat{\mathscr{Z}}_{[s,t]}^{\mathsmaller{\vartheta}}$ is an independent copy of $\mathscr{Z}_{[s,t]}^{\mathsmaller{\vartheta}}$, then the product measure $\mathscr{Z}_{[s,t]}^{\mathsmaller{\vartheta}}\times \hat{\mathscr{Z}}_{[s,t]}^{\mathsmaller{\vartheta}}$ is almost surely supported on $\mathcal{I}^c$ since
\[
\mathbb{E}\big[\,\mathscr{Z}_{[s,t]}^{\mathsmaller{\vartheta}}\times \hat{\mathscr{Z}}_{[s,t]}^{\mathsmaller{\vartheta}}(\mathcal{I})\,\big]\,=\, \mathbb{E}\big[\,\mathscr{Z}_{[s,t]}^{\mathsmaller{\vartheta}}\,\big]\times \mathbb{E}\big[\,\hat{\mathscr{Z}}_{[s,t]}^{\mathsmaller{\vartheta}}\,\big](\mathcal{I})
  \,=\,\mathbf{U}_{[s,t]}^2(\mathcal{I})\,=\,0\,.
\]
Thus, it seems natural to conjecture that beyond realizations of the pair of measures $\mathscr{Z}_{[s,t]}^{\mathsmaller{\vartheta}}$, $\hat{\mathscr{Z}}_{[s,t]}^{\mathsmaller{\vartheta}}$ merely being mutually singular with probability one, they are supported on sets of paths that do not even intersect.
\end{remark}

  For paths $p,q\in  \boldsymbol{\Upsilon}_{[s,t]} $ define the set of intersection times:   
  \[
 I_{p,q}\,:=\,\big\{\,a\in [s,t]\,:\,p(a)=q(a) \,  \big\}\,.
  \]  
The set  $I_{p,q}$ has Hausdorff dimension zero for $\mathbf{Q}^{\mathsmaller{\vartheta}}_{[s,t]}$-almost every pair $(p,q)\in \boldsymbol{\Upsilon}_{[s,t]}^2$, although    $I_{p,q}$ is $\mathbf{Q}^{\mathsmaller{\vartheta}}_{[s,t]}$-almost surely uncountable in the event $\mathcal{I}=\{(p,q)\in \boldsymbol{\Upsilon}_{[s,t]}^2 : I_{p,q}\neq \emptyset \}$.  These facts are implied by a more refined Hausdorff-type analysis, appropriate for  Hausdorff dimension zero sets.  For fixed $\mathfrak{h}>0$, define the Hausdorff-type outer measure  of $A\subset \R$ by
\begin{align*}
\lim_{\delta\searrow 0}  H_{\mathfrak{h},\delta}(A)\,:=\, H_{\mathfrak{h}}(A)\,,\hspace{1cm}\text{where}\hspace{1cm}   H_{\mathfrak{h},\delta}(A)\,:=\,\inf_{ \substack{A\subset \cup_n I_n  \\ |I_n|\leq \delta  } }\sum_{ n } \frac{1}{ \big|\log\big(\frac{1}{|I_n|}\big) \big|^{\mathfrak{h}}  }\,,
\end{align*}
where the infimum is over all coverings of $A$ by intervals of length less than $\delta\in (0,1)$.  Define the \textit{log-Hausdorff exponent} of $A$ as 
the infimum of the set of $\mathfrak{h}$ such that $H_{\mathfrak{h}}(A)<\infty$.  
\begin{theorem}\label{ThmIntLogHaus}  Fix $\vartheta\in \R$   and $0\leq s<t$.    The set $I_{p,q}$ has log-Hausdorff exponent one for $\mathbf{Q}^{\mathsmaller{\vartheta}}_{[s,t]}$-almost every $(p,q)\in \mathcal{I}$.  Consequently, the same holds for  almost every realization of $\big(\mathscr{Z}_{[s,t]}^{\mathsmaller{\vartheta}}\big)^2$. 
\end{theorem}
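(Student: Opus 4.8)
The plan is to push the problem through the rotation $R(p,q)=\big(\tfrac{p+q}{\sqrt2},\tfrac{p-q}{\sqrt2}\big)$ onto the zero set of a single path, and then to invoke the contact-set analysis behind the results of~\cite{CM} reviewed in Section~\ref{SubsectionSM}. Since $R$ acts on the spatial coordinates pointwise in time, writing $(u,w)=R(p,q)$ (so $w=\tfrac{p-q}{\sqrt2}$) we have $I_{p,q}=\{a\in[s,t]:w(a)=0\}=:\mathcal{Z}_w$, so the log-Hausdorff exponent of $I_{p,q}$ depends on $(p,q)$ only through $w$. By~(\ref{QUV}), $\mathbf{Q}^{\mathsmaller{\vartheta}}_{[s,t]}=\big(\mathbf{U}_{[s,t]}\times\mathbf{V}^{\mathsmaller{\vartheta}}_{[s,t]}\big)\circ R^{-1}$ with the $\mathbf{U}_{[s,t]}$-marginal independent of the $\mathbf{V}^{\mathsmaller{\vartheta}}_{[s,t]}$-marginal, so the claim is equivalent to: for $\mathbf{V}^{\mathsmaller{\vartheta}}_{[s,t]}$-a.e.\ $w$ with $\mathcal{Z}_w\neq\emptyset$, the set $\mathcal{Z}_w$ has log-Hausdorff exponent one. (By part~(i) of Theorem~\ref{ThmRNDer} the $\mathbf{U}_{[s,t]}^2$ component of $\mathbf{Q}^{\mathsmaller{\vartheta}}_{[s,t]}$ lives on $\mathcal{I}^c$, where $I_{p,q}=\emptyset$, so only the part of $\mathbf{V}^{\mathsmaller{\vartheta}}_{[s,t]}$ carried by paths meeting the origin is relevant, which is where the logarithmic divergence of $K^{\mathsmaller{\vartheta}}_t$ near $0$ enters.) Throughout I would restrict to a bounded window $W=\{\max(|p(s)|,|q(s)|,|p(t)|,|q(t)|)\le M\}$, which carries finite $\mathbf{Q}^{\mathsmaller{\vartheta}}_{[s,t]}$-mass and whose union over $M$ exhausts $\boldsymbol{\Upsilon}^2_{[s,t]}$ up to a null set.

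For the bound \emph{exponent} $\le 1$ the method is a first-moment covering estimate. Partition $[s,t]$ dyadically into $\asymp 2^k$ intervals $J$ of length $2^{-k}$ and cover $I_{p,q}$ by those $J$ it meets, so that
\[
\int_W H_{\mathfrak{h},2^{-k}}(I_{p,q})\,\mathbf{Q}^{\mathsmaller{\vartheta}}_{[s,t]}\;\le\;(k\log2)^{-\mathfrak{h}}\sum_{J}\mathbf{Q}^{\mathsmaller{\vartheta}}_{[s,t]}\big(W\cap\{(p,q):I_{p,q}\cap J\neq\emptyset\}\big).
\]
The key input is the hitting estimate $\mathbf{Q}^{\mathsmaller{\vartheta}}_{[s,t]}\big(W\cap\{I_{p,q}\cap J\neq\emptyset\}\big)=O\big(2^{-k}k\big)$, uniform in the location of $J$, so the sum over the $\asymp 2^k$ intervals is $O(k)$; after rotation this is the statement that the $\mathbf{V}^{\mathsmaller{\vartheta}}_{[s,t]}$-mass (within $W$) of paths whose zero set meets a fixed interval of length $h=2^{-k}$ is $O\big(h\log\tfrac1h\big)$. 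I would extract this from the explicit kernel $P^{\mathsmaller{\vartheta}}_t=g_t+K^{\mathsmaller{\vartheta}}_t$ of~(\ref{QDensity}) together with the near-origin asymptotics of the Volterra function governing $K^{\mathsmaller{\vartheta}}_t$ — equivalently, from the moment bounds for the intersection-time functional already available in~\cite{CM} — the point being that the probability of returning to the origin within a short window carries exactly one inverse power of the logarithm. Granting it, the right-hand side above is $O\big(k^{1-\mathfrak{h}}\big)\to0$ for every $\mathfrak{h}>1$, so by Fatou's lemma $H_{\mathfrak{h}}(I_{p,q})=0$ for $\mathbf{Q}^{\mathsmaller{\vartheta}}_{[s,t]}$-a.e.\ $(p,q)$, giving the upper bound on the exponent.

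For the bound \emph{exponent} $\ge 1$ the method is a mass-distribution argument built on the intersection local time. Transporting the functionals $\mathbf{I}^{\varepsilon}_{[a,b]}$ of Theorem~\ref{ThmRNDer} through $R$ and letting $\varepsilon\downarrow0$ produces, on $\{\mathcal{Z}_w\neq\emptyset\}$, a nonzero finite positive measure $\mathcal{L}_w$ on $[s,t]$ carried by $\mathcal{Z}_w$: it gives no mass to the excursion intervals of $[s,t]\setminus\mathcal{Z}_w$, on which $\mathbf{I}^{\varepsilon}$ vanishes for small $\varepsilon$, and it is nonzero by part~(iv) of Theorem~\ref{ThmRNDer}. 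The crucial pathwise input, which I would obtain from moment estimates $\int_W \mathcal{L}_w([a,b])^N\,\mathbf{V}^{\mathsmaller{\vartheta}}_{[s,t]}\le C_{N,W}\big(\log\tfrac1{b-a}\big)^{-N}$ plus a chaining argument (or directly from~\cite{CM}), is the logarithmic modulus $\mathcal{L}_w([a,b])\le C_w\big(\log\tfrac1{b-a}\big)^{-1}$ for all $[a,b]\subseteq[s,t]$ with $b-a$ small, with a $\mathbf{V}^{\mathsmaller{\vartheta}}_{[s,t]}$-a.e.\ finite random $C_w$. Then for any covering $\{I_n\}$ of $\mathcal{Z}_w$ with $|I_n|\le\delta$ and any $\mathfrak{h}<1$,
\[
\sum_n\big|\log\tfrac1{|I_n|}\big|^{-\mathfrak{h}}\;\ge\;\frac{1}{C_w}\sum_n\big|\log\tfrac1{|I_n|}\big|^{1-\mathfrak{h}}\,\mathcal{L}_w(I_n)\;\ge\;\frac{\big(\log\tfrac1\delta\big)^{1-\mathfrak{h}}}{C_w}\,\mathcal{L}_w(\mathcal{Z}_w)\,,
\]
and the right-hand side tends to $\infty$ as $\delta\downarrow0$ since $\mathfrak{h}<1$ and $\mathcal{L}_w(\mathcal{Z}_w)>0$; hence $H_{\mathfrak{h}}(\mathcal{Z}_w)=\infty$ and the exponent is at least $1$. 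Combining the two bounds gives the exponent exactly one $\mathbf{Q}^{\mathsmaller{\vartheta}}_{[s,t]}$-a.e.\ on $\mathcal{I}$, and the statement for $\big(\mathscr{Z}^{\mathsmaller{\vartheta}}_{[s,t]}\big)^2$ then follows at once from $\mathbb{E}\big[(\mathscr{Z}^{\mathsmaller{\vartheta}}_{[s,t]})^2\big]=\mathbf{Q}^{\mathsmaller{\vartheta}}_{[s,t]}$, since the exceptional set receives zero $(\mathscr{Z}^{\mathsmaller{\vartheta}}_{[s,t]})^2$-mass outside a $\mathbb{P}$-null event. I expect the main obstacle to be the hitting estimate in the second paragraph: one must show, uniformly in the position of the length-$h$ window, that the relevant mass has exact order $h\log\tfrac1h$, with no larger power of the logarithm, which requires the precise short-time behavior of the point-interaction kernel; the logarithmic modulus for $\mathcal{L}_w$ is comparatively soft once the moment bounds are in hand.
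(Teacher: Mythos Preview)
Your reduction via the rotation $R$ to the zero set $\mathcal{Z}_w$ of a single path under $\mathbf{V}^{\mathsmaller{\vartheta}}_{[s,t]}$ is exactly what the paper does: Section~\ref{SubsectionSM} records the identities $I_{p,q}=O_{(p-q)/\sqrt2}$ and $(p,q)\in\mathcal{I}\Leftrightarrow (p-q)/\sqrt2\in\mathcal{O}$, and then states the equivalent one-path version as Proposition~\ref{PropLocTime}(v). The ``Consequently'' clause is handled exactly as you say, via $\mathbb{E}\big[(\mathscr{Z}^{\mathsmaller{\vartheta}}_{[s,t]})^2\big]=\mathbf{Q}^{\mathsmaller{\vartheta}}_{[s,t]}$.

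The difference is that the paper does not attempt to prove Proposition~\ref{PropLocTime}(v) from scratch: its proof is a one-line citation of \cite[Theorem~2.25]{CM} (after localizing to $\{|p(0)|\le N\}$ and passing to the normalized measure $\mathbf{P}^{T,\mathsmaller{\vartheta},N}$). Your proposal instead sketches an independent derivation --- a first-moment covering argument for the upper bound and a Frostman/mass-distribution argument via the intersection local time for the lower bound. That outline is the standard template for such dimension results and is plausible, but it is strictly more than the paper does here, and the two ingredients you flag as needing work (the uniform hitting estimate $O(h\log\tfrac1h)$ for a window of length $h$, and the pathwise logarithmic modulus for $\mathcal{L}_w$) are precisely the substantive content that \cite{CM} supplies. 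If your aim is to match the paper, you can stop after the rotation reduction and cite \cite[Theorem~2.25]{CM}; if you want a self-contained argument, your sketch is on the right track but the hitting estimate would need to be established carefully from the short-time behavior of $K^{\mathsmaller{\vartheta}}_t$ rather than asserted.
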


\subsection{Consistency with a conditional Gaussian multiplicative chaos structure}\label{SubsectCGMC}
We will briefly describe a natural conjecture for how the distributional law of $\mathscr{Z}_{[s,t]}^{\mathsmaller{\vartheta}'} $ can be constructed as a subcritical Gaussian multiplicative chaos (GMC) with random reference measure $\mathscr{Z}_{[s,t]}^{\mathsmaller{\vartheta}} $ when $\vartheta'>\vartheta$.  A similar result was obtained in~\cite{Clark4} for an analogous  toy model defined on  the so-called \textit{diamond hierarchical lattice}, which has a self-similar graphical structure that enables relatively straightforward computations.  The motivating intuition for that study was exactly the pattern outlined below.\vspace{.2cm}

We will first argue that the random path measure $\mathscr{Z}_{[s,t]}^{\mathsmaller{\vartheta}} $ cannot be constructed as   a GMC in the usual sense, with nonrandom reference measure given by its expectation $\mathbf{U}_{[s,t]}$.  In formal terms, for $\mathscr{Z}_{[s,t]}^{\mathsmaller{\vartheta}}$ to be a GMC with respect to $\mathbf{U}_{[s,t]}$, it would need to be expressible as
\begin{align}\label{ZGMC}
\mathscr{Z}_{[s,t]}^{\mathsmaller{\vartheta}}(dp)\,=\,\textup{exp}\Big\{\,\mathbf{W}(p)-\frac{1}{2}\mathbb{E}\big[\mathbf{W}^2(p)\big]  \, \Big\} \,\mathbf{U}_{[s,t]}(dp)\,,
\end{align}
where $\mathbf{W}=\{\mathbf{W}(p)\}_{p\in \boldsymbol{\Upsilon}_{[s,t]}}$ is a centered Gaussian field with some covariance kernel $T(p,q)=\mathbb{E}\big[ \mathbf{W}(p)\mathbf{W}(q) \big]$.  Here, in general, the individual elements $\mathbf{W}(p)$ do not  denote Gaussian random variables, but rather  $\mathbf{W}$ is understood as a bounded linear map from $L^2( \boldsymbol{\Upsilon}_{[s,t]},\mathbf{U}_{[s,t]} )$ to a Gaussian subspace of $L^2(\Omega, \mathbb{P})$, for which we write
\[
\mathbf{W}\psi\,=\,\int_{\boldsymbol{\Upsilon}_{[s,t]}} \,\mathbf{W}(p)\,\psi(p)\,\mathbf{U}_{[s,t]}(dp)\,,\hspace{1cm}\psi\in L^2( \boldsymbol{\Upsilon}_{[s,t]},\mathbf{U}_{[s,t]} )\,.
\]
Based on~(\ref{ZGMC}), the second moment of $\mathscr{Z}_{[s,t]}^{\mathsmaller{\vartheta}}$ has the form
\[
\mathbf{Q}^{\mathsmaller{\vartheta}}_{[s,t]}(dp,dq)\,=\,\mathbb{E}\Big[\,\mathscr{Z}_{[s,t]}^{\mathsmaller{\vartheta}}(dp) \,\mathscr{Z}_{[s,t]}^{\mathsmaller{\vartheta}}(dq) \,\Big]\,=\,e^{T(p,q)}\,\mathbf{U}_{[s,t]}(dp)\,\mathbf{U}_{[s,t]}(dq)\,,  \hspace{.9cm}p,q\in \boldsymbol{\Upsilon}_{[s,t]}\,;
\]
see~\cite[Lemma 34]{Shamov}.  However, the above contradicts that $\mathbf{Q}^{\mathsmaller{\vartheta}}_{[s,t]}$ is not absolutely continuous with respect to $\mathbf{U}_{[s,t]}^2$, as a consequence of the nontrivial Lebesgue decomposition that $\mathbf{Q}^{\mathsmaller{\vartheta}}_{[s,t]}$ has with respect to $\mathbf{U}_{[s,t]}^2$ in Theorem~\ref{ThmRNDer}.\vspace{.2cm}

Suppose for the purpose of our heuristic computation below that  $\big\{\mathbf{W}_{\mathscr{Z}_{[s,t]}^{\mathsmaller{\vartheta}}}(p)\big\}_{p\in \boldsymbol{\Upsilon}_{[s,t]}}$ is a field that is  Gaussian with correlation kernel $\mathbf{I}_{[s,t]}(p,q)$ when conditioned on $\mathscr{Z}_{[s,t]}^{\mathsmaller{\vartheta}}$.  In other terms, for an appropriate test function $\psi:\boldsymbol{\Upsilon}_{[s,t]}\rightarrow \R $ the random variable formally expressed by
\begin{align}\label{FieldExpr}
\int_{\boldsymbol{\Upsilon}_{[s,t]}} \,\mathbf{W}_{\mathscr{Z}_{[s,t]}^{\mathsmaller{\vartheta}}}(p)\,\psi(p)\,\mathscr{Z}_{[s,t]}^{\mathsmaller{\vartheta}}(dp) 
\end{align}
is a centered Gaussian with variance 
\[
\int_{\boldsymbol{\Upsilon}_{[s,t]}^2} \,\mathbf{I}_{[s,t]}(p,q)\,\psi(p)\,\psi(q)\,\mathscr{Z}_{[s,t]}^{\mathsmaller{\vartheta}}(dp) \,\mathscr{Z}_{[s,t]}^{\mathsmaller{\vartheta}}(dq) \,,
\]
conditional on $\mathscr{Z}_{[s,t]}^{\mathsmaller{\vartheta}}$. Next suppose that $\mathscr{M}_{[s,t]}^{\mathsmaller{\vartheta},\mathsmaller{\vartheta'}}$ is the conditional GMC formally defined by 
\begin{align*}
\mathscr{M}_{[s,t]}^{\mathsmaller{\vartheta},\mathsmaller{\vartheta'}} (dp)
\, =\, \textup{exp}\bigg\{\, \sqrt{\mathsmaller{\vartheta}'-\mathsmaller{\vartheta}}\,\mathbf{W}_{ \mathscr{Z}_{[s,t]}^{\mathsmaller{\vartheta}}}(p)\,-\,\frac{\mathsmaller{\vartheta}'-\mathsmaller{\vartheta}}{2}\,\mathbb{E}\Big[\, \mathbf{W}_{ \mathscr{Z}_{[s,t]}^{\mathsmaller{\vartheta}} }^2(p)   \,\Big|\, \mathscr{Z}_{[s,t]}^{\mathsmaller{\vartheta}} \, \Big]  \,\bigg\}    \,\mathscr{Z}_{[s,t]}^{\mathsmaller{\vartheta}}(dp)\,.
\end{align*}
Then $\mathscr{M}_{[s,t]}^{\mathsmaller{\vartheta},\mathsmaller{\vartheta'}}$ and $\mathscr{Z}_{[s,t]}^{\mathsmaller{\vartheta'}}$  have the same second moment  since
\begin{align*}
\displaystyle \mathbb{E}\Big[  \,\big(\mathscr{M}_{[s,t]}^{\mathsmaller{\vartheta},\mathsmaller{\vartheta'}} \big)^2 \, \Big] \, =\, e^{(\vartheta'-\vartheta)\,\mathbf{I}_{[s,t]} }\, \mathbb{E}\Big[  \,\big(\mathscr{Z}_{[s,t]}^{\mathsmaller{\vartheta}}\big)^2\,  \Big]\,=\, e^{(\vartheta'-\vartheta)\,\mathbf{I}_{[s,t]} }\, \mathbf{Q}^{\mathsmaller{\vartheta}}_{[s,t]} 
  \stackrel{(\ref{RND})  }{=}\mathbf{Q}^{\mathsmaller{\vartheta'}}_{[s,t]}\, \,=\,\mathbb{E}\Big[  \big(\mathscr{Z}_{[s,t]}^{\mathsmaller{\vartheta'}}\big)^2\,  \Big] \,,
  \end{align*}
which  suggests that
$\mathscr{M}_{[s,t]}^{\mathsmaller{\vartheta},\mathsmaller{\vartheta'}}$ and $\mathscr{Z}_{[s,t]}^{\mathsmaller{\vartheta'}}$  are equal in distribution. \vspace{.2cm}

In~\cite{CSZ6} Caravenna, Sun, and Zygouras proved that the projection measure $\overline{\mathscr{Z}}_{s,t}^{\mathsmaller{\vartheta}}(dx)=\mathscr{Z}_{s,t}^{\mathsmaller{\vartheta}}(dx,\R^2)$ is not a GMC by demonstrating that  the form of its first three moments exclude this possibility.   The significance of their result is that it illustrates the non-Gaussian nature of the critical regime, distinguishing it from the subcritical regime of two-dimensional polymer models where log-normals  arise as  distributional limits~\cite{CSZ1,Chatterjee,Gu,CSZ7}.  The above computation isolates where a layer of  GMC structure is to be expected, at least in the relative sense. \vspace{.2cm}

An approach to proving the distributional equality between $\mathscr{M}_{[s,t]}^{\mathsmaller{\vartheta},\mathsmaller{\vartheta'}}$ and $\mathscr{Z}_{[s,t]}^{\mathsmaller{\vartheta'}}$ in analogy to the argument in~\cite{Clark4} would require the following: 
\begin{itemize}
\item Provide a list of properties uniquely characterizing the family of laws $\big\{\mathscr{Z}_{[s,t]}^{\mathsmaller{\vartheta}}\big\}_{0\leq s<t<\infty}$ (for path measure valued random elements).  This would likely follow easily from a list of properties uniquely characterizing the distribution of the 2d SHF $\mathscr{Z}^{\mathsmaller{\vartheta}}$, which is an open question.

\item For $\vartheta<\vartheta'$ construct $\mathscr{M}_{[s,t]}^{\mathsmaller{\vartheta},\mathsmaller{\vartheta'}}$ using tools in~\cite{Shamov}.  Let $\mathbf{I}_{ \mathscr{Z}_{[s,t]}^{\mathsmaller{\vartheta}} }:L^2\big(\boldsymbol{\Upsilon}_{[s,t]}, \mathscr{Z}_{[s,t]}^{\mathsmaller{\vartheta}} \big)\rightarrow L^2\big(\boldsymbol{\Upsilon}_{[s,t]}, \mathscr{Z}_{[s,t]}^{\mathsmaller{\vartheta}} \big)$ denote the linear operator with integral kernel given by the intersection time $\mathbf{I}_{[s,t]}(p,q)$.  The technical aspect arises in constructing a linear operator $Y_{ \mathscr{Z}_{[s,t]}^{\mathsmaller{\vartheta}} }:\ell^2\rightarrow L^2\big(\boldsymbol{\Upsilon}_{[s,t]}, \mathscr{Z}_{[s,t]}^{\mathsmaller{\vartheta}} \big)$ satisfying 
\[
\mathbf{I}_{ \mathscr{Z}_{[s,t]}^{\mathsmaller{\vartheta}} }\,=\,Y_{ \mathscr{Z}_{[s,t]}^{\mathsmaller{\vartheta}} }\,Y_{ \mathscr{Z}_{[s,t]}^{\mathsmaller{\vartheta}} }^*\,.
\]
If $W:\ell^2\rightarrow L^2(\Omega, \mathbb{P})$ is a standard Gaussian random vector independent of $\mathscr{Z}_{[s,t]}^{\mathsmaller{\vartheta}} $, i.e., a linear isometry from $\ell^2$ into a  Gaussian subspace of $L^2(\Omega, \mathbb{P})$ that is independent of $\mathscr{Z}_{[s,t]}^{\mathsmaller{\vartheta}} $, then the field~(\ref{FieldExpr}) is defined by the random linear operator $\mathbf{W}_{\mathscr{Z}_{[s,t]}^{\mathsmaller{\vartheta}}}:=W Y_{ \mathscr{Z}_{[s,t]}^{\mathsmaller{\vartheta}} }^*$.

\item Show that for any $a>0$ the family $\big\{\mathscr{M}_{[s,t]}^{\mathsmaller{\vartheta}-a,\mathsmaller{\vartheta}}\big\}_{0\leq s<t<\infty}$ has the list of properties uniquely characterizing $\big\{\mathscr{Z}_{[s,t]}^{\mathsmaller{\vartheta}}\big\}_{0\leq s<t<\infty}$.

\end{itemize}

\section{Conditional expectations and the Chapman-Kolmogorov relation} \label{SecChapmanProof}
The main goals of this section are to prove Propositions~\ref{PropCond} \&~\ref{PropChapman}, which we do in Sections~\ref{SubsectionCond} \& \ref{SubSecChapman}, respectively.  One technical issue  arising here is that the operation~(\ref{MeasComb}) does not  define a map from $\mathcal{M}^2    $ to $\mathcal{M}$, where recall the  $\mathcal{M}:=\mathcal{M}(\R^2) $.   In Section~\ref{SubsecSpecSpace} we introduce a  subcollection  $\widetilde{\mathcal{M}}$ of $\mathcal{M}$ and endow it with a topology that  is stronger than the vague topology, and we show that  the map $(\mu_1,\mu_2)\mapsto \mu_1\,\mathlarger{\mathlarger{\bullet}}_{\varsigma}\, \mu_2$ from $\widetilde{\mathcal{M}}^2$ to $\mathcal{\widetilde{M}}$  is continuous in Section~\ref{SubsectCont}.  The proof of Lemma~\ref{LemInd} is in Section~\ref{SubsectLemInd}.

\subsection{The $\boldsymbol{\widetilde{C}_m}$-weak topology for measures on $\boldsymbol{(\R^2)^{m+1}}$  }\label{SubsecSpecSpace}

Let $m,n\in \mathbb{N}$.  Recall that $\mathcal{M}_{m}:=\mathcal{M}\big((\R^2)^{m+1}\big)$, and  put $\widehat{\mathcal{M}}_{m}:=\widehat{\mathcal{M}}\big((\R^2)^{m+1}\big)$. Define $\phi_{m}^{n}:  (\R^2)^{m+1}\rightarrow [0,\infty) $ by 
\[
\phi^n_m(x_0,\ldots,x_m)\,=\,\textup{exp}\bigg\{-\frac{1}{n}\,|x_0|\,+\,n\,\sum_{1}^m\,|x_j| \, \bigg\}\,, \hspace{1cm} x_j\in \R^2\,.
\]
We omit the subscript $m$ on these notations when $m=1$. 
Let $\widetilde{C}_m$ denote the collection of real-valued continuous functions $f$ on $(\R^2)^{m+1}$ for which there exist $c,n>0$  such that
$|f|\leq c\phi^n_m$.  We use $\widetilde{\mathcal{M}}_m$ to denote the collection of  measures $\mu$ on $(\R^2)^{m+1}$ such that  $ \mu(\phi_m^n)<\infty  $ for all $n\in \mathbb{N}$.  Then  every $\varphi\in \widetilde{C}_m$ is $\mu$-integrable, and we equip  $\widetilde{\mathcal{M}}_m$ with the weak topology induced by the family $\widetilde{C}_m$, meaning the coarsest topology such that the map $\mu\in \widetilde{\mathcal{M}}_m \rightarrow \mu(\varphi)$ is continuous for all $\varphi\in \widetilde{C}_m$. Equivalently, we can characterize this as the  topology generated by the family of maps $\{\Phi_m^n\}_{n\in \mathbb{N}}$ for $\Phi_m^n: \widetilde{\mathcal{M}}_m\rightarrow \widehat{\mathcal{M}}_{m}   $ defined by $\Phi^n_m(\mu)= \phi^n_m\,\mu $, denoting here the measure having Radon-Nikodym derivative $\phi^n_m$ with respect to $\mu$. The map $\Phi_m:\widetilde{\mathcal{M}}_m\rightarrow \widehat{\mathcal{M}}_{m}^{\infty}$ defined by $\Phi_m(\mu)= \big(\Phi^1_m(\mu), \Phi^2_m(\mu),\ldots  \big)  $ is a closed  embedding by the proposition below.  Note that the map $\Phi_m$ is injective since, in fact, each $\Phi^n_m$ is injective.   We will refer to this topology on $\widetilde{\mathcal{M}}_m$ as the $\widetilde{C}_m$\textit{-weak} topology, which is stronger than the  vague topology induced by the inclusion  $\widetilde{\mathcal{M}}_m\subset \mathcal{M}_{m} $.  \vspace{.2cm}

The $\widetilde{C}_m$-weak topology inherits all of the basic properties of the weak topology on $\widehat{\mathcal{M}}_{m} $.  By Prohorov's theorem, there exists a complete metric  $\rho_m$  on $\widehat{\mathcal{M}}_{m}$ inducing the weak topology.  The product topology on  $\widehat{\mathcal{M}}_{m}^{\infty}$ is induced by the 
complete metric $\boldsymbol{\rho}_m$ given by
\begin{align}\label{RhoBold}
\boldsymbol{\rho}_m\big(\mu,\lambda\big)\,:=\,\sum_{n=1}^{\infty}\,\frac{1}{2^n}\wedge \rho_m\big( \mu^{(n)}  ,  \lambda^{(n)}\big) \,,\hspace{1.2cm}\,\mu=\big(\mu^{(1)},\mu^{(2)},\ldots\big)\,,\,\,\, \lambda=\big(\lambda^{(1)},\lambda^{(2)},\ldots\big)\,.
\end{align}
We can then induce the topology on  $\widetilde{\mathcal{M}}_m$   using the  metric  $\widetilde{\rho}_m(\mu,\nu):=\boldsymbol{\rho}_m\big(\Phi_m (\mu) ,\Phi_m(\nu)\big)  $, which is complete by the following proposition. 
\begin{proposition} \label{PropMTilde} Fix $m\in \mathbb{N}$. The space $\widetilde{\mathcal{M}}_m$ is Polish under the metric  $\widetilde{\rho}_m$. In particular, the embedding map $\Phi_m$ is closed, and we have (i)--(iii) below. 
\begin{enumerate}[(i)]

\item A subset  $M$ of   $\widetilde{\mathcal{M}}_m$ is precompact iff  $\Phi^n_m[M]$ is  precompact in $ \widehat{\mathcal{M}}_{m}$ for each $n\in \mathbb{N}$.

\item  A family $\{\boldsymbol{\mu}_{\alpha}\}_{\alpha\in I}$ of  $\widetilde{\mathcal{M}}_m$-valued random elements is tight provided that  $\big\{\Phi^n_m(\boldsymbol{\mu}_{\alpha})\big\}_{\alpha\in I}$ is tight in $\widehat{\mathcal{M}}_{m}$ for each $n\in \mathbb{N}$.

\item A family $\{\boldsymbol{\mu}_{\alpha}\}_{\alpha\in I}$ of $\widetilde{\mathcal{M}}_m$-valued random elements is tight provided that $\big\{\mathbb{E}[\boldsymbol{\mu}_{\alpha}]:\alpha \in I \big\}$ is precompact in $\widetilde{\mathcal{M}}_m$. 
\end{enumerate}

\end{proposition}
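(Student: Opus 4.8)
The plan is to exhibit $\widetilde{\mathcal{M}}_m$ as a closed subspace of the Polish space $\widehat{\mathcal{M}}_m^{\infty}$ via the map $\Phi_m$; the Polishness of $(\widetilde{\mathcal{M}}_m,\widetilde{\rho}_m)$ and the three listed properties then all follow by soft arguments. I would first dispatch the elementary facts about $\Phi_m$. Because $\phi_m^n$ is continuous and strictly positive, each $\Phi_m^n$ is injective—one recovers $\mu$ from $\phi_m^n\mu$ by multiplying by $1/\phi_m^n$—so $\Phi_m$ is injective. For the topology, note that $\psi\phi_m^n\in\widetilde{C}_m$ whenever $\psi\in C_b\big((\R^2)^{m+1}\big)$, while any $\varphi\in\widetilde{C}_m$ with $|\varphi|\le c\phi_m^n$ factors as $\varphi=(\varphi/\phi_m^n)\phi_m^n$ with $\varphi/\phi_m^n\in C_b\big((\R^2)^{m+1}\big)$; hence the $\widetilde{C}_m$-weak topology agrees with the initial topology of the family $\{\Phi_m^n\}_{n\in\mathbb{N}}$, so $\Phi_m$ is a topological embedding into $\widehat{\mathcal{M}}_m^{\infty}$.

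The crux of the argument is that $\Phi_m[\widetilde{\mathcal{M}}_m]$ is closed. Suppose $\Phi_m(\mu_j)\to\boldsymbol{\lambda}=\big(\lambda^{(1)},\lambda^{(2)},\ldots\big)$ in $\widehat{\mathcal{M}}_m^{\infty}$, that is, $\phi_m^n\mu_j\to\lambda^{(n)}$ weakly for each $n$. For $1\le k\le n$ the ratio $h_{k,n}:=\phi_m^k/\phi_m^n$ is continuous with $0<h_{k,n}\le 1$ (its logarithm is a nonpositive linear combination of $|x_0|,\ldots,|x_m|$), and since $\nu\mapsto h_{k,n}\nu$ is weakly continuous on finite measures we get $\lambda^{(k)}=\lim_j\phi_m^k\mu_j=\lim_j h_{k,n}\big(\phi_m^n\mu_j\big)=h_{k,n}\lambda^{(n)}$. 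Taking $k=1$ and using that $h_{1,n}$ is everywhere strictly positive, this rearranges to $\lambda^{(n)}=(\phi_m^n/\phi_m^1)\lambda^{(1)}$ for every $n$. Then $\mu:=(\phi_m^1)^{-1}\lambda^{(1)}$ is a locally finite Borel measure ($1/\phi_m^1$ is bounded on bounded sets) with $\Phi_m^n(\mu)=\phi_m^n\mu=(\phi_m^n/\phi_m^1)\lambda^{(1)}=\lambda^{(n)}$ and $\mu(\phi_m^n)=\lambda^{(n)}\big((\R^2)^{m+1}\big)<\infty$ for all $n$; hence $\mu\in\widetilde{\mathcal{M}}_m$ and $\Phi_m(\mu)=\boldsymbol{\lambda}$. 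Thus the image is closed, so it is Polish as a closed subset of $\big(\widehat{\mathcal{M}}_m^{\infty},\boldsymbol{\rho}_m\big)$, $\Phi_m$ is a closed embedding, and $\widetilde{\rho}_m=\boldsymbol{\rho}_m\circ(\Phi_m\times\Phi_m)$ is a complete metric for the $\widetilde{C}_m$-weak topology. I expect this closedness step—in particular extracting the relations $\lambda^{(n)}=(\phi_m^n/\phi_m^1)\lambda^{(1)}$ from the coordinatewise convergence—to be the only real obstacle; everything preceding and following it is routine.

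For (i), if $M$ is precompact then each $\Phi_m^n[M]$ is precompact by continuity of $\Phi_m^n$; conversely, if each $\Phi_m^n[M]$ is precompact, then $\prod_n\overline{\Phi_m^n[M]}$ is compact by Tychonoff's theorem, its intersection with the closed set $\Phi_m[\widetilde{\mathcal{M}}_m]$ is compact and contains $\Phi_m[M]$, and pulling back by the homeomorphism $\Phi_m^{-1}$ shows $M$ is precompact. For (ii), given $\varepsilon>0$ choose compact $K_n\subset\widehat{\mathcal{M}}_m$ with $\mathbb{P}\big(\Phi_m^n(\boldsymbol{\mu}_\alpha)\notin K_n\big)<\varepsilon 2^{-n}$ for all $\alpha$; then $M:=\Phi_m^{-1}\big[\Phi_m[\widetilde{\mathcal{M}}_m]\cap\prod_n K_n\big]$ is compact by the argument in (i), and $\mathbb{P}\big(\boldsymbol{\mu}_\alpha\notin M\big)\le\sum_n\mathbb{P}\big(\Phi_m^n(\boldsymbol{\mu}_\alpha)\notin K_n\big)<\varepsilon$. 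Finally, for (iii): since $\mathbb{E}\big[\Phi_m^n(\boldsymbol{\mu}_\alpha)\big]=\Phi_m^n\big(\mathbb{E}[\boldsymbol{\mu}_\alpha]\big)$ and $\Phi_m^n$ carries the precompact family $\{\mathbb{E}[\boldsymbol{\mu}_\alpha]\}$ to a precompact subfamily of $\widehat{\mathcal{M}}_m$ by (i), it suffices in view of (ii) to use the standard fact that a family of random finite measures on a Polish space whose intensity measures form a weakly precompact set is tight—proved by combining Prohorov's characterization of weak precompactness in $\widehat{\mathcal{M}}_m$ (a uniform total-mass bound together with uniform smallness on the complements of a suitable sequence of compacta) with Markov's inequality applied to $\boldsymbol{\mu}_\alpha\big((\R^2)^{m+1}\big)$ and to $\boldsymbol{\mu}_\alpha\big((\R^2)^{m+1}\setminus K\big)$.
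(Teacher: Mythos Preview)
Your proof is correct and follows essentially the same route as the paper's: establish closedness of $\Phi_m[\widetilde{\mathcal{M}}_m]$ via the relation $\lambda^{(k)}=(\phi_m^k/\phi_m^n)\lambda^{(n)}$ (the paper obtains this by testing against $\varphi\in C_b$ and using $(\phi_m^k/\phi_m^n)\varphi\in C_b$, you by the equivalent observation that multiplication by $h_{k,n}$ is weakly continuous), then deduce (i) from Tychonoff, (ii) from the $\varepsilon 2^{-n}$ union bound, and (iii) from the intensity-measure tightness criterion for random finite measures, which the paper cites as \cite[Theorem~4.10]{Kallenberg} while you sketch the Markov-inequality proof.
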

\begin{proof} Note that $\widetilde{\mathcal{M}}_m$ is separable because $\Phi_m$ embeds $\widetilde{\mathcal{M}}_m$ in the separable space $\widehat{\mathcal{M}}_{m}^{\infty}$.  The space $ \widetilde{\mathcal{M}}_m$ is complete under  $\widetilde{\rho}_m$ if and only if   $\Phi[\widetilde{\mathcal{M}}_m]$ is sequentially closed under the metric $\boldsymbol{\rho}_m$. Suppose that for $\{ \mu_j\}_1^{\infty}\subset \widetilde{\mathcal{M}}_m$ the sequence $\{\Phi_m(\mu_j)\}_{j=1}^{\infty}$ converges to a limit $\big(\lambda^{(1)}, \lambda^{(2)},\ldots\big)\in \widehat{\mathcal{M}}_{m}^{\infty}$ under $\boldsymbol{\rho}_m$. Then,  for any $n\in \mathbb{N}$, the sequence $\{ \Phi^n_m(\mu_j) \}_{j=1}^{\infty}$  converges  to   $\lambda^{(n)}\in \widehat{\mathcal{M}}_{m}$ under $ \rho_m $.  Define the  measure $\eta^{(n)}:= \big(\phi^{n}_m\big)^{-1} \,\lambda^{(n)}  $.  We can see that $ \eta^{(n)}$ does not depend on $n$ through the simple computation below. Given any $k<n$ and  $\varphi\in C_b\big( (\R^2)^{m+1} \big)$,  we have
\begin{align}
\Phi_m^{k}(\mu_j)(\varphi)\,=\, \big( \phi_{m}^{k} \,\mu_j \big)(\varphi) \,=\, \big( \phi^{n}_m \,\mu_j \big)\bigg(\frac{ \phi^{k}_m }{ \phi_m^{n} }\,\varphi\bigg) \,=\, \Phi^n_m (\mu_j)\bigg(\frac{ \phi^{k}_m }{ \phi^n_{m} }\,\varphi\bigg)\,.
\end{align}
Note that $\frac{ \phi^k_{m} }{ \phi^{n}_m }\varphi \in C_b\big( (\R^2)^{m+1} \big)$.   As $j\rightarrow \infty$ the above yields the equality $\lambda^{(k)}(\varphi)= \lambda^{(n)}\big(\frac{ \phi^{k}_m }{ \phi^{n}_m }\varphi\big)$  for  any $\varphi$, and thus we can conclude that  $\lambda^{(k)}= \frac{ \phi^{k}_m }{ \phi^n_{m} }\lambda^{(n)}$.  Hence $\eta^{(k)}=\eta^{(n)}$ holds for all $k, n\in \mathbb{N}$, and so the superscript on $\eta^{(n)}\equiv \eta$ can be omitted, and we have  $\big(\lambda^{(1)}, \lambda^{(2)},\ldots\big) =\Phi_m(\eta)\in  \Phi_m[\widetilde{\mathcal{M}}]$.   Therefore $ \Phi_m[\widetilde{\mathcal{M}}]$ is sequentially closed.  \vspace{.2cm}

\noindent Part (i): The ``only if" direction is immediate since each $\Phi^n_m$ is continuous.  Suppose that   $\Phi_m^n[M]  $ is precompact for each $n$.  Then  the product set $\prod_{n=1}^{\infty} \overline{\Phi^n_m[M]}  $ is a compact subset of  $\widehat{\mathcal{M}}_{m}^{\infty}$ by Tychonoff's theorem.  It follows that $\Phi_m[M]$ is  precompact, as it is a subset of  $ \prod_{n=1}^{\infty} \overline{\Phi^n_m[M]}  $. Since $\Phi_m $ is a closed embedding, the set $M$ is precompact.\vspace{.2cm}

\noindent Part (ii):  If $\big\{\Phi^n_m(\boldsymbol{\mu}_{\alpha})\big\}_{\alpha\in I}$ is tight in $ \widehat{\mathcal{M}}_{m}$ for each $n$, then for any $\epsilon>0$ we can pick a sequence $\{K_n\}_1^{\infty}$ of compact subsets of $\widehat{\mathcal{M}}_{m}$ such that $\mathbb{P}\big[  \Phi^n_m[\boldsymbol{\mu}_{\alpha} ]\in K_n^c \big]\leq \frac{\epsilon}{2^n}  $ for all $\alpha$ and $n$.   The set $K=\prod_{1}^{\infty}K_n   $ is compact in $\widehat{\mathcal{M}}_{m}^{\infty}$ by Tychonoff's theorem, and hence $ \Phi_m^{-1}[K]$ is compact in $\widetilde{\mathcal{M}}_m$, as $\Phi_m$ is a closed embedding. For any $\alpha$, we have
\begin{align*}
\mathbb{P}\left[  \boldsymbol{\mu}_{\alpha} \in \Phi_m^{-1}[K] \right]\,= \,&\,\mathbb{P}\big[ \Phi^n_m(\boldsymbol{\mu}_{\alpha} )\in K_n \text{ for each $n$} \big]\\   \,\geq\, & \,1\,-\,\sum_{n=1}^{\infty}\mathbb{P}\big[ \Phi^n_m(\boldsymbol{\mu}_{\alpha} )\in K_n^c \big] \,\geq \,1\,-\,\sum_{n=1}^{\infty} \frac{\epsilon}{2^n}\,=  \,1\,-\,\epsilon \,,
\end{align*}
and hence the family $\big\{\Phi_m(\boldsymbol{\mu}_{\alpha})\big\}_{\alpha\in I}$ is tight.\vspace{.2cm}

\noindent Part (iii): Suppose that $M:=\big\{\mathbb{E}[\boldsymbol{\mu}_{\alpha}]:\alpha \in I \big\}$ is precompact in $\widetilde{\mathcal{M}}_m$. By (i) the set 
\[
\Phi^n_m[M]\,=\,\Big\{\Phi^n_m\big(\mathbb{E}[\boldsymbol{\mu}_{\alpha}]\big)\,:\,\alpha \in I \Big\} \,=\,\Big\{\mathbb{E}\big[\Phi^n_m(\boldsymbol{\mu}_{\alpha})\big]\,:\,\alpha \in I \Big\}
\]
is  precompact in $ \widehat{\mathcal{M}}_{m}$ for each $n\in \mathbb{N}$. It follows from~\cite[Theorem 4.10]{Kallenberg} that the family $\big\{  \Phi^n_m(\boldsymbol{\mu}_{\alpha})\big\}_{\alpha\in I}$ is tight.
\end{proof}

Since the $\widetilde{C}_m$-weak topology is stronger than the vague topology on $\widetilde{\mathcal{M}}_m$,  part (i) of Proposition~\ref{PropMTilde} implies the following.

\begin{corollary} A sequence $\{ \mu_j\}_{1}^{\infty}\subset \widetilde{\mathcal{M}}_m$ converges to $\mu\in \widetilde{\mathcal{M}}_m$ in the $\widetilde{C}_m$-weak topology if and only if $  \mu_j  \rightarrow \mu$ vaguely and  $ \sup_{ j}\,\mu_{j}(\phi^n_m) <\infty       $ for each $n\in \mathbb{N}$.

\end{corollary}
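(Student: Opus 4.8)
The plan is to derive both implications from part (i) of Proposition~\ref{PropMTilde}, using three facts about the $\widetilde{C}_m$-weak topology on $\widetilde{\mathcal{M}}_m$: it is metrizable (indeed Polish), it is strictly stronger than the vague topology, and, via the embedding $\Phi_m=(\Phi_m^1,\Phi_m^2,\ldots)$, a sequence converges in it precisely when each weighted measure $\phi_m^n\mu_j$ converges to $\phi_m^n\mu$ in $\widehat{\mathcal{M}}_m$.

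For the forward implication I would argue directly: vague convergence is automatic because the vague topology is coarser, and testing the convergence $\Phi_m^n(\mu_j)\to\Phi_m^n(\mu)$ in $\widehat{\mathcal{M}}_m$ against the constant function $1$ gives $\mu_j(\phi_m^n)\to\mu(\phi_m^n)<\infty$, so the real sequence $\{\mu_j(\phi_m^n)\}_j$ is convergent, hence bounded; this holds for every $n$. This direction is routine.

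The converse carries the content. Assuming $\mu_j\to\mu$ vaguely and $C_n:=\sup_j\mu_j(\phi_m^n)<\infty$ for every $n$, I would show that $M:=\{\mu_j:j\in\mathbb{N}\}$ is precompact in $\widetilde{\mathcal{M}}_m$ by verifying, through part (i) of Proposition~\ref{PropMTilde}, that $\Phi_m^n[M]=\{\phi_m^n\mu_j\}_j$ is precompact in $\widehat{\mathcal{M}}_m$ for each $n$. By Prohorov's theorem on the Polish space $(\R^2)^{m+1}$ this reduces to uniform boundedness of total masses, which is immediate since $\phi_m^n\mu_j$ has mass $\mu_j(\phi_m^n)\le C_n$, and to tightness. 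The one step requiring genuine input is the tightness estimate: since $\phi_m^{n+1}/\phi_m^n=\exp\{\tfrac{1}{n(n+1)}|x_0|+\sum_{j=1}^m|x_j|\}$ tends to infinity as the point escapes to infinity in $(\R^2)^{m+1}$, for any $\varepsilon>0$ there is a ball $B_R$ with $\phi_m^{n+1}/\phi_m^n\ge C_{n+1}/\varepsilon$ off $B_R$, whence $(\phi_m^n\mu_j)(B_R^c)\le\tfrac{\varepsilon}{C_{n+1}}\mu_j(\phi_m^{n+1})\le\varepsilon$ uniformly in $j$; in effect one spends the bound on the $(n{+}1)$st weighted mass to control the tails of the $n$th. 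This delivers tightness, hence the precompactness of $M$.

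To finish I would invoke the standard metric-space subsequence principle: every subsequence of $\{\mu_j\}_j$ lies in the precompact set $M$ and so has a further subsequence converging in the $\widetilde{C}_m$-weak topology; that limit also converges to it vaguely (the vague topology being weaker) and converges to $\mu$ vaguely (as a subsequence of $\mu_j\to\mu$), so by uniqueness of vague limits it equals $\mu$. Since $\widetilde{\mathcal{M}}_m$ is metrizable, this forces $\mu_j\to\mu$ in the $\widetilde{C}_m$-weak topology. The only real obstacle in the argument is the tightness estimate above; everything else is soft topology and bookkeeping with the embedding $\Phi_m$.
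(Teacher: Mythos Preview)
Your proposal is correct and follows exactly the approach the paper intends: the paper offers no detailed proof, simply stating that the corollary follows from part (i) of Proposition~\ref{PropMTilde} together with the fact that the $\widetilde{C}_m$-weak topology is stronger than the vague one. Your argument is a faithful expansion of this, with the tightness bootstrap via $\phi_m^{n+1}/\phi_m^n\to\infty$ being precisely the mechanism that makes the precompactness criterion in (i) applicable.
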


The following basic lemma states that sequential convergence under the $\widetilde{C}_m$-topology only needs to be checked for functions  $\varphi\in \widetilde{C}_m$ having a  product form.
\begin{lemma}\label{LemmaSuffice} A sequence $\{ \mu_j\}_{1}^{\infty} \subset \widetilde{\mathcal{M}}_m$ converges to $\mu\in \widetilde{\mathcal{M}}_m$ provided that $\mu_j(\varphi)\rightarrow \mu(\varphi)$ for all $\varphi\in \widetilde{C}_m$ of the product form below  for some  $n\in \mathbb{N}$ and  $\psi^{(j)}\in C_b(\R^2)$:
\[
\varphi(x_0,\ldots, x_m)\,=\,\textup{exp}\bigg\{-\frac{1}{n}|x_0|\, +\,n\,\sum_1^m\,  |x_j| \,\bigg\}\,\prod_{j=0}^m \psi^{(j)}(x_j)\,, \hspace{1cm}x_j\in \R^2\,.
\]

\end{lemma}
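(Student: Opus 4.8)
The plan is to reduce the assertion to the weak convergence $\Phi^n_m(\mu_j)\to\Phi^n_m(\mu)$ in $\widehat{\mathcal{M}}_m$ for each fixed $n\in\mathbb{N}$, which is precisely convergence in the $\widetilde{C}_m$-weak topology since that topology is the one induced by the family $\{\Phi^n_m\}_{n\in\mathbb{N}}$ (equivalently, by the metric $\widetilde\rho_m$, whose coordinates are the $\Phi^n_m$). Fix $n$ and put $\nu_j:=\Phi^n_m(\mu_j)=\phi^n_m\mu_j$ and $\nu:=\Phi^n_m(\mu)$, finite measures on $(\R^2)^{m+1}$. For any product function $h=\prod_{j=0}^m\psi^{(j)}$ with $\psi^{(j)}\in C_b(\R^2)$, the function $\phi^n_m h$ has exactly the product form in the statement and lies in $\widetilde{C}_m$ (being dominated by $(\prod_j\|\psi^{(j)}\|_\infty)\,\phi^n_m$), so the hypothesis gives $\nu_j(h)=\mu_j(\phi^n_m h)\to\mu(\phi^n_m h)=\nu(h)$. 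Taking all $\psi^{(j)}\equiv 1$ shows that the total masses $\nu_j\big((\R^2)^{m+1}\big)=\mu_j(\phi^n_m)$ converge, and the same applied at level $n+1$ shows $\mu_j(\phi^{n+1}_m)$ converges; in particular $\sup_j\mu_j(\phi^n_m)<\infty$ and $\sup_j\mu_j(\phi^{n+1}_m)<\infty$.

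The next step is to establish tightness of $\{\nu_j\}_j$. The ratio $r:=\phi^{n+1}_m/\phi^n_m=\exp\big\{\tfrac1{n(n+1)}|x_0|+\sum_{j=1}^m|x_j|\big\}$ is continuous and tends to $+\infty$ as the Euclidean norm of $(x_0,\dots,x_m)$ tends to $\infty$, so $\{r\le A\}$ is compact for every $A>0$; on $\{r>A\}$ one has $\phi^n_m<A^{-1}\phi^{n+1}_m$, hence $\nu_j(\{r>A\})\le A^{-1}\mu_j(\phi^{n+1}_m)\le A^{-1}\sup_k\mu_k(\phi^{n+1}_m)$, which is uniformly small for $A$ large. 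Together with the uniform bound on $\|\nu_j\|=\mu_j(\phi^n_m)$, Prohorov's theorem makes $\{\nu_j\}_j$ relatively compact in the weak topology of $\widehat{\mathcal{M}}_m$.

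It remains to identify the subsequential limits. If $\nu_{j_k}\to\nu'$ weakly along some subsequence, then $\nu'(h)=\lim_k\nu_{j_k}(h)=\nu(h)$ for every product function $h$ as above, so by linearity $\nu'$ and $\nu$ agree on the linear span of such functions. For each $t=(t_0,\dots,t_m)\in(\R^2)^{m+1}$ this span contains the real and imaginary parts of $x\mapsto e^{\mathrm{i}\langle t,x\rangle}=\prod_{j=0}^m e^{\mathrm{i}\,t_j\cdot x_j}$, obtained by expanding each factor as $\cos(t_j\cdot x_j)+\mathrm{i}\sin(t_j\cdot x_j)$, whose two summands are functions in $C_b(\R^2)$. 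Hence the Fourier transforms of $\nu'$ and $\nu$ coincide, and therefore $\nu'=\nu$, finite Borel measures on $\R^{2(m+1)}$ being determined by their Fourier transforms. Every weakly convergent subsequence of the relatively compact family $\{\nu_j\}_j$ thus has limit $\nu$, so $\nu_j\to\nu$ weakly; since $n$ was arbitrary, $\mu_j\to\mu$ in the $\widetilde{C}_m$-weak topology.

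The step I expect to require the most care is the tightness argument: convergence of $\mu_j$ tested only against product functions does not, on its own, prevent the finite measures $\Phi^n_m(\mu_j)$ from shedding mass near infinity, and what saves the day is that $\phi^{n+1}_m$ dominates $\phi^n_m$ off compact sets — precisely the properness of their ratio. Once tightness is secured, the identification of the limit via the measure-determining property of product test functions (through Fourier transforms) is routine.
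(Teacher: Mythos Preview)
Your proof is correct and follows the same overall reduction as the paper: both observe that convergence in $\widetilde{\mathcal{M}}_m$ amounts to weak convergence of $\Phi^n_m(\mu_j)\to\Phi^n_m(\mu)$ in $\widehat{\mathcal{M}}_m$ for each $n$, and that the hypothesis translates into convergence of these finite measures against all tensor-product test functions $\psi^{(0)}\otimes\cdots\otimes\psi^{(m)}$. The paper stops there, simply invoking the standard fact that weak convergence of finite measures on a product Polish space is determined by product test functions. You instead prove this fact explicitly: you secure tightness of $\{\Phi^n_m(\mu_j)\}_j$ by exploiting the hypothesis at level $n+1$ (via the properness of the ratio $\phi^{n+1}_m/\phi^n_m$), and then identify subsequential limits through Fourier transforms. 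Your route is more self-contained; the paper's is shorter because it delegates to a known result. One minor remark: your tightness step, while correct, is not strictly needed---normalizing to probability measures and applying L\'evy's continuity theorem would give weak convergence directly from the Fourier-transform convergence you already have, without invoking level $n+1$ at all.
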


\begin{proof} This is equivalent to asserting that the sequence of finite measures $\{\lambda_{k}^{(n)}\}_{k=1}^{\infty}  $ defined by $\lambda_{k}^{(n)}:=\Phi^{n}_m(\mu_k)$ converges weakly to  $\lambda^{(n)}:=\Phi^{n}_m(\mu)$ provided that  $\lambda_{k}^{(n)}\big(\psi^{(0)}\otimes \cdots \otimes \psi^{(m)}\big)\rightarrow \lambda^{(n)}\big(\psi^{(0)}\otimes \cdots \otimes \psi^{(m)}\big)$ as $k\rightarrow \infty$ for any $\psi^{(j)}\in C_b(\R^2)$, which is true.
\end{proof}

\subsection{Continuity of the $\mathlarger{\mathlarger{\bullet}}_{\varsigma}$ operation }\label{SubsectCont}

The aim of this subsection is to prove the proposition below.  
\begin{proposition}\label{PropCont}  Fix $\varsigma>0$ and $k,m\in \mathbb{N}$. If
$\mu\in \widetilde{\mathcal{M}}_k$ and $\lambda\in \widetilde{\mathcal{M}}_m$, then  $\mu\,\mathlarger{\mathlarger{\bullet}}_{\varsigma}\,\lambda \in \widetilde{\mathcal{M}}_{k+m}$.  Furthermore, the map from $\widetilde{\mathcal{M}}_k\times \widetilde{\mathcal{M}}_m$ to $ \widetilde{\mathcal{M}}_{k+m}$ defined by $\mathlarger{\mathlarger{\bullet}}_{\varsigma}$ is continuous.

\end{proposition}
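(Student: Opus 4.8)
We want to prove Proposition~\ref{PropCont}: for $\varsigma>0$ and $k,m\in\mathbb N$, the operation $\mathlarger{\mathlarger{\bullet}}_{\varsigma}$ maps $\widetilde{\mathcal M}_k\times\widetilde{\mathcal M}_m$ into $\widetilde{\mathcal M}_{k+m}$ and is jointly continuous. The natural route is to pull everything back through the embedding maps $\Phi^n$ into the ordinary finite-measure setting, so that ``$\widetilde C$-weak continuity'' becomes ``ordinary weak continuity of a smoothing/convolution map,'' which is standard. The first task is the quantitative bound showing the output lies in $\widetilde{\mathcal M}_{k+m}$: one computes $(\mu\mathlarger{\mathlarger{\bullet}}_{\varsigma}\lambda)(\phi^N_{k+m})$ by unwinding the definition~(\ref{MeasComb}), giving an integral of the form $\int \mu(dx_0,\dots,dx_k)\,g_{\varsigma}(x_k-y_0)\,\lambda(dy_0,\dots,dy_m)$ against the exponential weight. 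The key elementary estimate is that $g_{\varsigma}(x_k-y_0)\,\mathrm e^{N|x_k|+N|y_0|}$ is bounded by $C_{N,\varsigma}\,\mathrm e^{M|x_k|}\,\mathrm e^{M|y_0|}$ for any $M>N$ --- indeed by $C\,\mathrm e^{M'|x_k|}$ for $M'$ slightly bigger again, absorbing the $y_0$-growth into the Gaussian. After this Gaussian-completing-the-square step, the $N$-weighted integral is controlled by $\mu(\phi^{N'}_k)\cdot\lambda(\phi^{N''}_m)$ for suitable $N',N''$, both finite by assumption, which gives $\mu\mathlarger{\mathlarger{\bullet}}_{\varsigma}\lambda\in\widetilde{\mathcal M}_{k+m}$. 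This same computation also shows that on a bounded $\widetilde C$-set the map is ``uniformly integrable enough,'' which is what we will need for continuity.

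For the continuity statement, by Lemma~\ref{LemmaSuffice} and the metric structure recorded around~(\ref{RhoBold}) it suffices to show: if $\mu_j\to\mu$ in $\widetilde{\mathcal M}_k$ and $\lambda_j\to\lambda$ in $\widetilde{\mathcal M}_m$, then $(\mu_j\mathlarger{\mathlarger{\bullet}}_{\varsigma}\lambda_j)(\varphi)\to(\mu\mathlarger{\mathlarger{\bullet}}_{\varsigma}\lambda)(\varphi)$ for every product-form $\varphi\in\widetilde C_{k+m}$. I would reduce to weak convergence of finite measures by writing, for a fixed weight $\phi^N_{k+m}$,
\[
\big(\mu_j\mathlarger{\mathlarger{\bullet}}_{\varsigma}\lambda_j\big)\!\big(\phi^N_{k+m}\,\Psi\big)\;=\;\int \phi^{N'}_k\,\mu_j(d\vec x)\;h_{\varsigma,N}(\vec x,\vec y)\;\phi^{N''}_m\,\lambda_j(d\vec y)\,,
\]
where $\Psi$ is a bounded continuous product of $\psi^{(i)}$'s and $h_{\varsigma,N}$ is a continuous, \emph{bounded} kernel obtained by extracting the exponential weights into the measures as in the previous paragraph (this boundedness is exactly the completing-the-square estimate above). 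Now $\phi^{N'}_k\,\mu_j$ converges weakly to $\phi^{N'}_k\,\mu$ in $\widehat{\mathcal M}_k$ by definition of the $\widetilde C_k$-weak topology (it is the image under $\Phi^{N'}_k$), and likewise on the $\lambda$ side; so I am reduced to the standard fact that if $\alpha_j\Rightarrow\alpha$ and $\beta_j\Rightarrow\beta$ weakly and $h$ is bounded continuous, then $\int h\,d(\alpha_j\otimes\beta_j)\to\int h\,d(\alpha\otimes\beta)$ (product of weakly convergent sequences, e.g. via portmanteau after passing to the product space, noting $\alpha_j\otimes\beta_j\Rightarrow\alpha\otimes\beta$). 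Choosing $N$ to be the exponent of the given $\varphi$ and $\Psi$ the corresponding product of $\psi^{(i)}$'s finishes the argument.

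The main obstacle is the first step: the kernel $g_{\varsigma}(x_k-y_0)$ couples the ``last coordinate'' of $\mu$ to the ``first coordinate'' of $\lambda$, and the $\widetilde C$-weights $\phi^n$ are \emph{exponentially growing} in precisely those coordinates (the $+n|x_j|$ terms), so one must be careful that the Gaussian decay genuinely dominates. The clean way is the completing-the-square bound $g_{\varsigma}(a-b)\le C_{\varsigma,N}\,g_{2\varsigma}(a-b)\,\mathrm e^{-N(|a|+|b|)}\cdot\mathrm e^{N(|a|+|b|)}$ --- more precisely, use that for any $N$ there is $M>N$ with $g_\varsigma(a-b)\,\mathrm e^{N|a|+N|b|}\le C\,\mathrm e^{M|a-b|^2\text{-free bound}}$; the honest statement is $g_\varsigma(a-b)\le C_{\varsigma,M}\,\mathrm e^{-\frac{1}{4\varsigma}|a-b|^2}$ and $\mathrm e^{N|a|+N|b|}\le \mathrm e^{2N|a-b|+2N|b|}\le \mathrm e^{\frac{1}{8\varsigma}|a-b|^2+C_N}\,\mathrm e^{2N|b|}$, so that the product is $\le C\,\mathrm e^{-\frac{1}{8\varsigma}|a-b|^2}\mathrm e^{2N|b|}\le C'\,\mathrm e^{2N|b|}$, bounded \emph{in $a$} and with only harmless extra growth in $b$ that gets absorbed into $\phi^{N''}_m$. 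Everything downstream of that estimate is bookkeeping with the definitions of $\widetilde C_m$ and the topology, and the final reduction to weak convergence of products of finite measures is routine.
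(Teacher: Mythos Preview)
Your overall plan is sound and, in particular, your continuity argument in Part 2---reducing to weak convergence of the product measures $\Phi^{N'}_k(\mu_j)\otimes\Phi^{N''}_m(\lambda_j)$ integrated against a bounded continuous kernel---is a legitimate alternative to the paper's route through its uniform-convergence Lemma~\ref{LemmaUnif}. The paper instead writes the double integral as an outer integral in $(x,a)$ against $\Phi^n(\mu_j)$ of an inner function $\widetilde\varphi_{j,n}(a)=\int\varphi_n(a;b,y)\,\Phi^n(\lambda_j)(db,dy)$, and then checks three conditions to show $\widetilde\varphi_{j,n}\to\widetilde\varphi_{\infty,n}$ \emph{uniformly}. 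Your product-measure argument avoids that lemma entirely; both approaches hinge on the same boundedness estimate for the kernel after the weights are extracted.

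There is, however, a real gap in your treatment of Part 1 (and hence in how you justify boundedness of $h_{\varsigma,N}$). You write the integrand as containing a factor $g_\varsigma(x_k-y_0)\,e^{N|x_k|+N|y_0|}$, but no such growth in $x_k$ or $y_0$ is present: those coordinates are integrated out, so the weight $\phi^N$ on the output does not involve them. The point is the \emph{asymmetry} of $\phi^n_m$, which has decay $e^{-\frac{1}{n}|x_0|}$ in the first coordinate and growth in the rest. When you insert $\phi^{N'}_k(\vec x)\,\phi^{N''}_m(\vec y)$ and divide out, the residual factor in the gluing variables is $e^{-N'|x_k|+\frac{1}{N''}|y_0|}\,g_\varsigma(x_k-y_0)$, i.e.\ \emph{decay} in $a=x_k$ and only mild \emph{growth} $e^{\frac{1}{N''}|b|}$ in $b=y_0$---the opposite of what you wrote. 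Consequently your final step, ``absorb the extra $e^{2N|b|}$ growth into $\phi^{N''}_m$,'' fails: $\phi^{N''}_m$ has decay $e^{-\frac{1}{N''}|b|}$ in its first coordinate, so it cannot absorb exponential growth there. The correct (and easier) estimate is the paper's~(\ref{Trivy}): $-n|a|+\tfrac{1}{n}|b|-\tfrac{|a-b|^2}{2\varsigma}\le \tfrac{\varsigma}{2n^2}$, which immediately gives $\mu\,\mathlarger{\mathlarger{\bullet}}_\varsigma\,\lambda(\phi^n)\le \tfrac{1}{2\pi\varsigma}\,e^{\varsigma/(2n^2)}\,\mu(\phi^n)\,\lambda(\phi^n)$ with the \emph{same} index $n$ on both factors. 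With this correction in place, your kernel $h_{\varsigma,N}$ is genuinely bounded and your product-measure continuity argument goes through.
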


We will organize the proof of Proposition~\ref{PropCont} around checking the conditions of the following  technical lemma.

\begin{lemma}\label{LemmaUnif} Let $(X,\rho_X)$ and $(Y,\rho_Y)$ be Polish spaces.  Suppose that conditions (I)--(III) below hold. 
\begin{enumerate}[(I)]

\item  $\mu_1$, $\mu_2$,\ldots and $\mu_{\infty}$   are finite  measures on $Y$ such that $\mu_j\rightarrow \mu_{\infty}$ weakly.

\item  $\varphi$ is a real-valued continuous function on  $X\times Y$ such that $\varphi(x,\cdot)\in C_b(Y)  $ for all $x\in X$, and the map $x\mapsto \varphi(x,\cdot) $ from $X$ to $C_b(Y)$ is continuous with respect to the uniform norm on $C_b(Y)$.

\item For every $\epsilon>0$ there exists a compact set $K\subset X$ such that 
\[
C_K\,:=\,\sup_{x\in K^c}\,\sup_{j\in \mathbb{N} }\,\int \big|\varphi(x,y)\big|\,\mu_j(dy)\,<\,\epsilon \,.
\]

\end{enumerate}
Define $\widetilde{\varphi}_j:X\rightarrow \R$ by $\widetilde{\varphi}_j(x):= \int \varphi(x,y)\,\mu_j(dy) $ for $j\in \mathbb{N}\cup \{\infty\}$.  Then the sequence $\{\widetilde{\varphi}_j\}_{1}^{\infty}$ converges uniformly to $\widetilde{\varphi}_\infty$.

\end{lemma}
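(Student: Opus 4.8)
The plan is to prove uniform convergence of $\widetilde{\varphi}_j$ to $\widetilde{\varphi}_\infty$ by a splitting argument: on a large compact set $K \subset X$ we use equicontinuity plus weak convergence of the measures, and off $K$ we use condition (III) to make everything small. First I would fix $\epsilon > 0$ and use (III) to choose a compact $K \subset X$ with $\sup_{x \in K^c} \sup_{j \in \mathbb{N}} \int |\varphi(x,y)|\,\mu_j(dy) < \epsilon$; passing to the limit (via weak convergence and the Portmanteau-type bound, since $|\varphi(x,\cdot)|$ need not be continuous but can be dominated, or more simply because $\varphi(x,\cdot)\in C_b(Y)$ so $\int|\varphi(x,y)|\mu_j(dy)$ converges) this bound also holds for $j = \infty$. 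Hence $\sup_{x \in K^c} |\widetilde{\varphi}_j(x) - \widetilde{\varphi}_\infty(x)| \le 2\epsilon$ for all $j$, which handles the complement of $K$.

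On $K$ the argument is a finite-cover / equicontinuity argument. By (II), the map $x \mapsto \varphi(x,\cdot) \in C_b(Y)$ is continuous in the uniform norm, so its restriction to the compact set $K$ is uniformly continuous: there is $\delta > 0$ so that $\rho_X(x,x') < \delta$ implies $\|\varphi(x,\cdot) - \varphi(x',\cdot)\|_\infty < \epsilon$. Since $\sup_j \mu_j(Y) =: C < \infty$ (weak convergence of finite measures forces bounded total mass), this gives $|\widetilde{\varphi}_j(x) - \widetilde{\varphi}_j(x')| \le C\epsilon$ for all $j \in \mathbb{N} \cup \{\infty\}$ whenever $\rho_X(x,x') < \delta$. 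Cover $K$ by finitely many balls $B(x_1,\delta), \ldots, B(x_N,\delta)$. For each fixed center $x_i$, the function $\varphi(x_i,\cdot)$ lies in $C_b(Y)$, so by (I) we have $\widetilde{\varphi}_j(x_i) = \int \varphi(x_i,y)\,\mu_j(dy) \to \int \varphi(x_i,y)\,\mu_\infty(dy) = \widetilde{\varphi}_\infty(x_i)$; since there are only finitely many centers, choose $J$ so that $|\widetilde{\varphi}_j(x_i) - \widetilde{\varphi}_\infty(x_i)| < \epsilon$ for all $j \ge J$ and all $i$. Then for any $x \in K$, picking $x_i$ with $\rho_X(x,x_i) < \delta$ and using the triangle inequality,
\[
|\widetilde{\varphi}_j(x) - \widetilde{\varphi}_\infty(x)| \le |\widetilde{\varphi}_j(x) - \widetilde{\varphi}_j(x_i)| + |\widetilde{\varphi}_j(x_i) - \widetilde{\varphi}_\infty(x_i)| + |\widetilde{\varphi}_\infty(x_i) - \widetilde{\varphi}_\infty(x)| \le (2C+1)\epsilon
\]
for all $j \ge J$. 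Combining with the estimate on $K^c$ gives $\sup_{x \in X} |\widetilde{\varphi}_j(x) - \widetilde{\varphi}_\infty(x)| \le (2C+3)\epsilon$ for $j \ge J$, and since $\epsilon$ was arbitrary this is the claimed uniform convergence.

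The main obstacle I anticipate is the bookkeeping at the interface of the two estimates — specifically, making sure the tail bound from (III) transfers to $j = \infty$ cleanly. This requires noting that $\varphi(x,\cdot) \in C_b(Y)$ for each fixed $x$, so $\int \varphi(x,y)\,\mu_j(dy) \to \int \varphi(x,y)\,\mu_\infty(dy)$ pointwise in $x$, and hence the uniform-in-$j$ bound $\int|\varphi(x,y)|\mu_j(dy)$ — which one gets from (III) for $j$ finite — also bounds the limit; one can circumvent any measurability subtlety about $|\varphi(x,\cdot)|$ by working with $\varphi$ itself, i.e.\ bounding $|\widetilde\varphi_j(x)|$ rather than $\int|\varphi|d\mu_j$, at the cost of a harmless factor. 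The other place requiring a small remark is that weak convergence of finite (not necessarily probability) measures does give $\sup_j \mu_j(Y) < \infty$, which we need for the equicontinuity estimate to be uniform in $j$; this follows by testing against the constant function $1 \in C_b(Y)$.
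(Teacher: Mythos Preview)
Your proof is correct and follows essentially the same approach as the paper: split into $K$ and $K^c$ via condition~(III), transfer the tail bound to $j=\infty$ by weak convergence, and on $K$ use uniform continuity of $x\mapsto\varphi(x,\cdot)$ together with $\sup_j\mu_j(Y)<\infty$ to upgrade pointwise convergence to uniform convergence. The only cosmetic difference is that the paper packages the argument on $K$ as an application of Arzel\`a--Ascoli (equicontinuity plus pointwise boundedness plus pointwise convergence), whereas you write out the underlying finite-cover argument directly; your concern about $|\varphi(x,\cdot)|$ is also unnecessary, since the absolute value of a bounded continuous function is again in $C_b(Y)$.
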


\begin{proof} By condition (III), for any  given $\epsilon>0$ we can pick a compact set $K\subset X$ such that $C_K<\epsilon$.  Note that since $\mu_j\rightarrow \mu_{\infty}$ weakly by condition (I), we also have $\sup_{x\in K^c}\,\int \big|\varphi(x,y)\big|\,\mu_{\infty}(dy)\,\leq \,C_K$, and consequently $|\widetilde{\varphi}_j|\leq \epsilon$ on $K^c$ for all $j\in \mathbb{N}\cup \{\infty\}$.
Thus it suffices to show that $\{\widetilde{\varphi}_j\}_{1}^{\infty}$ converges uniformly to $\widetilde{\varphi}_\infty$ on any compact subset $K$ of $X$.  The map $x\mapsto \varphi(x,\cdot)$, which is continuous by condition (II), is uniformly continuous when its domain is restricted to $ K$.  Thus for any $\varepsilon>0$ there exists $\delta>0$ such that for all $x,a\in K$ with $\rho_X(x,a)<\delta$ we have
\begin{align*}
  \big\| \varphi(x,\cdot)-\varphi(a,\cdot ) \big\|_{\infty} \,\leq \,\frac{\varepsilon}{1+\sup_{ k\in \mathbb{N} }\mu_k(Y)    } \,,
\end{align*}
and so for all $j\in \mathbb{N}$
\begin{align*}
\big|\widetilde{\varphi}_j(x)-\widetilde{\varphi}_j(a)\big|\,\leq \, \big\| \varphi(x,\cdot)-\varphi(a,\cdot) \big\|_{\infty}\,\sup_{k\in \mathbb{N}} 
 \mu_k(Y)    
 \, < \,\varepsilon \,.
\end{align*}
Hence the family of functions $\{\widetilde{\varphi}_j : j\in \mathbb{N}\}$ is uniformly equicontinuous on $K$, and it is also pointwise bounded since
\[
 \sup_{ j\in \mathbb{N}  } \big| \widetilde{\varphi}_j(x) \big|\,\leq \,\big\|\varphi(x,\cdot)\big\|_{\infty}\,\sup_{j\in \mathbb{N} }\mu_j(Y)\,.
\]
Since the sequence $\{\widetilde{\varphi}_j\}_{1}^{\infty}$ converges pointwise to $\widetilde{\varphi}_\infty$ by (I),
  it follows from Arzel\`a-Ascoli's theorem that $\{\widetilde{\varphi}_j\}_{1}^{\infty}$ converges uniformly  to $\widetilde{\varphi}_\infty$ on $K$, which completes the proof.
\end{proof}

\begin{proof}[Proof of Proposition~\ref{PropCont}]  We will specialize our proof to the case $k=m=1$, as the general case involves less wieldy notation but no additional difficulty.  Let $\mu,\lambda\in \widetilde{\mathcal{M}}$.  For $n\in \mathbb{N}$ recall that we define $\phi^n(x,y)=e^{-\frac{1}{n}|x| +n|y| }$. We can write
\begin{align*}
\mu\, \mathlarger{\mathlarger{\bullet}}_{\varsigma}\,\lambda \,( \phi^n)\,=\,&\,\int_{(\R^2)^4}\,e^{-\frac{1}{n}|x|  }\,\mu(dx,da)\,\frac{ e^{- \frac{ |a-b|^2  }{ 2\varsigma }   }
 }{ 2\pi \varsigma } \,\lambda(db,dy)\, e^{n|y|  }\\
 \,=\,&\,\frac{1}{2\pi \varsigma} \int_{(\R^2)^4}\,\phi^n(x,a) \,\mu(dx,da)\, \textup{exp}\bigg\{-n|a|+\frac{1}{n}|b|- \frac{ |a-b|^2  }{ 2\varsigma } \,\bigg\}   \,\lambda(db,dy)\, \phi^n(b,y) \,.
\end{align*}
For $n\geq 1$ we have the following trivial upper bound for the exponent above:
\begin{align}\label{Trivy}
-n|a|\,+\,\frac{1}{n}|b|\,  -\, \frac{ |a-b|^2  }{ 2\varsigma } \,\leq \,\frac{1}{n}(-|a|\,+\,|b|)\,   -\, \frac{ |a-b|^2  }{ 2\varsigma } \,\leq \,\frac{1}{n}|a-b|\,   -\, \frac{ |a-b|^2  }{ 2\varsigma } \,\leq \,\frac{\varsigma}{2n^2}\,,
\end{align}
where the last inequality results from maximizing the quadratic polynomial $p(x)=\frac{x}{n}-\frac{x^2}{2\varsigma}$.
It follows that
\begin{align*}
\mu\, \mathlarger{\mathlarger{\bullet}}_{\varsigma}\,\lambda\,(\phi^n) \,\leq \,\frac{1}{2\pi \varsigma}\,e^{ 
 \frac{\varsigma}{2n^2} }\mu(\phi^n) \,\lambda (\phi^n) \,<\,\infty 
\end{align*}
 for any $n\in \mathbb{N}$, implying that $\mu\, \mathlarger{\mathlarger{\bullet}}_{\varsigma}\,\lambda \in \widetilde{\mathcal{M}} $. \vspace{.2cm}

For sequences $\{\mu_j\}_1^{\infty},\{\lambda_j\}_1^{\infty}\subset \widetilde{\mathcal{M}}$ with $\mu_j\rightarrow \mu_{\infty}$ and $\lambda_j\rightarrow \lambda_{\infty}$, we wish to show that $\mu_j\, \mathlarger{\mathlarger{*}}_{\varsigma}\,\lambda_j \rightarrow \mu_{\infty}\, \mathlarger{\mathlarger{\bullet}}_{\varsigma}\,\lambda_{\infty}$.  For this, by Lemma~\ref{LemmaSuffice}, it suffices to show that  $\mu_j\, \mathlarger{\mathlarger{\bullet}}_{\varsigma}\,\lambda_j(\varphi)\rightarrow \mu_{\infty}\, \mathlarger{\mathlarger{\bullet}}_{\varsigma}\,\lambda_{\infty}(\varphi)$ for all $\varphi\in \widetilde{C}$ of the form $\varphi(x,y)=\psi^{(1)}(x)\psi^{(2)}(y)e^{-\frac{1}{n}|x| +n|y| }$ for some $\psi^{(1)},\psi^{(2)}\in C_b(\R^2)$ and $n\in \mathbb{N}$.  We can write
\begin{align*}
\mu_j\, \mathlarger{\mathlarger{\bullet}}_{\varsigma}\,\lambda_j( \varphi)\,=\,&\,\int_{(\R^2)^4}\,\psi^{(1)}(x)\,e^{-\frac{1}{n}|x|  }\,\mu_j(dx,da)\,\frac{ e^{- \frac{ |a-b|^2  }{ 2\varsigma }   }
 }{ 2\pi \varsigma } \,\lambda_j(db,dy)\, e^{n|y|  } \,\psi^{(2)}(y) \\
 \, = \,&\,\int_{(\R^2)^2}\, \psi^{(1)}(x)\, \Phi^n(\mu_j)(dx,da)\,\varphi_{j,n}(a)\, ,
\end{align*}
in which $\widetilde{\varphi}_{j,n}:\R^2\rightarrow \R$ is defined by
\begin{align*}
\widetilde{\varphi}_{j,n}(a)\, =\, \int_{ (\R^2)^2}\, \,\varphi_n\big(a; b,y\big)\, \Phi^n(\lambda_j)(db,dy) \hspace{.7cm}
\text{for} \hspace{.7cm}
\varphi_n\big(a; b,y\big)\,=\,\frac{ e^{- n|a|+\frac{1}{n}|b|  - \frac{ |a-b|^2  }{ 2\varsigma }  } }{ 2 \pi \varsigma  } \,\psi^{(2)}(y)   \,.
\end{align*}
Since  $\{\Phi^n(\mu_j)\}_{j=1}^{\infty}$ converges weakly to $\Phi^n(\mu_\infty)$, we have  $\mu_j\, \mathlarger{\mathlarger{\bullet}}_{\varsigma}\,\lambda_j( \varphi)\rightarrow \mu_{\infty}\, \mathlarger{\mathlarger{\bullet}}_{\varsigma}\,\lambda_{\infty}( \varphi)$ provided that $\widetilde{\varphi}_{j,n} \in C_b(\R^2) $ and $\widetilde{\varphi}_{j,n} \rightarrow \widetilde{\varphi}_{\infty,n}$ uniformly.  We can verify this by checking conditions (I)--(III) of Lemma~\ref{LemmaUnif} for the sequence of measures $\big\{  \Phi^n(\lambda_j)\big\}_{j=1}^{\infty}$ on $(\R^2)^2$ and the  function $\varphi_n:\R^2\times (\R^2)^2\rightarrow \R $. \vspace{.2cm}

\noindent Condition  (I).  Since $\lambda_j\rightarrow \lambda_{\infty}$ in $\widetilde{\mathcal{M}}$ by assumption,  the sequence of measures  $\big\{  \Phi^n(\lambda_j)\big\}_{j=1}^{\infty}$ converges weakly to $\Phi^n(\lambda_{\infty})$ for each $n\in \mathbb{N}$.   \vspace{.3cm}

\noindent Condition (II). We  have $ \varphi_n(a; \cdot, \cdot)\in C_b\big( (\R^2)^2\big)$ for every $a\in \R^2$ since $ \varphi_n(a;b,y)$ is jointly continuous and  
\begin{align}\label{Trivy2}
\sup_{a,b,y\in \R^2}\big|  \varphi_n\big(a; b,y\big) \big|  \,\stackrel{ (\ref{Trivy}) }{ \leq}  \, \frac{ e^{\frac{\varsigma}{2n^2} }}{ 2 \pi \varsigma }  \,\|\psi^{(2)}\|_{\infty}\, < \infty\,.
\end{align}
 The supremum of the gradient $ \nabla_a \, \varphi_n\big(a; b,y\big) =-\big(n\frac{a}{|a|}+\frac{1}{\varsigma}\,(a-b)   \big) \varphi_n\big(a; b,y\big)  $ satisfies
\begin{align*}
\sup_{a,b,y\in \R^2}\big|  \nabla_a \varphi_n\big(a; b,y\big) \big| \,\leq\,\sup_{a,b,y\in \R^2} \bigg( n\,+\,\frac{1}{\varsigma}\,|a-b| \bigg)\,\big|  \varphi_n\big(a; b,y\big) \big|\,<\,\infty  \,,
\end{align*}
and hence the map $a\mapsto  \varphi_n(a; \cdot,\cdot ) $  from $ \R^2$  to $  C_b\big( (\R^2)^2 \big)$ is continuous (in fact,  Lipschitz). \vspace{.3cm}

\noindent Condition (III).  Since  $\Phi^n(\lambda_j)\rightarrow \Phi^n(\lambda_\infty)$ weakly, the family $\{\Phi^n(\lambda_j)\}_{ j\in \mathbb{N} }$ is tight.  Thus for any $\epsilon>0$ there exists an $R>0$ such that, for  the open ball  $B_R$  of radius $R$ centered at the origin in $(\R^2)^2\equiv \R^4$, we have
\begin{align}\label{Trivy3}
\sup_{j\in \mathbb{N} }\,\Phi^n(\lambda_j)\big(B_R^c\big)\,\leq \,\frac{\epsilon}{2\|\psi^{(2)}\|_{\infty} } \frac{1}{1+\frac{1}{2\pi \varsigma} e^{\frac{\varsigma}{2n^2} } }\,. 
\end{align}
 Pick $L>0$ large enough so that 
\begin{align}\label{Boog}
\frac{e^{ \frac{1}{n}L - \frac{ L^2  }{ 2\varsigma }  } }{2\pi \varsigma} \leq \frac{\epsilon}{2} \frac{1}{1+  \|\psi^{(2)}\|_{\infty}\sup_{j} \Phi^n(\lambda_j)(\R^2\times \R^2) }  \,.
\end{align}
Define the compact set $K=\big\{x\in \R^2:|x|\leq L+R  \big\}$.
For any $a\in K^c$ and $j\in  \mathbb{N} $,
\begin{align*}
 \int_{(\R^2)^2}\, \big|\varphi_n\big(a; b,y\big)\big|\,\Phi^n(\lambda_j)(db,dy)  \,\leq \,&\, \|  \varphi_n\|_{\infty}\,\Phi^n(\lambda_j)\big(B_R^c\big)    \,+\,  \int_{B_R}\,\big|\varphi_n\big(a; b,y\big)\big|\,\,\Phi^n(\lambda_j)(db,dy)   \\
 \,\leq \,&\,    \frac{\epsilon}{2}\,+\,\|\psi^{(2)}\|_{\infty} \int_{B_R}\frac{ e^{ \frac{1}{n}|a-b| - \frac{ |a-b|^2  }{ 2\varsigma }  }  }{ 2 \pi \varsigma  } \,\Phi^n(\lambda_j)(db,dy) 
   \\ \,\leq \,&\, \frac{\epsilon}{2}\,+\,\frac{\epsilon}{2}\,=\,\epsilon\,.
\end{align*}  
The second inequality applies (\ref{Trivy2})--(\ref{Trivy3}) to the first term.  The third inequality applies~(\ref{Boog}) and that if $(b,y)\in B_R$ and $a\in K^c$ then $|a-b|\geq L$.  Thus we have verified conditions (I)--(III) of Lemma~\ref{LemmaUnif}.
\end{proof}

\subsection{Proof of Proposition~\ref{PropCond}}  \label{SubsectionCond}

 Given $\vartheta,\varepsilon>0$ let $ \mathscr{Z}^{\mathsmaller{\vartheta},\varepsilon}=\big\{\mathscr{Z}^{\mathsmaller{\vartheta},\varepsilon}_{s,t}\big\}_{0\leq s<t<\infty} $ denote the process~(\ref{DefZEpsilon}), defined on a probability space $(\boldsymbol{\Omega}, \mathds{F},  \mathds{P})$.  For $S\subset [0,\infty)$ with nonempty interior, define the $\sigma$-algebra
\[
\mathds{F}_{S}\,:=\,\sigma\bigg\{ \int \, f(t,x) \, \xi(t,x)\,dt\,dx\,:\,f\in L^2\big([0,\infty)\times \R^2\big) \text{ with }\textup{supp}(f)\subset S^{o}\times \R^2 \bigg\}\, .
\]
Thus, we can formally express $\mathds{F}_{S}$ as $\sigma\big\{\xi(t,x):t\in S,\,x\in \R^2\big\} $.  The following trivial lemma is the counterpart for $\mathscr{Z}^{\mathsmaller{\vartheta},\varepsilon}$ of the conditional expectation formula in Proposition~\ref{PropCond}. Naturally, our proof of  Proposition~\ref{PropCond} entails showing that the formula is inherited by the distributional limit $\mathscr{Z}^{\mathsmaller{\vartheta}}$.
\begin{lemma} \label{LemmaEpsilonCase} Fix $\vartheta\in \R$ and $\varepsilon>0$.   For any $0\leq r<s<t<u $, we have
\begin{align}\label{CondExpEpsilon}
\mathds{E}\Big[\,\mathscr{Z}^{\mathsmaller{\vartheta},\varepsilon}_{r,u}\, \Big|\, \mathds{F}_{[0,s]\cup [t,\infty)} \,\Big]\,=\,\mathscr{Z}^{\mathsmaller{\vartheta},\varepsilon}_{r,s}\,\mathlarger{\mathlarger{\bullet}}_{t-s}\,\mathscr{Z}^{\mathsmaller{\vartheta},\varepsilon}_{t,u}\,.
\end{align}
When $r=s$ or $u=t$, the right side above becomes respectively $\mathlarger{\mathlarger{\bullet}}_{t-s}\,\mathscr{Z}^{\mathsmaller{\vartheta},\varepsilon}_{t,u}$ and $\mathscr{Z}^{\mathsmaller{\vartheta},\varepsilon}_{r,s}\,\mathlarger{\mathlarger{\bullet}}_{t-s}$.
\end{lemma}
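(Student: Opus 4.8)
The plan is to deduce~(\ref{CondExpEpsilon}) from three ingredients: a cocycle (Chapman--Kolmogorov) identity for the mollified SHE semigroup, the independence of the white noise over disjoint time sets, and the already-recorded identity $\mathds{E}\big[\mathscr{Z}^{\mathsmaller{\vartheta},\varepsilon}_{s,t}\big]=\mathbf{U}_{t-s}$.

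First I would establish that for any $a<b<c$ the mollified solution obeys
\[
U^{\mathsmaller{\vartheta},\varepsilon}_{a,c}(x,dz)\,=\,\int_{\R^2}U^{\mathsmaller{\vartheta},\varepsilon}_{a,b}(x,dy)\,U^{\mathsmaller{\vartheta},\varepsilon}_{b,c}(y,dz)\,,
\]
and therefore, since $\mathscr{Z}^{\mathsmaller{\vartheta},\varepsilon}_{b,c}(dy,dz)=dy\,U^{\mathsmaller{\vartheta},\varepsilon}_{b,c}(y,dz)$ has a disintegration with respect to Lebesgue measure in its first coordinate (the spatial mollification makes $U^{\mathsmaller{\vartheta},\varepsilon}_{b,c}(y,\cdot)$ a genuine function on $\R^2$), that $\mathscr{Z}^{\mathsmaller{\vartheta},\varepsilon}_{a,c}=\mathscr{Z}^{\mathsmaller{\vartheta},\varepsilon}_{a,b}\,\mathlarger{\mathlarger{\bullet}}\,\mathscr{Z}^{\mathsmaller{\vartheta},\varepsilon}_{b,c}$. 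This follows from the Feynman--Kac representation~(\ref{DefU}): split $\int_a^c=\int_a^b+\int_b^c$ in the stochastic exponent together with the deterministic correction $\vsm^{\mathsmaller{\vartheta},\varepsilon}(c-a)\|j\|_2^2/(2\varepsilon^2)$, disintegrate $\mathbf{P}_{a,x}$ over the value $p(b)$ by the Markov property of Wiener measure, and observe that the first exponential factor depends only on $p|_{[a,b]}$ while the second factor together with the terminal weight depends only on $p|_{[b,c]}$. Applying this twice and invoking the associativity of $\mathlarger{\mathlarger{\bullet}}$ gives $\mathscr{Z}^{\mathsmaller{\vartheta},\varepsilon}_{r,u}=\mathscr{Z}^{\mathsmaller{\vartheta},\varepsilon}_{r,s}\,\mathlarger{\mathlarger{\bullet}}\,\mathscr{Z}^{\mathsmaller{\vartheta},\varepsilon}_{s,t}\,\mathlarger{\mathlarger{\bullet}}\,\mathscr{Z}^{\mathsmaller{\vartheta},\varepsilon}_{t,u}$.

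Next, for a test function $\varphi\in\hat{C}_b\big((\R^2)^2\big)$ I would write
\[
\big(\mathscr{Z}^{\mathsmaller{\vartheta},\varepsilon}_{r,s}\,\mathlarger{\mathlarger{\bullet}}\,\mathscr{Z}^{\mathsmaller{\vartheta},\varepsilon}_{s,t}\,\mathlarger{\mathlarger{\bullet}}\,\mathscr{Z}^{\mathsmaller{\vartheta},\varepsilon}_{t,u}\big)(\varphi)\,=\,\int\varphi(x,z)\,\mathscr{Z}^{\mathsmaller{\vartheta},\varepsilon}_{r,s}(dx,dy_1)\,U^{\mathsmaller{\vartheta},\varepsilon}_{s,t}(y_1,dy_2)\,U^{\mathsmaller{\vartheta},\varepsilon}_{t,u}(y_2,dz)\,.
\]
Now $\mathscr{Z}^{\mathsmaller{\vartheta},\varepsilon}_{r,s}$ is $\mathds{F}_{[r,s]}$-measurable and $U^{\mathsmaller{\vartheta},\varepsilon}_{t,u}$ is $\mathds{F}_{[t,u]}$-measurable, hence both are $\mathds{F}_{[0,s]\cup[t,\infty)}$-measurable, whereas $U^{\mathsmaller{\vartheta},\varepsilon}_{s,t}$ is $\mathds{F}_{(s,t)}$-measurable and therefore independent of $\mathds{F}_{[0,s]\cup[t,\infty)}$, since the white noise over the disjoint time sets $(s,t)$ and $(0,s)\cup(t,\infty)$ is independent. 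The integrand is linear in $U^{\mathsmaller{\vartheta},\varepsilon}_{s,t}$, so a conditional Fubini argument at the level of the $\hat{C}_b$-tested quantities --- justified by the uniform second-moment bounds on $\{\mathscr{Z}^{\mathsmaller{\vartheta},\varepsilon}_{a,b}\}$ from~\cite{BC}, which supply the requisite local integrability --- gives
\[
\mathds{E}\Big[\mathscr{Z}^{\mathsmaller{\vartheta},\varepsilon}_{r,u}(\varphi)\,\Big|\,\mathds{F}_{[0,s]\cup[t,\infty)}\Big]\,=\,\int\varphi(x,z)\,\mathscr{Z}^{\mathsmaller{\vartheta},\varepsilon}_{r,s}(dx,dy_1)\,\mathds{E}\big[U^{\mathsmaller{\vartheta},\varepsilon}_{s,t}(y_1,dy_2)\big]\,U^{\mathsmaller{\vartheta},\varepsilon}_{t,u}(y_2,dz)\,,
\]
and $\mathds{E}\big[U^{\mathsmaller{\vartheta},\varepsilon}_{s,t}(y_1,dy_2)\big]=g_{t-s}(y_2-y_1)\,dy_2$ because $\mathds{E}\big[\mathscr{Z}^{\mathsmaller{\vartheta},\varepsilon}_{s,t}\big]=\mathbf{U}_{t-s}$ and $\mathscr{Z}^{\mathsmaller{\vartheta},\varepsilon}_{s,t}(dy_1,dy_2)=dy_1\,U^{\mathsmaller{\vartheta},\varepsilon}_{s,t}(y_1,dy_2)$.

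Finally I would recognize the last display as $\mathscr{Z}^{\mathsmaller{\vartheta},\varepsilon}_{r,s}\,\mathlarger{\mathlarger{\bullet}}_{t-s}\,\mathscr{Z}^{\mathsmaller{\vartheta},\varepsilon}_{t,u}$ tested against $\varphi$: substituting $\mathscr{Z}^{\mathsmaller{\vartheta},\varepsilon}_{t,u}(dy_2,dz)=dy_2\,U^{\mathsmaller{\vartheta},\varepsilon}_{t,u}(y_2,dz)$ and using that $g_{t-s}$ is even, the integral is exactly the definition~(\ref{MeasComb}) of $\mathlarger{\mathlarger{\bullet}}_{t-s}$, so~(\ref{CondExpEpsilon}) follows. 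The degenerate cases $r=s$ and $u=t$ are the same computation with the corresponding outer factor absent, matching the stated conventions for $\mathlarger{\mathlarger{\bullet}}_{t-s}\,\mathscr{Z}^{\mathsmaller{\vartheta},\varepsilon}_{t,u}$ and $\mathscr{Z}^{\mathsmaller{\vartheta},\varepsilon}_{r,s}\,\mathlarger{\mathlarger{\bullet}}_{t-s}$. The only step that is not pure bookkeeping is the cocycle identity of the first paragraph: one must verify that the spatial mollification genuinely furnishes the disintegrations, measurably in all variables, so that the Feynman--Kac factorization across the intermediate time holds as an identity of almost surely locally finite measures rather than merely against smooth data.
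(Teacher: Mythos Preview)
Your proof is correct and uses the same ingredients as the paper---the Feynman--Kac splitting of the exponent over $[r,s]\cup[s,t]\cup[t,u]$, the Markov property of Wiener measure, and the independence of the white noise over disjoint time sets---only in a different order: you first establish the almost-sure cocycle identity $\mathscr{Z}^{\mathsmaller{\vartheta},\varepsilon}_{r,u}=\mathscr{Z}^{\mathsmaller{\vartheta},\varepsilon}_{r,s}\,\mathlarger{\mathlarger{\bullet}}\,\mathscr{Z}^{\mathsmaller{\vartheta},\varepsilon}_{s,t}\,\mathlarger{\mathlarger{\bullet}}\,\mathscr{Z}^{\mathsmaller{\vartheta},\varepsilon}_{t,u}$ and then condition, whereas the paper conditions the pathwise exponential $\mathscr{E}^{\mathsmaller{\vartheta},\varepsilon}_{r,u}(p)$ first (so the middle factor vanishes to $1$) and applies the Markov property afterward. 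The two routes are equivalent reorderings of the same computation.
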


\begin{proof}  Since $\mathscr{Z}^{\mathsmaller{\vartheta},\varepsilon}_{s,t}(dx,dy)=dx\,U_{s,t}^{\mathsmaller{\vartheta},\varepsilon}(x,dy)$ for the kernel $U_{s,t}^{\mathsmaller{\vartheta},\varepsilon}$  defined in~(\ref{DefU}), establishing our desired result amounts to showing that for any $x\in \R^2$ 
\begin{align}\label{GOAL}
\mathds{E}\Big[\,U_{r,u}^{\mathsmaller{\vartheta},\varepsilon}(x,\cdot)\, \Big|\, \mathds{F}_{[0,s]\cup [t,\infty)}\, \Big]\,=\, \int_{a,b\in \R^2 }\,U_{r,s}^{\mathsmaller{\vartheta},\varepsilon}(x,da)\,g_{t-s }(a-b) \,db\, U_{t,u}^{\mathsmaller{\vartheta},\varepsilon}(b,\cdot )\,.
\end{align}
Given some $\varphi\in C_c(\R^2)$,  the conditional expectation of  $U_{r,u}^{\mathsmaller{\vartheta},\varepsilon}(x,\varphi) $ with respect to $\mathds{F}_{(0,s)\cup(t,\infty)}$ can be expressed as
\begin{align}\label{IntAbs}
\mathds{E}\Big[\,U_{r,u}^{\mathsmaller{\vartheta},\varepsilon}(x,\varphi)\, \Big|\, \mathds{F}_{[0,s]\cup[t,\infty)}\, \Big]\,=\,\int_{\boldsymbol{\Upsilon}_r} \,  \mathds{E}\Big[\,\mathscr{E}^{\mathsmaller{\vartheta},\varepsilon}_{r,u}(p)  \, \Big|\, \mathds{F}_{[0,s]\cup[t,\infty)} \,\Big] \,\varphi(p_{u})  \, \mathbf{P}_{r,x}(dp) \,,
 \end{align}
where for $0\leq a<b$ we define
$$\mathscr{E}^{\mathsmaller{\vartheta},\varepsilon}_{a,b}(p)\,:=\, \textup{exp}\Bigg\{\sqrt{\vsm^{\mathsmaller{\vartheta},\varepsilon} } \int_{a}^{b}\xi_{\varepsilon} \big(r, \,p(r) \big)dr  -\vsm^{\mathsmaller{\vartheta},\varepsilon} \frac{(b-a)\|j\|^2_2}{2\varepsilon^2} \Bigg\}\,.$$
 
For fixed $p \in \boldsymbol{\Upsilon}_r$,  we can write 
\[
\int_r^{u}\,\xi_{\varepsilon} \big(a, \, p(a) \big)\,da\,=\,\int_r^{s}\,\xi_{\varepsilon} \big(a, \, p(a) \big)\,da\,+\,\int_{s}^{t}\,\xi_{\varepsilon} \big(a, \, p(a) \big)\,da\,+\,\int_{t}^{u}\,\xi_{\varepsilon} \big(a, \,p(a) \big)\,da\,,
\]
where the first and last terms on the right side are $\mathds{F}_{[0,s]\cup[t,\infty)} $-measurable, and the middle term is independent of $\mathds{F}_{[0,s]\cup[t,\infty)} $ and has distribution $\mathcal{N}\Big(0,\frac{(t-s)\|j\|^2}{\varepsilon^2}\Big)$. 
Thus we get
 \begin{align}\label{ProdU}
&\mathds{E}\Big[\,\mathscr{E}^{\mathsmaller{\vartheta},\varepsilon}_{r,u}(p)  \, \Big|\, \mathds{F}_{[0,s]\cup[t,\infty)} \,\Big]  \,=\, \mathscr{E}^{\mathsmaller{\vartheta},\varepsilon}_{r,s}(p)\, \mathscr{E}^{\mathsmaller{\vartheta},\varepsilon}_{t,u}(p) \, .
 \end{align}
Subbing~(\ref{ProdU}) into~(\ref{IntAbs}) and applying the Markov property  with  respect to its natural filtration $F_b:=\sigma\big\{ p(a) : a\in [0,b]  \big\}$ twice yields 
 \begin{align*}
\mathds{E}\Big[\,U_{r,u}^{\mathsmaller{\vartheta},\varepsilon}(x,\varphi)\, \Big|\, \mathds{F}_{[0,s]\cup[t,\infty)}\, \Big]\,=\,&\,\int_{\boldsymbol{\Upsilon}_r} \, \mathscr{E}^{\mathsmaller{\vartheta},\varepsilon}_{r,s}(p)\, \mathscr{E}^{\mathsmaller{\vartheta},\varepsilon}_{t,u}(p) \, \varphi\big(p(u)\big)  \, \mathbf{P}_{r,x}(dp) \\ \,=\,&\int_{\boldsymbol{\Upsilon}_r} \, \mathscr{E}^{\mathsmaller{\vartheta},\varepsilon}_{r,s}(p)\, 
 \bigg(\int_{\boldsymbol{\Upsilon}_{t}} \mathscr{E}^{\mathsmaller{\vartheta},\varepsilon}_{t,u}(q) \, \varphi\big(q(u)\big) \,\mathbf{P}_{t,p(t)}(dq)\bigg) \, \mathbf{P}_{r,x}(dp)\\
 \,=\, &\int_{\boldsymbol{\Upsilon}_r} \, \mathscr{E}^{\mathsmaller{\vartheta},\varepsilon}_{r,s}(p)\, 
U_{t,u}^{\mathsmaller{\vartheta},\varepsilon}\big(p(t),\varphi\big)\, \mathbf{P}_{r,x}(dp) \\
 \,=\, &\,\int_{\boldsymbol{\Upsilon}_r} \, \mathscr{E}^{\mathsmaller{\vartheta},\varepsilon}_{r,s}(p)\, \bigg(\int_{\boldsymbol{\Upsilon}_{s}}  \, U_{t,u}^{\mathsmaller{\vartheta},\varepsilon}\big(q(t),\varphi\big)\,\mathbf{P}_{s,q(s)}(dq)\bigg)
\, \mathbf{P}_{r,x}(dp) \\
\,=\,&\, \int_{\boldsymbol{\Upsilon}_r} \, \mathscr{E}^{\mathsmaller{\vartheta},\varepsilon}_{r,s}(p)\, \bigg(\int_{\R^2}  \, g_{t-s}\big(p(s)-a \big) \,U_{t,u}^{\mathsmaller{\vartheta},\varepsilon}(a,\varphi)\,da\bigg)
\, \mathbf{P}_{r,x}(dp)  \\
\,=\,&\, \int_{\boldsymbol{\Upsilon}_r} \, \mathscr{E}^{\mathsmaller{\vartheta},\varepsilon}_{r,s}(p)\,\widehat{\varphi}\big(p(s)\big)
\, \mathbf{P}_{r,x}(dp) \,=:\,\,U_{r,s}^{\mathsmaller{\vartheta},\varepsilon}(x,\widehat{\varphi}) \,,
 \end{align*}
where $\widehat{\varphi}(y)\,:=\,\int_{\R^2} g_{t-s}(y-b)\,U_{t,u}^{\mathsmaller{\vartheta},\varepsilon}(b,\varphi)\,db$ for $y\in \R^2$. Since the above holds for any $\varphi\in C_c(\R^2)$, we have proven~(\ref{GOAL}).
\end{proof}

Fix $n\in \mathbb{N}$.  Given $\varphi_j \in C_c\big( (\R^2)^2\big)$ and $0\leq s_j<t_j<\infty$ for each $j\in\{1,\ldots, n\}$, let $ \mathscr{F}_{s_1,t_1;\ldots; s_n,t_n}^{\varphi_1,\ldots, \varphi_n}$ denote the $\sigma$-algebra generated by the random vector $\big(  \mathscr{Z}^{\mathsmaller{\vartheta}}_{s_1,t_1}(\varphi_1),\ldots, \mathscr{Z}^{\mathsmaller{\vartheta}}_{s_n,t_n}(\varphi_n) \big)$, that is, the collection of  events of the form
\[
E_A\,:=\, \Big\{ \,\omega\in \Omega\,:\,\Big(  \mathscr{Z}^{\mathsmaller{\vartheta}}_{s_1,t_1}(\omega;\varphi_1),\ldots, \mathscr{Z}^{\mathsmaller{\vartheta}}_{s_n,t_n}(\omega;\varphi_n) \Big)\in A   \,    \Big\} 
\]
for some Borel set $A\subset \R^n$. Given $S\subset [0,\infty)$ the $\sigma$-algebra $\mathscr{F}_{S}:=\sigma\big\{ \mathscr{Z}^{\mathsmaller{\vartheta}}_{s,t}  : (s,t)\subset S\big\}$ is generated by the algebra given by
\begin{align*}
 \mathscr{A}_S \,:=\, \bigcup_{n=1}^{\infty} \,\bigcup_{\substack{(s_1,t_1),\ldots, (s_n,t_n) \subset S  \\ \varphi_1,\ldots, \varphi_n\in C_c( (\R^2)^2)  }    }  \mathscr{F}_{s_1,t_1; \ldots ; s_n,t_n }^{\varphi_1,\ldots, \varphi_n}\,.
\end{align*}
 
\begin{lemma}\label{LemmaTrivial} For any $E\in \mathscr{F}_{s_1,t_1;\ldots; s_n,t_n}^{\varphi_1,\ldots, \varphi_n}$ there exists a uniformly bounded sequence of functions $\{h_j\}_1^{\infty}\subset C_b(\R^n) $ such that $h_j\big(  \mathscr{Z}^{\mathsmaller{\vartheta}}_{s_1,t_1}(\varphi_1),\ldots, \mathscr{Z}^{\mathsmaller{\vartheta}}_{s_n,t_n}(\varphi_n) \big)\rightarrow 1_E$ almost surely as $j\rightarrow \infty$.

\end{lemma}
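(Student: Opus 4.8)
The plan is to observe that, by the very definition of $\mathscr{F}_{s_1,t_1;\ldots;s_n,t_n}^{\varphi_1,\ldots,\varphi_n}$ recorded above, the event $E$ is of the form $E_A$ for some Borel set $A\subset\R^n$; that is, writing $Y:=\big(\mathscr{Z}^{\mathsmaller{\vartheta}}_{s_1,t_1}(\varphi_1),\ldots,\mathscr{Z}^{\mathsmaller{\vartheta}}_{s_n,t_n}(\varphi_n)\big)$ for the $\R^n$-valued random vector in question, we have $1_E=1_A(Y)$. Let $\mu:=\mathbb{P}\circ Y^{-1}$ denote the law of $Y$ on $\R^n$. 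Then it suffices to produce a sequence $\{h_j\}_1^{\infty}\subset C_b(\R^n)$ that is uniformly bounded and satisfies $h_j\to 1_A$ $\mu$-almost everywhere, because then $h_j(Y)\to 1_A(Y)=1_E$ $\mathbb{P}$-almost surely (the $\mu$-null set of non-convergence pulls back under $Y$ to a $\mathbb{P}$-null set).

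To build the $h_j$, I would first use regularity of the finite Borel measure $\mu$ on the metric space $\R^n$: for each $k\in\mathbb{N}$ there exist a closed set $F_k$ and an open set $G_k$ with $F_k\subseteq A\subseteq G_k$ and $\mu(G_k\setminus F_k)<2^{-k}$. (This is the standard fact that the collection of Borel sets admitting such inner-closed/outer-open approximations is a $\sigma$-algebra containing the closed sets, hence all Borel sets.) Next, for each $k$ apply Urysohn's lemma — explicitly, set
\[
h_k(x)\,:=\,\frac{\operatorname{dist}(x,\R^n\setminus G_k)}{\operatorname{dist}(x,\R^n\setminus G_k)+\operatorname{dist}(x,F_k)}\,,
\]
which is continuous, takes values in $[0,1]$, equals $1$ on $F_k$ and vanishes off $G_k$ — so that $\{x:h_k(x)\neq 1_A(x)\}\subseteq G_k\setminus F_k$ and hence $\mu(h_k\neq 1_A)<2^{-k}$. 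By the Borel–Cantelli lemma, $h_k\to 1_A$ $\mu$-almost everywhere, and the sequence $\{h_k\}_1^{\infty}$ is uniformly bounded (by $1$) and lies in $C_b(\R^n)$, which is precisely what is required.

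This lemma is essentially a soft measure-theoretic fact, so there is no real obstacle; the only two things needing care are (a) noting that the generating $\sigma$-algebra is literally $\{Y^{-1}(A):A\in\mathcal{B}(\R^n)\}$, which is supplied by the construction of $\mathscr{F}_{s_1,t_1;\ldots;s_n,t_n}^{\varphi_1,\ldots,\varphi_n}$ preceding the statement, and (b) securing uniform boundedness simultaneously with almost-everywhere convergence — handled automatically above since every $h_k$ takes values in $[0,1]$. As an alternative to the explicit Urysohn construction one could instead approximate $1_A$ in $L^1(\mu)$ by bounded continuous functions, truncate them into $[0,1]$, and pass to an almost-everywhere convergent subsequence; the argument above is simply a self-contained version of this.
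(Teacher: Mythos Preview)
Your proof is correct and follows essentially the same route as the paper: write $E=Y^{-1}(A)$, push forward to the law $\mu$ of $Y$, invoke regularity of $\mu$ to sandwich $A$ between closed and open sets, and apply Urysohn's lemma to produce $[0,1]$-valued continuous approximants. The one refinement in your argument is that you choose the approximation errors to be summable ($\mu(G_k\setminus F_k)<2^{-k}$) and then invoke Borel--Cantelli to upgrade to $\mu$-a.e.\ convergence; the paper's version, as written, only records that the set of disagreement has vanishing probability, which is convergence in probability rather than almost sure convergence, so your treatment of this point is in fact cleaner.
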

\begin{proof} Define the  probability measure $\mu$ on $\R^n$ by $\mu(A)=\mathbb{P}\big[ E_A\big]$.  Pick a Borel set $A\subset \R^n$ such that $E=E_A$.   Since finite  measures on $\R^n$ are  regular, we can pick   sequences of open sets $\{\mathcal{O}_j\}_1^{\infty}\subset \R^n $ and closed sets $\{\mathcal{K}_j\}_1^{\infty}\subset \R^n $ such that $\mathcal{K}_j\subset A \subset \mathcal{O}_j$ and $\mu(\mathcal{O}_j \backslash \mathcal{K}_j)\rightarrow 0$. By  Urysohn's lemma, for each $j$ there exists a  continuous function $h_j:\R^n\rightarrow [0,1]$ such that $h_j=1$ on $\mathcal{K}_j$ and $h_j=0$ on $\mathcal{O}_j^c$.  Then $h_j\big(  \mathscr{Z}^{\mathsmaller{\vartheta}}_{s_1,t_1}(\varphi_1),\ldots, \mathscr{Z}^{\mathsmaller{\vartheta}}_{s_n,t_n}(\varphi_n) \big)$ and $1_{E_A}$ are equal outside of the set $ E_{\mathcal{O}_j \backslash K_j}$, which has the vanishing probability $\mathbb{P}\big[  E_{\mathcal{O}_j \backslash K_j}\big]=\mu(\mathcal{O}_j \backslash K_j)$.
\end{proof}

\begin{proof}[Proof of Proposition~\ref{PropCond}] It suffices to show that for any $\varphi\in C_c\big((\R^2)^2\big)$ 
\begin{align}\label{Tribe}
\mathbb{E}\Big[\,\mathscr{Z}^{\mathsmaller{\vartheta}}_{r,u}(\varphi)\, \Big|\, \mathscr{F}_{[0,s]\cup[t,\infty)} \,\Big]\,=\,\mathscr{Z}^{\mathsmaller{\vartheta}}_{r,s}\,\mathlarger{\mathlarger{\bullet}}_{t-s}\,\mathscr{Z}^{\mathsmaller{\vartheta}}_{t,u}(\varphi)\,.
\end{align}
Since the $\sigma$-algebra $ \mathscr{F}_{[0,s]\cup [t,\infty)}$ is generated by the algebra $\mathscr{A}_{[0,s]\cup [t,\infty)}  $, we can check~(\ref{Tribe}) by verifying that 
\begin{align*}
\mathbb{E}\Big[\,\Big(\mathscr{Z}^{\mathsmaller{\vartheta}}_{r,u}(\varphi)-\mathscr{Z}^{\mathsmaller{\vartheta}}_{r,s}\,\mathlarger{\mathlarger{\bullet}}_{t-s}\,\mathscr{Z}^{\mathsmaller{\vartheta}}_{t,u}(\varphi)\Big)\,1_{E} \,\Big]\,=\,0
\end{align*}
for each $E\in \mathscr{A}_{[0,\mathbf{s}]\cup[\mathbf{t},\infty)}$.  As a consequence of Lemma~\ref{LemmaTrivial}, we only need to show that 
\begin{align}\label{NeedThis}
\mathbb{E}\bigg[\,\Big(\mathscr{Z}^{\mathsmaller{\vartheta}}_{r,u}(\varphi)-\mathscr{Z}^{\mathsmaller{\vartheta}}_{r,s}\,\mathlarger{\mathlarger{\bullet}}_{t-s}\,\mathscr{Z}^{\mathsmaller{\vartheta}}_{t,u}(\varphi)\Big)\,h\Big(  \mathscr{Z}^{\mathsmaller{\vartheta}}_{s_1,t_1}(\varphi_1),\ldots, \mathscr{Z}^{\mathsmaller{\vartheta}}_{s_n,t_n}(\varphi_n) \Big) \,\bigg]\,=\,0
\end{align}
for all $h\in C_b(\R^n)$,
  $\varphi_j\in C_c\big( (\R^2)^2\big)$,  and  $0\leq s_j<t_j<\infty$ with $(s_j,t_j)\subset [0,s]\cup [t,\infty]$ for $j\in \{1,\ldots, n\}$.  However, since $\mathds{E}\big[\mathscr{Z}^{\mathsmaller{\vartheta},\varepsilon}_{r,u}(\varphi)\, \big|\, \mathds{F}_{[0,s]\cup[t,\infty)} \big]=\mathscr{Z}^{\mathsmaller{\vartheta},\varepsilon}_{r,s}\,\mathlarger{\mathlarger{\bullet}}_{t-s}\,\mathscr{Z}^{\mathsmaller{\vartheta},\varepsilon}_{t,u}(\varphi)$ holds for any  $\varepsilon>0$ by Lemma~\ref{LemmaEpsilonCase} and the random variables $\mathscr{Z}^{\mathsmaller{\vartheta},\varepsilon}_{s_1,t_1}(\varphi_1)$,\ldots, $\mathscr{Z}^{\mathsmaller{\vartheta},\varepsilon}_{s_n,t_n}(\varphi_n) $ are $\mathds{F}_{[0,s]\cup[t,\infty)}$-measurable, we have
\begin{align}\label{ToGetThis}
\mathds{E}\bigg[\,\Big(\mathscr{Z}^{\mathsmaller{\vartheta},\varepsilon}_{r,u}(\varphi)-\mathscr{Z}^{\mathsmaller{\vartheta},\varepsilon}_{r,s}\,\mathlarger{\mathlarger{\bullet}}_{t-s}\,\mathscr{Z}^{\mathsmaller{\vartheta},\varepsilon}_{t,u}(\varphi)\Big)\,h\Big(  \mathscr{Z}^{\mathsmaller{\vartheta},\varepsilon}_{s_1,t_1}(\varphi_1),\ldots, \mathscr{Z}^{\mathsmaller{\vartheta},\varepsilon}_{s_n,t_n}(\varphi_n) \Big)\, \bigg]\,=\,0\,.
\end{align}
Recall that  the $\mathcal{M}^S$-valued random element  $\mathscr{Z}^{\mathsmaller{\vartheta}}$ is defined to have law arising as the distributional  
 limit of a sequence of $\mathcal{M}^S$-valued random elements  $\{\mathscr{Z}^{\mathsmaller{\vartheta}, \varepsilon_j }\}_1^{\infty}$ for  some vanishing sequence $\{\varepsilon_j\}_1^{\infty}\subset (0,1]$. Hence there is convergence in distribution of the following $\mathcal{M}^{n+3}$-valued random elements:
\begin{align}\label{Dist2Pointwise}
\Big(\mathscr{Z}^{\mathsmaller{\vartheta},\varepsilon_j}_{r,u},\,\mathscr{Z}^{\mathsmaller{\vartheta},\varepsilon_j}_{r,s}\,,\mathscr{Z}^{\mathsmaller{\vartheta},\varepsilon_j}_{t,u},\,  \mathscr{Z}^{\mathsmaller{\vartheta},\varepsilon_j}_{s_1,t_1},\ldots, \mathscr{Z}^{\mathsmaller{\vartheta},\varepsilon_j}_{s_n,t_n} \Big) \hspace{.5cm} \stackbin[j\rightarrow \infty ]{ \textup{d}}{\longrightarrow}  \hspace{.5cm} \Big(\mathscr{Z}^{\mathsmaller{\vartheta}}_{r,u},\,\mathscr{Z}^{\mathsmaller{\vartheta}}_{r,s}\,,\mathscr{Z}^{\mathsmaller{\vartheta}}_{t,u},\, \mathscr{Z}^{\mathsmaller{\vartheta}}_{s_1,t_1},\ldots, \mathscr{Z}^{\mathsmaller{\vartheta}}_{s_n,t_n} \Big)\,.
\end{align}
Since 
$  \mathbb{E}\big[\mathscr{Z}^{\mathsmaller{\vartheta}}_{a,b}\big]=\mathbf{U}_{b-a} $ is in $\widetilde{\mathcal{M}}$,
it follows from  (iii) of Proposition~\ref{PropMTilde} that $\big\{\mathscr{Z}^{\mathsmaller{\vartheta},\varepsilon_j}_{a,b}\big\}_{j=1}^{\infty}$ is a tight sequence of  $\widetilde{\mathcal{M}}$-valued random elements for any $0\leq a <b $. Thus the convergence~(\ref{Dist2Pointwise}) also holds in the   $\big(\widetilde{\mathcal{M}}\big)^{n+3}$-topology.  By Skorokhod's representation theorem, we can take the limiting and limit  $\big(\widetilde{\mathcal{M}}\big)^{n+3}$-valued random elements in~(\ref{Dist2Pointwise}) to be defined on  the same probability space and with the convergence~(\ref{Dist2Pointwise}) holding almost surely.  As $j\rightarrow \infty$ we then have that
\begin{align}
\mathscr{Z}^{\mathsmaller{\vartheta},\varepsilon_j}_{r,u}(\varphi)  & \hspace{.5cm}\stackbin[ ]{\textup{a.s.} }{\longrightarrow}  \hspace{.5cm}\mathscr{Z}^{\mathsmaller{\vartheta}}_{r,u}(\varphi) \,, \nonumber  \\
\mathscr{Z}^{\mathsmaller{\vartheta},\varepsilon_j}_{r,s}\,\mathlarger{\mathlarger{\bullet}}_{t-s}\,\mathscr{Z}^{\mathsmaller{\vartheta},\varepsilon_j}_{t,u}(\varphi) & \hspace{.5cm}\stackbin[ ]{\textup{a.s.} }{\longrightarrow}  \hspace{.5cm}\mathscr{Z}^{\mathsmaller{\vartheta}}_{r,s}\,\mathlarger{\mathlarger{\bullet}}_{t-s}\,\mathscr{Z}^{\mathsmaller{\vartheta}}_{t,u}(\varphi)\,,  \nonumber \\
h\Big(  \mathscr{Z}^{\mathsmaller{\vartheta},\varepsilon_j}_{s_1,t_1}(\varphi_1),\ldots, \mathscr{Z}^{\mathsmaller{\vartheta},\varepsilon_j}_{s_n,t_n}(\varphi_n) \Big) & \hspace{.5cm}\stackbin[ ]{\textup{a.s.} }{\longrightarrow}\hspace{.5cm}   h\Big(  \mathscr{Z}^{\mathsmaller{\vartheta}}_{s_1,t_1}(\varphi_1),\ldots, \mathscr{Z}^{\mathsmaller{\vartheta}}_{s_n,t_n}(\varphi_n) \Big) \,, \label{ThreeConv}
\end{align}
where the convergence on the second line follows from Proposition~\ref{PropCont}. Furthermore, the family of random variables
\begin{align*}
\bigg\{\Big(\mathscr{Z}^{\mathsmaller{\vartheta},\varepsilon}_{r,u}(\varphi)-\mathscr{Z}^{\mathsmaller{\vartheta},\varepsilon}_{r,s}\,\mathlarger{\mathlarger{\bullet}}_{t-s}\,\mathscr{Z}^{\mathsmaller{\vartheta},\varepsilon}_{t,u}(\varphi)\Big)\,h\Big(  \mathscr{Z}^{\mathsmaller{\vartheta},\varepsilon}_{s_1,t_1}(\varphi_1),\ldots, \mathscr{Z}^{\mathsmaller{\vartheta},\varepsilon}_{s_n,t_n}(\varphi_n) \Big) \bigg\}_{\varepsilon\in (0,1)}
\end{align*}
is uniformly integrable since
\begin{align*}
\bigg| &\Big(\mathscr{Z}^{\mathsmaller{\vartheta},\varepsilon}_{r,u}(\varphi)-\mathscr{Z}^{\mathsmaller{\vartheta},\varepsilon}_{r,s}\,\mathlarger{\mathlarger{\bullet}}_{t-s}\,\mathscr{Z}^{\mathsmaller{\vartheta},\varepsilon}_{t,u}(\varphi)\Big)\,h\Big(  \mathscr{Z}^{\mathsmaller{\vartheta},\varepsilon}_{s_1,t_1}(\varphi_1),\ldots, \mathscr{Z}^{\mathsmaller{\vartheta},\varepsilon}_{s_n,t_n}(\varphi_n) \Big)   \bigg|\\ & \,\leq\, \|h\|_{\infty}\,\Big|\mathscr{Z}^{\mathsmaller{\vartheta},\varepsilon}_{r,u}(\varphi)-\mathscr{Z}^{\mathsmaller{\vartheta},\varepsilon}_{r,s}\,\mathlarger{\mathlarger{\bullet}}_{t-s}\,\mathscr{Z}^{\mathsmaller{\vartheta},\varepsilon}_{t,u}(\varphi)\Big| \,\leq\, \|h\|_{\infty}\,\Big( \mathscr{Z}^{\mathsmaller{\vartheta},\varepsilon}_{r,u}\big(|\varphi|\big) \,+\, \mathscr{Z}^{\mathsmaller{\vartheta},\varepsilon}_{r,s}\,\mathlarger{\mathlarger{\bullet}}_{t-s}\,\mathscr{Z}^{\mathsmaller{\vartheta},\varepsilon}_{t,u}\big(|\varphi|\big)  \Big)
\end{align*}
and the families  $\big\{ \mathscr{Z}^{\mathsmaller{\vartheta},\varepsilon}_{r,u}\big(|\varphi|\big)  \big\}_{\varepsilon\in (0,1]}$ and  $\big\{ \mathscr{Z}^{\mathsmaller{\vartheta},\varepsilon}_{r,s}\,\mathlarger{\mathlarger{\bullet}}_{t-s}\,\mathscr{Z}^{\mathsmaller{\vartheta},\varepsilon}_{t,u}\big(|\varphi|\big)  \big\}_{\varepsilon\in (0,1]}$ are uniformly integrable. Here, the family  $\big\{ \mathscr{Z}^{\mathsmaller{\vartheta},\varepsilon}_{r,u}\big(|\varphi|\big)  \big\}_{\varepsilon\in (0,1]}$ is uniformly integrable because the second moments are uniformly bounded, which holds as a consequence of~\cite[Proposition 4.1]{BC}, for instance.  We can then conclude that  $\big\{ \mathscr{Z}^{\mathsmaller{\vartheta},\varepsilon}_{r,s}\,\mathlarger{\mathlarger{\bullet}}_{t-s}\,\mathscr{Z}^{\mathsmaller{\vartheta},\varepsilon}_{t,u}\big(|\varphi|\big)  \big\}_{\varepsilon\in (0,1]}$ is uniformly integrable in consequence of Lemma~\ref{LemmaEpsilonCase}. With this uniform integrability,  we can deduce~(\ref{NeedThis}) from~(\ref{ToGetThis}) and~(\ref{ThreeConv}), which completes the proof.
\end{proof}

\subsection{Proof of Proposition~\ref{PropCond2}}

\begin{proof}Since $\mathscr{Z}^{\mathsmaller{\vartheta}}_{r,s}  \,\mathlarger{\mathlarger{\bullet}}_{t-s}\,  \mathscr{Z}^{\mathsmaller{\vartheta}}_{t,u}$ is a conditional expectation of $\mathscr{Z}^{\mathsmaller{\vartheta}}_{r,u}$ by Proposition~\ref{PropCond}, we have the first equality below.
\begin{align*}
\mathbb{E}\Big[\, \big(\mathscr{Z}^{\mathsmaller{\vartheta}}_{r,u}-\mathscr{Z}^{\mathsmaller{\vartheta}}_{r,s}  \,\mathlarger{\mathlarger{\bullet}}_{t-s}\,  \mathscr{Z}^{\mathsmaller{\vartheta}}_{t,u}\big)^2 \,\Big]    & =\,\mathbb{E}\Big[ \big(\mathscr{Z}^{\mathsmaller{\vartheta}}_{r,u}\big)^2\Big]\,-\,\mathbb{E}\Big[\, \big(\mathscr{Z}^{\mathsmaller{\vartheta}}_{r,s}  \,\mathlarger{\mathlarger{\bullet}}_{t-s}\,  \mathscr{Z}^{\mathsmaller{\vartheta}}_{t,u}\big)^2 \, \Big] 
 \\   & =\,\mathbb{E}\Big[\big( \mathscr{Z}^{\mathsmaller{\vartheta}}_{r,u}\big)^2\Big]\,-\,\mathbb{E}\Big[ \big(\mathscr{Z}^{\mathsmaller{\vartheta}}_{r,s}\big)^2\Big] \,\mathlarger{\mathlarger{\bullet}}_{t-s}\,  \mathbb{E}\Big[\big( \mathscr{Z}^{\mathsmaller{\vartheta}}_{t,u}\big)^2\Big] \\
 & =\, \mathbf{Q}_{u-r}^{\mathsmaller{\vartheta}} \,-\,\mathbf{Q}_{s-r}^{\mathsmaller{\vartheta}} \,\mathlarger{\mathlarger{\bullet}}_{t-s}\,  \mathbf{Q}_{u-t}^{\mathsmaller{\vartheta}}
\end{align*}
Furthermore, the above can be expressed as 
\begin{align*}
 \mathbf{Q}_{u-r}^{\mathsmaller{\vartheta}} \,-\,\mathbf{Q}_{s-r}^{\mathsmaller{\vartheta}} \,\mathlarger{\mathlarger{\bullet}}_{t-s}\,  \mathbf{Q}_{u-t}^{\mathsmaller{\vartheta}}\, =\,&\, \mathbf{Q}_{s-r}^{\mathsmaller{\vartheta}}\,\mathlarger{\mathlarger{\bullet}}\,  \mathbf{Q}_{t-s}^{\mathsmaller{\vartheta}}\,\mathlarger{\mathlarger{\bullet}}\,   \mathbf{Q}_{u-t}^{\mathsmaller{\vartheta}} \,-\,\mathbf{Q}_{s-r}^{\mathsmaller{\vartheta}}\,\mathlarger{\mathlarger{\bullet}}\,  \mathbf{U}_{t-s}^2 \,\mathlarger{\mathlarger{\bullet}}\,   \mathbf{Q}_{u-t}^{\mathsmaller{\vartheta}} \\
 \, =\,&\, \mathbf{Q}_{s-r}^{\mathsmaller{\vartheta}}\,\mathlarger{\mathlarger{\bullet}}\,\big(  \mathbf{Q}_{t-s}^{\mathsmaller{\vartheta}}\,-\,  \mathbf{U}_{t-s}^2 \big)\,\mathlarger{\mathlarger{\bullet}}\,   \mathbf{Q}_{u-t}^{\mathsmaller{\vartheta}} \\
    \, =\,&\,\mathbf{Q}_{s-r}^{\mathsmaller{\vartheta}}  \,\mathlarger{\mathlarger{\bullet}}\,    \mathbf{K}_{t-s}^{\mathsmaller{\vartheta}}  \,\mathlarger{\mathlarger{\bullet}}\, \mathbf{Q}_{u-t}^{\mathsmaller{\vartheta}} \,,
\end{align*}
where we have used the semigroup property for $\big\{\mathbf{Q}_{a}^{\mathsmaller{\vartheta}} \big\}_{a\in [0,\infty)} $ and  that $\mathbf{K}_{a}^{\mathsmaller{\vartheta}}:=\mathbf{Q}_{a}^{\mathsmaller{\vartheta}}-\mathbf{U}_{a}^2$.
\end{proof}

\subsection{Proof of Proposition~\ref{PropChapman}}\label{SubSecChapman}

In several instances, our variance calculations lead to measures of the form $
\mathbf{Q}^{\mathsmaller{\vartheta}}_{s(\varsigma)}\,\mathlarger{\mathlarger{\bullet}}\, \mathbf{K}^{\mathsmaller{\vartheta}}_{\varsigma}     \,\mathlarger{\mathlarger{\bullet}}     \,\mathbf{Q}^{\mathsmaller{\vartheta}}_{t(\varsigma)}$, where  $s(\varsigma)\rightarrow s$ and $t(\varsigma)\rightarrow t$ as $\varsigma\rightarrow 0$.  These  measures vanish vaguely with small $\varsigma$, and in a slightly stronger topology, by the lemma below. 
Let $\widetilde{C}^{2}$ denote the set of real-valued continuous functions on $(\R^2\times \R^2)^2$ that are bounded in absolute value by a constant multiple of $\phi^n\otimes\phi^n $ for large enough $n\in \mathbb{N}$
\begin{lemma}\label{LemmaSqueeze} Fix $\vartheta\in \R$.   If the sequence $\{\varsigma_j\}_1^{\infty}\subset (0,\infty)$ vanishes and the sequences $\{s_j\}_1^{\infty},\{t_j\}_1^{\infty}\subset [0,\infty)$ are bounded, then  $
\mathbf{Q}^{\mathsmaller{\vartheta}}_{s_j}\,\mathlarger{\mathlarger{\bullet}}\, \mathbf{K}^{\mathsmaller{\vartheta}}_{\varsigma_j}     \,\mathlarger{\mathlarger{\bullet}}     \,\mathbf{Q}^{\mathsmaller{\vartheta}}_{t_j}$ converges to zero   $\widetilde{C}^{2}$-weakly. 
\end{lemma}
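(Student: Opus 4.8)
The plan is to reduce the statement, via the $\widetilde C^2$–analogue of the corollary to Proposition~\ref{PropMTilde}, to two assertions about the nonnegative measure $\mathbf M_j:=\mathbf Q^{\mathsmaller{\vartheta}}_{s_j}\mathlarger{\mathlarger{\bullet}}\mathbf K^{\mathsmaller{\vartheta}}_{\varsigma_j}\mathlarger{\mathlarger{\bullet}}\mathbf Q^{\mathsmaller{\vartheta}}_{t_j}$ on $(\R^2\times\R^2)^2$: (a) $\mathbf M_j\to 0$ vaguely, and (b) $\sup_j \mathbf M_j(\phi^n\otimes\phi^n)<\infty$ for each $n\in\mathbb N$. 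Assertion (b) is immediate: since $\mathbf K^{\mathsmaller{\vartheta}}_{\varsigma}=\mathbf Q^{\mathsmaller{\vartheta}}_{\varsigma}-\mathbf U_{\varsigma}^2\leq \mathbf Q^{\mathsmaller{\vartheta}}_{\varsigma}$ and $\mathlarger{\mathlarger{\bullet}}$ is monotone on nonnegative kernels, the semigroup property of $\{\mathbf Q^{\mathsmaller{\vartheta}}_a\}$ gives $\mathbf M_j\leq \mathbf Q^{\mathsmaller{\vartheta}}_{\sigma_j}$ with $\sigma_j:=s_j+\varsigma_j+t_j$, and $\sigma_j\leq T$ for some fixed $T<\infty$; hence $\mathbf M_j(\phi^n\otimes\phi^n)\leq \sup_{a\in(0,T]}\mathbf Q^{\mathsmaller{\vartheta}}_a(\phi^n\otimes\phi^n)$, which is finite (this is where $\mathbf Q^{\mathsmaller{\vartheta}}_a$ lying in the $\widetilde C^2$ class is used) and locally bounded in $a$ by a routine estimate from the explicit density~(\ref{QDensity}). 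So the substance of the lemma, generalizing the vague vanishing of the right side of~(\ref{QKQ}) noted after Proposition~\ref{PropCond2}, is assertion (a).

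For (a) I would pass to center-of-mass/difference coordinates built from the map $R$ of Section~\ref{SubsectCPM}: let $R\colon\R^2\times\R^2\to\R^2\times\R^2$ be the orthogonal involution $(x,x')\mapsto\bigl(\tfrac{x+x'}{\sqrt 2},\tfrac{x-x'}{\sqrt 2}\bigr)$, and push the kernels on $(\R^2\times\R^2)^2$ forward by $R$ on the source and target copies of $\R^2\times\R^2$. By~(\ref{QDensity}), (\ref{VarZee}) and the evenness of $g_a$, in the new coordinates $(a,b),(a',b')$ the densities of $\mathbf Q^{\mathsmaller{\vartheta}}_t$, $\mathbf K^{\mathsmaller{\vartheta}}_t$, $\mathbf U_t^2$ become $g_t(a-a')P^{\mathsmaller{\vartheta}}_t(b,b')$, $g_t(a-a')K^{\mathsmaller{\vartheta}}_t(b,b')$, $g_t(a-a')g_t(b-b')$, i.e.\ all factor as (heat kernel in the first $\R^2$)$\,\otimes\,$(a kernel in the second $\R^2$). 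Since $\mathlarger{\mathlarger{\bullet}}$ on $\R^4\equiv\R^2\times\R^2$ carries a tensor product of kernels to the tensor product of the two compositions, the heat semigroup yields that the conjugate of $\mathbf M_j$ has density $g_{\sigma_j}(a-a')\,\mathcal P_j(b,b')$, where
\[
\mathcal P_j(b,b')\,:=\,\int_{(\R^2)^2} P^{\mathsmaller{\vartheta}}_{s_j}(b,v)\,K^{\mathsmaller{\vartheta}}_{\varsigma_j}(v,v')\,P^{\mathsmaller{\vartheta}}_{t_j}(v',b')\,dv\,dv'\;\geq\;0 .
\]
As $(R,R)$ is a homeomorphism of $(\R^4)^2$ and $\{g_{\sigma_j}(a-a')\,da\,da'\}$ has uniformly bounded mass on bounded sets ($\sigma_j\leq T$), the vague convergence $\mathbf M_j\to 0$ reduces to $\mathcal P_j\to 0$ vaguely; testing against $\mathbf 1_{B_\rho\times B_\rho}$ and enlarging the ball to all of $\R^2$ in the two outer integrations, this in turn reduces to showing that $\int_{(\R^2)^2} G(v)\,K^{\mathsmaller{\vartheta}}_{\varsigma_j}(v,v')\,G(v')\,dv\,dv'\to 0$, where $G(v):=\sup_{a\in(0,T]}\int_{\R^2}P^{\mathsmaller{\vartheta}}_a(z,v)\,dz$. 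From $P^{\mathsmaller{\vartheta}}_a=g_a(\cdot-\cdot)+K^{\mathsmaller{\vartheta}}_a$ and the Volterra representation~(\ref{DefK}), carrying out the $z$-integral gives $\int_{\R^2}P^{\mathsmaller{\vartheta}}_a(z,v)\,dz=1+2\pi\int_0^a\nu\bigl((a-\tau)e^{\vartheta}\bigr)\,g_\tau(v)\,d\tau$, so $G$ is bounded outside a neighbourhood of the origin and blows up at most logarithmically there, and, using $\int g_\tau g_r=g_{\tau+r}(0)$, one obtains $\int G(v)\,g_r(v)\,dv\leq C(1+|\log r|)$ for all $r\in(0,1)$ uniformly.

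It remains to run the last limit. Plugging $K^{\mathsmaller{\vartheta}}_{\varsigma}(v,v')=2\pi e^{\vartheta}\int_{0<r<w<\varsigma}g_r(v)\,\nu'\bigl((w-r)e^{\vartheta}\bigr)\,g_{\varsigma-w}(v')\,dw\,dr$ into the left side and doing the $v$- and $v'$-integrals with the previous bound,
\[
\int G\,K^{\mathsmaller{\vartheta}}_{\varsigma_j}\,G\;\leq\;C\!\!\int_{0<r<w<\varsigma_j}\!\!\bigl(1+|\log r|\bigr)\,\nu'\bigl((w-r)e^{\vartheta}\bigr)\,\bigl(1+|\log(\varsigma_j-w)|\bigr)\,dw\,dr .
\]
Substituting $v=w-r$ (so that for each $v$ the variable $r$ runs over an interval of length $\varsigma_j-v\leq\varsigma_j$), the inner integral of $\bigl(1+|\log r|\bigr)\bigl(1+|\log(\varsigma_j-v-r)|\bigr)$ is $O\bigl(\varsigma_j\log^2(1/\varsigma_j)\bigr)$ uniformly in $v$, while $\int_0^{\varsigma_j}\nu'(v e^{\vartheta})\,dv=e^{-\vartheta}\bigl(\nu(\varsigma_j e^{\vartheta})-\nu(0^+)\bigr)=e^{-\vartheta}\nu(\varsigma_j e^{\vartheta})$, which is bounded (indeed $\to 0$) as $\varsigma_j\to 0$. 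Hence $\int G\,K^{\mathsmaller{\vartheta}}_{\varsigma_j}\,G\lesssim \varsigma_j\log^2(1/\varsigma_j)\to 0$, giving (a) and completing the proof.

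The step I expect to be the main obstacle is this last computation together with the uniform-in-$j$ control of the one-sided evolution $G$ in the previous paragraph: one must pin down the (merely logarithmic) singularity of $G$ at the origin and then combine it with the local integrability of $\nu'$ near $0$ and the small-argument decay of the Volterra function $\nu$ to squeeze the $K^{\mathsmaller{\vartheta}}_{\varsigma_j}$–sandwich to zero. Everything else — the factorization through $R$, the tensorial bookkeeping under $\mathlarger{\mathlarger{\bullet}}$, and the easy uniform mass bound (b) — is routine once that estimate is in hand.
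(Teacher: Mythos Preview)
Your argument is correct in substance and shares the paper's central maneuver: pass to center-of-mass/difference coordinates via $R$ so that the density factorizes as a heat kernel in the center-of-mass variable times a $P^{\mathsmaller\vartheta}\!-\!K^{\mathsmaller\vartheta}\!-\!P^{\mathsmaller\vartheta}$ kernel in the relative variable, and then show the relative-motion integral vanishes as $\varsigma\to 0$. The differences are structural and in the final estimate.

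First, the paper does not split into your (a)$+$(b). It simply observes that every $\varphi\in\widetilde C^2$ is dominated by a multiple of $\phi^n\otimes\phi^n$ for some $n$, and then shows directly that $\mathbf M_j(\phi^n\otimes\phi^n)\to 0$. This bypasses both the need for a ``$\widetilde C^2$-analogue of the corollary to Proposition~\ref{PropMTilde}'' and your uniform-mass step (b). Your (b) is plausible but is not quite as immediate as you suggest: you need $\sup_{a\in(0,T]}\mathbf Q^{\mathsmaller\vartheta}_a(\phi^n\otimes\phi^n)<\infty$ uniformly down to $a\to 0^+$, which is not stated anywhere and would require its own estimate. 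The paper's direct route avoids this entirely by keeping the exponential weight $\phi^{2n}(X_1,X_4)$ inside the relative-motion integral from the start.

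Second, for the decay of the relative-motion piece the paper cites the pointwise bound
\[
K^{\mathsmaller\vartheta}_{\varsigma}(x,y)\le \frac{\mathcal C_{T,\vartheta}}{\varsigma\bigl(1+\log^+\tfrac1\varsigma\bigr)}\Bigl(e^{-|x|^2/2\varsigma}+\log^+\tfrac{\varsigma}{|x|^2}\Bigr)\Bigl(e^{-|y|^2/2\varsigma}+\log^+\tfrac{\varsigma}{|y|^2}\Bigr)
\]
from \cite[Proposition~8.17]{CM}, identifying the factor $(1+\log^+\tfrac1\varsigma)^{-1}$ as the source of the vanishing, and leaves the remaining bookkeeping implicit. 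Your route—inserting the Volterra integral representation of $K^{\mathsmaller\vartheta}_{\varsigma}$ and integrating against $G$ with the logarithmic bound $\int G\,g_r\le C(1+|\log r|)$—is a genuinely different and more self-contained computation; it trades an external citation for an explicit two-variable integral estimate, and the bound $\int GK^{\mathsmaller\vartheta}_{\varsigma}G\lesssim \varsigma\log^2(1/\varsigma)$ you obtain is sharper than what the paper's terse argument makes explicit. Either method closes the lemma; the paper's is shorter given the cited bound, yours is more hands-on and avoids the dependence on~\cite{CM}.
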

\begin{proof}
Since every element in $\widetilde{C}^2$ is bounded in absolute value by a multiple of $\phi^n\otimes \phi^n $ for large enough $n$, it is enough to show that $\mathbf{Q}^{\mathsmaller{\vartheta}}_{s_j}\,\mathlarger{\mathlarger{\bullet}}\, \mathbf{K}^{\mathsmaller{\vartheta}}_{\varsigma_j}     \,\mathlarger{\mathlarger{\bullet}}     \,\mathbf{Q}^{\mathsmaller{\vartheta}}_{t_j}(\phi^n\otimes\phi^n)$ vanishes as $j\rightarrow \infty$ for each $n$.    We can write
\begin{align}\label{QForm}
\mathbf{Q}^{\mathsmaller{\vartheta}}_{s_j}\,\mathlarger{\mathlarger{\bullet}}\, \mathbf{K}^{\mathsmaller{\vartheta}}_{\varsigma_j}     \,\mathlarger{\mathlarger{\bullet}}     \,\mathbf{Q}^{\mathsmaller{\vartheta}}_{t_j}\big(\phi^n\otimes\phi^n\big)\,=\,&\int_{(\R^2)^8}\,\phi^n(x_1,x_4)\,\phi^n(x_1',x_4')\,
\mathbf{\dot{Q}}^{\mathsmaller{\vartheta}}_{s_j}\big(x_1,x_1';x_2,x_2'\big)\,\mathbf{\dot{K}}^{\mathsmaller{\vartheta}}_{\varsigma_j} \big(x_2,x_2';x_3,x_3'\big)  \,  \nonumber \\ &\,\mathbf{\dot{Q}}^{\mathsmaller{\vartheta}}_{t_j}\big(x_3,x_3';x_4,x_4'\big)\,dx_1\,dx_1'\,dx_2\,dx_2'\,dx_3\,dx_3'\,dx_4\,dx_4'\,,
\end{align}
where $\mathbf{\dot{Q}}^{\mathsmaller{\vartheta}}_t$ and $\mathbf{\dot{K}}^{\mathsmaller{\vartheta}}_t$ are the Lebesgue densities of  $\mathbf{Q}^{\mathsmaller{\vartheta}}_t$ and $\mathbf{K}^{\mathsmaller{\vartheta}}_t$.  Recall that 
$\mathbf{\dot{Q}}^{\mathsmaller{\vartheta}}_t$ has  density~(\ref{QDensity}) and that  $\mathbf{K}^{\mathsmaller{\vartheta}}_{t}=\mathbf{Q}^{\mathsmaller{\vartheta}}_{t}-\mathbf{U}_{t}^2$ has  density
\[
\mathbf{\dot{K}}^{\mathsmaller{\vartheta}}_{t}\big(x,y;x',y'\big)\,=\,g_{t}\bigg(\frac{x+x'}{\sqrt{2}}-\frac{y+y'}{\sqrt{2}} 
  \bigg)\,K_{t}^{\mathsmaller{\vartheta}}\bigg(\frac{x-x'}{\sqrt{2}}, \frac{y-y'}{\sqrt{2}}  \bigg)\,,\hspace{.7cm} x,y,x',y'\in \R^2\,.
  \]
Using the inequality
\[
\phi^n(x,y)\,\phi^n(x',y') \,\leq\,   \phi^{2n}\bigg(\frac{x+x'}{\sqrt{2}} ,\frac{y+y'}{\sqrt{2}} \bigg)\,\phi^{2n}\bigg(\frac{x-x'}{\sqrt{2}} , \frac{y-y'}{\sqrt{2}} \bigg)  
\]
and switching to the integration variables $X_j=\frac{x_j-x_j'}{\sqrt{2}}$ and  $X_j'=\frac{x_j+x_j'}{\sqrt{2}}$ yields that~(\ref{QForm}) is bounded by
\begin{align}\label{NextBound}
C_j\,\int_{(\R^2)^4}\,\phi^{2n}(X_1, X_4)\,
 P^{\mathsmaller{\vartheta}}_{s_j}(X_1,X_2)\,K^{\mathsmaller{\vartheta}}_{\varsigma_j} (X_2, X_3)  \,  \,P^{\mathsmaller{\vartheta}}_{t_j}(X_3,X_4)\,dX_1\,dX_2\,dX_3\,dX_4\,,
\end{align}
where $\{C_j\}_1^{\infty}$ is the bounded sequence given by
\begin{align*}
C_j\,:=\, &\,\int_{(\R^2)^4}\,\phi^{2n}(X_1', X_4') \,
 g_{s_j}(X_1'-X_2')\,g_{\varsigma_j}(X_2'-X_3')  \,  g_{t_j}(X_3'-X_4')\,dX_1'\,dX_2'\,dX_3'\,dX_4' \\ \,=\,&\, \int_{(\R^2)^2}\,\phi^{2n}(X_1', X_4') \,
 g_{s_j+\varsigma_j+t_j}(X_1'-X_4')\,dX_1'\,dX_4'\,. 
 \end{align*}
For any $T>0$ there exists  a constant $\mathcal{C}_{T,\mathsmaller{\vartheta}}>0$ such that for all $\varsigma\in (0,T]$ and $x,y\in \R^2$ 
\begin{align}\label{KBound}
K_{\varsigma}^{\mathsmaller{\vartheta}}(x,y)\,\leq \, \frac{\mathcal{C}_{T,\mathsmaller{\vartheta}}}{\varsigma\big(1+\log^+\frac{1}{\varsigma}\big) }  \bigg( e^{-\frac{|x|^2}{2\varsigma}  }\,+\,\log^+ \frac{\varsigma}{|x|^2}    \bigg) \, \bigg( e^{-\frac{|y|^2}{2\varsigma}  }\,+\,\log^+ \frac{\varsigma}{|y|^2}    \bigg)   \,,
\end{align}
where $\log^+  $ is the positive component in the polar decomposition of $\log$; see~\cite[Proposition 8.17]{CM}, for instance.  The integral in~(\ref{NextBound})  vanishes as $j\rightarrow \infty$ in consequence of the bound~(\ref{KBound}) and the equality $P^{\mathsmaller{\vartheta}}_t(x,y)=g_t(x-y)+K^{\mathsmaller{\vartheta}}_t(x,y)$, where the the factor $ \big(1+\log^+\frac{1}{\varsigma}\big)^{-1}$ in~(\ref{KBound}) is the source of the decay.
\end{proof}

\vspace{.2cm}

\begin{proof}[Proof of Proposition~\ref{PropChapman}]
It follows from Proposition~\ref{PropCond2} and Lemma~\ref{LemmaSqueeze}  that $\mathscr{Z}^{\mathsmaller{\vartheta}}_{r,s}  \,\mathlarger{\mathlarger{\bullet}}_{\varsigma}\,  \mathscr{Z}^{\mathsmaller{\vartheta}}_{s+\varsigma,t}
$ converges to $
\mathscr{Z}^{\mathsmaller{\vartheta}}_{r,t}$ vaguely in $L^2$ as $\varsigma\rightarrow 0$.
We can thus focus on showing that the difference between $\mathscr{Z}^{\mathsmaller{\vartheta}}_{r,s}  \,\mathlarger{\mathlarger{\bullet}}_{\varsigma}\,  \mathscr{Z}^{\mathsmaller{\vartheta}}_{s+\varsigma,t}
$ and $\mathscr{Z}^{\mathsmaller{\vartheta}}_{r,s}  \,\mathlarger{\mathlarger{\bullet}}_{\varsigma}\,  \mathscr{Z}^{\mathsmaller{\vartheta}}_{s,t}
$
vanishes vaguely in $L^2$.  It follows from Proposition~\ref{PropCond} that
\[
\mathbb{E}\Big[\mathscr{Z}^{\mathsmaller{\vartheta}}_{r,s}  \,\mathlarger{\mathlarger{\bullet}}_{\varsigma}\,  \mathscr{Z}^{\mathsmaller{\vartheta}}_{s,t}\,\Big|\,\mathscr{F}_{[0,s]\cup [s+\varsigma,\infty)  }  \Big]= \mathscr{Z}^{\mathsmaller{\vartheta}}_{r,s}  \,\mathlarger{\mathlarger{\bullet}}_{\varsigma}\, \mathbb{E}\Big[ \mathscr{Z}^{\mathsmaller{\vartheta}}_{s,t}\,\Big|\,\mathscr{F}_{[0,s]\cup [s+\varsigma,\infty)  }  \Big]
   =\mathscr{Z}^{\mathsmaller{\vartheta}}_{r,s}  \,\mathlarger{\mathlarger{\bullet}}_{\varsigma}\,\big(\mathlarger{\mathlarger{\bullet}}_{\varsigma}  \mathscr{Z}^{\mathsmaller{\vartheta}}_{s+\varsigma,t}\big)=\mathscr{Z}^{\mathsmaller{\vartheta}}_{r,s}  \,\mathlarger{\mathlarger{\bullet}}_{2\varsigma}\,\mathscr{Z}^{\mathsmaller{\vartheta}}_{s+\varsigma,t}\,.
\]
Since $\mathscr{Z}^{\mathsmaller{\vartheta}}_{r,s}  \,\mathlarger{\mathlarger{\bullet}}_{2\varsigma}\,  \mathscr{Z}^{\mathsmaller{\vartheta}}_{s+\varsigma,t} $ is a conditional expectation of 
 $\mathscr{Z}^{\mathsmaller{\vartheta}}_{r,s}  \,\mathlarger{\mathlarger{\bullet}}_{\varsigma}\,  \mathscr{Z}^{\mathsmaller{\vartheta}}_{s,t}$, the second moment of their difference  can be expressed as 
\begin{align*}
\mathbb{E}\Big[\,\big( \mathscr{Z}^{\mathsmaller{\vartheta}}_{r,s}  \,\mathlarger{\mathlarger{\bullet}}_{\varsigma}\,  \mathscr{Z}^{\mathsmaller{\vartheta}}_{s,t}
 \,-\,\mathscr{Z}^{\mathsmaller{\vartheta}}_{r,s}  \,\mathlarger{\mathlarger{\bullet}}_{2\varsigma}\,  \mathscr{Z}^{\mathsmaller{\vartheta}}_{s+\varsigma,t} \big)^2  \, \Big]  =\,&\, \mathbb{E}\Big[\,\big( \mathscr{Z}^{\mathsmaller{\vartheta}}_{r,s}  \,\mathlarger{\mathlarger{\bullet}}_{\varsigma}\,  \mathscr{Z}^{\mathsmaller{\vartheta}}_{s,t}
  \big)^2 \, \Big]\,-\,\mathbb{E}\Big[\,\big( \mathscr{Z}^{\mathsmaller{\vartheta}}_{r,s}  \,\mathlarger{\mathlarger{\bullet}}_{2\varsigma}\,  \mathscr{Z}^{\mathsmaller{\vartheta}}_{s+\varsigma,t} \big)^2 \,  \Big]\\  =\,&\, \mathbb{E}\Big[\big(\mathscr{Z}^{\mathsmaller{\vartheta}}_{r,s} \big)^2    \Big]\,\mathlarger{\mathlarger{\bullet}}_{\varsigma}\,\mathbb{E}\Big[\big(\mathscr{Z}^{\mathsmaller{\vartheta}}_{s,t} \big)^2  \Big]\,-\,\mathbb{E}\Big[\big(\mathscr{Z}^{\mathsmaller{\vartheta}}_{r,s}  \big)^2    \Big]\,\mathlarger{\mathlarger{\bullet}}_{2\varsigma}\,\mathbb{E}\Big[\big(\mathscr{Z}^{\mathsmaller{\vartheta}}_{s+\varsigma,t} \big)^2   \Big]\\  =\,&\, \mathbf{Q}_{s-r}^{\mathsmaller{\vartheta}}\,\mathlarger{\mathlarger{\bullet}}_{\varsigma}\,\mathbf{Q}_{t-s}^{\mathsmaller{\vartheta}}\,-\,\mathbf{Q}_{s-r}^{\mathsmaller{\vartheta}}\,\mathlarger{\mathlarger{\bullet}}_{2\varsigma}\,\mathbf{Q}_{t-s-\varsigma}^{\mathsmaller{\vartheta}}\,,
 \end{align*}
 which   can be written in the form
 \begin{align*}
   \mathbf{Q}_{s-r}^{\mathsmaller{\vartheta}}\,\mathlarger{\mathlarger{\bullet}}_{\varsigma}\,\mathbf{Q}_{t-s}^{\mathsmaller{\vartheta}}\,-\,\mathbf{Q}_{s-r}^{\mathsmaller{\vartheta}}\,\mathlarger{\mathlarger{\bullet}}_{2\varsigma}\,\mathbf{Q}_{t-s-\varsigma}^{\mathsmaller{\vartheta}}\,=\,&\,   \mathbf{Q}_{s-r}^{\mathsmaller{\vartheta}}\,\mathlarger{\mathlarger{\bullet}}\, \mathbf{U}_{\varsigma}^2\,\mathlarger{\mathlarger{\bullet}}\,\mathbf{Q}_{t-s}^{\mathsmaller{\vartheta}}\,-\,\mathbf{Q}_{s-r}^{\mathsmaller{\vartheta}}\,\mathlarger{\mathlarger{\bullet}}\, \mathbf{U}_{2\varsigma}^2\,\mathlarger{\mathlarger{\bullet}}\,\mathbf{Q}_{t-s-\varsigma}^{\mathsmaller{\vartheta}}\\
   =\,&\, \mathbf{Q}_{s-r}^{\mathsmaller{\vartheta}}\,\mathlarger{\mathlarger{\bullet}}\, \mathbf{U}_{\varsigma}^2\,\mathlarger{\mathlarger{\bullet}}\,\mathbf{Q}_{\varsigma}^{\mathsmaller{\vartheta}}\,\mathlarger{\mathlarger{\bullet}}\,\mathbf{Q}_{t-s-\varsigma}^{\mathsmaller{\vartheta}}\,-\,\mathbf{Q}_{s-r}^{\mathsmaller{\vartheta}}\,\mathlarger{\mathlarger{\bullet}}\, \mathbf{U}_{\varsigma}^2\,\mathlarger{\mathlarger{\bullet}}\, \mathbf{U}_{\varsigma}^2\,\mathlarger{\mathlarger{\bullet}}\,\mathbf{Q}_{t-s-\varsigma}^{\mathsmaller{\vartheta}}\\
   =\,&\, \mathbf{Q}_{s-r}^{\mathsmaller{\vartheta}}\,\mathlarger{\mathlarger{\bullet}}\, \mathbf{U}_{\varsigma}^2\,\mathlarger{\mathlarger{\bullet}}\,\big(\mathbf{Q}_{\varsigma}^{\mathsmaller{\vartheta}}\,-\,\mathbf{U}_{\varsigma}^2\big)\,\mathlarger{\mathlarger{\bullet}}\,\mathbf{Q}_{t-s-\varsigma}^{\mathsmaller{\vartheta}}
   \\  =\,&\, \mathbf{Q}_{s-r}^{\mathsmaller{\vartheta}}\,\mathlarger{\mathlarger{\bullet}}\, \mathbf{U}_{\varsigma}^2\,\mathlarger{\mathlarger{\bullet}}\,\mathbf{K}_{\varsigma}^{\mathsmaller{\vartheta}}\,\mathlarger{\mathlarger{\bullet}}\,\mathbf{Q}_{t-s-\varsigma}^{\mathsmaller{\vartheta}}\\  \leq \,&\, \mathbf{Q}_{s-r}^{\mathsmaller{\vartheta}}\,\mathlarger{\mathlarger{\bullet}}\, \mathbf{Q}_{\varsigma}^{\mathsmaller{\vartheta}}\,\mathlarger{\mathlarger{\bullet}}\,\mathbf{K}_{\varsigma}^{\mathsmaller{\vartheta}}\,\mathlarger{\mathlarger{\bullet}}\,\mathbf{Q}_{t-s-\varsigma}^{\mathsmaller{\vartheta}}\\  = \,&\, \mathbf{Q}_{s-r+\varsigma}^{\mathsmaller{\vartheta}}\,\mathlarger{\mathlarger{\bullet}}\,\mathbf{K}_{\varsigma}^{\mathsmaller{\vartheta}}\,\mathlarger{\mathlarger{\bullet}}\,\mathbf{Q}_{t-s-\varsigma}^{\mathsmaller{\vartheta}}\,.
 \end{align*}
The above vanishes vaguely as $\varsigma\rightarrow 0$ by Lemma~\ref{LemmaSqueeze}.
 
It remains to show that the difference between $\mathscr{Z}^{\mathsmaller{\vartheta}}_{r,s}  \,\mathlarger{\mathlarger{\bullet}}_{2\varsigma}\,  \mathscr{Z}^{\mathsmaller{\vartheta}}_{s+\varsigma,t}$ and $\mathscr{Z}^{\mathsmaller{\vartheta}}_{r,s}  \,\mathlarger{\mathlarger{\bullet}}_{\varsigma}\,  \mathscr{Z}^{\mathsmaller{\vartheta}}_{s+\varsigma,t}$ vanishes vaguely in $L^2$ as $\varsigma\rightarrow 0$.  We can write
\begin{align} \label{SumFour}
\mathbb{E}\Big[\,\big( \mathscr{Z}^{\mathsmaller{\vartheta}}_{r,s}  \,\mathlarger{\mathlarger{\bullet}}_{2\varsigma}\,  \mathscr{Z}^{\mathsmaller{\vartheta}}_{s+\varsigma,t}
 -\mathscr{Z}^{\mathsmaller{\vartheta}}_{r,s}  \,\mathlarger{\mathlarger{\bullet}}_{\varsigma}\,  \mathscr{Z}^{\mathsmaller{\vartheta}}_{s+\varsigma,t} \big)^2   \,  \Big] \,=\,\sum_{a,b\in \{1,2\}}(-1)^{a+b}\,\lambda^{a,b}_{\varsigma}  
 \end{align}
for the measures
\begin{align*}
\lambda^{a,b}_{\varsigma} \,:=\, \mathbb{E}\Big[\,\big( \mathscr{Z}^{\mathsmaller{\vartheta}}_{r,s}  \,\mathlarger{\mathlarger{\bullet}}_{a\varsigma}\,  \mathscr{Z}^{\mathsmaller{\vartheta}}_{s+\varsigma,t}
  \big)\times \big( \mathscr{Z}^{\mathsmaller{\vartheta}}_{r,s}  \,\mathlarger{\mathlarger{\bullet}}_{b\varsigma}\,  \mathscr{Z}^{\mathsmaller{\vartheta}}_{s+\varsigma,t}
 \big) \,\Big]\,.
\end{align*}
Since the sum in~(\ref{SumFour}) includes two positive  and two negative terms, the second moment of the difference between $\mathscr{Z}^{\mathsmaller{\vartheta}}_{r,s}  \,\mathlarger{\mathlarger{\bullet}}_{2\varsigma}\,  \mathscr{Z}^{\mathsmaller{\vartheta}}_{s+\varsigma,t}$ and $\mathscr{Z}^{\mathsmaller{\vartheta}}_{r,s}  \,\mathlarger{\mathlarger{\bullet}}_{\varsigma}\,  \mathscr{Z}^{\mathsmaller{\vartheta}}_{s+\varsigma,t}$ vanishes vaguely in $L^2$ provided that  each measure
$\lambda^{a,b}_{\varsigma}$ converges vaguely  to $\mathbf{Q}_{t-r}^{\mathsmaller{\vartheta}}$.   The Lebesgue density $\dot{\lambda}^{a,b}_{\varsigma}$ of $\lambda^{a,b}_{\varsigma}$ can be expressed in terms of the Lebesgue density $\mathbf{\dot{Q}}_{u}^{\mathsmaller{\vartheta}}$ of $\mathbf{Q}_{u}^{\mathsmaller{\vartheta}}$ as
\begin{align*}
\dot{\lambda}^{a,b}_{\varsigma}(x,x';w,w')\,=\,&\, \int_{ (\R^2)^4  }\, \mathbf{\dot{Q}}_{s-r}^{\mathsmaller{\vartheta}}(x,x';y,y')\, g_{a\varsigma}(y-z) \,g_{b\varsigma}(y'-z')\,\mathbf{\dot{Q}}_{t-s-\varsigma}^{\mathsmaller{\vartheta}}(z,z';w,w')\,dy\,dy'\,dz\,dz'\,.
\end{align*}
The above converges  in $L^1_{\textup{loc}}$ as $\varsigma\rightarrow 0$ to 
\begin{align*}
 \int_{ (\R^2)^2  }\, \mathbf{\dot{Q}}_{s-r}^{\mathsmaller{\vartheta}}(x,x';y,y')\,\mathbf{\dot{Q}}_{t-s}^{\mathsmaller{\vartheta}}(y,y';w,w')\,dy\,dy'
\,=\, \mathbf{\dot{Q}}_{t-r}^{\mathsmaller{\vartheta}}(x,x';w,w')\,,
\end{align*}
and hence $\lambda^{a,b}_{\varsigma}$ converges vaguely  to $\mathbf{Q}_{t-r}^{\mathsmaller{\vartheta}}$.
\end{proof}

\subsection{Proof of Lemma~\ref{LemInd}}\label{SubsectLemInd}

 Recall that $\mathscr{F}_{A}$ for $A\subset [0,\infty)$ is defined as the $\sigma$-algebra generated by the family $\big\{ \mathscr{Z}^{\mathsmaller{\vartheta}}_{a,b} :  (a,b)\subset A \big\}$. 
\begin{proof} The $\sigma$-algebras $ \mathscr{F}_{[s,t]}$ and $ \mathscr{F}_{[0,s]\cup [t,\infty)}$ are independent  provided that  $\sigma \big\{ \mathscr{Z}^{\mathsmaller{\vartheta}}_{a_j,b_j} : 1\leq j\leq m\}$ and $\sigma \big\{ \mathscr{Z}^{\mathsmaller{\vartheta}}_{a_j',b_j'} : 1\leq j\leq n\}$ are independent for any subintervals $(a_1,b_1),\ldots, (a_m,b_m)  $  of $[s,t]$ and subintervals $(a_1',b_1'),\ldots, (a_n',b_n')  $ of $[0,s]\cup [t,\infty)$.  Let $\big\{(A_j,B_j) \big\}_1^M$  and $\big\{(A_j',B_j') \}_1^N$ be collections of disjoint intervals with
\[
\bigcup_1^m \,(a_j,b_j) \,\subset  \,\bigcup_1^M\, [A_j,B_j] \,\subset [s,t] \hspace{.9cm}\text{and}\hspace{.9cm}  \bigcup_1^n \, (a_j',b_j') \,\subset  \,\bigcup_1^N\, [A_j',B_j']\,\subset \,[0,s]\cup [t,\infty)\,.
\]
Then the $\sigma$-algebras  $\sigma \big\{ \mathscr{Z}^{\mathsmaller{\vartheta}}_{A_j,B_j} : 1\leq j\leq M\}$ and $\sigma \big\{ \mathscr{Z}^{\mathsmaller{\vartheta}}_{A_j',B_j'} : 1\leq j\leq N\}$ are independent by (iii) of Proposition~\ref{PropBasicProperties}.  Observe that if $P=\{t_0,\ldots, t_{k}\}$ is a partition of an interval $(a,b)$, then $\mathscr{Z}^{\mathsmaller{\vartheta}}_{a,b}$ is   $\sigma \big\{ \mathscr{Z}^{\mathsmaller{\vartheta}}_{t_{j-1},t_j} : 1\leq j\leq k\}$-measurable in consequence  of Proposition~\ref{PropChapman}.  Hence we have
\begin{align*}
\sigma \big\{ \mathscr{Z}^{\mathsmaller{\vartheta}}_{a_j,b_j} : 1\leq j\leq m\}\,\subset \,&\,\sigma \big\{ \mathscr{Z}^{\mathsmaller{\vartheta}}_{A_j,B_j} : 1\leq j\leq M\}  \\  \sigma \big\{ \mathscr{Z}^{\mathsmaller{\vartheta}}_{a_j',b_j'} : 1\leq j\leq n\} \,\subset \,&\, \sigma \big\{ \mathscr{Z}^{\mathsmaller{\vartheta}}_{A_j',B_j'} : 1\leq j\leq N\}\,,
\end{align*}
which implies that $\sigma \big\{ \mathscr{Z}^{\mathsmaller{\vartheta}}_{a_j,b_j} : 1\leq j\leq m\}$ and $ \sigma \big\{ \mathscr{Z}^{\mathsmaller{\vartheta}}_{a_j',b_j'} : 1\leq j\leq n\} $ are independent.
\end{proof}

\section{The multi-interval 2d SHF: construction and properties}\label{SecGenSHF}

We now turn to the proofs of Theorem~\ref{ThmSHFExtension} and Propositions~\ref{PropSHFExtensionProp} \& \ref{PropCondGen}. 
 As a preliminary, in Section~\ref{SubsectionMart} we introduce a slightly altered version, $\widetilde{\mathscr{Z}}_{P}^{\mathsmaller{\vartheta},\varsigma}$, of the  $\mathcal{M}_m$-valued random element $\mathscr{Z}_{P}^{\mathsmaller{\vartheta},\varsigma}$ in~(\ref{MartMulti}) such that the process $\big\{\widetilde{\mathscr{Z}}_{P}^{\mathsmaller{\vartheta},\varsigma}\}_{\varsigma>0}$ forms a backwards martingale.   We then define $\mathscr{Z}_{P}^{\mathsmaller{\vartheta}}$ as the vague  limit in $L^2$ of $\widetilde{\mathscr{Z}}_{P}^{\mathsmaller{\vartheta},\varsigma}$ as $\varsigma\rightarrow 0$, which reduces the proof of Theorem~\ref{ThmSHFExtension} in Section~\ref{SubsectThmSHF} to showing that  the difference between $\mathscr{Z}_{P}^{\mathsmaller{\vartheta},\varsigma}$ and $\widetilde{\mathscr{Z}}_{P}^{\mathsmaller{\vartheta},\varsigma}$ vanishes  vaguely in $L^2$ as $\varsigma\rightarrow 0$. The proofs of Propositions~\ref{PropSHFExtensionProp} \& \ref{PropCondGen} are in Section~\ref{SubsectPropCondGenProof}.

\subsection{Construction of the multi-interval 2d SHF through a martingale}\label{SubsectionMart}
Let $P=\{t_0,\ldots,t_m\}$ be a partition  of the interval $[s,t]$.  Define  $\Delta :=\min_{1\leq j\leq m}( t_j-t_{j-1})$.  Given $\varsigma\in (0,\Delta]$ define   the $\sigma$-algebra $
\textup{F}_{\varsigma}\,:=\,\mathscr{F}_{S(\varsigma)}
$ 
for the set $S(\varsigma)\subset [s,t]$ given by
\[
S(\varsigma)\,:=\,[t_0,t_1]\cup [ t_1+\varsigma,t_2]\cup\cdots \cup [t_{m-1}+\varsigma,t_m]\,, 
\]
and define the  $\mathcal{M}_m$-valued random element 
\begin{align*}
\widetilde{\mathscr{Z}}_{P}^{\mathsmaller{\vartheta},\varsigma}\,:=\,\mathscr{Z}^{\mathsmaller{\vartheta}}_{t_0,t_1}\,\mathlarger{\mathlarger{\circ}}_{\varsigma}\,\mathscr{Z}^{\mathsmaller{\vartheta}}_{t_1+\varsigma,t_2} \,\mathlarger{\mathlarger{\circ}}_{\varsigma} \, \cdots   \,\mathlarger{\mathlarger{\circ}}_{\varsigma}\, \mathscr{Z}^{\mathsmaller{\vartheta}}_{t_{m-1}+\varsigma,t_m} \,.
\end{align*}
For $\varsigma>\Delta$ we can simply put $\textup{F}_{\varsigma}:=\textup{F}_{\Delta}$ and $\widetilde{\mathscr{Z}}_{P}^{\mathsmaller{\vartheta},\varsigma}:=\widetilde{\mathscr{Z}}_{P}^{\mathsmaller{\vartheta},\Delta}$.
In the proof of Lemma~\ref{LemmaMartingale}, we will show that  $\{\widetilde{\mathscr{Z}}_{P}^{\mathsmaller{\vartheta},\varsigma}\}_{\varsigma\in (0,\Delta]}$ is a backwards martingale with respect to $\{\textup{F}_{\varsigma}\}_{\varsigma\in (0,\Delta]}$, having expectation $\mathbf{U}_P$ and second moment $\mathbf{Q}_{P}^{\mathsmaller{\vartheta},\varsigma}\in\mathcal{M}\big( (\R^2\times \R^2)^{m+1}\big)$ given by
\begin{align*}
\mathbf{Q}_{P}^{\mathsmaller{\vartheta},\varsigma}\,:=\,\mathbf{Q}_{t_1-t_0}^{\mathsmaller{\vartheta}}\,\mathlarger{\mathlarger{\circ}}_{\varsigma}\,\mathbf{Q}_{t_2-t_1-\varsigma }^{\mathsmaller{\vartheta}}\,\mathlarger{\mathlarger{\circ}}_{\varsigma}\,\cdots\,\mathlarger{\mathlarger{\circ}}_{\varsigma}\mathbf{Q}_{t_m-t_{m-1}-\varsigma}^{\mathsmaller{\vartheta}}\,.
\end{align*}
Observe that $\mathbf{Q}_{P}^{\mathsmaller{\vartheta},\varsigma}\leq \mathbf{Q}_{P}^{\mathsmaller{\vartheta}} $ since 
\begin{align}\label{QIneq}
\mathbf{Q}_{P}^{\mathsmaller{\vartheta},\varsigma}\,=\,&\,\mathbf{Q}_{t_1-t_0}^{\mathsmaller{\vartheta}}\,\mathlarger{\mathlarger{\circ}}\,\big(\mathbf{U}_{\varsigma}^2\,\mathlarger{\mathlarger{\bullet}} \, \mathbf{Q}_{t_2-t_1-\varsigma }^{\mathsmaller{\vartheta}} \big)\,\mathlarger{\mathlarger{\circ}}\,\cdots\,\mathlarger{\mathlarger{\circ}}\,\big(\mathbf{U}_{\varsigma}^2\,\mathlarger{\mathlarger{\bullet}} \, \mathbf{Q}_{t_m-t_{m-1}-\varsigma}^{\mathsmaller{\vartheta}}\big)\, \nonumber \\
\,\leq \,&\,\mathbf{Q}_{t_1-t_0}^{\mathsmaller{\vartheta}}\,\mathlarger{\mathlarger{\circ}}\,\big(\mathbf{Q}_{\varsigma}^{\mathsmaller{\vartheta}}\,\mathlarger{\mathlarger{\bullet}} \, \mathbf{Q}_{t_2-t_1-\varsigma }^{\mathsmaller{\vartheta}} \big)\,\mathlarger{\mathlarger{\circ}}\,\cdots\,\mathlarger{\mathlarger{\circ}}\,\big(\mathbf{Q}_{\varsigma}^{\mathsmaller{\vartheta}}\,\mathlarger{\mathlarger{\bullet}} \, \mathbf{Q}_{t_m-t_{m-1}-\varsigma}^{\mathsmaller{\vartheta}}\big)\nonumber \\
\,=\,&\,\mathbf{Q}_{t_1-t_0}^{\mathsmaller{\vartheta}}\,\mathlarger{\mathlarger{\circ}}\, \mathbf{Q}_{t_2-t_1}^{\mathsmaller{\vartheta}} \,\mathlarger{\mathlarger{\circ}}\,\cdots\,\mathlarger{\mathlarger{\circ}}\,\mathbf{Q}_{t_m-t_{m-1}}^{\mathsmaller{\vartheta}}\,=:\,\mathbf{Q}_{P}^{\mathsmaller{\vartheta}}\,.
\end{align}
A similar calculation shows that $\mathbf{Q}_{P}^{\mathsmaller{\vartheta},\varsigma}$ is decreasing in $\varsigma$.  Finally, $\mathbf{Q}_{P}^{\mathsmaller{\vartheta},\varsigma}$ converges vaguely to $\mathbf{Q}_{P}^{\mathsmaller{\vartheta}}$ as $\varsigma\rightarrow 0$ since the difference  between their Lebesgue densities vanishes in  $L^1_{\textup{loc}}\big( (\R^2\times \R^2)^{m+1} \big)$.

Let $\boldsymbol{\mu}_{j}$ for $j\in \mathbb{N}\cup\{\infty\}$ be $\widetilde{\mathcal{M}}_{m}$-valued random elements on some probability space.  We say that the sequence $\{\boldsymbol{\mu}_j\}_1^{\infty}$ \textit{converges to} $\boldsymbol{\mu}_{\infty}$ $\widetilde{C}_m$\textit{-weakly in $L^p$}  when $\boldsymbol{\mu}_j(\varphi)\rightarrow \boldsymbol{\mu}_{\infty}(\varphi)$ in $L^p$ for all $\varphi\in \widetilde{C}_m$.  Note that this implies that $\{\boldsymbol{\mu}_j\}_1^{\infty}$ converges to $\boldsymbol{\mu}_{\infty}$  vaguely in $L^p$ since  $\widetilde{C}_m$ contains all compactly supported continuous functions on $(\R^2)^{m+1}$.
\begin{lemma}\label{LemmaMartingale} Fix $\vartheta\in \R$ and $0\leq s<t$.  Let $P$ be a partition of the interval $[s,t]$. The  $\mathcal{M}_{m }$-valued random element $\widetilde{\mathscr{Z}}_{P}^{\mathsmaller{\vartheta},\varsigma}$ converges $\widetilde{C}_m$-weakly in $L^2$ as $\varsigma\rightarrow 0$ to a limit $\mathscr{Z}_{P}^{\mathsmaller{\vartheta}}$ having first and second moments $\mathbf{U}_P$ and $\mathbf{Q}_{P}^{\mathsmaller{\vartheta}}$.
\end{lemma}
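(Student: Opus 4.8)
The plan is to establish the convergence of $\widetilde{\mathscr{Z}}_{P}^{\mathsmaller{\vartheta},\varsigma}$ by exhibiting $\{\widetilde{\mathscr{Z}}_{P}^{\mathsmaller{\vartheta},\varsigma}\}_{\varsigma\in(0,\Delta]}$ as a backwards $L^2$-martingale with respect to the decreasing filtration $\{\textup{F}_{\varsigma}\}_{\varsigma\in(0,\Delta]}$, tested against each fixed $\varphi\in\widetilde{C}_m$, and then invoking the backwards martingale convergence theorem together with uniform $L^2$-boundedness. First I would verify the martingale property: for $0<\varsigma<\varsigma'\le\Delta$ one must show $\mathbb{E}\big[\widetilde{\mathscr{Z}}_{P}^{\mathsmaller{\vartheta},\varsigma}(\varphi)\,\big|\,\textup{F}_{\varsigma'}\big]=\widetilde{\mathscr{Z}}_{P}^{\mathsmaller{\vartheta},\varsigma'}(\varphi)$. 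This is exactly where Proposition~\ref{PropCond} (in its multi-interval incarnation, Proposition~\ref{PropCondGen}, or rather the elementary single-factor version applied block by block) does the work: conditioning $\mathscr{Z}^{\mathsmaller{\vartheta}}_{t_{j-1}+\varsigma,t_j}$ on the $\sigma$-algebra $\mathscr{F}_{[0,t_{j-1}+\varsigma']\cup[t_j,\infty)}$ replaces it by $\mathlarger{\mathlarger{\bullet}}_{\varsigma'-\varsigma}\,\mathscr{Z}^{\mathsmaller{\vartheta}}_{t_{j-1}+\varsigma',t_j}$, and the identities $\mu_1\,\mathlarger{\mathlarger{\circ}}_{\varsigma}\,(\mathlarger{\mathlarger{\bullet}}_{\varsigma'-\varsigma}\,\mu_2)=\mu_1\,\mathlarger{\mathlarger{\circ}}_{\varsigma'}\,\mu_2$ from the excerpt reassemble the product. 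One has to be careful that the several conditioning operations, applied to the distinct blocks $\mathscr{Z}^{\mathsmaller{\vartheta}}_{t_{j-1}+\varsigma,t_j}$, can be carried out jointly; this follows because these blocks live on disjoint time intervals and, by Lemma~\ref{LemInd} and the independence property (iii) of Proposition~\ref{PropBasicProperties}, the relevant conditional expectations factor — the conditioning on $\textup{F}_{\varsigma'}$ acts independently on each block.

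Next I would compute the first and second moments of $\widetilde{\mathscr{Z}}_{P}^{\mathsmaller{\vartheta},\varsigma}$. Since expectation commutes with the $\mathlarger{\mathlarger{\circ}}_{\varsigma}$ operation applied to independent factors, and $\mathbb{E}[\mathscr{Z}^{\mathsmaller{\vartheta}}_{a,b}]=\mathbf{U}_{b-a}$, the mean is $\mathbf{U}_{t_1-t_0}\,\mathlarger{\mathlarger{\circ}}_{\varsigma}\,\mathbf{U}_{t_2-t_1-\varsigma}\,\mathlarger{\mathlarger{\circ}}_{\varsigma}\cdots=\mathbf{U}_P$, using the Gaussian semigroup to absorb the intermediate convolutions; the $\varsigma$'s cancel exactly because $g_{\varsigma}$ convolved with a Gaussian of parameter $t_{j+1}-t_j-\varsigma$ restores $g_{t_{j+1}-t_j}$. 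The second moment is $\mathbf{Q}_{P}^{\mathsmaller{\vartheta},\varsigma}$ as displayed in the excerpt, again by independence of the blocks and the formula for $\mathbb{E}[(\mathscr{Z}^{\mathsmaller{\vartheta}}_{a,b})^2]=\mathbf{Q}_{b-a}^{\mathsmaller{\vartheta}}$. The bound $\mathbf{Q}_{P}^{\mathsmaller{\vartheta},\varsigma}\le\mathbf{Q}_{P}^{\mathsmaller{\vartheta}}$ from~(\ref{QIneq}) gives uniform $L^2$-boundedness: $\mathbb{E}[\widetilde{\mathscr{Z}}_{P}^{\mathsmaller{\vartheta},\varsigma}(\varphi)^2]\le\mathbf{Q}_{P}^{\mathsmaller{\vartheta}}(|\varphi|\otimes|\varphi|)<\infty$, finite precisely because $\varphi\in\widetilde{C}_m$ is dominated by some $\phi_m^n$ and $\mathbf{Q}_{P}^{\mathsmaller{\vartheta}}(\phi_m^n\otimes\phi_m^n)<\infty$ (this last integrability is a Gaussian computation together with the logarithmic divergence of $K_t^{\mathsmaller{\vartheta}}$ being integrable). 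The backwards martingale convergence theorem then yields, for each $\varphi$, an almost-sure and $L^2$ limit $\mathscr{Z}_{P}^{\mathsmaller{\vartheta}}(\varphi)$; by Lemma~\ref{LemmaSuffice} it suffices to have this for the product-form $\varphi$, and consistency of these limits in $\varphi$ (linearity, positivity, the right behavior under restriction of supports) identifies a genuine $\mathcal{M}_m$-valued — in fact $\widetilde{\mathcal{M}}_m$-valued — random element $\mathscr{Z}_{P}^{\mathsmaller{\vartheta}}$. To promote this to $\widetilde{\mathcal{M}}_m$-valuedness and get the joint (not merely coordinatewise) convergence claimed, I would use Proposition~\ref{PropMTilde}(iii): tightness of $\{\widetilde{\mathscr{Z}}_{P}^{\mathsmaller{\vartheta},\varsigma}\}$ follows from precompactness of $\{\mathbb{E}[\widetilde{\mathscr{Z}}_{P}^{\mathsmaller{\vartheta},\varsigma}]\}=\{\mathbf{U}_P\}$, a singleton; combined with the coordinatewise $L^2$-limits this pins down the $\widetilde{C}_m$-weak limit.

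Finally, the moments of the limit: $\mathbb{E}[\mathscr{Z}_{P}^{\mathsmaller{\vartheta}}(\varphi)]=\lim\mathbb{E}[\widetilde{\mathscr{Z}}_{P}^{\mathsmaller{\vartheta},\varsigma}(\varphi)]=\mathbf{U}_P(\varphi)$ by $L^1$-convergence (a consequence of $L^2$-convergence), and $\mathbb{E}[\mathscr{Z}_{P}^{\mathsmaller{\vartheta}}(\varphi)^2]=\lim\mathbf{Q}_{P}^{\mathsmaller{\vartheta},\varsigma}(\varphi\otimes\varphi)=\mathbf{Q}_{P}^{\mathsmaller{\vartheta}}(\varphi\otimes\varphi)$ by $L^2$-convergence together with the vague convergence $\mathbf{Q}_{P}^{\mathsmaller{\vartheta},\varsigma}\to\mathbf{Q}_{P}^{\mathsmaller{\vartheta}}$ noted in the excerpt (upgraded to $\widetilde{C}_m\otimes\widetilde{C}_m$-testing via the domination $\mathbf{Q}_{P}^{\mathsmaller{\vartheta},\varsigma}\le\mathbf{Q}_{P}^{\mathsmaller{\vartheta}}$ and dominated convergence on the $L^1_{\textup{loc}}$ densities). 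I expect the main obstacle to be the bookkeeping in the martingale step — rigorously justifying that conditioning on $\textup{F}_{\varsigma'}$ factors through the individual blocks and that the repeated application of Proposition~\ref{PropCond} (which is stated for a single middle interval) correctly telescopes into the $\mathlarger{\mathlarger{\circ}}_{\varsigma}$-product — since this requires combining the conditional-expectation identity, the independence of disjoint-interval blocks, and the associativity/compatibility identities for $\mathlarger{\mathlarger{\circ}}_{\varsigma}$ and $\mathlarger{\mathlarger{\bullet}}_{\varsigma}$ all at once, with some care about measurability of the $\mathlarger{\mathlarger{\bullet}}_{\varsigma}$-operation (handled by Proposition~\ref{PropCont}).
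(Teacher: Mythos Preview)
Your proposal is correct and follows essentially the same approach as the paper: establish the backwards-martingale property via Proposition~\ref{PropCond} applied block by block (using independence of the disjoint-interval factors to justify the factorization of the conditional expectation), compute the first and second moments as $\mathbf{U}_P$ and $\mathbf{Q}_P^{\vartheta,\varsigma}$, use the bound $\mathbf{Q}_P^{\vartheta,\varsigma}\le\mathbf{Q}_P^{\vartheta}$ for uniform $L^2$-boundedness, and invoke martingale convergence. The paper carries this out along the discrete sequence $\varsigma=1/n$ and builds the limiting random measure via a countable-dense-subset/Riesz-representation argument rather than through the tightness route you sketch via Proposition~\ref{PropMTilde}(iii), but this is a cosmetic difference; the substance of the two arguments is the same.
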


\begin{proof} For   $P=\{t_0,\ldots, t_m\}$ the first moment of $\widetilde{\mathscr{Z}}_{P}^{\mathsmaller{\vartheta},\varsigma}$ is $\mathbf{U}_P$   since  for $\varsigma\in (0,\Delta]$
\begin{align*}
\mathbb{E}\big[\,\widetilde{\mathscr{Z}}_{P}^{\mathsmaller{\vartheta},\varsigma}\,\big]\,=\,&\,\mathbb{E}\big[\,\mathscr{Z}^{\mathsmaller{\vartheta}}_{t_0,t_1}\,\big]\,\mathlarger{\mathlarger{\circ}}_{\varsigma}\,\mathbb{E}\big[\,\mathscr{Z}^{\mathsmaller{\vartheta}}_{t_1+\varsigma,t_2}\,\big] \,\mathlarger{\mathlarger{\circ}}_{\varsigma} \, \cdots   \,\mathlarger{\mathlarger{\circ}}_{\varsigma}\, \mathbb{E}\big[\,\mathscr{Z}^{\mathsmaller{\vartheta}}_{t_{m-1}+\varsigma,t_m}\,\big]\\
\,=\,&\,\mathbf{U}_{t_1-t_0} \,\mathlarger{\mathlarger{\circ}}_{\varsigma}\,\mathbf{U}_{t_2-t_1-\varsigma} \,\mathlarger{\mathlarger{\circ}}_{\varsigma} \, \cdots   \,\mathlarger{\mathlarger{\circ}}_{\varsigma}\, \mathbf{U}_{t_m-t_{m-1}-\varsigma}\\
\,=\,&\,\mathbf{U}_{t_1-t_0} \,\mathlarger{\mathlarger{\circ}}\,\big(\mathbf{U}_{\varsigma} \,\mathlarger{\mathlarger{\bullet}}\, \mathbf{U}_{t_2-t_1-\varsigma} \big)\,\mathlarger{\mathlarger{\circ}} \, \cdots   \,\mathlarger{\mathlarger{\circ}}\, \big(\mathbf{U}_{\varsigma} \,\mathlarger{\mathlarger{\bullet}}\,\mathbf{U}_{t_m-t_{m-1}-\varsigma}\big)\\
\,=\,&\,\mathbf{U}_{t_1-t_0} \,\mathlarger{\mathlarger{\circ}}\,\mathbf{U}_{t_2-t_1} \,\mathlarger{\mathlarger{\circ}} \, \cdots   \,\mathlarger{\mathlarger{\circ}}\, \mathbf{U}_{t_m-t_{m-1}} \,=:\, \mathbf{U}_P\,,
\end{align*}
in which the first equality uses that $\mathscr{Z}^{\mathsmaller{\vartheta}}_{t_0,t_1 } $, $\mathscr{Z}^{\mathsmaller{\vartheta}}_{t_1+\varsigma,t_2} $,\ldots, $\mathscr{Z}^{\mathsmaller{\vartheta}}_{t_{m-1}+\varsigma,t_m}$ are independent. Similarly, the second moment of $\widetilde{\mathscr{Z}}_{P}^{\mathsmaller{\vartheta},\varsigma}$ is
\begin{align*}
\mathbb{E}\Big[\,\big(\widetilde{\mathscr{Z}}_{P}^{\mathsmaller{\vartheta},\varsigma}\big)^2 \,\Big]\,=\,&\, \mathbb{E}\Big[\,\big(\mathscr{Z}^{\mathsmaller{\vartheta}}_{t_0,t_1} \big)^2\,\Big]\,\mathlarger{\mathlarger{\circ}}_{\varsigma} \,\mathbb{E}\Big[\,\big(\mathscr{Z}^{\mathsmaller{\vartheta}}_{t_1+\varsigma,t_2}  \big)^2\,\Big]\,\mathlarger{\mathlarger{\circ}}_{\varsigma} \, \cdots   \,\mathlarger{\mathlarger{\circ}}_{\varsigma}\, \mathbb{E}\Big[\,\big(\mathscr{Z}^{\mathsmaller{\vartheta}}_{t_{m-1}+\varsigma,t_m} \big)^2\,\Big]\\ \,=\,&\, \mathbf{Q}_{t_1-t_0}^{\mathsmaller{\vartheta}}\,\mathlarger{\mathlarger{\circ}}_{\varsigma}\,\mathbf{Q}_{t_2-t_1-\varsigma}^{\mathsmaller{\vartheta}}\,\mathlarger{\mathlarger{\circ}}_{\varsigma}\,\cdots\,\mathlarger{\mathlarger{\circ}}_{\varsigma}\mathbf{Q}_{t_m-t_{m-1}-\varsigma}^{\mathsmaller{\vartheta}} \,=:\,\mathbf{Q}_{P}^{\mathsmaller{\vartheta},\varsigma}\,.
\end{align*}
To see that    $\{\widetilde{\mathscr{Z}}_{P}^{\mathsmaller{\vartheta},\varsigma}\}_{\varsigma\in (0,\Delta]}$ is a backwards martingale with respect to $\{\textup{F}_{\varsigma}\}_{\varsigma\in (0,\Delta]}$, observe that 
for $0<\varsigma' <\varsigma\leq \Delta$
\begin{align}
\mathbb{E}\left[\widetilde{\mathscr{Z}}_{P}^{\mathsmaller{\vartheta},\varsigma'}\,\Big|\, \textup{F}_{\varsigma}\right]\,=\,&\,\mathbb{E}\big[\,\mathscr{Z}^{\mathsmaller{\vartheta}}_{t_0,t_1} \,\big|\, \textup{F}_{\varsigma}\,\big]\,\mathlarger{\mathlarger{\circ}}_{\varsigma'}\,\mathbb{E}\big[\,\mathscr{Z}^{\mathsmaller{\vartheta}}_{t_1+\varsigma',t_2} \,\big|\, \textup{F}_{\varsigma}\,\big]\,\mathlarger{\mathlarger{\circ}}_{\varsigma'} \, \cdots   \,\mathlarger{\mathlarger{\circ}}_{\varsigma'}\, \mathbb{E}\big[\,\mathscr{Z}^{\mathsmaller{\vartheta}}_{t_{m-1}+\varsigma',t_m}  \,\big|\, \textup{F}_{\varsigma}\,\big] \nonumber   \\
\,=\,&\,\mathscr{Z}^{\mathsmaller{\vartheta}}_{t_0,t_1} \,\mathlarger{\mathlarger{\circ}}_{\varsigma'}\,\big(\mathlarger{\mathlarger{\bullet}}_{\varsigma-\varsigma'}\,\mathscr{Z}^{\mathsmaller{\vartheta}}_{t_1+\varsigma,t_2} \big)\,\mathlarger{\mathlarger{\circ}}_{\varsigma'} \, \cdots   \,\mathlarger{\mathlarger{\circ}}_{\varsigma'}\, \big(\mathlarger{\mathlarger{\bullet}}_{\varsigma-\varsigma'}\mathscr{Z}^{\mathsmaller{\vartheta}}_{t_{m-1}+\varsigma,t_m}  \big) \nonumber \\
\,=\,&\,\mathscr{Z}^{\mathsmaller{\vartheta}}_{t_0,t_1} \,\mathlarger{\mathlarger{\circ}}_{\varsigma}\,\mathscr{Z}^{\mathsmaller{\vartheta}}_{t_1+\varsigma,t_2} \,\mathlarger{\mathlarger{\circ}}_{\varsigma} \, \cdots   \,\mathlarger{\mathlarger{\circ}}_{\varsigma}\, \mathscr{Z}^{\mathsmaller{\vartheta}}_{t_{m-1}+\varsigma,t_m} \,=:\, \widetilde{\mathscr{Z}}_{P}^{\mathsmaller{\vartheta},\varsigma} \,,\label{CondExpZ}
\end{align}
where the second  equality above holds by Proposition~\ref{PropCond}.

 Next we will construct $\mathscr{Z}_{P}^{\mathsmaller{\vartheta}}$ as an almost sure limit of the sequence $\big\{\widetilde{\mathscr{Z}}_{P}^{\mathsmaller{\vartheta},1/n}\big\}_{n\in \mathbb{N}}$.  Given $\varphi\in \widetilde{C}_m$, the sequence
$\big\{\widetilde{\mathscr{Z}}_{P}^{\mathsmaller{\vartheta},1/n}(\varphi)\big\}_{n\in \mathbb{N}}$ is a  martingale with expectation $\mathbf{U}_P(\varphi)$ and uniformly bounded  second moments  since
\begin{align*}
\mathbb{E}\Big[\,\big(\widetilde{\mathscr{Z}}_{P}^{\mathsmaller{\vartheta},1/n}(\varphi)\big)^2\,\Big]\,=\, \mathbf{Q}_{P}^{\mathsmaller{\vartheta},1/n}(\varphi\otimes \varphi) \,\stackrel{(\ref{QIneq}) }{\leq} \,\mathbf{Q}_{P}^{\mathsmaller{\vartheta}}(\varphi\otimes \varphi) \,<\,\infty\,.
\end{align*}
 Hence the sequence of random variables $\big\{\widetilde{\mathscr{Z}}_{P}^{\mathsmaller{\vartheta},1/n}(\varphi)\big\}_{n\in \mathbb{N}}$ converges almost surely and in $L^2$ to a limit $L_{\varphi}$.  A standard argument, which we will review below, shows that there is a random measure
 $\mathscr{Z}_{P}^{\mathsmaller{\vartheta}}$ such that $ \mathscr{Z}_{P}^{\mathsmaller{\vartheta}}(\varphi)= L_{\varphi }$ holds almost surely for each $\varphi\in \widetilde{C}_m$.  The backwards martingale property for the family  $\{\widetilde{\mathscr{Z}}_{P}^{\mathsmaller{\vartheta},\varsigma}\}_{\varsigma\in (0,\Delta] }$ implies that $\mathbb{E}\big[\mathscr{Z}_{P}^{\mathsmaller{\vartheta}}\, | \,\textup{F}_{\varsigma}\big]=\widetilde{\mathscr{Z}}_{P}^{\mathsmaller{\vartheta,\varsigma}} $ for all $\varsigma\in (0,\Delta]$.  The second moment of $\mathscr{Z}_{P}^{\mathsmaller{\vartheta}}$ is the vague limit of $\mathbf{Q}_{P}^{\mathsmaller{\vartheta},\varsigma}$ as $\varsigma\rightarrow 0$, which is $\mathbf{Q}_{P}^{\mathsmaller{\vartheta}}$.\vspace{.2cm}

 Clearly, for a given $\alpha\in \R$ and $\varphi'\in C_c\big( (\R^2)^{m+1} \big)$, we have $L_{\alpha\varphi+\varphi'}=\alpha L_{\varphi}+L_{\varphi'}$ almost surely, and  $L_{\varphi}\geq 0$ holds almost surely when $\varphi\geq 0$. 
Let $D^+$ be a countable dense subset of  $C_c\big( (\R^2)^{m+1},[0,\infty) \big)$ such that each $\varphi\in C_c\big( (\R^2)^{m+1},[0,\infty) \big)$ is a uniform limit of a sequence  $\{ \varphi_j \}_1^{\infty}$ in $ D^+$ for which $\bigcup_1^{\infty}\textup{supp}(\varphi_j)$ is bounded.  We can also stipulate that $D^+$  is closed under nonnegative rational linear combinations. Put $D=D^++(-D^+)$.  There exists a full measure subset $\Omega'$ of $\Omega$ such that for every $\omega\in \Omega'$
\begin{itemize}
\item $L_{\varphi}(\omega)\geq 0$ for all nonnegative $\varphi\in D$.

\item $L_{\alpha\varphi+\varphi'}(\omega)= \alpha L_{\varphi}(\omega)+L_{\varphi'}$ for all $\alpha\in \mathbb{Q}$ and $\varphi,\varphi'\in D$.
  
\end{itemize}
Thus, for any fixed $\omega\in \Omega'$, the map $\varphi\rightarrow L_{\varphi}(\omega)$ extends uniquely to a nonnegative linear functional on $C_c\big( (\R^2)^{m+1} \big)$. By the Riesz representation theorem, there is a unique  measure $\mathscr{Z}_{P}^{\mathsmaller{\vartheta}}(\omega,\cdot)$ on $(\R^2)^{m+1}$ satisfying $\mathscr{Z}_{P}^{\mathsmaller{\vartheta}}(\omega,\varphi)=L_{\varphi}(\omega)$ for all $\varphi\in C_c\big( (\R^2)^{m+1} \big)$, and  we put $\mathscr{Z}_{P}^{\mathsmaller{\vartheta}}(\omega)=0$ for $\omega\in \Omega\backslash \Omega'$.
\end{proof}

\subsection{Proof of Theorem~\ref{ThmSHFExtension}}\label{SubsectThmSHF}

\begin{proof} Due to Lemma~\ref{LemmaMartingale}, it suffices for us to show that the  difference between $\mathscr{Z}_{P}^{\mathsmaller{\vartheta},\varsigma}$ and $\widetilde{\mathscr{Z}}_{P}^{\mathsmaller{\vartheta},\varsigma}$ vanishes vaguely in $L^2$ as $\varsigma\rightarrow 0$.  We will do this by showing that the second moments of the differences $\mathscr{Z}_{P}^{\mathsmaller{\vartheta},\varsigma}-\overline{\mathscr{Z}}_{P}^{\mathsmaller{\vartheta},\varsigma}$ and $\overline{\mathscr{Z}}_{P}^{\mathsmaller{\vartheta},\varsigma}-\widetilde{\mathscr{Z}}_{P}^{\mathsmaller{\vartheta},\varsigma}$ vanish under the vague topology, where $\overline{\mathscr{Z}}^{\mathsmaller{\vartheta},\varsigma}_P:= \mathbb{E}\big[ \mathscr{Z}^{\mathsmaller{\vartheta}}_P \,\big| \,\textup{F}_{\varsigma} \big]$. Using Proposition~\ref{PropCond}, we can compute
\begin{align*}
\overline{\mathscr{Z}}^{\mathsmaller{\vartheta},\varsigma}_P \,=\,& \,
\mathbb{E}\big[\,\mathscr{Z}^{\mathsmaller{\vartheta}}_{t_0,t_1}  \,\big| \,\textup{F}_{\varsigma} \,\big]  \,\mathlarger{\mathlarger{\circ}}_{\varsigma}\,\mathbb{E}\big[\,\mathscr{Z}^{\mathsmaller{\vartheta}}_{t_1,t_2}  \,\big| \,\textup{F}_{\varsigma} \,\big]  \,\mathlarger{\mathlarger{\circ}}_{\varsigma}\,\cdots \,\mathlarger{\mathlarger{\circ}}_{\varsigma}\,\mathbb{E}\big[\,\mathscr{Z}^{\mathsmaller{\vartheta}}_{t_{m-1},t_m} \,\big| \,\textup{F}_{\varsigma}\, \big]  \\ \,=\,& \,
\mathscr{Z}^{\mathsmaller{\vartheta}}_{t_0,t_1}    \,\mathlarger{\mathlarger{\circ}}_{\varsigma}\,\big(\mathlarger{\mathlarger{\bullet}}_{\varsigma}\,\mathscr{Z}^{\mathsmaller{\vartheta}}_{t_{1}+\varsigma,t_2} \big)   \,\mathlarger{\mathlarger{\circ}}_{\varsigma}\,\cdots\,\mathlarger{\mathlarger{\circ}}_{\varsigma}\,\big(\mathlarger{\mathlarger{\bullet}}_{\varsigma}\,\mathscr{Z}^{\mathsmaller{\vartheta}}_{t_{m-1}+\varsigma,t_m} \big)  \\ \,=\,& \,
\mathscr{Z}^{\mathsmaller{\vartheta}}_{t_0,t_1}    \,\mathlarger{\mathlarger{\circ}}_{2\varsigma}\,\mathscr{Z}^{\mathsmaller{\vartheta}}_{t_{1}+\varsigma,t_2}    \,\mathlarger{\mathlarger{\circ}}_{2\varsigma}\,\cdots\,\mathlarger{\mathlarger{\circ}}_{2\varsigma}\,\mathscr{Z}^{\mathsmaller{\vartheta}}_{t_{m-1}+\varsigma,t_m}  \,.
\end{align*}
Next, we can express the second moment of the difference between $\mathscr{Z}_{P}^{\mathsmaller{\vartheta},\varsigma} $ and $\overline{\mathscr{Z}}_{P}^{\mathsmaller{\vartheta},\varsigma}$ in terms of the telescoping sum below
\begin{align}\label{Telescoping}
\mathbb{E}\Big[\, \big(\mathscr{Z}_{P}^{\mathsmaller{\vartheta},\varsigma}-\overline{\mathscr{Z}}_{P}^{\mathsmaller{\vartheta},\varsigma}\big)^2 \, \Big] \,=\,
\mathbb{E}\Big[\, \big(\mathscr{Z}_{P}^{\mathsmaller{\vartheta},\varsigma} \big)^2\,\Big] \,-\,\mathbb{E}\Big[\,\big(\overline{\mathscr{Z}}_{P}^{\mathsmaller{\vartheta},\varsigma}\big)^2\,\Big] 
 \,=\,\sum_{k=2}^m\,
\mathbb{E}\Big[ \,\big(\overline{\mathscr{Z}}_{P}^{\mathsmaller{\vartheta},\varsigma,k}\big)^2 \,\Big] \,-\,\mathbb{E}\Big[ \,\big(\overline{\mathscr{Z}}_{P}^{\mathsmaller{\vartheta},\varsigma,k-1}\big)^2\,\Big]\,,
\end{align}
wherein $\overline{\mathscr{Z}}^{\mathsmaller{\vartheta},\varsigma,m}_P :=\mathscr{Z}_{P}^{\mathsmaller{\vartheta},\varsigma}$ and for $k\in \{1,\ldots, m-1\}$
\begin{align*}
\overline{\mathscr{Z}}^{\mathsmaller{\vartheta},\varsigma,k}_P\,:=\,\mathscr{Z}^{\mathsmaller{\vartheta}}_{t_0,t_1} \mathlarger{\mathlarger{\circ}}_{\varsigma}\,\cdots\,\mathlarger{\mathlarger{\circ}}_{\varsigma}\,\mathscr{Z}^{\mathsmaller{\vartheta}}_{t_{k-1},t_k}\,\mathlarger{\mathlarger{\circ}}_{2\varsigma}\,\mathscr{Z}^{\mathsmaller{\vartheta}}_{t_k+\varsigma,t_{k+1}} \,\mathlarger{\mathlarger{\circ}}_{2\varsigma}\,\cdots \,\mathlarger{\mathlarger{\circ}}_{2\varsigma}\,\mathscr{Z}^{\mathsmaller{\vartheta}}_{t_{m-1}+\varsigma,t_m}\,.
\end{align*}
We can express the second moment of $\overline{\mathscr{Z}}_{P}^{\mathsmaller{\vartheta},\varsigma,k}$ as
\begin{align}\label{Pill}
\mathbb{E}\Big[ \,\big(\overline{\mathscr{Z}}_{P}^{\mathsmaller{\vartheta},\varsigma,k}\big)^2 \,\Big] \,=\,&\,\mathbb{E}\Big[\,\big(\mathscr{Z}^{\mathsmaller{\vartheta}}_{t_0,t_1}\big)^2\,\Big]\,\mathlarger{\mathlarger{\circ}}_{\varsigma}\, \cdots \,\mathlarger{\mathlarger{\circ}}_{\varsigma}\,\mathbb{E}\Big[\,\big(\mathscr{Z}^{\mathsmaller{\vartheta}}_{t_{k-1},t_k}\big)^2\,\Big]\nonumber  \\ &\, \,\mathlarger{\mathlarger{\circ}}_{2\varsigma}\,\mathbb{E}\Big[\,\big(\mathscr{Z}^{\mathsmaller{\vartheta}}_{t_k+\varsigma,t_{k+1}}\big)^2\,\Big]\,\mathlarger{\mathlarger{\circ}}_{2\varsigma}\, \cdots \,\mathlarger{\mathlarger{\circ}}_{2\varsigma}\,\mathbb{E}\Big[\,\big(\mathscr{Z}^{\mathsmaller{\vartheta},\varsigma}_{t_{m-1}+\varsigma,t_m}\big)^2\,\Big]\nonumber \\
\,=\,&\,\mathbf{Q}^{\mathsmaller{\vartheta}}_{t_{1}-t_{0}}\,\mathlarger{\mathlarger{\circ}}_{\varsigma}\, \cdots \,\mathlarger{\mathlarger{\circ}}_{\varsigma}\,\mathbf{Q}^{\mathsmaller{\vartheta}}_{t_{k}-t_{k-1}}\,\mathlarger{\mathlarger{\circ}}_{2\varsigma}\,\mathbf{Q}^{\mathsmaller{\vartheta}}_{t_{k+1}-t_{k}-\varsigma}\,\mathlarger{\mathlarger{\circ}}_{2\varsigma}\, \cdots \,\mathlarger{\mathlarger{\circ}}_{2\varsigma}\,\mathbf{Q}^{\mathsmaller{\vartheta}}_{t_m-t_{m-1}-\varsigma}\,.
\end{align}
Thus a single term from the telescoping sum~(\ref{Telescoping}) can be bounded as
\begin{align}\label{Pill2}
\mathbb{E}\Big[\, \big(\overline{\mathscr{Z}}_{P}^{\mathsmaller{\vartheta},\varsigma,k}\big)^2 \,\Big]   \,-\, \mathbb{E}\Big[\, \big(\overline{\mathscr{Z}}_{P}^{\mathsmaller{\vartheta},\varsigma,k-1}\big)^2\, \Big]   =\,&\, \mathbf{Q}^{\mathsmaller{\vartheta}}_{t_{1}-t_{0}}\,\mathlarger{\mathlarger{\circ}}_{\varsigma}\, \cdots \,\mathlarger{\mathlarger{\circ}}_{\varsigma}\,\mathbf{Q}^{\mathsmaller{\vartheta}}_{t_{k-1}-t_{k-2}} \,\mathlarger{\mathlarger{\circ}}_{\varsigma}\,\big(\mathbf{Q}^{\mathsmaller{\vartheta}}_{t_{k}-t_{k-1}}-\mathlarger{\mathlarger{\bullet}}_{\varsigma}\mathbf{Q}^{\mathsmaller{\vartheta}}_{t_{k}-t_{k-1}-\varsigma}\big)\, \nonumber  \\ &\,\,\mathlarger{\mathlarger{\circ}}_{2\varsigma}\,\mathbf{Q}^{\mathsmaller{\vartheta}}_{t_{k+1}-t_{k}-\varsigma}\,\mathlarger{\mathlarger{\circ}}_{2\varsigma}\, \cdots \,\mathlarger{\mathlarger{\circ}}_{2\varsigma}\,\mathbf{Q}^{\mathsmaller{\vartheta}}_{t_m-t_{m-1}-\varsigma} \nonumber  \\  \leq\,&\, \mathbf{Q}^{\mathsmaller{\vartheta}}_{t_{1}-t_{0}}\,\mathlarger{\mathlarger{\circ}}\,\mathbf{Q}^{\mathsmaller{\vartheta}}_{t_{2}-t_{1}+\varsigma}\,\mathlarger{\mathlarger{\circ}}\, \cdots \,\mathlarger{\mathlarger{\circ}}\,\mathbf{Q}^{\mathsmaller{\vartheta}}_{t_{k-1}-t_{k-2}+\varsigma} \,\mathlarger{\mathlarger{\circ}}\,\big(\mathbf{Q}^{\mathsmaller{\vartheta}}_{\varsigma} \,\mathlarger{\mathlarger{\bullet}}\,\mathbf{K}^{\mathsmaller{\vartheta}}_{\varsigma}\,\mathlarger{\mathlarger{\bullet}}\,\mathbf{Q}^{\mathsmaller{\vartheta}}_{t_{k}-t_{k-1}-\varsigma}\big)\, \nonumber  \\ &\,\,\mathlarger{\mathlarger{\circ}}\,\mathbf{Q}^{\mathsmaller{\vartheta}}_{t_{k+1}-t_{k}+\varsigma}\,\mathlarger{\mathlarger{\circ}}\, \cdots \,\mathlarger{\mathlarger{\circ}}\,\mathbf{Q}^{\mathsmaller{\vartheta}}_{t_m-t_{m-1}+\varsigma} \,,
\end{align}
where the inequality uses that $\mathlarger{\mathlarger{\circ}}_{r}=\mathlarger{\mathlarger{\circ}}\,\mathbf{U}_{r}^2\,\mathlarger{\mathlarger{\bullet}}$ and $\mathbf{U}_{r}^2\leq \mathbf{Q}^{\mathsmaller{\vartheta}}_{r}$, along with the routine computation
\[
\mathbf{Q}^{\mathsmaller{\vartheta}}_{r}-\mathlarger{\mathlarger{\bullet}}_{\varsigma}\mathbf{Q}^{\mathsmaller{\vartheta}}_{r-\varsigma} \,=\,\mathbf{Q}^{\mathsmaller{\vartheta}}_{\varsigma}\,\mathlarger{\mathlarger{\bullet}}\,\mathbf{Q}^{\mathsmaller{\vartheta}}_{r-\varsigma}-\mathbf{U}_{\varsigma}^2\,\mathlarger{\mathlarger{\bullet}}\,\mathbf{Q}^{\mathsmaller{\vartheta}}_{r-\varsigma} \,=\,\big(\mathbf{Q}^{\mathsmaller{\vartheta}}_{\varsigma}\,-\,\mathbf{U}_{\varsigma}^2\big)\,\mathlarger{\mathlarger{\bullet}}\,\mathbf{Q}^{\mathsmaller{\vartheta}}_{r-\varsigma} \,=\, 
 \mathbf{K}^{\mathsmaller{\vartheta}}_{\varsigma}\, \mathlarger{\mathlarger{\bullet}}
\,\mathbf{Q}^{\mathsmaller{\vartheta}}_{r-\varsigma}\,.
\]
The projection of the measure~(\ref{Pill2}) that results from integrating out  the middle $m-1$ components of the $(m+1)$-fold Cartesian product 
$(\R^2\times \R^2)^{m+1}$ is given by
\begin{align}
\mathbf{Q}^{\mathsmaller{\vartheta}}_{t_{1}-t_{0}}\,&\,\mathlarger{\mathlarger{\bullet}}\,\mathbf{Q}^{\mathsmaller{\vartheta}}_{t_{2}-t_{1}+\varsigma}\,\mathlarger{\mathlarger{\bullet}}\, \cdots \,\mathlarger{\mathlarger{\bullet}}\,\mathbf{Q}^{\mathsmaller{\vartheta}}_{t_{k-1}-t_{k-2}+\varsigma} \,\mathlarger{\mathlarger{\bullet}}\,\big(\mathbf{Q}^{\mathsmaller{\vartheta}}_{\varsigma} \,\mathlarger{\mathlarger{\bullet}}\,\mathbf{K}^{\mathsmaller{\vartheta}}_{\varsigma}\,\mathlarger{\mathlarger{\bullet}}\,\mathbf{Q}^{\mathsmaller{\vartheta}}_{t_{k}-t_{k-1}-\varsigma}\big)\,\mathlarger{\mathlarger{\bullet}}\,\mathbf{Q}^{\mathsmaller{\vartheta}}_{t_{k+1}-t_{k}+\varsigma}\,\mathlarger{\mathlarger{\bullet}}\, \cdots \,\mathlarger{\mathlarger{\bullet}}\,\mathbf{Q}^{\mathsmaller{\vartheta}}_{t_m-t_{m-1}+\varsigma}\nonumber \\
&\,=\,\mathbf{Q}^{\mathsmaller{\vartheta}}_{t_{k-1}-t_{0}+(k-1)\varsigma }\,\mathlarger{\mathlarger{\bullet}}\,\mathbf{K}^{\mathsmaller{\vartheta}}_{\varsigma}\,\mathlarger{\mathlarger{\bullet}}\,\mathbf{Q}^{\mathsmaller{\vartheta}}_{t_m-t_{k-1}+(m-k-1)\varsigma}\,.\label{Fipel}
\end{align}
Since the measure~(\ref{Fipel}) vanishes vaguely as $\varsigma\rightarrow 0$ by Lemma~\ref{LemmaSqueeze}, so does~(\ref{Pill2}), and hence the second moment of the difference between $\mathscr{Z}_{P}^{\mathsmaller{\vartheta}} $ and $\overline{\mathscr{Z}}_{P}^{\mathsmaller{\vartheta},\varsigma}$ vanishes vaguely.\vspace{.2cm}

Now we bound the difference between $\overline{\mathscr{Z}}_{P}^{\mathsmaller{\vartheta},\varsigma}$ and $\widetilde{\mathscr{Z}}_{P}^{\mathsmaller{\vartheta},\varsigma} $, which we can write as the telescoping sum
\[
\overline{\mathscr{Z}}_{P}^{\mathsmaller{\vartheta},\varsigma} -\widetilde{\mathscr{Z}}_{P}^{\mathsmaller{\vartheta},\varsigma}\,=\, \sum_{k=2}^m\,\widetilde{\mathscr{Z}}_{P}^{\mathsmaller{\vartheta},\varsigma,k}\,-\,\widetilde{\mathscr{Z}}_{P}^{\mathsmaller{\vartheta},\varsigma,k-1}\,,
\]
where $\widetilde{\mathscr{Z}}^{\mathsmaller{\vartheta},\varsigma,m}_P:=\overline{\mathscr{Z}}_{P}^{\mathsmaller{\vartheta},\varsigma}$,  $\widetilde{\mathscr{Z}}^{\mathsmaller{\vartheta},\varsigma,1}_P:=\widetilde{\mathscr{Z}}_{P}^{\mathsmaller{\vartheta},\varsigma}$ , and for 
$k\in \{2,\ldots, m-1\}$ we define
\begin{align*}
\widetilde{\mathscr{Z}}^{\mathsmaller{\vartheta},\varsigma,k}_P\,:=\,\mathscr{Z}^{\mathsmaller{\vartheta}}_{t_0,t_1}\,\mathlarger{\mathlarger{\circ}}_{2\varsigma}\,\mathscr{Z}^{\mathsmaller{\vartheta}}_{t_{1}+\varsigma,t_2}\,\mathlarger{\mathlarger{\circ}}_{2\varsigma}\,\cdots \,\mathlarger{\mathlarger{\circ}}_{2\varsigma}\,\mathscr{Z}^{\mathsmaller{\vartheta}}_{t_{k-1}+\varsigma,t_k}\,\mathlarger{\mathlarger{\circ}}_{\varsigma}\,\mathscr{Z}^{\mathsmaller{\vartheta}}_{t_{k}+\varsigma ,t_{k+1}}\,\mathlarger{\mathlarger{\circ}}_{\varsigma}\,\cdots\,\mathlarger{\mathlarger{\circ}}_{\varsigma}\,\mathscr{Z}^{\mathsmaller{\vartheta}}_{t_{m-1}+\varsigma,t_m}\,.
\end{align*}
 The second moment of the difference between $\widetilde{\mathscr{Z}}_{P}^{\mathsmaller{\vartheta},\varsigma,k}$ and $\widetilde{\mathscr{Z}}_{P}^{\mathsmaller{\vartheta},\varsigma,k-1}$ is
\begin{align}\label{Tlbit}
\mathbb{E}\Big[\, \big(\widetilde{\mathscr{Z}}_{P}^{\mathsmaller{\vartheta},\varsigma,k}-\widetilde{\mathscr{Z}}_{P}^{\mathsmaller{\vartheta},\varsigma,k-1}\big)^2\, \Big] \,=\,\sum_{a,b\in \{0,1\}}\,(-1)^{a+b}\,\lambda^{a,b}_{\varsigma,k} 
\end{align}
for the measures
$
\lambda^{a,b}_{\varsigma,k} := \mathbb{E}\big[\widetilde{\mathscr{Z}}_{P}^{\mathsmaller{\vartheta},\varsigma,k-a}\times \widetilde{\mathscr{Z}}_{P}^{\mathsmaller{\vartheta},\varsigma,k-b} \big]
$, which we can express as 
\begin{align*}
&\lambda^{a,b}_{\varsigma,k}\big(dx_0,dx_0';\ldots;dx_m,dx_m'\big)\\
&\,=\,\int_{y,y'\in \R^2}\mathbf{Q}^{\mathsmaller{\vartheta}}_{t_{1}-t_{0}}\,\mathlarger{\mathlarger{\circ}}_{2\varsigma}\,\mathbf{Q}^{\mathsmaller{\vartheta}}_{t_{2}-t_{1}-\varsigma}\,\mathlarger{\mathlarger{\circ}}_{2\varsigma}\, \cdots \,\mathlarger{\mathlarger{\circ}}_{2\varsigma}\,\mathbf{Q}^{\mathsmaller{\vartheta}}_{t_{k}-t_{k-1}-\varsigma}\big(dx_0,dx_0'; dx_1,dx_1';\ldots;dx_k,dx_k'\big)\,\\
&\text{}\hspace{.2cm}\,g_{\varsigma+a\varsigma}(x_k-y)\,g_{\varsigma+b\varsigma}(x_k'-y')\,\mathbf{Q}^{\mathsmaller{\vartheta}}_{t_{k+1}-t_{k}-\varsigma}\,\mathlarger{\mathlarger{\circ}}_{\varsigma}\, \cdots \,\mathlarger{\mathlarger{\circ}}_{\varsigma}\,\mathbf{Q}^{\mathsmaller{\vartheta}}_{t_m-t_{m-1}-\varsigma}\big(dy,dy';dx_{k+1},dx_{k+1}';\ldots;dx_m,dx_m'\big)\,.
\end{align*}
Each $\lambda^{a,b}_{\varsigma,k}$ converges vaguely to $\mathbf{Q}^{\mathsmaller{\vartheta}}_{P}$, implying that second moment of the difference between $\widetilde{\mathscr{Z}}_{P}^{\mathsmaller{\vartheta},\varsigma,k}$ and $\widetilde{\mathscr{Z}}_{P}^{\mathsmaller{\vartheta},\varsigma,k-1}$ converges to zero.  Thus the difference between $\overline{\mathscr{Z}}_{P}^{\mathsmaller{\vartheta},\varsigma}$ and $\widetilde{\mathscr{Z}}_{P}^{\mathsmaller{\vartheta},\varsigma} $ vanishes vaguely, completing the proof.
\end{proof}

\subsection{Proofs of Propositions~\ref{PropSHFExtensionProp} and~\ref{PropCondGen}}\label{SubsectPropCondGenProof}

\begin{proof}[Proof of Proposition~\ref{PropCondGen}] Assume that $s,t\notin  P$.  For $P=\{t_0,\ldots, t_m\}$ let $k,\ell$ be the largest elements in $\{0,\ldots, m\}$ such that  $t_k<s$ and $t_{\ell}<t$.  We will assume that  $0<k<\ell<m$, as other cases  differ only notationally.  By Lemma~\ref{LemmaMartingale} $\widetilde{\mathscr{Z}}_{P}^{\mathsmaller{\vartheta},\varsigma} $ converges vaguely in $L^2$ to $\mathscr{Z}_{P}^{\mathsmaller{\vartheta}}  $ as $\varsigma\rightarrow 0$.  It follows that $\mathbb{E}\big[\widetilde{\mathscr{Z}}_{P}^{\mathsmaller{\vartheta},\varsigma}\big|\mathscr{F}_{[0,s]\cup [t,\infty)  }\big] $ converges vaguely in $L^2$ to $\mathbb{E}\big[\mathscr{Z}_{P}^{\mathsmaller{\vartheta}}\big|\mathscr{F}_{[0,s]\cup [t,\infty)  }\big] $.  Taking the conditional expectation of 
\begin{align*}
\widetilde{\mathscr{Z}}_{P}^{\mathsmaller{\vartheta},\varsigma}\,=\,&\,\mathscr{Z}^{\mathsmaller{\vartheta}}_{t_0,t_1}\,\mathlarger{\mathlarger{\circ}}_{\varsigma}\,\mathscr{Z}^{\mathsmaller{\vartheta}}_{t_1+\varsigma,t_2} \,\mathlarger{\mathlarger{\circ}}_{\varsigma} \, \cdots   \,\mathlarger{\mathlarger{\circ}}_{\varsigma}\, \mathscr{Z}^{\mathsmaller{\vartheta}}_{t_{k-1}+\varsigma,t_k}\,\mathlarger{\mathlarger{\circ}}_{\varsigma}\, \mathscr{Z}^{\mathsmaller{\vartheta}}_{t_{k}+\varsigma,t_{k+1}}\,\mathlarger{\mathlarger{\circ}}_{\varsigma}\, \mathscr{Z}^{\mathsmaller{\vartheta}}_{t_{k+1}+\varsigma,t_{k+2}}\,\\ 
 &\,\mathlarger{\mathlarger{\circ}}_{\varsigma} \, \cdots   \,\mathlarger{\mathlarger{\circ}}_{\varsigma}\, \mathscr{Z}^{\mathsmaller{\vartheta}}_{t_{\ell-1}+\varsigma,t_{\ell}}\,\mathlarger{\mathlarger{\circ}}_{\varsigma}\, \mathscr{Z}^{\mathsmaller{\vartheta}}_{t_{\ell}+\varsigma,t_{\ell+1}}\,\mathlarger{\mathlarger{\circ}}_{\varsigma}\, \mathscr{Z}^{\mathsmaller{\vartheta}}_{t_{\ell+1}+\varsigma,t_{\ell+2}}\,\mathlarger{\mathlarger{\circ}}_{\varsigma} \, \cdots   \,\mathlarger{\mathlarger{\circ}}_{\varsigma}\, \mathscr{Z}^{\mathsmaller{\vartheta}}_{t_{m-1}+\varsigma,t_{m}}
\end{align*}
with respect to $ \mathscr{F}_{[0,s]\cup [t,\infty)  } $ and applying Proposition~\ref{PropCond} to the $\mathscr{Z}^{\mathsmaller{\vartheta}}_{t_{k}+\varsigma,t_{k+1}}$ and $\mathscr{Z}^{\mathsmaller{\vartheta}}_{t_{\ell}+\varsigma,t_{\ell+1}}$ terms yields
\begin{align*}
\mathbb{E}\Big[\,\widetilde{\mathscr{Z}}_{P}^{\mathsmaller{\vartheta},\varsigma}\,\Big|\,\mathscr{F}_{[0,s]\cup [t,\infty)  }\,\Big] \,=\,&\,\mathscr{Z}^{\mathsmaller{\vartheta}}_{t_0,t_1}\,\mathlarger{\mathlarger{\circ}}_{\varsigma}\,\mathscr{Z}^{\mathsmaller{\vartheta}}_{t_1+\varsigma,t_2} \,\mathlarger{\mathlarger{\circ}}_{\varsigma} \, \cdots   \,\mathlarger{\mathlarger{\circ}}_{\varsigma}\, \mathscr{Z}^{\mathsmaller{\vartheta}}_{t_{k-1}+\varsigma,t_k}\,\mathlarger{\mathlarger{\circ}}_{\varsigma}\, \big(\mathscr{Z}^{\mathsmaller{\vartheta}}_{t_{k}+\varsigma,s}\,\mathlarger{\mathlarger{\bullet}}_{t_{k+1}-s }  \big)
\,\mathlarger{\mathlarger{\circ}}_{\varsigma}\,\mathbf{U}_{t_{k+2}-t_{k+1}-\varsigma} \\ & \,\mathlarger{\mathlarger{\circ}}_{\varsigma}\, \cdots   \,\mathlarger{\mathlarger{\circ}}_{\varsigma}\, \mathbf{U}_{t_{\ell}-t_{\ell-1}-\varsigma}\,\mathlarger{\mathlarger{\circ}}_{\varsigma}\,\big( \mathlarger{\mathlarger{\bullet}}_{t-t_{\ell}-\varsigma} \,  \mathscr{Z}^{\mathsmaller{\vartheta}}_{t,t_{\ell+1}}\big)\,\mathlarger{\mathlarger{\circ}}_{\varsigma} \,\mathscr{Z}^{\mathsmaller{\vartheta}}_{t_{\ell+1}+\varsigma,t_{\ell+2}}\, \mathlarger{\mathlarger{\circ}}_{\varsigma} \,\cdots   \,\mathlarger{\mathlarger{\circ}}_{\varsigma}\, \mathscr{Z}^{\mathsmaller{\vartheta}}_{t_{m-1}+\varsigma,t_{m}}\\
\,=\,&\,\big(\mathscr{Z}^{\mathsmaller{\vartheta}}_{t_0,t_1}\,\mathlarger{\mathlarger{\circ}}_{\varsigma}\,\mathscr{Z}^{\mathsmaller{\vartheta}}_{t_1+\varsigma,t_2} \,\mathlarger{\mathlarger{\circ}}_{\varsigma} \, \cdots   \,\mathlarger{\mathlarger{\circ}}_{\varsigma}\, \mathscr{Z}^{\mathsmaller{\vartheta}}_{t_{k-1}+\varsigma,t_k}\,\mathlarger{\mathlarger{\circ}}_{\varsigma}\,\mathscr{Z}^{\mathsmaller{\vartheta}}_{t_{k}+\varsigma,s}\big)\,\mathlarger{\mathlarger{\bullet}}\, \big(\mathbf{U}_{t_{k+1}-s }  
\,\mathlarger{\mathlarger{\circ}}\,\mathbf{U}_{t_{k+2}-t_{k+1}} \\ & \,\mathlarger{\mathlarger{\circ}}\, \cdots   \,\mathlarger{\mathlarger{\circ}}\, \mathbf{U}_{t_{\ell}-t_{\ell-1}}\,\mathlarger{\mathlarger{\circ}}\,\mathbf{U}_{t-t_{\ell}}\big)\,\mathlarger{\mathlarger{\bullet}}\,  \big(\mathscr{Z}^{\mathsmaller{\vartheta}}_{t,t_{\ell+1}}\,\mathlarger{\mathlarger{\circ}}_{\varsigma} \,\mathscr{Z}^{\mathsmaller{\vartheta}}_{t_{\ell+1}+\varsigma,t_{\ell+2}}\, \mathlarger{\mathlarger{\circ}}_{\varsigma} \,\cdots   \,\mathlarger{\mathlarger{\circ}}_{\varsigma}\, \mathscr{Z}^{\mathsmaller{\vartheta}}_{t_{m-1}+\varsigma,t_{m}}\big)\\
\,=\,&\, \widetilde{\mathscr{Z}}_{P_{\leq s}}^{\mathsmaller{\vartheta},\varsigma}\, \mathlarger{\mathlarger{\bullet}} \, \mathbf{U}_{P_{[s,t]}}  \, \mathlarger{\mathlarger{\bullet}}\,\widetilde{\mathscr{Z}}_{P_{\geq t}}^{\mathsmaller{\vartheta},\varsigma}\,.
\end{align*}
Again by Lemma~\ref{LemmaMartingale}, we have $\widetilde{\mathscr{Z}}_{P_{\leq s}}^{\mathsmaller{\vartheta},\varsigma}\rightarrow \mathscr{Z}_{P_{\leq s}}^{\mathsmaller{\vartheta}} $ $\tilde{C}_k$-weakly in $L^2$ and $\widetilde{\mathscr{Z}}_{P_{\geq t}}^{\mathsmaller{\vartheta},\varsigma}\rightarrow \mathscr{Z}_{P_{\geq t}}^{\mathsmaller{\vartheta}} $ $\tilde{C}_{m-\ell}$-weakly in $L^2$, which implies that $\widetilde{\mathscr{Z}}_{P_{\leq s}}^{\mathsmaller{\vartheta},\varsigma}\, \mathlarger{\mathlarger{\bullet}}\, \mathbf{U}_{P_{[s,t]}}  \, \mathlarger{\mathlarger{\bullet}} \,\widetilde{\mathscr{Z}}_{P_{\geq t}}^{\mathsmaller{\vartheta},\varsigma}$ converges $\tilde{C}_{m}$-weakly in $L^2$ to $\mathscr{Z}_{P_{\leq s}}^{\mathsmaller{\vartheta}}\, \mathlarger{\mathlarger{\bullet}} \, \mathbf{U}_{P_{[s,t]}}  \, \mathlarger{\mathlarger{\bullet}}\,\mathscr{Z}_{P_{\geq t}}^{\mathsmaller{\vartheta}}$. Hence $\mathbb{E}\big[\mathscr{Z}_{P}^{\mathsmaller{\vartheta}}\big|\mathscr{F}_{[0,s]\cup [t,\infty)  }\big]$ is equal to  $\mathscr{Z}_{P_{\leq s}}^{\mathsmaller{\vartheta}}\, \mathlarger{\mathlarger{\bullet}} \, \mathbf{U}_{P_{[s,t]}}  \, \mathlarger{\mathlarger{\bullet}}\,\mathscr{Z}_{P_{\geq t}}^{\mathsmaller{\vartheta}}$.  Now we can compute
\begin{align*}
\mathbb{E}\Big[\, \big(  \mathscr{Z}_{P}^{\mathsmaller{\vartheta}}-\mathscr{Z}_{P_{\leq s}}^{\mathsmaller{\vartheta}}\, \mathlarger{\mathlarger{\bullet}} \, \mathbf{U}_{P_{[s,t]}}  \, \mathlarger{\mathlarger{\bullet}}\,\mathscr{Z}_{P_{\geq t}}^{\mathsmaller{\vartheta}} \big)^2  \,\Big]\,=\,&\,\mathbb{E}\Big[ \,\big(  \mathscr{Z}_{P}^{\mathsmaller{\vartheta}}\big)^2  \,\Big]\,-\,\mathbb{E}\Big[ \,\big(  \mathscr{Z}_{P_{\leq s}}^{\mathsmaller{\vartheta}}\, \mathlarger{\mathlarger{\bullet}} \, \mathbf{U}_{P_{[s,t]}}  \, \mathlarger{\mathlarger{\bullet}}\,\mathscr{Z}_{P_{\geq t}}^{\mathsmaller{\vartheta}} \big)^2 \, \Big]\\
\,=\,&\,\mathbb{E}\Big[ \,\big(  \mathscr{Z}_{P}^{\mathsmaller{\vartheta}}\big)^2 \, \Big]\,-\,\mathbb{E}\Big[\,\big(\mathscr{Z}_{P_{\leq s}}^{\mathsmaller{\vartheta}}\big)^2\,\Big] \, \mathlarger{\mathlarger{\bullet}} \,\mathbf{U}_{P_{[s,t]}}^2\,\mathlarger{\mathlarger{\bullet}} \,\mathbb{E}\Big[\,\big(\mathscr{Z}_{P_{\geq t}}^{\mathsmaller{\vartheta}}\big)^2\,\Big]
\\
\,=\,&\ \mathbf{Q}_{P}^{\mathsmaller{\vartheta}} \,-\,\mathbf{Q}_{P_{\leq s}}^{\mathsmaller{\vartheta}} \, \mathlarger{\mathlarger{\bullet}} \,\mathbf{U}_{P_{[s,t]}}^2\,\mathlarger{\mathlarger{\bullet}} \,\mathbf{Q}_{P_{\geq t}}^{\mathsmaller{\vartheta}}\\
\,=\,&\, \mathbf{Q}_{P_{\leq s}}^{\mathsmaller{\vartheta}} \, \mathlarger{\mathlarger{\bullet}} \,\mathbf{Q}_{P_{[s,t]}}^{\mathsmaller{\vartheta}}\,\mathlarger{\mathlarger{\bullet}} \,\mathbf{Q}_{P_{\geq t}}^{\mathsmaller{\vartheta}}\,-\,\mathbf{Q}_{P_{\leq s}}^{\mathsmaller{\vartheta}} \, \mathlarger{\mathlarger{\bullet}} \,\mathbf{U}_{P_{[s,t]}}^2\,\mathlarger{\mathlarger{\bullet}} \,\mathbf{Q}_{P_{\geq t}}^{\mathsmaller{\vartheta}}\\
\,=\,&\, \mathbf{Q}_{P_{\leq s}}^{\mathsmaller{\vartheta}} \, \mathlarger{\mathlarger{\bullet}} \,\big(\mathbf{Q}_{P_{[s,t]}}^{\mathsmaller{\vartheta}}\,-\,\mathbf{U}_{P_{[s,t]}}^2\big)\,\mathlarger{\mathlarger{\bullet}} \,\mathbf{Q}_{P_{\geq t}}^{\mathsmaller{\vartheta}}\\
\,=\,&\, \mathbf{Q}_{P_{\leq s}}^{\mathsmaller{\vartheta}} \, \mathlarger{\mathlarger{\bullet}} \,\mathbf{K}_{P_{[s,t]}}^{\mathsmaller{\vartheta}}\,\mathlarger{\mathlarger{\bullet}} \,\mathbf{Q}_{P_{\geq t}}^{\mathsmaller{\vartheta}}\,.
\end{align*}
Thus we have the desired results when $s,t\notin P$, and the cases when $s \in P$ or $t \in P$ require only small modifications of the above. 
\end{proof}

\begin{proof}[Proof of Proposition~\ref{PropSHFExtensionProp}] 
Statement (i) holds by Lemma~\ref{LemmaMartingale}, and (ii)--(iv) follow closely from the definition of $\mathscr{Z}^{\mathsmaller{\vartheta}}_{P}$ as the vague limit in $L^2$ of $\mathscr{Z}^{\mathsmaller{\vartheta},\varsigma}_{P}$, defined in~(\ref{MartMulti}), and Proposition~\ref{PropBasicProperties}.\vspace{.2cm}

\noindent (v) We need only focus on the case that the refining partition $P'$ contains exactly one more point, $a$, than $P$.  Then there exists $k\in \{0,\ldots,m-1\}$ such that $a\in (t_k,t_{k+1})$.  Since $\widetilde{\mathscr{Z}}^{\mathsmaller{\vartheta},\varepsilon}_{P'} $ converges $\widetilde{C}_{\ell}$-weakly in $L^2$ to $\mathscr{Z}^{\mathsmaller{\vartheta}}_{P'} $ by Lemma~\ref{LemmaMartingale}, the projection $\widetilde{\mathscr{Z}}^{\mathsmaller{\vartheta},\varepsilon}_{P'}\circ \Pi_{P',P }^{-1} $ converges $\widetilde{C}_m$-weakly in $L^2$ to $\mathscr{Z}^{\mathsmaller{\vartheta}}_{P'}\circ \Pi_{P',P }^{-1} $.  Thus we only need to show that $\widetilde{\mathscr{Z}}^{\mathsmaller{\vartheta},\varepsilon}_{P'}\circ \Pi_{P',P }^{-1} $ converges vaguely in $L^2$ to $\mathscr{Z}^{\mathsmaller{\vartheta}}_{P} $.  Since $\widetilde{\mathscr{Z}}^{\mathsmaller{\vartheta},\varepsilon}_{P} $ converges $\widetilde{C}_m$-weakly in $L^2$ to $\mathscr{Z}^{\mathsmaller{\vartheta}}_{P} $, it suffices to show that the difference between $\widetilde{\mathscr{Z}}^{\mathsmaller{\vartheta},\varepsilon}_{P'}\circ \Pi_{P',P }^{-1} $ and $\widetilde{\mathscr{Z}}^{\mathsmaller{\vartheta},\varepsilon}_{P} $ vanishes vaguely in $L^2$.  The second moment of the difference
\begin{align*}
\widetilde{\mathscr{Z}}^{\mathsmaller{\vartheta},\varepsilon}_{P'}\circ \Pi_{P',P }^{-1}\,-\,\widetilde{\mathscr{Z}}^{\mathsmaller{\vartheta},\varepsilon}_{P}\,=\,&\,\mathscr{Z}^{\mathsmaller{\vartheta}}_{t_0,t_1}\,\mathlarger{\mathlarger{\circ}}_{\varsigma}\,\mathscr{Z}^{\mathsmaller{\vartheta}}_{t_1+\varsigma,t_2} \,\mathlarger{\mathlarger{\circ}}_{\varsigma} \, \cdots   \,\mathlarger{\mathlarger{\circ}}_{\varsigma}\,\mathscr{Z}^{\mathsmaller{\vartheta}}_{t_{k-1}+\varsigma,t_k} \,\mathlarger{\mathlarger{\circ}}_{\varsigma} \,\big(\mathscr{Z}^{\mathsmaller{\vartheta}}_{t_k+\varsigma,a} \,\mathlarger{\mathlarger{\bullet}}_{\varsigma} \,\mathscr{Z}^{\mathsmaller{\vartheta}}_{a+\varsigma,t_{k+1}} \,-\,\mathscr{Z}^{\mathsmaller{\vartheta}}_{t_k+\varsigma,t_{k+1}} \big)\\ \,& \,\mathlarger{\mathlarger{\circ}}_{\varsigma}\,\mathscr{Z}^{\mathsmaller{\vartheta}}_{t_{k+1}+\varsigma,t_{k+2}} \,\mathlarger{\mathlarger{\circ}}_{\varsigma} \, \cdots   \,\mathlarger{\mathlarger{\circ}}_{\varsigma}\,\mathscr{Z}^{\mathsmaller{\vartheta}}_{t_{m-1}+\varsigma,t_m} \,.
\end{align*}
can be expressed as
\begin{align}\label{Pinel}
\mathbf{Q}^{\mathsmaller{\vartheta}}_{t_1-t_0}\,&\,\mathlarger{\mathlarger{\circ}}_{\varsigma}\,\mathbf{Q}^{\mathsmaller{\vartheta}}_{t_2-t_1-\varsigma} \,\mathlarger{\mathlarger{\circ}}_{\varsigma} \, \cdots   \,\mathlarger{\mathlarger{\circ}}_{\varsigma}\,\mathbf{Q}^{\mathsmaller{\vartheta}}_{t_k-t_{k-1}-\varsigma} \,\mathlarger{\mathlarger{\circ}}_{\varsigma}\,\mathbb{E}\Big[\,\big(\mathscr{Z}^{\mathsmaller{\vartheta}}_{t_k+\varsigma,a} \,\mathlarger{\mathlarger{\bullet}}_{\varsigma} \,\mathscr{Z}^{\mathsmaller{\vartheta}}_{a+\varsigma,t_{k+1}} \,-\,\mathscr{Z}^{\mathsmaller{\vartheta}}_{t_k+\varsigma,t_{k+1}} \big)^2\,\Big]\nonumber \\ 
&\,\mathlarger{\mathlarger{\circ}}_{\varsigma}\,\mathbf{Q}^{\mathsmaller{\vartheta}}_{t_{k+2}-t_{k+1}-\varsigma} \,\mathlarger{\mathlarger{\circ}}_{\varsigma} \, \cdots   \,\mathlarger{\mathlarger{\circ}}_{\varsigma}\,\mathbf{Q}^{\mathsmaller{\vartheta}}_{t_m- t_{m-1}-\varsigma} \,.
\end{align}
By Proposition~\ref{PropCond2} we have the equality
\[
\mathbb{E}\Big[\,\big(\mathscr{Z}^{\mathsmaller{\vartheta}}_{t_k+\varsigma,a} \,\mathlarger{\mathlarger{\bullet}}_{\varsigma} \,\mathscr{Z}^{\mathsmaller{\vartheta}}_{a+\varsigma,t_{k+1}} \,-\,\mathscr{Z}^{\mathsmaller{\vartheta}}_{t_k+\varsigma,t_{k+1}} \,\big)^2\,\Big]\,=\,\mathbf{Q}^{\mathsmaller{\vartheta}}_{a-t_{k}-\varsigma} \,\mathlarger{\mathlarger{\bullet}}\,\mathbf{K}^{\mathsmaller{\vartheta}}_{\varsigma} \,\mathlarger{\mathlarger{\bullet}}\,\mathbf{Q}^{\mathsmaller{\vartheta}}_{t_{k+1}-a-\varsigma} \,.
\]
Hence, by using the usual relations $\mathlarger{\mathlarger{\circ}}_{\varsigma}=\mathlarger{\mathlarger{\circ}}\, \mathbf{U}_{\varsigma}^2\,\mathlarger{\mathlarger{\bullet}}$ and $\mathbf{U}_{\varsigma}^2\leq \mathbf{Q}^{\mathsmaller{\vartheta}}_{\varsigma}$ and  $ \mathbf{Q}^{\mathsmaller{\vartheta}}_{r}=  \mathbf{Q}^{\mathsmaller{\vartheta}}_{\varsigma}\,\mathlarger{\mathlarger{\bullet}} \,\mathbf{Q}^{\mathsmaller{\vartheta}}_{r-\varsigma}$, we find that~(\ref{Pinel}) is bounded by
\begin{align*}
\mathbf{Q}^{\mathsmaller{\vartheta}}_{t_1-t_0}\,\mathlarger{\mathlarger{\circ}}\,&\,\mathbf{Q}^{\mathsmaller{\vartheta}}_{t_2-t_1} \,\mathlarger{\mathlarger{\circ}} \, \cdots   \,\mathlarger{\mathlarger{\circ}}\,\mathbf{Q}^{\mathsmaller{\vartheta}}_{t_k-t_{k-1}} \,\mathlarger{\mathlarger{\circ}}\,\mathbf{Q}^{\mathsmaller{\vartheta}}_{a-t_{k}} \,\mathlarger{\mathlarger{\bullet}}\,\mathbf{K}^{\mathsmaller{\vartheta}}_{\varsigma} \,\mathlarger{\mathlarger{\bullet}}\,\mathbf{Q}^{\mathsmaller{\vartheta}}_{t_{k+1}-a-\varsigma}\,\mathlarger{\mathlarger{\circ}}\,\mathbf{Q}^{\mathsmaller{\vartheta}}_{t_{k+2}-t_{k+1}} \,\mathlarger{\mathlarger{\circ}} \, \cdots   \,\mathlarger{\mathlarger{\circ}}\,\mathbf{Q}^{\mathsmaller{\vartheta}}_{t_m-t_{m-1}} \,.
\end{align*}
The above measure has $\mathbf{Q}^{\mathsmaller{\vartheta}}_{a-s} \,\mathlarger{\mathlarger{\bullet}}\,\mathbf{K}^{\mathsmaller{\vartheta}}_{\varsigma} \,\mathlarger{\mathlarger{\bullet}}\,\mathbf{Q}^{\mathsmaller{\vartheta}}_{t-a-\varsigma}$ as one of its projections, and it thus vanishes vaguely in consequence of Lemma~\ref{LemmaScheme}.  Therefore~(\ref{Pinel}) vanishes  vaguely.\vspace{.2cm}

\noindent (vi)  It suffices to show that $\mathscr{Z}^{\mathsmaller{\vartheta}}_{P} $ depends continuously on the partition  $P=\{t_0,t_1,\ldots ,t_m\}$ as one point $t_k$  is varied with the others fixed. For  $r\in (t_k,t_{k+1})$ define $ P'= \big(P\backslash\{t_k\}\big)  \cup \{ r \}  $.  By Proposition~\ref{PropCondGen}, we have
\begin{align*}
\mathbb{E}\Big[\,\big( \mathscr{Z}^{\mathsmaller{\vartheta}}_{P}  \,-\, \mathscr{Z}^{\mathsmaller{\vartheta}}_{P_{\leq t_k}}  \,\mathlarger{\mathlarger{\circ}}\,\mathbf{U}_{r-t_k }\,\mathlarger{\mathlarger{\bullet}}\,\mathscr{Z}^{\mathsmaller{\vartheta}}_{P_{\geq r} }\big)^2\,\Big]\,=\,&\,\mathbf{Q}_{P_{\leq t_k}}^{\mathsmaller{\vartheta}}\,\mathlarger{\mathlarger{\circ}}\,\mathbf{K}_{r-t_k}^{\mathsmaller{\vartheta}}\,\mathlarger{\mathlarger{\bullet}}\,\mathbf{Q}^{\mathsmaller{\vartheta}}_{P_{\geq r } }\,,\\ \mathbb{E}\Big[\,\big( \mathscr{Z}^{\mathsmaller{\vartheta}}_{P'}  \,-\, \mathscr{Z}^{\mathsmaller{\vartheta}}_{P_{\leq t_k}}  \,\mathlarger{\mathlarger{\bullet}}\,\mathbf{U}_{ r-t_{k} }\,\mathlarger{\mathlarger{\circ}}\,\mathscr{Z}^{\mathsmaller{\vartheta}}_{P_{\geq r }}\big)^2\,\Big]\,=\,&\,\mathbf{Q}_{P_{\leq t_k}}^{\mathsmaller{\vartheta}}\,\mathlarger{\mathlarger{\bullet}}\,\mathbf{K}_{r-t_{k}}^{\mathsmaller{\vartheta}}\,\mathlarger{\mathlarger{\circ}}\,\mathbf{Q}^{\mathsmaller{\vartheta}}_{P_{\geq r } }\,.
\end{align*}
The above  measures vanish vaguely as $k\rightarrow \infty$ by Lemma~\ref{LemmaSqueeze}, so we can focus on bounding the difference between $\mathscr{Z}^{\mathsmaller{\vartheta}}_{P_{\leq t_k}}  \,\mathlarger{\mathlarger{\circ}}\,\mathbf{U}_{r-t_k }\,\mathlarger{\mathlarger{\bullet}}\,\mathscr{Z}^{\mathsmaller{\vartheta}}_{P_{\geq r} }$ and $\mathscr{Z}^{\mathsmaller{\vartheta}}_{P_{\leq t_k}}  \,\mathlarger{\mathlarger{\bullet}}\,\mathbf{U}_{ r-t_{k} }\,\mathlarger{\mathlarger{\circ}}\,\mathscr{Z}^{\mathsmaller{\vartheta}}_{P_{\geq r }}$, which  are both projections of 
$\mathscr{Z}^{\mathsmaller{\vartheta}}_{P_{\leq t_k}}  \,\mathlarger{\mathlarger{\circ}}\,\mathbf{U}_{ r-t_{k} }\,\mathlarger{\mathlarger{\circ}}\,\mathscr{Z}^{\mathsmaller{\vartheta}}_{P_{\geq r }}$. For $\varphi\in C_{c}^1\big( (\R^2)^{m+1} \big)$,  we can write
\begin{align}
\mathbb{E}\bigg[\,\Big| \mathscr{Z}^{\mathsmaller{\vartheta}}_{P_{\leq t_k}}  \,&\,\mathlarger{\mathlarger{\circ}}\,\mathbf{U}_{r-t_k }\,\mathlarger{\mathlarger{\bullet}}\,\mathscr{Z}^{\mathsmaller{\vartheta}}_{P_{\geq r} }(\varphi)\,-\,  \mathscr{Z}^{\mathsmaller{\vartheta}}_{P_{\leq t_k}}  \,\mathlarger{\mathlarger{\bullet}}\,\mathbf{U}_{ r-t_{k} }\,\mathlarger{\mathlarger{\circ}}\,\mathscr{Z}^{\mathsmaller{\vartheta}}_{P_{\geq r }}(\varphi)\Big|\,\bigg]\nonumber  \\ 
  =\, &\,\mathbb{E}\Bigg[\,\bigg|\int_{ (\R^2)^{m+2} } \Big(\varphi\big(x_0,\ldots, dx_m\big) 
\,-\, \varphi\big(x_0,\ldots, dx_{k-1}, y,x_{k+1},\ldots, x_m\big)   \Big)\nonumber \\  &\hspace{.5cm}\mathscr{Z}^{\mathsmaller{\vartheta}}_{P_{\leq t_k}}  \,\mathlarger{\mathlarger{\circ}}\,\mathbf{U}_{ r-t_{k} }\,\mathlarger{\mathlarger{\circ}}\,\mathscr{Z}^{\mathsmaller{\vartheta}}_{P_{\geq r }}\big(dx_0,\ldots, dx_k,  dy,dx_{k+1},\ldots ,dx_m  \big)\bigg|\,\Bigg] \nonumber  \\
 \leq \, &\,\int_{ (\R^2)^{m+2} } \Big|\varphi\big(x_0,\ldots, dx_m\big) 
\,-\, \varphi\big(x_0,\ldots, dx_{k-1}, y,x_{k+1},\ldots, x_m\big)   \Big|\nonumber \\  &\hspace{.5cm}\mathbf{U}_{P\cup \{r\} }\big(dx_0,\ldots, dx_k,  dy,dx_{k+1},\ldots ,dx_m  \big)\,,\label{UgDiff}
\end{align}
in which we have used that 
\[
\mathbb{E}\Big[ \, \mathscr{Z}^{\mathsmaller{\vartheta}}_{P_{\leq t_k}}  \,\mathlarger{\mathlarger{\circ}}\,\mathbf{U}_{ r-t_{k} }\,\mathlarger{\mathlarger{\circ}}\,\mathscr{Z}^{\mathsmaller{\vartheta}}_{P_{\geq r }}\,\Big]\,=\,\mathbb{E}\big[\,  \mathscr{Z}^{\mathsmaller{\vartheta}}_{P_{\leq t_k}}\,  \big]\,\mathlarger{\mathlarger{\circ}}\,\mathbf{U}_{ r-t_{k} }\,\mathlarger{\mathlarger{\circ}}\,\mathbb{E}\big[\,\mathscr{Z}^{\mathsmaller{\vartheta}}_{P_{\geq r }}\,\big]\,=\, \mathbf{U}^{\mathsmaller{\vartheta}}_{P_{\leq t_k}} \,\mathlarger{\mathlarger{\circ}}\,\mathbf{U}_{ r-t_{k} }\,\mathlarger{\mathlarger{\circ}}\,\mathbf{U}_{P_{\geq r }}\,=\,\mathbf{U}_{P\cup \{r\} }\,.
\]
Pick $R>0$ large enough so that $\textup{supp}(\varphi)$ is a subset of $\big\{(x_0,\ldots, x_{m})\in  (\R^2)^{m+1} : |x_0| < R  \big\}$.  Then~(\ref{UgDiff}) is bounded by
\begin{align*}
\bigg\| \frac{\partial}{\partial x_k}\varphi\bigg\|_{\infty} \,&\,\int_{ (\R^2)^{m+2} }\,1_{\{|x_0|\leq R\}}\,|x_k-y|\,
\mathbf{U}_{P\cup \{r\} }\big(dx_0,\ldots, dx_k,  dy,dx_{k+1},\ldots ,dx_m  \big) \\  \, & \,=  \,\pi\,R^2\,\bigg\| \frac{\partial}{\partial x_k}\varphi\bigg\|_{\infty} \,\int_{\R^2}\,|a|\, g_{r-t_k}(a)\,da\,,
\end{align*}
which vanishes as $r\searrow t_k$. \vspace{.2cm}

\noindent (vii) For $a\in [t,s_1]$, define $P'_{a}=\{a, s_1,\ldots, s_{\ell}  \}$. By Proposition~\ref{PropCondGen} the second moment of the difference between  $\mathscr{Z}^{\mathsmaller{\vartheta}}_{P\cup P'}$ and $\mathscr{Z}^{\mathsmaller{\vartheta}}_{P}\,\mathlarger{\mathlarger{\circ}}_{\varsigma}\,\mathscr{Z}^{\mathsmaller{\vartheta}}_{P'_{t+\varsigma} }$ for $\varsigma\in [0, s_1-t]$ can be expressed as
\[
\mathbb{E}\Big[\,\big( \mathscr{Z}^{\mathsmaller{\vartheta}}_{P\cup P'} -\mathscr{Z}^{\mathsmaller{\vartheta}}_{P}\,\mathlarger{\mathlarger{\circ}}_{\varsigma}\,\mathscr{Z}^{\mathsmaller{\vartheta}}_{P'_{t+\varsigma} }\big)^2\,\Big]\,=\,\mathbf{Q}_{P}^{\mathsmaller{\vartheta}}\,\mathlarger{\mathlarger{\circ}}\,\mathbf{K}_{\varsigma}^{\mathsmaller{\vartheta}}\,\mathlarger{\mathlarger{\bullet}}\,\mathbf{Q}_{P_{t+\varsigma}'}^{\mathsmaller{\vartheta}}\,,
\]
which vanishes vaguely  as $\varsigma\rightarrow 0$ by Lemma~\ref{LemmaSqueeze}. Since $\mathlarger{\mathlarger{\bullet}}_{\varsigma}\,\mathscr{Z}^{\mathsmaller{\vartheta}}_{P'_{t+\varsigma} }$ is the conditional expectation of $\mathscr{Z}^{\mathsmaller{\vartheta}}_{P' }$ with respect to $\mathscr{F}_{[0,t]\cup [t+\varsigma,\infty)} $, we have 
\begin{align*}
\mathbb{E}\Big[\,\big( \mathscr{Z}^{\mathsmaller{\vartheta}}_{P}\,\mathlarger{\mathlarger{\circ}}_{\varsigma}\,\mathscr{Z}^{\mathsmaller{\vartheta}}_{P' }\, -\,\mathscr{Z}^{\mathsmaller{\vartheta}}_{P' }\,\mathlarger{\mathlarger{\circ}}_{2\varsigma}\,\mathscr{Z}^{\mathsmaller{\vartheta}}_{P'_{t+\varsigma}} \big)^2\,\Big]\,=\,& \mathbb{E}\Big[\,\big( \mathscr{Z}^{\mathsmaller{\vartheta}}_{P}\,\mathlarger{\mathlarger{\circ}}_{\varsigma}\,\big(\mathscr{Z}^{\mathsmaller{\vartheta}}_{P' }\, -\,\mathlarger{\mathlarger{\bullet}}_{\varsigma}\,\mathscr{Z}^{\mathsmaller{\vartheta}}_{P'_{t+\varsigma}} \big)\big)^2\,\Big]\\
\,=\,&
\mathbb{E}\Big[ \,\big( \mathscr{Z}^{\mathsmaller{\vartheta}}_{P}\big)^2\,\Big]\,\mathlarger{\mathlarger{\circ}}_{\varsigma}\,\mathbb{E}\Big[\,\big(\mathscr{Z}^{\mathsmaller{\vartheta}}_{P' }\, -\,\mathlarger{\mathlarger{\bullet}}_{\varsigma}\,\mathscr{Z}^{\mathsmaller{\vartheta}}_{P'_{t+\varsigma}}\big)^2\,\Big]\\ \,=\,&\, \mathbf{Q}_{P}^{\mathsmaller{\vartheta}}\,\mathlarger{\mathlarger{\circ}}_{\varsigma}\,\mathbf{K}_{\varsigma}^{\mathsmaller{\vartheta}}\,\mathlarger{\mathlarger{\bullet}}\,\mathbf{Q}_{P_{t+\varsigma}'}^{\mathsmaller{\vartheta}}\,,
\end{align*}
where we have used Proposition~\ref{PropCondGen}.  Thus the second moment of the difference between $\mathscr{Z}^{\mathsmaller{\vartheta}}_{P}\,\mathlarger{\mathlarger{\circ}}_{\varsigma}\,\mathscr{Z}^{\mathsmaller{\vartheta}}_{P' }$ and $\mathscr{Z}^{\mathsmaller{\vartheta}}_{P' }\,\mathlarger{\mathlarger{\circ}}_{2\varsigma}\,\mathscr{Z}^{\mathsmaller{\vartheta}}_{P'_{t+\varsigma}} $ vanishes vaguely by Lemma~\ref{LemmaSqueeze}.  It remains to bound the difference between $\mathscr{Z}^{\mathsmaller{\vartheta}}_{P' }\,\mathlarger{\mathlarger{\circ}}_{\varsigma}\,\mathscr{Z}^{\mathsmaller{\vartheta}}_{P'_{t+\varsigma}} $ and $\mathscr{Z}^{\mathsmaller{\vartheta}}_{P' }\,\mathlarger{\mathlarger{\circ}}_{2\varsigma}\,\mathscr{Z}^{\mathsmaller{\vartheta}}_{P'_{t+\varsigma}} $, which has second moment
\begin{align*}
\mathbb{E}\Big[\,\big( \mathscr{Z}^{\mathsmaller{\vartheta}}_{P' }\,\mathlarger{\mathlarger{\circ}}_{\varsigma}\,\mathscr{Z}^{\mathsmaller{\vartheta}}_{P'_{t+\varsigma}}\, -\,\mathscr{Z}^{\mathsmaller{\vartheta}}_{P' }\,\mathlarger{\mathlarger{\circ}}_{2\varsigma}\,\mathscr{Z}^{\mathsmaller{\vartheta}}_{P'_{t+\varsigma}} \big)^2\,\Big]\,=\,\sum_{a,b\in \{1,2\} }\,(-1)^{a+b}\,\lambda^{a,b}_{\varsigma}\,,
\end{align*}
where
\[
\lambda^{a,b}_{\varsigma}\,:=\,\mathbb{E}\Big[\,\big( \mathscr{Z}^{\mathsmaller{\vartheta}}_{P' }\,\mathlarger{\mathlarger{\circ}}_{a\varsigma}\,\mathscr{Z}^{\mathsmaller{\vartheta}}_{P'_{t+\varsigma}}\big)\times \big(\mathscr{Z}^{\mathsmaller{\vartheta}}_{P' }\,\mathlarger{\mathlarger{\circ}}_{b\varsigma}\,\mathscr{Z}^{\mathsmaller{\vartheta}}_{P'_{t+\varsigma}} \big)\,\Big]\,.
\]
We can express $\lambda^{a,b}_{\varsigma}$ in the form
\begin{align*}
 \lambda^{a,b}_{\varsigma}\big(dx_0,dx_0';\ldots ; dx_{m+\ell },dx_{m+\ell}'\big)\,=\,\int_{y,y'\in \R^2 }\,&\,\mathbf{Q}_{ P }^{\mathsmaller{\vartheta}}\big(dx_0,dx_0';\ldots ;dx_m,dx_m'\big)\,g_{a\varsigma}(x_m-y)\,   \,g_{b\varsigma}(x_m'-y')\,\\  &\mathbf{Q}_{ P'_{t+\varsigma}}^{\mathsmaller{\vartheta}}\big(dy,dy';dx_{m+1},dx_{m+1}';\ldots ; dx_{m+\ell},dx_{m+\ell}'\big)\,.
\end{align*}
For each $a,b\in \{1,2\}$, the measure $\lambda^{a,b}_{\varsigma}$ 
 converges vaguely to $\mathbf{Q}_{ P\cup P' }^{\mathsmaller{\vartheta}}$, and thus the second moment of the difference between $\mathscr{Z}^{\mathsmaller{\vartheta}}_{P' }\,\mathlarger{\mathlarger{\circ}}_{\varsigma}\,\mathscr{Z}^{\mathsmaller{\vartheta}}_{P'_{t+\varsigma}} $ and $\mathscr{Z}^{\mathsmaller{\vartheta}}_{P' }\,\mathlarger{\mathlarger{\circ}}_{2\varsigma}\,\mathscr{Z}^{\mathsmaller{\vartheta}}_{P'_{t+\varsigma}} $ vanishes vaguely.
\end{proof}

\section{The continuum polymer measures: construction and  properties}\label{SecCPM}

The construction of the random path measure $\mathscr{Z}^{\mathsmaller{\vartheta}}$ in Section~\ref{SubsectPathMeasConst} follows a standard form involving the  Kolmogorov's extension theorem, where the needed  consistency property is provided by (v) of Proposition~\ref{PropSHFExtensionProp}.   The proofs of  Propositions~\ref{PropPathMeasProp} \&~\ref{PropCondExpLast} in Section~\ref{SubsectPathMeasProp} are straightforward consequences of the defining relation $\mathscr{Z}^{\mathsmaller{\vartheta}}\circ \Pi_P^{-1}=\mathscr{Z}^{\mathsmaller{\vartheta}}_P$ from~(\ref{ZProject}) and the properties of the multi-interval 2d SHF in Propositions~\ref{PropSHFExtensionProp} \& \ref{PropCondGen}. In Section~\ref{SubsectionSM} we explain how the theorems in Section~\ref{SubsectCPMSM} concerning the path measure $\mathbf{Q}^{\mathsmaller{\vartheta}}_{[s,t]}$ relate to results in~\cite{CM}.

\subsection{Proof of Theorem~\ref{ThmPathMeasConst}}\label{SubsectPathMeasConst}

For $n\in \mathbb{N}$ define the linear map
$\Xi_{n}:(\R^2)^{2^n +1}\rightarrow (\R^2)^{2^{n-1} +1}  $ by
\[
(x_0,x_1, x_2,\ldots, x_{2^n})  \hspace{.5cm} \mapsto \hspace{.5cm}  (x_0,x_{2}, x_4,\ldots, x_{2^{n}}) \,,
\]
that is, $\Xi_{n}$ acts by deleting the odd-indexed components $x_j$ of the preimage tuple.  Define $\widehat{\Upsilon}_{[0,1]}:= (\R^2)^{D  }$, where  $D $ denotes the set of dyadic numbers in $[0,1]$, i.e., of the form $x=\frac{k}{2^n}$ for $n\in \mathbb{N}$ and $k\in \{0,1,\ldots,2^n\}$.

\begin{proof} For convenience, let us  put $[s,t]=[0,1]$. Let $\{P_n\}_0^{\infty}$ be the sequence of dyadic partitions of $[0,1]$, that is, with $P_n=\big\{ \frac{k}{2^n} :   k=0,1,\ldots, 2^n    \big\}  $. Then we have  $D=\bigcup_1^{\infty}P_n$.  By (v) of Proposition~\ref{PropSHFExtensionProp}, the random measure $\mathscr{Z}^{\mathsmaller{\vartheta}}_{P_{n-1}}$ is almost surely a projection (marginal) of $\mathscr{Z}^{\mathsmaller{\vartheta}}_{P_{n}}$, in the sense that $\mathscr{Z}^{\mathsmaller{\vartheta}}_{P_{n-1}}=\mathscr{Z}^{\mathsmaller{\vartheta}}_{P_{n}}  \circ \Xi_{n}^{-1} $.
Thus there exists a full measure set  $\Omega'\in \mathcal{F}$ such that $\mathscr{Z}^{\mathsmaller{\vartheta}}_{P_{n-1}}(\omega)=\mathscr{Z}^{\mathsmaller{\vartheta}}_{P_{n}}(\omega)\circ \Xi_{n}^{-1} $ holds for all $n\in \mathbb{N}$ and $\omega\in \Omega'$.  For any $\omega\in \Omega'$  the family of $\sigma$-finite measures $\big\{ \mathscr{Z}^{\mathsmaller{\vartheta}}_{P_{n}}(\omega)  \big\}_{n=1}^{\infty} $ is  consistent, and so by  Kolmogorov's extension theorem there exists a  measure 
$\widehat{\mathscr{Z}}^{\mathsmaller{\vartheta}}(\omega)$ on $\widehat{\Upsilon}_{[0,1]}$ such that
$\mathscr{Z}^{\mathsmaller{\vartheta}}_{P_{n}}(\omega)=\widehat{\mathscr{Z}}^{\mathsmaller{\vartheta}}(\omega)\circ \Pi_{P_n}^{-1}$ holds for each $n\in \mathbb{N}$.  In the case $\omega\in \Omega\backslash \Omega'$, we can simply set $\widehat{\mathscr{Z}}^{\mathsmaller{\vartheta}}(\omega)$ equal to the null measure on $\widehat{\Upsilon}_{[0,1]}$. \vspace{.2cm}

Let $\digamma:\boldsymbol{\Upsilon}_{[0,1]}\rightarrow \boldsymbol{\widehat{\Upsilon}}_{[0,1]} $ denote the map assigning $q\in \boldsymbol{\Upsilon}_{[0,1]}$ its restriction, $q|_{D} $, to $D$.  Then $\digamma$ is injective and maps onto the set $\boldsymbol{\Upsilon'}$  of uniformly continuous functions in $\boldsymbol{\widehat{\Upsilon}}_{[0,1]}$ since  each uniformly continuous function in $\boldsymbol{\widehat{\Upsilon}}_{[0,1]}$ extends uniquely to a continuous function on $[0,1]$. Let
$\Theta_{\digamma}:\mathcal{B}\big(\boldsymbol{\Upsilon}_{[0,1]}\big)\rightarrow \mathcal{B}\big(\boldsymbol{\widehat{\Upsilon}}_{[0,1]}\big) $ be the Borel set map induced by $\digamma$:
 \[
 \Theta_{\digamma}(E)=\{\digamma(p):p\in E     \}\,,\hspace{1cm}E\in \mathcal{B}\big(\boldsymbol{\Upsilon}_{[0,1]}\big)\,.
 \]
   For $\omega\in \Omega'$ we define $ \mathscr{Z}^{\mathsmaller{\vartheta}}(\omega):=\widehat{\mathscr{Z}}^{\mathsmaller{\vartheta}}(\omega)\circ \Theta_{\digamma}$. In the analysis below, we will show that  $\widehat{\mathscr{Z}}^{\mathsmaller{\vartheta}}(\omega)$ takes full measure on $\boldsymbol{\Upsilon'}$ for almost every $\omega\in \Omega'$.\vspace{.2cm}

  For $N\in \mathbb{N}$ define $B_N\subset \R^{2  }$ by  $B_N=\{x: |x|\leq N   \} $ and $\widehat{B}_N\subset \boldsymbol{\widehat{\Upsilon}}_{[0,1]}$ by $\widehat{B}_N=\widehat{\Pi}_0^{-1}[B_N] $, where $\widehat{\Pi}_t: \boldsymbol{\widehat{\Upsilon}}_{[0,1]}\rightarrow \R^2$ denotes the evaluation map at $t\in D$.  Put $\widehat{\mathscr{Z}}^{\mathsmaller{\vartheta},\mathsmaller{N}}:=1_{\overline{B}_N}\widehat{\mathscr{Z}}^{\mathsmaller{\vartheta}} $.  Realizations of $\widehat{\mathscr{Z}}^{\mathsmaller{\vartheta},\mathsmaller{N}}$ are almost surely finite since
\[
\mathbb{E}\Big[ \, \widehat{\mathscr{Z}}^{\mathsmaller{\vartheta},\mathsmaller{N}}\big( \boldsymbol{\widehat{\Upsilon}}_{[0,1]}\big)\,\Big]\,=\,\mathbb{E}\Big[ \, \boldsymbol{\mathscr{Z}}^{\mathsmaller{\vartheta}}_{0,1}\big(B_N\times \R^2\big)\,\Big]\,=\,\int_{B_N\times \R^2}\,g_1(x-y) \,dy\,dx\,= \, \frac{\pi}{2} N^2\,.  
\]
For any $0\leq \mathbf{s}<\mathbf{t}\leq 1$ and $m\in\mathbb{N}$, we have that
\begin{align*}
\mathbb{E}\bigg[ \, \int_{ \boldsymbol{\widehat{\Upsilon}}_{[0,1]}}\,\big|p(\mathbf{t})-p(\mathbf{s})\big|^{2m}   \,  \widehat{\mathscr{Z}}^{\mathsmaller{\vartheta},N}(dp)\,\bigg]\,=\,&\, \mathbb{E}\bigg[\,  \int_{B_N\times \R^2\times \R^2 }\,|y-z|^{2m}   \,  \boldsymbol{\mathscr{Z}}^{\mathsmaller{\vartheta}}_{ \{0,\mathbf{s},\mathbf{t}\} }(dx,dy,dz)\,\bigg]\\ \,=\,&\,   \int_{B_N\times \R^2\times \R^2 }\,|y-z|^{2m}   \, g_{\mathbf{s}}(x-y)\,g_{\mathbf{t}-\mathbf{s}}(y-z)\,dx\,dy\,dz  \\ \,=\,&\, C_{N,m}\, (\mathbf{t}-\mathbf{s})^m \,,
\end{align*}
where $C_{N,m}:= \pi N^2\frac{ (2m)!  }{ 2^m m! } $.   Chebyshev's inequality yields that 
\[
\mathbb{P}\bigg[ \, \int_{ \boldsymbol{\widehat{\Upsilon}}_{[0,1]} }\,\big|p(\mathbf{t})-p(\mathbf{s})\big|^{2m}   \,  \widehat{\mathscr{Z}}^{\mathsmaller{\vartheta},N}(dp)\,>\,(\mathbf{t}- \mathbf{s})^{m-2} \,\bigg] \,\leq \,  C_{N,m} \,(\mathbf{t}- \mathbf{s})^2 \, .
\]
Hence, the random variable
\[
Y^{N,m}_n\,:=\,\max_{ 1\leq k\leq 2^n}\, \int_{\boldsymbol{\widehat{\Upsilon}}_{[0,1]} }\,\bigg|p\Big(\frac{k}{2^n}\Big)-p\Big(\frac{k-1}{2^n}\Big)\bigg|^{2m}   \,  \widehat{\mathscr{Z}}^{\mathsmaller{\vartheta},N}(dp)\,
\]
satisfies
\[
\mathbb{P}\Big[\, Y^{N,m}_n\,>\, 2^{n(2-m)} \,\Big] \,\leq  \,\frac{ C_{N,m}}{ 2^{n} } \, .
\]
Since $\sum_1^{\infty}\frac{ 1 }{ 2^{n} } $ is summable, 
the first Borel-Cantelli lemma implies that with probability one  $Y^{N,m}_n \leq  2^{n(2-m)}$ holds for all but finitely $n\in \mathbb{N}$.  Thus there exists  an $\mathbb{N}$-valued random variable $\mathbf{n}$ and  a full measure subset $\Omega''_N$  of $ \Omega'$ such that $Y^{N,m}_n(\omega) \leq  2^{n(2-m)}$ holds for all  $n\geq \mathbf{n}(\omega)$, $\omega\in \Omega''_N$.  We put $\Omega''=\bigcap_{1}^{\infty} \Omega''_N$. \vspace{.2cm}

Now we assume that $m\geq 5$.
Applying Chebyshev's inequality to the measure $\widehat{\mathscr{Z}}^{\mathsmaller{\vartheta},N}(\omega)$ yields
\begin{align*}
\max_{1\leq k\leq 2^n }\,\widehat{\mathscr{Z}}^{\mathsmaller{\vartheta},N}\Bigg(\omega;\,\bigg\{\,p\in \boldsymbol{\widehat{\Upsilon}}_{[0,1]} \,: \bigg|p\Big(\frac{k}{2^n}\Big)-p\Big(\frac{k-1}{2^n}\Big)\bigg|\,>\,2^{-\frac{n}{2m}}\,\bigg\}\Bigg) \,\leq \,2^{n} \, Y^{N,m}_n(\omega)   \, ,
\end{align*}
and thus 
\begin{align*}
\,\widehat{\mathscr{Z}}^{\mathsmaller{\vartheta},N}\Bigg(\omega;\,\bigg\{\,p\in \boldsymbol{\widehat{\Upsilon}}_{[0,1]} \,:\,\max_{1\leq k\leq 2^n } \bigg|p\Big(\frac{k}{2^n}\Big)-p\Big(\frac{k-1}{2^n}\Big)\bigg|\,>\,2^{-\frac{n}{2m}}\,\bigg\}\Bigg) \,\leq \,2^{2n} \, Y^{N,m}_n(\omega)
   \, .
\end{align*}
The series $\sum_{n=1}^{\infty} 2^{2n} Y^{N,m}_n(\omega)$ is summable for every $\omega\in \Omega''$ since $ Y^{N,m}_n(\omega)\leq 2^{n(2- m)}$ holds for every $n\geq \mathbf{n}(\omega)$,  Hence, by the first Borel-Cantelli lemma, $\big\{p\in  \boldsymbol{\widehat{\Upsilon}}_{[0,1]}: \mathbf{N}(p)<\infty\big\}  $ is a full measure set for 
$\widehat{\mathscr{Z}}^{\mathsmaller{\vartheta},N}\big(\omega\big)$, where
\[
\mathbf{N}(p)\,=\,  \sup \bigg\{\, n\in \mathbb{N}\,:\, \max_{1\leq k\leq 2^n } \bigg|p\Big(\frac{k}{2^n}\Big)-p\Big(\frac{k-1}{2^n}\Big)\bigg|\,>\,2^{-\frac{n}{2m}} \,\bigg\}\,.
\] 
The set  $\boldsymbol{\Upsilon'}$  is contained in $\big\{p\in  \boldsymbol{\widehat{\Upsilon}}_{[0,1]}: \mathbf{N}(p)<\infty\big\}  $; see, for example, the standard argument in the proof of Kolmogorov's criterion in \cite[pp.\ 54-55]{Karatzas}.  Hence  $\boldsymbol{\Upsilon'}$ is a  $ \widehat{\mathscr{Z}}^{\mathsmaller{\vartheta},N}(\omega)$ full measure set for every $\omega\in \Omega''$.  Since $\widehat{\mathscr{Z}}^{\mathsmaller{\vartheta},N}(\omega)\nearrow \widehat{\mathscr{Z}}^{\mathsmaller{\vartheta}}(\omega)$ as $N\rightarrow \infty$, it follows that $\boldsymbol{\Upsilon'}$ is a full measure set for $\widehat{\mathscr{Z}}^{\mathsmaller{\vartheta}}(\omega)$. \vspace{,2cm}

Let $Q=\{t_0,t_1,\ldots, t_{\ell}\}$ be a partition of a subinterval of $ [0,1]$. We wish to verify that $ \mathscr{Z}^{\mathsmaller{\vartheta}}\circ \Pi_{Q}^{-1}=\mathscr{Z}^{\mathsmaller{\vartheta}}_{Q} $ holds almost surely. Suppose that  $Q\subset D$. Then there exists $n\in \mathbb{N}$ such that $Q\subset P_n$ and $j_0,\ldots, j_{\ell}\in \{0,1,\ldots, 2^n\}$ with $t_i=\frac{ j_i }{ 2^n} $.  For $E\in \mathcal{B}\big( (\R^2)^{\ell+1} \big)$ we have
\begin{align*}
 \mathscr{Z}^{\mathsmaller{\vartheta}}\circ \Pi_{Q}^{-1} (E)&\,\,=\,\,\mathscr{Z}^{\mathsmaller{\vartheta}}\bigg(\bigg\{\,p \in \boldsymbol{\Upsilon}_{[0,1]}: \big(p(t_0),p(t_1),\ldots, p(t_{\ell})\big)\in E    \,\bigg\}   \bigg)\\
&\,\stackrel{\text{a.s.} 
 }{=}\,\widehat{\mathscr{Z}}^{\mathsmaller{\vartheta}}\bigg(\bigg\{\,p \in \boldsymbol{\widehat{\Upsilon}}_{[0,1]}: \big(p(t_0),p(t_1),\ldots, p(t_{\ell})\big)\in E \,   \bigg\}   \bigg)\\
&\,\,=\,\,\widehat{\mathscr{Z}}^{\mathsmaller{\vartheta}}\circ \Pi_{P_n}^{-1} \bigg(\bigg\{\,(x_0,\ldots,x_{2^n}  )\in (\R^2)^{2^n+1} : \big(x_{j_0},x_{j_1},\ldots, x_{j_{\ell}}\big)\in E \,   \bigg\}   \bigg)
\\
&\,\stackrel{\text{a.s.} 
 }{=}\,\mathscr{Z}^{\mathsmaller{\vartheta}}_{P_n} \bigg(\bigg\{\,(x_0,\ldots,x_{2^n}  )\in (\R^2)^{2^n+1} : \big(x_{j_0},x_{j_1},\ldots, x_{j_{\ell}}\big)\in E   \, \bigg\}   \bigg)
\\
&\,\,=\,\mathscr{Z}^{\mathsmaller{\vartheta}}_{P_n}\circ \Pi_{P_n,Q }^{-1}(E)
\\
&\,\stackrel{\text{a.s.} 
 }{=}\,\mathscr{Z}^{\mathsmaller{\vartheta}}_{Q} ( E)\,.
\end{align*}
The second equality above holds since $\mathscr{Z}^{\mathsmaller{\vartheta}}=\widehat{\mathscr{Z}}^{\mathsmaller{\vartheta}}\circ \Theta_{\digamma}$  almost surely, and $\widehat{\mathscr{Z}}^{\mathsmaller{\vartheta}}$ is almost surely supported on the range of $\digamma$.  The fourth and sixth equalities follow from  our construction of $\widehat{\mathscr{Z}}^{\mathsmaller{\vartheta}}$ and (v) of Proposition~\ref{PropSHFExtensionProp}, respectively.  Let $\mathcal{C}$ be some countable subcollection of $ \mathcal{B}\big( (\R^2)^{\ell+1} \big) $ that uniquely determines  measures on $(\R^2)^{\ell+1}$, such as  the collection of $2(\ell+1)$ fold products of $h$-intervals $(\alpha,\beta]$ with rational endpoints. For almost every $\omega\in \Omega$ we have $\mathscr{Z}^{\mathsmaller{\vartheta}}(\omega)\circ \Pi_{Q}^{-1}(E) = \mathscr{Z}^{\mathsmaller{\vartheta}}_{Q}(\omega;E) $ for all $E\in \mathcal{C}$.  Thus $\mathscr{Z}^{\mathsmaller{\vartheta}}\circ \Pi_{Q}^{-1}=\mathscr{Z}^{\mathsmaller{\vartheta}}_{Q}$ holds almost surely.  \vspace{.2cm}

If $Q$ is not a subset of $D$, then we can pick a sequence of partitions $\{Q^{(m)}\} _1^{\infty}$ such that  $Q^{(m)}=\big\{t_0^{(m)},t_1^{(m)},\ldots, t_{\ell}^{(m)}\big\}\subset D$ and $t_j^{(m)}\rightarrow t_j$. By the above,   $ \mathscr{Z}^{\mathsmaller{\vartheta}}\circ \Pi_{Q^{(m)}}^{-1}=\mathscr{Z}^{\mathsmaller{\vartheta}}_{Q^{(m)}} $ holds  almost surely for every $m$. Furthermore, there is vague convergence in $L^2$ $\mathscr{Z}^{\mathsmaller{\vartheta}}_{Q_{m}}\rightarrow \mathscr{Z}^{\mathsmaller{\vartheta}}_{Q} $ by (vi) of Proposition~\ref{PropSHFExtensionProp}.  The convergence $\mathscr{Z}^{\mathsmaller{\vartheta}}\circ \Pi_{Q^{(m)}}^{-1}\rightarrow \mathscr{Z}^{\mathsmaller{\vartheta}}\circ \Pi_{Q}^{-1} $ holds vaguely pointwise, and thus in probability, since  any $\varphi\in C_c\big( (\R^2)^{\ell+1} \big)$ and $\omega\in \Omega$
\begin{align*}
\mathscr{Z}^{\mathsmaller{\vartheta}}(\omega)\circ \Pi_{Q}^{-1}(\varphi)\,=\,\mathscr{Z}^{\mathsmaller{\vartheta}}\big(\omega; \varphi\circ \Pi_{Q}  \big)\,=\,\lim_{m}\mathscr{Z}^{\mathsmaller{\vartheta}}\big(\omega; \varphi\circ \Pi_{Q^{(m)}}  \big)\,=\,\lim_{m}\mathscr{Z}^{\mathsmaller{\vartheta}}(\omega)\circ \Pi_{Q^{(m)}}^{-1}(\varphi)\,,
\end{align*}
in which the second equality holds by the dominated convergence theorem. Therefore we can conclude that $  \mathscr{Z}^{\mathsmaller{\vartheta}}\circ \Pi_{Q}^{-1} = \mathscr{Z}^{\mathsmaller{\vartheta}}_{Q}  $.
\end{proof}

\subsection{Proofs of Propositions~\ref{PropPathMeasProp} \&~\ref{PropCondExpLast}}\label{SubsectPathMeasProp}

In the proof below, we will use that two $\sigma$-finite measures $\mu$, $\lambda$ on $\boldsymbol{\Upsilon}_{[s,t]}$ are equal provided that their pushforwards $\mu\circ \Pi_P^{-1}$ and  $\lambda\circ \Pi_P^{-1}$ by the evaluation map $\Pi_P$ are equal   for every partition $P\in \Lambda_{[s,t]}$ in a sequence of increasingly refined partitions  $P_1, P_2,\,\ldots $  whose mesh vanishes as $n\rightarrow \infty$.

\begin{proof}[Proof of Proposition~\ref{PropPathMeasProp}] 
(i) For any partition $P=\{t_0,\ldots, t_m\}$ of $[s,t]$, the relation $ \mathscr{Z}_{[s,t]}^{\mathsmaller{\vartheta}}\circ \Pi_{P}^{-1}  =  \mathscr{Z}^{\mathsmaller{\vartheta}}_{P} $ implies that 
\begin{align*}
\mathbb{E}\big[ \mathscr{Z}_{[s,t]}^{\mathsmaller{\vartheta}} \big]\,\circ \, \Pi_{P}^{-1} \,=\,\mathbb{E}\big[ \mathscr{Z}_{[s,t]}^{\mathsmaller{\vartheta}}\,\circ \,\Pi_{P}^{-1} \big]\,=\,\mathbb{E}\big[ \mathscr{Z}_{P}^{\mathsmaller{\vartheta}} \big]\,=\,\mathbf{U}_{P}\,,
\end{align*}
where the last equality holds by (i) of Proposition~\ref{PropSHFExtensionProp}.  Since $\mathbf{U}_{P}=\mathbf{U}_{[s,t]}\circ \Pi_{P}^{-1} $, the above implies that the measures $\mathbb{E}\big[ \mathscr{Z}_{[s,t]}^{\mathsmaller{\vartheta}} \big]$ and $\mathbf{U}_{[s,t]}$ are equal.
 We can apply similar reasoning to deduce that the second moment of  $\mathscr{Z}_{[s,t]}^{\mathsmaller{\vartheta}} $ is equal to $\mathbf{Q}^{\mathsmaller{\vartheta}}_{[s,t]} $.\vspace{.2cm}

\noindent (ii) In consequence of (ii) of Proposition~\ref{PropSHFExtensionProp}, it suffices to show that $\sigma\big(\mathscr{Z}_{[s,t]}^{\mathsmaller{\vartheta}} \big)$ is generated by the algebra $\mathcal{A} := \bigcup_{P\in \Lambda_{[s,t]}} \sigma(  \mathscr{Z}_{P}^{\mathsmaller{\vartheta}} )$.    By~\cite[Lemma 4.7]{Kallenberg}  the $\sigma$-algebra $\sigma\big(\mathscr{Z}_{[s,t]}^{\mathsmaller{\vartheta}} \big)$ is generated by $\big\{\mathscr{Z}_{[s,t]}^{\mathsmaller{\vartheta}}(E): E\in \mathcal{I}  \big\} $ when $\mathcal{I}\subset \mathcal{B}(\boldsymbol{\Upsilon}_{[s,t]} )$ is a dissecting semi-ring comprised of bounded subsets of $\boldsymbol{\Upsilon}_{[s,t]}$ generating
$\mathcal{B}(\boldsymbol{\Upsilon}_{[s,t]} )$.  We will construct a dissecting semi-ring $\mathcal{I}$ such that   $\mathscr{Z}_{[s,t]}^{\mathsmaller{\vartheta}}(E) $ is  $\mathcal{A}$-measurable for each $E\in \mathcal{I}$.\vspace{.2cm}

For $n\in \mathbb{N}$ define the  partition $P_n=\big\{t_0^n,\ldots,t_{2^n}^n  \big\}$ of the interval $[s,t]$ by putting $t_{j}^n=s+ \frac{j}{2^n} (t-s)$. For $\boldsymbol{\ell}=(\boldsymbol{\ell}_0,\boldsymbol{\ell}_0';\ldots ; \boldsymbol{\ell}_{2^n},\boldsymbol{\ell}_{2^n}' )\in (\Z\times \Z)^{2^n+1} $, define $C_{\boldsymbol{\ell}}^n \subset (\R^2)^{2^n+1}$ as the product set  
\[
C_{\boldsymbol{\ell}}^n \,:=\,\prod_{j=0}^{2^n}\,\bigg(\frac{\boldsymbol{\ell}_j-1}{2^n} , \frac{\boldsymbol{\ell}_j}{2^n}  \bigg]\times \bigg(\frac{\boldsymbol{\ell}_j'-1}{2^n} , \frac{\boldsymbol{\ell}_j'}{2^n}  \bigg]\,.
\]
Then the  subcollection of $\mathcal{B}(\boldsymbol{\Upsilon}_{[s,t]} )$ given by
\[
\mathcal{I}\,:=\,\bigcup_{n=1}^{\infty} \Big\{\,   \Pi_{P_n}^{-1}\big[C_{\boldsymbol{\ell}}^n\big]\, :\,  \boldsymbol{\ell}\in (\Z\times \Z)^{2^n+1}  \,  \Big\}  
\]
is a dissection semi-ring generating $\mathcal{B}(\boldsymbol{\Upsilon}_{[s,t]} )$ comprised of bounded subsets of $\boldsymbol{\Upsilon}_{[s,t]}$  and satisfying $\sigma(\mathcal{I})=\mathcal{B}(\boldsymbol{\Upsilon}_{[s,t]} )$.  Finally,  $\big\{\mathscr{Z}_{[s,t]}^{\mathsmaller{\vartheta}}(E): E\in \mathcal{I} \big\}=\big\{  \mathscr{Z}_{P_n}^{\mathsmaller{\vartheta}}(C_{\boldsymbol{\ell}}^n):  n\in \mathbb{N},\, \boldsymbol{\ell}\in (\Z\times \Z)^{2^n+1}  \big\} $ is a subset of $\mathcal{A}$.\vspace{.2cm}

\noindent (iii) Let the collection $\mathcal{I}\subset \mathcal{B}(\boldsymbol{\Upsilon}_{[s,t]} )$ be defined above. It suffices to show that $\mathscr{Z}_{[u+s,u+t]}^{\mathsmaller{\vartheta}}\,\circ \,\mathbf{s}_u^{-1} (E)$ and $\mathscr{Z}_{[s,t]}^{\mathsmaller{\vartheta}}(E)$ are equal in distribution for any $E\in \mathcal{I}$.  Given $E\in \mathcal{I}$ there exists $n\in \mathbb{N}$ and  $  \boldsymbol{\ell}\in (\Z\times \Z)^{2^n+1}$. such that 
 $ E=\Pi_{P_n}^{-1}\big[C_{\boldsymbol{\ell}}^n\big] $, and we have
\[
\mathscr{Z}_{[u+s,u+t]}^{\mathsmaller{\vartheta}}\,\circ \,\mathbf{s}_u^{-1} (E)\,=\,\mathscr{Z}_{[u+s,u+t]}^{\mathsmaller{\vartheta}}\circ\Pi_{u+P_n}^{-1}\big[C_{\boldsymbol{\ell}}^n\big]\,=\,\mathscr{Z}_{u+P_n}^{\mathsmaller{\vartheta}}(C_{\boldsymbol{\ell}}^n)\,\stackrel{\textup{d} }{=}\,\mathscr{Z}_{P_n}^{\mathsmaller{\vartheta}}(C_{\boldsymbol{\ell}}^n)\,=\,\mathscr{Z}_{[s,t]}^{\mathsmaller{\vartheta}}(E)\,,
\]
where the second and fourth equalities apply the defining relation~(\ref{ZProject}), and the third equality uses  (iii) of  Proposition~\ref{PropSHFExtensionProp}.\vspace{.2cm}

We can prove (iv) using a similar argument as for (iii).  
\end{proof}

\begin{proof}[Proof of Proposition~\ref{PropCondExpLast}]  
Let $P=\{t_0,\ldots, t_n\}$ be a partition of $[r,u]$ with $s,t\in P$. Since the equality $\mathscr{Z}^{\mathsmaller{\vartheta}}_{[r,u]}\circ \Pi_P^{-1}=\mathscr{Z}^{\mathsmaller{\vartheta}}_{P}$ holds almost surely, we have that
  \begin{align*}
\mathbb{E}\Big[\,\mathscr{Z}^{\mathsmaller{\vartheta}}_{[r,u]}\,\Big|\,\mathscr{F}_{[0,s]\cup [t,\infty) }  \,\Big]\circ \Pi_P^{-1}\,=\,&\,\mathbb{E}\Big[\,\mathscr{Z}^{\mathsmaller{\vartheta}}_{[r,u]}\circ \Pi_P^{-1}\,\Big|\,\mathscr{F}_{[0,s]\cup [t,\infty) } \, \Big]\\ \,=\,&\, \mathbb{E}\Big[\,\mathscr{Z}^{\mathsmaller{\vartheta}}_{P}\,\Big|\,\mathscr{F}_{[0,s]\cup [t,\infty) } \, \Big]\,=\,  \mathscr{Z}^{\mathsmaller{\vartheta}}_{P_{\leq s}}\,\mathlarger{\mathlarger{\circ}}\,\mathbf{U}_{P_{[s,t]}}\,\mathlarger{\mathlarger{\circ}}\,\mathscr{Z}^{\mathsmaller{\vartheta}}_{P_{\geq t} }\,,
  \end{align*}
where the second equality holds by Proposition~\ref{PropCondGen}.  However, since $\mathscr{Z}^{\mathsmaller{\vartheta}}_{P_{\leq s}}\,\mathlarger{\mathlarger{\circ}}\,\mathbf{U}_{P_{[s,t]}}\mathlarger{\mathlarger{\circ}}\mathscr{Z}^{\mathsmaller{\vartheta}}_{P_{\geq t} }$ is the pushforward of
$\mathscr{Z}^{\mathsmaller{\vartheta}}_{[r,s]}\odot   \mathscr{Z}^{\mathsmaller{\vartheta}}_{[t,u]}$ by $\Pi_P$, it follows that $\mathbb{E}\big[\mathscr{Z}^{\mathsmaller{\vartheta}}_{[r,u]}\,\big|\,\mathscr{F}_{[0,s]\cup [t,\infty) }  \big]$ is equal to $\mathscr{Z}^{\mathsmaller{\vartheta}}_{[r,s]}\odot   \mathscr{Z}^{\mathsmaller{\vartheta}}_{[t,u]}$. Likewise, the formula for the pushforward of the second moment of the difference between $\mathscr{Z}^{\mathsmaller{\vartheta}}_{[r,u]}$ and $\mathscr{Z}^{\mathsmaller{\vartheta}}_{[r,s]}\odot   \mathscr{Z}^{\mathsmaller{\vartheta}}_{[t,u]}$ follows easily from~(\ref{QKQ2}). 
\end{proof}

\begin{proof}[Proof of Proposition~\ref{PropConn}]  Let $P=\{t_0,\ldots, t_m\}$ be a partition of $[r,t]$ with $s\in P$.  Then $t_{\ell}=s$ for some $1\leq \ell <m $.  Define $\widetilde{\Pi}_{P}: \boldsymbol{\Upsilon}_{[r,s]}\times \boldsymbol{\Upsilon}_{[s,t]}\rightarrow (\R^2)^{m+1}$  by
\[
\widetilde{\Pi}_{P}(p,q)\,=\,\big(p(t_0),\ldots, p(t_\ell), q(t_{\ell+1}),\ldots, q(t_m)   \big)\,, \hspace{1cm}p\in \boldsymbol{\Upsilon}_{[r,s]}\,, \,\,q\in \boldsymbol{\Upsilon}_{[s,t]}\,.
\]
The pushforwards of  $\mathscr{Z}^{\mathsmaller{\vartheta}}_{[r,t]}\circ\mathlarger{\iota}_{r,s,t}^{-1} $ and $\big(\mathscr{Z}^{\mathsmaller{\vartheta}}_{[r,s]}\circ \Xi_{r,s,+}^{-1} \big)\,\mathlarger{\mathlarger{\bullet}}_{\varsigma}\,\big(\mathscr{Z}^{\mathsmaller{\vartheta}}_{[s,t]}\circ \Xi_{s,t,-}^{-1} \big)$  by $\widetilde{\Pi}_{P}$ are 
\begin{align*}
\big(\mathscr{Z}^{\mathsmaller{\vartheta}}_{[r,t]}\,\circ\,\mathlarger{\iota}_{r,s,t}^{-1}\big) \circ \widetilde{\Pi}_{P}^{-1}\,=\, &\,\mathscr{Z}^{\mathsmaller{\vartheta}}_{[r,t]}\circ \Pi_{P}^{-1}\,=\, \mathscr{Z}^{\mathsmaller{\vartheta}}_{P}    \,,    \\
\Big(\big(\mathscr{Z}^{\mathsmaller{\vartheta}}_{[r,s]}\circ \Xi_{r,s,+}^{-1} \big)\,\mathlarger{\mathlarger{\bullet}}_{\varsigma}\,\big(\mathscr{Z}^{\mathsmaller{\vartheta}}_{[s,t]}\circ \Xi_{s,t,-}^{-1} \big) \Big) \, \circ \,\widetilde{\Pi}_{P}^{-1}\,=\, &\,\big(\mathscr{Z}^{\mathsmaller{\vartheta}}_{[r,s]}\circ \Pi_{P_{\leq s}}^{-1}\big)  \,\mathlarger{\mathlarger{\circ}}_{\varsigma}\, \big(\mathscr{Z}^{\mathsmaller{\vartheta}}_{[s,t]}\circ \Pi_{P_{\geq s}}^{-1}\big)     \,=\,  \mathscr{Z}^{\mathsmaller{\vartheta}}_{P_{\leq s}} \,\mathlarger{\mathlarger{\circ}}_{\varsigma}\,  \mathscr{Z}^{\mathsmaller{\vartheta}}_{P_{\geq s}}    \,.
\end{align*}
However $\mathscr{Z}^{\mathsmaller{\vartheta}}_{P_{\leq s}} \,\mathlarger{\mathlarger{\circ}}_{\varsigma}\,  \mathscr{Z}^{\mathsmaller{\vartheta}}_{P_{\geq s}}$ converges vaguely in $L^2$ to $\mathscr{Z}^{\mathsmaller{\vartheta}}_{P}$ as $\varsigma\rightarrow 0 $ by (vii) of Proposition~\ref{PropSHFExtensionProp}.  This implies that $\big(\mathscr{Z}^{\mathsmaller{\vartheta}}_{[r,s]}\circ \Xi_{r,s,+}^{-1} \big)\,\mathlarger{\mathlarger{\bullet}}_{\varsigma}\,\big(\mathscr{Z}^{\mathsmaller{\vartheta}}_{[s,t]}\circ \Xi_{s,t,-}^{-1} \big) $ converges vaguely in probability to $\mathscr{Z}^{\mathsmaller{\vartheta}}_{[r,t]}\,\circ\,\mathlarger{\iota}_{r,s,t}^{-1}$.
\end{proof}

\subsection{Relating  Theorems~\ref{ThmRNDer} \&~\ref{ThmIntLogHaus} to~\cite{CM}} \label{SubsectionSM}

Recall from~(\ref{QUV}) that  $\mathbf{Q}^{\mathsmaller{\vartheta}}_{[s,t]}$  decomposes   in terms of the measure $\mathbf{V}^{\mathsmaller{\vartheta}}_{[s,t]}$  on $\boldsymbol{\Upsilon}_{[s,t]}$ as
\[
\mathbf{Q}^{\mathsmaller{\vartheta}}_{[s,t]}(dp,dq)\,=\,\mathbf{U}_{[s,t]}\Big(\, d\Big(\frac{p+q }{\sqrt{2}}\Big) \, \Big) \,\mathbf{V}^{\mathsmaller{\vartheta}}_{[s,t]}\Big(\, d\Big(\frac{p-q }{\sqrt{2}}\Big) \, \Big)\,, \hspace{1cm}p,q\in \boldsymbol{\Upsilon}_{[s,t]}\,.
\]
 The observations regarding $\mathbf{Q}^{\mathsmaller{\vartheta}}_{[s,t]}$ collected in Theorems~\ref{ThmRNDer} \& \ref{ThmIntLogHaus} follow directly from those for  $\mathbf{V}^{\mathsmaller{\vartheta}}_{[s,t]}$ in Proposition~\ref{PropLocTime} below, which are consequences of results in~\cite{CM}, as discussed in  this subsection.\vspace{.2cm}

Let $\mathcal{O}$ denote the set of $p\in \boldsymbol{\Upsilon}_{[s,t]}$ with $ 0\in \textup{range}(p)$, i.e., paths that visit the origin. Define $O_{p}:=\big\{a\in [s,t]: p(a)=0 \big\}$. Given $\varepsilon\in (0,1)$  define the map $\mathbf{L}_{[s,t]}^{\varepsilon}:\boldsymbol{\Upsilon}_{[s,t]} \rightarrow [0,\infty)$ by
$$
\mathbf{L}_{[s,t]}^{\varepsilon}(p) \,:=\,\frac{1}{2\varepsilon^2\log^2\frac{1}{\varepsilon}   } \,\textup{meas}\big(  \,\big\{\, a\in [s,t] \,:\,  |p(a)|\leq \varepsilon   \, \big\}\,\big)\,.$$
 In relation to the notations in Section~\ref{SubsectCPMSM} we have 
 $\mathbf{I}_{[s,t]}^{\varepsilon}(p,q)= \mathbf{L}_{[s,t]}^{\varepsilon}\big(  \frac{p-q}{\sqrt{2}} \big)$,  $ I_{p,q}=O_{\frac{p-q}{\sqrt{2}} }$, and a pair $(p,q)\in \boldsymbol{\Upsilon}_{[s,t]}^2$ is in $ \mathcal{I}$ if and only if $\frac{p-q}{\sqrt{2}}\in \mathcal{O}$.
\begin{proposition}\label{PropLocTime} Fix $\vartheta\in \R$ and $0\leq s<t$. 
\begin{enumerate}[(i)]
\item  The  Lebesgue decomposition of  $\mathbf{V}^{\mathsmaller{\vartheta}}_{[s,t]}$ with respect to   $\mathbf{U}_{[s,t]}$ has components $\mathbf{U}_{[s,t]}$ and  $\mathbf{V}^{\mathsmaller{\vartheta}}_{[s,t]}-\mathbf{U}_{[s,t]}$, 
which are  supported on $\mathcal{O}^c$ and $\mathcal{O}$, respectively.

\item There exists a Borel measurable function $\mathbf{L}_{[s,t]}$ on $\boldsymbol{\Upsilon}_{[s,t]}$ such that $\mathbf{L}_{[s,t]}^{\varepsilon} \rightarrow \mathbf{L}_{[s,t]}$ in  $L^1_{\textup{loc} }\big(\mathbf{V}^{\mathsmaller{\vartheta}}_{[s,t]} \big)$ as $\varepsilon\rightarrow 0$. 
\item For any $\vartheta'\in \R$ the path measure $\mathbf{V}^{\mathsmaller{\vartheta}'}_{[s,t]}$ is absolutely continuous with respect to $\mathbf{V}^{\mathsmaller{\vartheta}}_{[s,t]}$ with Radon-Nikodym derivative 
\[ \frac{d\mathbf{V}^{\mathsmaller{\vartheta}'}_{[s,t]}}{d\mathbf{V}^{\mathsmaller{\vartheta}}_{[s,t]}}\,=\,\textup{exp}\big\{\,(\mathsmaller{\vartheta}'-\mathsmaller{\vartheta})\, \mathbf{L}_{[s,t]}  \, \big\} \,.
\]

\item  $\mathbf{L}_{[s,t]}$ is $\mathbf{V}^{\mathsmaller{\vartheta}}_{[s,t]}$-almost surely positive on $\mathcal{O}$.

\item The set $O_{p}$ has log-Hausdorff exponent one for $\mathbf{V}^{\mathsmaller{\vartheta}}_{[s,t]}$-almost every $p\in \mathcal{O}$. 

\end{enumerate}

\end{proposition}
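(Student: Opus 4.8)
The plan is to obtain Proposition~\ref{PropLocTime} by reduction to our earlier paper~\cite{CM}; the substantive step in the present paper is simply to pin down the correspondence between $\mathbf{V}^{\vartheta}_{[s,t]}$ and the path measure analyzed there. First I would record that $\mathbf{V}^{\vartheta}_{[s,t]}$ is uniquely determined by $\mathbf{V}^{\vartheta}_{[s,t]}\circ\Pi_{P}^{-1}=\mathbf{V}^{\vartheta}_{P}$ for $P\in\Lambda_{[s,t]}$: the densities of the $\mathbf{V}^{\vartheta}_{P}$ are products of the point-interaction heat kernels $P^{\vartheta}_{t_j-t_{j-1}}$ introduced just after~(\ref{DefK}), so the semigroup property of $\{P^{\vartheta}_{a}\}_{a\geq 0}$ makes the family $\{\mathbf{V}^{\vartheta}_{P}\}_{P\in\Lambda_{[s,t]}}$ projectively consistent; Kolmogorov's extension theorem then produces a measure on the product space $(\R^{2})^{D}$ ($D$ the dyadic times of $[s,t]$), and a Kolmogorov continuity criterion --- applied after restricting to $\{\,|p(s)|\leq N\,\}$ to obtain a finite measure, using moment bounds $\int_{\{|p(s)|\leq N\}}|p(b)-p(a)|^{2m}\,\mathbf{V}^{\vartheta}_{[s,t]}(dp)\leq C_{N,m}(b-a)^{m}$ that follow from~(\ref{KBound}) exactly as in the proof of Theorem~\ref{ThmPathMeasConst} and~\cite[pp.\ 54--55]{Karatzas} --- shows this measure is carried by $\boldsymbol{\Upsilon}_{[s,t]}=C([s,t],\R^{2})$. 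This supplies the existence/uniqueness claim deferred from Section~\ref{SubsectCPM} and identifies $\mathbf{V}^{\vartheta}_{[s,t]}$ with the corresponding path measure of~\cite{CM}, after which (i)--(v) are exactly the statements proved there; what is left is a translation of notation, guided by the dictionary in Section~\ref{SubsectCPMSM}.

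For (i)--(iv) I would then spell out the mechanism and defer the computations to~\cite{CM}. Since $P^{\vartheta}_{t}=g_{t}+K^{\vartheta}_{t}$ with $K^{\vartheta}\geq 0$, one has $\mathbf{V}^{\vartheta}_{P}\geq\mathbf{U}_{P}$ at the level of marginals, hence $\mathbf{V}^{\vartheta}_{[s,t]}\geq\mathbf{U}_{[s,t]}$; the representation~(\ref{DefK}) exhibits each $K^{\vartheta}$-factor as inserting a visit of the path to the origin, so a limiting argument along refining partitions shows the excess $\mathbf{V}^{\vartheta}_{[s,t]}-\mathbf{U}_{[s,t]}$ is carried by $\mathcal{O}$; since a single point is polar for planar Brownian motion, $\mathbf{U}_{[s,t]}(\mathcal{O})=0$, and the Lebesgue decomposition (i) follows. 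The normalization $\tfrac{1}{2\varepsilon^{2}\log^{2}(1/\varepsilon)}$ in $\mathbf{L}^{\varepsilon}_{[s,t]}$ is the critical one: tested against the kernel $P^{\vartheta}$ and the Volterra density $\nu'$, one checks in~\cite{CM} that $\int_{A}(\mathbf{L}^{\varepsilon}_{[s,t]})^{2}\,\mathbf{V}^{\vartheta}_{[s,t]}$ and $\int_{A}\mathbf{L}^{\varepsilon}_{[s,t]}\mathbf{L}^{\varepsilon'}_{[s,t]}\,\mathbf{V}^{\vartheta}_{[s,t]}$ converge to a common finite limit for bounded $A$, which gives the $L^{1}_{\mathrm{loc}}(\mathbf{V}^{\vartheta}_{[s,t]})$ convergence in (ii); the $\vartheta$-dependence of $P^{\vartheta}$ unwinds into the exponential tilt $e^{(\vartheta'-\vartheta)\mathbf{L}_{[s,t]}}$ of (iii); and a zero--one/excursion argument yields the $\mathbf{V}^{\vartheta}_{[s,t]}$-a.e.\ positivity of $\mathbf{L}_{[s,t]}$ on $\mathcal{O}$ in (iv).

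Part (v) is the deepest of the five; within the present paper its only difficulty is again the identification in the first paragraph, the fractal estimate itself being the content of~\cite{CM}. There one establishes, for $\mathbf{V}^{\vartheta}_{[s,t]}$-almost every $p\in\mathcal{O}$, the two matching bounds $H_{\mathfrak{h}}(O_{p})<\infty$ for all $\mathfrak{h}>1$ and $H_{\mathfrak{h}}(O_{p})=\infty$ for all $\mathfrak{h}<1$. The upper bound is a first-moment covering argument: dissect $[s,t]$ dyadically, retain only the subintervals on which $p$ approaches the origin at the scale dictated by the normalization, and sum the retained intervals against the gauge $|\log(1/|I_{n}|)|^{-\mathfrak{h}}$, using the $\sim 1/\log$ decay of the hitting probabilities of the point-interaction process. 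The lower bound is a Frostman-type energy estimate carried by the local-time measure $d\mathbf{L}$, which by (ii) and (iv) is a nonzero measure supported on $O_{p}$: one bounds its $\mathfrak{h}$-gauge energy and deduces that coverings of $O_{p}$ cannot be too cheap. If one tried to prove (v) from scratch rather than quoting~\cite{CM}, I expect this energy estimate --- quantifying how much mass $d\mathbf{L}$ can place near a point on the logarithmic scale --- to be the main obstacle; here it suffices to check that the hypotheses of the relevant theorem of~\cite{CM} are met by $\mathbf{V}^{\vartheta}_{[s,t]}$, which is immediate.
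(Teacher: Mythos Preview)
Your overall strategy---reduce everything to~\cite{CM} after identifying $\mathbf{V}^{\vartheta}_{[s,t]}$ with the path measure studied there---matches the paper's, and your handling of the existence/uniqueness of $\mathbf{V}^{\vartheta}_{[s,t]}$ via Kolmogorov extension plus a continuity criterion is in the right spirit. However, the paper's translation layer is sharper than yours in one respect that matters for the citations to go through cleanly. The results in~\cite{CM} are stated not for the $\sigma$-finite measure $\mathbf{V}^{\vartheta}_{[s,t]}$ but for the \emph{probability} measures $\mathbf{P}^{T,\vartheta}_x$ obtained by fixing the initial point $x$ and normalizing; the paper therefore first disintegrates $\mathbf{V}^{T,\vartheta}=\int_{\R^2}\mathbf{V}^{T,\vartheta}_x\,dx$, relates $\mathbf{V}^{T,\vartheta}_x$ to $\mathbf{P}^{T,\vartheta}_x$ via~(\ref{Pdef}), and only then invokes~\cite{CM}. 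This lets part~(i) be read off directly from~\cite[Theorem~2.4]{CM}, which gives both $\mathbf{V}^{T,\vartheta}_x[\mathcal{O}^c]=1$ and that the conditioning of $\mathbf{P}^{T,\vartheta}_x$ to $\mathcal{O}^c$ is Wiener measure, whence $1_{\mathcal{O}^c}\mathbf{V}^{T,\vartheta}_x=\mathbf{P}^T_x$ and integrating over $x$ gives the decomposition. Your alternative route to~(i)---observing $\mathbf{V}^{\vartheta}_P\geq\mathbf{U}_P$ and then arguing by ``a limiting argument along refining partitions'' that the excess concentrates on $\mathcal{O}$---is plausible but not obviously rigorous as stated: you would need to show that a path supporting the excess must actually \emph{hit} the origin rather than merely pass arbitrarily close, and that is exactly what the cited theorem in~\cite{CM} supplies. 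For~(ii) the paper similarly localizes to $\{|p(0)|\leq N\}$ to obtain a finite measure before citing~\cite[Theorem~2.9]{CM}; your $L^2$ sketch is morally the same but again bypasses the normalization/disintegration needed to match the hypotheses in~\cite{CM}. Parts~(iii)--(v) are handled identically in both accounts by direct citation.
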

For the remainder of this subsection, let us set $[s,t]=[0,T]$ for  $T>0$, and put $\mathbf{V}^{T,\mathsmaller{\vartheta}} := \mathbf{V}^{\mathsmaller{\vartheta}}_{[s,t]}$ and $\mathbf{U}^{T} := \mathbf{U}_{[s,t]}$.  We can write
\begin{align}\label{VDis}
\mathbf{V}^{T,\mathsmaller{\vartheta}}\,=\,\int_{\R^2}\,\mathbf{V}^{T,\mathsmaller{\vartheta}}_x\,dx\,,
\end{align}
where $\mathbf{V}^{T,\mathsmaller{\vartheta}}_x$ for nonzero $x\in \R^2$ is the    finite measure on the path space $\boldsymbol{\Upsilon}_{[0,T]}$ corresponding to  initial position $x$ and transition density kernels $\{P^{\mathsmaller{\vartheta}}_t\}_{t\in [0,\infty)} $.  That is, $\mathbf{V}^{T,\mathsmaller{\vartheta}}_x$ is the path measure whose pushforward by $\Pi_{P}$,  for any partition $P=\{t_0,t_1,\ldots , t_m\}$ of $[0,T]$, is   given by
\begin{align*}
\mathbf{V}^{T,\mathsmaller{\vartheta}}_x\circ \Pi_{P}^{-1}\big(dx_0,dx_1,\ldots, dx_m  \big)\,=\,\prod_{j=1}^m P^{\mathsmaller{\vartheta}}_{t_j-t_{j-1}}(x_{j-1},x_j)\,\delta_x(dx_0)\,dx_1 \,\cdots\,dx_m\,, \hspace{1cm} x_j\in \R^2\,.
\end{align*}
The total mass of $\mathbf{V}^{T,\mathsmaller{\vartheta}}_x$ can be expressed as 
\begin{align}
\mathbf{V}^{T,\mathsmaller{\vartheta}}_x\big(\boldsymbol{\Upsilon}_{[0,T]}\big)\,=\,\int_{\R^2}\,P^{\mathsmaller{\vartheta}}_T(x, y)\,dy\,\stackrel{(\ref{DefK}) }{ =}\, 1\,+\,2\,\pi \,e^{\mathsmaller{\vartheta}}\,\int_{0<r<s<t   }\,  g_r(x)\,\nu'\big((s-r)e^{\mathsmaller{\vartheta}}\big)\,ds\,dr\,,
\end{align}
and it  diverges logarithmically as $x\rightarrow 0$.  The normalized path measure
\begin{align}\label{Pdef}
\mathbf{P}^{T,\mathsmaller{\vartheta}}_x\,:=\,\frac{1 }{\mathbf{V}^{T,\mathsmaller{\vartheta}}_x(\boldsymbol{\Upsilon}_{[0,T]} ) 
 }\,\mathbf{V}^{T,\mathsmaller{\vartheta}}_x
 \end{align}
has transition density kernels $\big\{d^{T,\mathsmaller{\vartheta}}_{s,t}\big\}_{0\leq s<t \leq T} $ given by the following  Doob transformation of $\{P^{\mathsmaller{\vartheta}}_t\}_{t\in [0,T]} $:
\[ \text{}\hspace{1cm}
d^{T,\mathsmaller{\vartheta}}_{s,t}(x,y)\,:=\,\frac{ \int_{\R^2} P^{\mathsmaller{\vartheta}}_{T-t}(y,z)\,dz  }{ \int_{\R^2} P^{\mathsmaller{\vartheta}}_{T-s}(x,z)\,dz } \,P^{\mathsmaller{\vartheta}}_{t-s}(x,y) \,, \hspace{1cm} x,y\in \R^2\backslash \{0\}\,.
\]
Naturally, we can define the density $y\mapsto d^{T,\mathsmaller{\vartheta}}_{s,t}(x,y)$ for $x=0$  by taking a limit of the above as $x\rightarrow 0$. 
The study~\cite{CM} focuses on the probability measures $\mathbf{P}^{T,\mathsmaller{\vartheta}}_x$, denoted therein by $\mathbf{P}^{T,\lambda}_x$ for a parameter $\lambda\in (0,\infty)$   related to $\vartheta\in \R$ through $\lambda=e^{\vartheta}$. Thus results in~\cite{CM} have direct consequences for  $\mathbf{V}^{T,\mathsmaller{\vartheta}}_x$ and hence  also for $\mathbf{V}^{T,\mathsmaller{\vartheta}}$, and in particular their existence  follows from~\cite[Proposition 2.2]{CM}.\vspace{.2cm}

The map   $ (t,y) \mapsto \mathlarger{d}_{s,t}^{T,\mathsmaller{\vartheta} }(x,y) $ for $t\in (s,T)$, $y\in \R^2\backslash \{0\}$  satisfies the forward Kolmogorov equation
\begin{align}\label{KolmogorovForJ}
\frac{\partial}{\partial t} \,\mathlarger{d}_{s,t}^{T,\mathsmaller{\vartheta}}(x,y)\,=\,\frac{1}{2}\,\Delta_y  \,\,\mathlarger{d}_{s,t}^{T,\mathsmaller{\vartheta}}(x,y)\,-\,\nabla_y \,\cdot\,\Big[\, b_{T-t}^{\mathsmaller{\vartheta}}(y) \, \mathlarger{d}_{s,t}^{T,\mathsmaller{\vartheta}}(x,y) \,\Big] \,,
\end{align}
where  $\nabla_y=(\partial_{y_1},\partial_{y_2})$ is the gradient operator and 
$b_{t}^{\mathsmaller{\vartheta}}:\R^2\rightarrow \R^2$ is defined by
\begin{align*}
b_{t}^{\mathsmaller{\vartheta}}(y)\,:=\, \nabla_y  \,\log \mathbf{V}^{t,\mathsmaller{\vartheta}}_y\big(\boldsymbol{\Upsilon}_{[0,t]} \big)   \,=\,\frac{ \nabla_y\mathbf{V}^{t,\mathsmaller{\vartheta}}_y(\boldsymbol{\Upsilon}_{[0,t]} ) }{ \mathbf{V}^{t,\mathsmaller{\vartheta}}_y(\boldsymbol{\Upsilon}_{[0,t]} ) }\,.
\end{align*}
Note that a stochastic differential equation (SDE) corresponding to the  forward Kolmogorov equation~(\ref{KolmogorovForJ}) has the form
    \begin{align}\label{SDEToSolve}
        d\mathbf{X}_t\,=\,d\mathbf{W}_t\,+\,b_{T-t}^{\mathsmaller{\vartheta}}(\mathbf{X}_t)\,dt\,,
    \end{align}
where $\{\mathbf{W}_t\}_{t\in [0,T]}$ is a 2d standard Brownian motion. The drift function $b_{t}^{\mathsmaller{\vartheta}}(y)$ points  in the direction $-y$, that is, towards the origin from $y$, and its magnitude  vanishes super-exponentially as $|y|\rightarrow \infty$ and diverges to infinity as $y\rightarrow 0$ with the asymptotics
  \begin{align}\label{bBlowUp}
\big|b_{t}^{\mathsmaller{\vartheta}}(y)\big|\, \stackrel{y\rightarrow 0 }{=}  \,\frac{ 1 }{|y|\log \frac{1}{|y|} }\left(1 \,+\,\frac{\frac{1}{2}(\vartheta-\log 2) +\gamma   }{\log\frac{1}{|y|}   } \,+\,\mathit{O}\bigg(  \frac{1  }{\log^2\frac{1}{|y|}   }   \bigg)    \right)    \,.
  \end{align}
Define $\bar{b}_{t}^{\mathsmaller{\vartheta}}:(0,\infty)\rightarrow (0,\infty)$ through the relation  $\bar{b}_{t}^{\mathsmaller{\vartheta}}(|y|)=\big|b_{t}^{\mathsmaller{\vartheta}}(y)\big|$.  The radial process $\mathbf{R}_t=|\mathbf{X}_t|$ satisfies the SDE
  \begin{align}\label{SDEToSolveR}
    d\mathbf{R}_t\,=\,d\mathbf{w}_t\,+\,\bigg(\frac{1}{2\mathbf{R}_t}\,-\,    \bar{b}_{T-t}^{\mathsmaller{\vartheta}}(\mathbf{R}_t)\bigg)\,dt\,,
    \end{align}
where $\{\mathbf{w}_t\}_{t\in [0,T]}$ is a 1d standard Brownian motion.  Without the presence of the drift term $-\bar{b}_{T-t}^{\mathsmaller{\vartheta}}(\mathbf{R}_t)\,dt$, the SDE~(\ref{SDEToSolveR}) would be that of a 2d Bessel process, which almost surely does not visit $0$ for any time $t>0$.  Interestingly, the process $\mathbf{R}$---and thus also the process $X$---has a positive probability of visiting $0$ for some $t>0$ even though $\frac{1}{2a}\gg  \bar{b}_{T-t}^{\mathsmaller{\vartheta}}(a)$ as $a\searrow 0$. \vspace{.2cm}

\begin{proof}[Proof of Proposition~\ref{PropLocTime}] (i)  From  \cite[Theorem 2.4(i)]{CM} we have $\mathbf{P}^{T,\mathsmaller{\vartheta}}_x [\mathcal{O}^c ]=\big(\mathbf{V}^{T,\mathsmaller{\vartheta}}_x[\boldsymbol{\Upsilon}_{[0,T]}] \big)^{-1}$, which with~(\ref{Pdef}) implies that $\mathbf{V}^{T,\mathsmaller{\vartheta}}_x [\mathcal{O}^c ]=1 $.   Let $\mathbf{P}^{T}_x$ denote 2d Wiener measure on $\boldsymbol{\Upsilon}_{[0,T]}$  corresponding to initial position $x$.
By  \cite[Theorem 2.4(ii)]{CM} the conditioning of $\mathbf{P}^{T,\mathsmaller{\vartheta}}_x $ to the event $\mathcal{O}^c $ is $\mathbf{P}^{T}_x$, yielding the  first equality below:
\begin{align}\label{P2V}
\mathbf{P}^{T}_x\,=\,\frac{1_{\mathcal{O}^c } }{  \mathbf{P}^{T,\mathsmaller{\vartheta}}_x [\mathcal{O}^c ] }\,\mathbf{P}^{T,\mathsmaller{\vartheta}}_x \,=\,\frac{1_{\mathcal{O}^c } }{  \mathbf{V}^{T,\mathsmaller{\vartheta}}_x [\mathcal{O}^c ] }\,\mathbf{V}^{T,\mathsmaller{\vartheta}}_x\,=\,1_{\mathcal{O}^c } \,\mathbf{V}^{T,\mathsmaller{\vartheta}}_x\,.
\end{align}
The second and third equalities above apply~(\ref{Pdef}) and that $\mathbf{V}^{T,\mathsmaller{\vartheta}}_x [\mathcal{O}^c ]=1 $. 
Thus we can write
\begin{align*}
\mathbf{V}^{T,\mathsmaller{\vartheta}}\,=\, 1_{\mathcal{O}^c } \,\mathbf{V}^{T,\mathsmaller{\vartheta}} \,+\,1_{\mathcal{O} }\, \mathbf{V}^{T,\mathsmaller{\vartheta}}\,=\,\int_{\R^2} \,\mathbf{P}^{T}_x\,dx \,+\,1_{\mathcal{O} }\, \mathbf{V}^{T,\mathsmaller{\vartheta}}\,=\,\mathbf{U}^{T}\,+\,1_{\mathcal{O} }\, \mathbf{V}^{T,\mathsmaller{\vartheta}}\,,
\end{align*}
where the second equality uses~(\ref{VDis}) and~(\ref{P2V}). Note that $\mathbf{U}^{T}$ is supported on $\mathcal{O}^c$ since a 2d Brownian motion starting away from the origin almost surely will never visit there.  Moreover, $\mathbf{V}^{T,\mathsmaller{\vartheta}}-\mathbf{U}^{T}=1_{\mathcal{O} } \mathbf{V}^{T,\mathsmaller{\vartheta}}_x$ is supported on $\mathcal{O}$.  Therefore, $\mathbf{U}^{T}$ and $\mathbf{V}^{T,\mathsmaller{\vartheta}}-\mathbf{U}^{T}$ are the components in the Lebesgue decomposition of $\mathbf{V}^{T,\mathsmaller{\vartheta}}$  with respect to $\mathbf{U}^{T}$. \vspace{.2cm}

\noindent (ii) Define $\mathbf{V}^{T,\mathsmaller{\vartheta},N}:= \int_{|x|\leq N} 
 \mathbf{V}^{T,\mathsmaller{\vartheta}}_x dx$ for $N>0$.  Equivalently, we can express this measure as $\mathbf{V}^{T,\mathsmaller{\vartheta},N}= 1_{E_N}\mathbf{V}^{T,\mathsmaller{\vartheta}}$ for $E_N:=\big\{p\in \boldsymbol{\Upsilon}_{[0,T]}: |p(0)|\leq N    \big\}$. 
 It suffices to show that $\mathbf{L}_{[0,T]}^{\varepsilon} \rightarrow \mathbf{L}_{[0,T]}$ in  $L^1\big(\mathbf{V}^{T,\mathsmaller{\vartheta},N} \big)$ as $\varepsilon\rightarrow 0$ for any $N >0$. Note that $\mathbf{V}^{T,\mathsmaller{\vartheta},N}$  has finite total mass  since $x\rightarrow \mathbf{V}^{T,\mathsmaller{\vartheta}}_x(\boldsymbol{\Upsilon}_{[0,T]})$     is bounded outside of any neighborhood of the origin and has merely logarithmic blowup there.  Thus $\mathbf{P}^{T,\mathsmaller{\vartheta},N}=\frac{1}{\mathbf{V}^{T,\mathsmaller{\vartheta},N}(\boldsymbol{\Upsilon}_{[0,T]}) }  \mathbf{V}^{T,\mathsmaller{\vartheta},N}$ is a well-defined probability measure, and $\mathbf{L}_{[0,T]}^{\varepsilon} \rightarrow \mathbf{L}_{[0,T]}$ holds in   $L^1\big(\mathbf{P}^{T,\mathsmaller{\vartheta},N} \big)$ 
 by 
~\cite[Theorem 2.9]{CM}, which obviously implies the analogous statement with $\mathbf{P}^{T,\mathsmaller{\vartheta},N}$ replaced by $\mathbf{V}^{T,\mathsmaller{\vartheta},N}$.\vspace{.2cm}

\noindent (iii), (iv), \& (v) follow from similar simple arguments respectively using~\cite[Theorem 2.10]{CM},~\cite[Proposition 2.12]{CM}, \&~\cite[Theorem 2.25]{CM}.
\end{proof}

\appendix

\section{Comment on the proof of Lemma~\ref{LemmaSubseqLimit}}\label{SecAppendix}
To prove Lemma~\ref{LemmaSubseqLimit}, it suffices to verify the conditions of the lemma below with
\begin{itemize}
\item  the metric space $(X,\rho_X)$  being $\mathcal{M}$ equipped with the Prohorov metric,

\item $S=\big\{(s,t): 0\leq s<t<\infty\big\}$,

\item $I=(0,1]$, and

\item $ \{ Y_t^{\varepsilon}\}_{t\in S}=\mathscr{Z}^{\mathsmaller{\vartheta},\varepsilon}$.

\end{itemize}
 Conditions (I) and (II) below are implied by~\cite[Lemma 4.1]{BC} and~\cite[Proposition 4.2]{BC}, respectively.
\begin{lemma}\label{LemmaScheme} Let $(X,\rho_X)$ and $(S,\rho_S)$ be polish spaces. 
For each $\varepsilon$ in an indexing set $I$,  let $ \{ Y_t^{\varepsilon}\}_{t\in S}$   be an  $X$-valued process, defined over a probability space $(\Omega^{\varepsilon},\mathcal{F}^{\varepsilon},\mathbb{P}^{\varepsilon})$.  Suppose that (I) \& (II) below hold:
\begin{enumerate}[(I)]
\item  The family $ \{ Y_t^{\varepsilon}\}_{\varepsilon\in I}$  is tight for each $t\in S$.

\item  For each  $t\in S$, $\sup_{\varepsilon\in I 
 }\mathbb{E}^{\varepsilon}\big[\rho_X\big(Y_{s}^{\varepsilon},Y_t^{\varepsilon}\big)\big]\rightarrow 0$ as $s\rightarrow t$. 

\end{enumerate}
Then every sequence in $I$ has a subsequence $\{\varepsilon_j\}_{1}^{\infty}  $ such that the $X^S$-valued random element $\big\{ Y^{\varepsilon_j}_t\big\}_{t\in S}$ converges in the sense of finite-dimensional distributions as $j\rightarrow \infty$.
\end{lemma}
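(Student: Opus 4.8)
The statement is a standard Prohorov-type compactness argument, so the plan is to combine tightness of each coordinate with an Arzel\`a--Ascoli-style equicontinuity estimate to extract a subsequence along which all finite-dimensional distributions converge. First I would fix a countable dense subset $S_0 \subset S$. By condition (I), for each fixed $t \in S$ the family $\{Y_t^\varepsilon\}_{\varepsilon \in I}$ is tight in $(X,\rho_X)$; applying this at each point of $S_0$ and using a diagonal extraction over an enumeration of $S_0$, I obtain a single subsequence $\{\varepsilon_j\}$ along which $(Y_{t_1}^{\varepsilon_j}, \ldots, Y_{t_k}^{\varepsilon_j})$ converges in distribution in $X^k$ for every finite tuple $t_1, \ldots, t_k \in S_0$ (finite products of tight families are tight, so the joint laws are tight, and the diagonal subsequence makes every such joint law converge). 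This already gives convergence of finite-dimensional distributions restricted to times in $S_0$.

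Next I would upgrade from $S_0$ to all of $S$ using condition (II). The key point is that (II) gives uniform $L^1$-continuity of $t \mapsto Y_t^\varepsilon$ in $\varepsilon$, hence uniform continuity in probability: for any $\eta > 0$, $\sup_{\varepsilon} \mathbb{P}^\varepsilon[\rho_X(Y_s^\varepsilon, Y_t^\varepsilon) > \eta] \to 0$ as $s \to t$, by Markov's inequality. Given an arbitrary $t \in S$, pick a sequence $t_n \in S_0$ with $t_n \to t$; then $\{Y_t^\varepsilon\}_\varepsilon$ is tight (condition (I)), and for the subsequence $\{\varepsilon_j\}$ the laws of $Y_{t_n}^{\varepsilon_j}$ converge as $j \to \infty$ (for each $n$, since $t_n \in S_0$), while condition (II) controls $\rho_X(Y_{t_n}^{\varepsilon_j}, Y_t^{\varepsilon_j})$ uniformly in $j$. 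A now-routine $3\eta$-argument (approximate $Y_t$ by $Y_{t_n}$, pass to the limit in $j$, then let $n \to \infty$) shows that $Y_t^{\varepsilon_j}$ converges in distribution, and the same reasoning applied to finite tuples — approximating $(t_1,\ldots,t_k)$ componentwise by tuples in $S_0$ and using joint tightness plus the product metric on $X^k$ — shows that every finite-dimensional distribution of $\{Y_t^{\varepsilon_j}\}_{t \in S}$ converges. Defining the limiting finite-dimensional laws to be these limits, Kolmogorov consistency is automatic because it holds along the subsequence, so there is a limiting $X^S$-valued random element in the sense of finite-dimensional distributions.

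The main obstacle — though it is more bookkeeping than genuine difficulty — is making the extension from $S_0$ to $S$ fully rigorous for \emph{joint} (multi-time) distributions rather than one-dimensional marginals: one must check that the approximating tuples can be chosen so that all coordinates converge simultaneously, which requires that $X^k$ with the max- (or sum-) metric inherits the Polish structure and that tightness is preserved under finite products, and that condition (II) applied coordinatewise controls the joint distance $\max_i \rho_X(Y_{t_i^{(n)}}^{\varepsilon_j}, Y_{t_i}^{\varepsilon_j})$. All of these are standard facts about Polish spaces and weak convergence (e.g.\ via the portmanteau theorem or Skorokhod coupling), so no new idea is needed beyond the two quoted inputs from~\cite{BC}; I would cite a reference such as~\cite{Kallenberg} for the Prohorov/tightness machinery and present the $3\eta$-argument explicitly only once, for a generic finite tuple.
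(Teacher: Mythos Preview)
Your proposal is correct and follows essentially the same strategy as the paper: pick a countable dense $S_0\subset S$, use condition~(I) to extract a subsequence along which finite-dimensional distributions over $S_0$ converge, then use condition~(II) to extend to all of $S$ via a triangle-inequality argument.

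The one noteworthy difference is in the execution of the extension step. The paper invokes Skorokhod's representation theorem immediately after extracting the subsequence, placing all the processes $\{Y_s^{\varepsilon_j}\}_{s\in S_0}$ on a common probability space with almost-sure convergence. From there the paper works with the metric $\widetilde\rho(Y_1,Y_2):=\mathbb{E}[1\wedge\rho_X(Y_1,Y_2)]$ (i.e., convergence in probability) rather than with the Prohorov metric on laws. This buys two simplifications over your route: the limit $\{Y_s\}_{s\in S}$ is constructed directly as a random element on the common space (so there is no need to invoke Kolmogorov consistency for the limiting finite-dimensional laws), and the passage from single-time to multi-time convergence is immediate, since convergence in probability of each coordinate yields convergence in probability of the tuple. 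The ``main obstacle'' you flag---making the $3\eta$-argument rigorous for joint laws---simply does not arise in the paper's version. Your approach is perfectly valid, just slightly more bookkeeping; if you want to shorten it, insert the Skorokhod step right after the diagonal extraction.
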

\begin{proof}
Let $D $ be a countable dense subset of $S$.  As a consequence of (I), the family of  $X^D$-valued random elements $\big(\{ Y^{\varepsilon}_s\}_{s\in D} \big)_{\varepsilon\in I} $ is tight. Thus any sequence in $I$ has a further subsequence $\{\varepsilon_j\}_1^{\infty}$ such that $\big\{ Y^{\varepsilon_j}_s\big\}_{s\in D}$ converges in distribution  (as an  $X^D$-valued random element).  By Skorokhod's representation theorem, we can assume that  each $X^D$-valued process $ ( Y_s^{\varepsilon_j})_{s\in D}$ (for $j\in \mathbb{N}$) is defined on a probability space $(\Omega,\mathcal{F},\mathbb{P})$ and that $ \{ Y_s^{\varepsilon_j}\}_{s\in D}$ converges almost surely to a limit $ \{ Y_s\}_{s\in D}$.  We use $\mathbb{E}$ to denote the expectation associated to $\mathbb{P}$.  \vspace{.3cm}

\noindent \textbf{Constructing $\boldsymbol{Y_s}$ for $\boldsymbol{s\in S}$:}
Fix $s\in S$, and let $\{s_k\}_1^{\infty}\subset D$ be any sequence converging to $s$. Given $\delta>0$ pick $N>0$ large enough so that for all $k\geq N$ 
\begin{align}\label{PickN}
  \sup_{\varepsilon\in I  }\,\widetilde{\rho}\big(Y_{s_k}^{\varepsilon},Y_s^{\varepsilon}\big)\,\leq \,\frac{\delta}{2} \, ,
\end{align}
where $\widetilde{\rho}(Y_{1},Y_2):= \mathbb{E}\big[ 1\wedge \rho_X(Y_1,Y_2)\big]$ for $X$-valued random variables $Y_1$, $Y_2$. For $k,\ell>N$ and any $j\in \mathbb{N}$ we then have
\begin{align*}
  \widetilde{\rho}\big(Y_{s_k},Y_{s_l}\big)\,\leq \,&\,  \widetilde{\rho}\big(Y_{s_k},Y_{s_k}^{\varepsilon_j}\big)\,+\,\widetilde{\rho}\big(Y_{s_k}^{\varepsilon_j},Y_{s}^{\varepsilon_j}\big)\,+\,\widetilde{\rho}\big(Y_{s}^{\varepsilon_j},Y_{s_l}^{\varepsilon_j}\big)\,+\, \widetilde{\rho}\big(Y_{s_l}^{\varepsilon_j},Y_{s_l}\big) \\
  \,\leq \,  &\,\widetilde{\rho}\big(Y_{s_k},Y_{s_k}^{\varepsilon_j}\big)\,+\, \widetilde{\rho}\big(Y_{s_l}^{\varepsilon_j},Y_{s_l}\big)\,+\,\delta\,,
\end{align*}
and thus taking the limit $j\rightarrow \infty$ yields
\begin{align*}
  \widetilde{\rho}\big(Y_{s_k},Y_{s_l}\big)\,\leq \,&\, \limsup_{j}\Big( \widetilde{\rho}\big(Y_{s_k},Y_{s_k}^{\varepsilon_j}\big)\,+\, \widetilde{\rho}\big(Y_{s_l}^{\varepsilon_j},Y_{s_l}\big)\Big)\,+\,\delta \,=\,\delta\,,
\end{align*}
Since $\delta>0$ is arbitrary, the sequence of random elements $\{Y_{s_j}\}_1^{\infty}$ is Cauchy in probability, and thus converges in probability to a limit, which we denote by $Y_{s}$. \vspace{.3cm}

\noindent \textbf{Convergence in probability of $\boldsymbol{Y_s^{\varepsilon_j} }$ to $\boldsymbol{Y_s}$ for each $\boldsymbol{s\in S}$:}  Pick $N>0$ such that $ \widetilde{\rho}\big(Y_{s_k},Y_{s}\big)\leq \frac{\epsilon}{2}   $ and~(\ref{PickN}) hold for all $k\geq N$. Hence for any $k\geq N$ and $j$
\begin{align*}
  \widetilde{\rho}\big(Y_{s}^{\varepsilon_j },Y_{s}\big)\,\leq \,&\, \widetilde{\rho}\big(Y_{s}^{\varepsilon_j },Y_{s_k}^{\varepsilon_j}\big)\,+\, \widetilde{\rho}\big(Y_{s_k}^{\varepsilon_j},Y_{s_k}\big)\,+\, \widetilde{\rho}\big(Y_{s_k},Y_{s}\big) \,\leq \,  \widetilde{\rho}\big(Y_{s_k}^{\varepsilon_j},Y_{s_k}\big)\,+\,\delta\,.
\end{align*}
and so 
\begin{align*}
  \limsup_j\,\widetilde{\rho}\big(Y_{s}^{\varepsilon_j },Y_{s}\big)\,\leq \,&\,  \limsup_j\,\widetilde{\rho}\big(Y_{s_k}^{\varepsilon_j},Y_{s_k}\big)\,+\,\delta \,=\,\delta \,.
\end{align*}
Therefore, $\{Y_{s}^{\varepsilon_j}\}_1^{\infty}$ converges in probability to $Y_s$. \vspace{.3cm{}}

\noindent \textbf{Finite-dimensional distribution convergence of $\boldsymbol{ \{Y_s^{\varepsilon_j}\}_{s\in S} }$ to $\boldsymbol{ \{Y_s\}_{s\in S}  }$:} Given any $s_1,\ldots, s_n\in S$, there is convergence in probability of the $X^n$-valued random elements
\begin{align*}
 \big(Y_{s_1}^{\varepsilon_j},\ldots, Y_{s_n}^{\varepsilon_j}\big)\hspace{.3cm}\stackbin[ j\rightarrow \infty   ]{ \mathbb{P}}{\longrightarrow} \hspace{.3cm}\big(Y_{s_1},\ldots, Y_{s_n}\big)\,.
\end{align*}
Since convergence in probability implies convergence in distribution, the above convergence also holds in distribution.  Therefore   $ \{Y_s^{\varepsilon_j}\}_{s\in S} $  converges in distribution to $ \{Y_s\}_{s\in S}  $ in the finite-dimensional distributional sense, i.e., under the product topology on $X^S$.
\end{proof}

\end{document}